\numberwithin{equation}{section}
\numberwithin{figure}{section}
  \theoremstyle{plain}
  \newtheorem{thm}{\protect\theoremname}[section]
	\theoremstyle{remark}
  \newtheorem{rem}[thm]{\protect\remarkname}
  \theoremstyle{plain}
  \newtheorem{lem}[thm]{\protect\lemmaname}
  \theoremstyle{definition}
  \newtheorem{defn}[thm]{\protect\definitionname}
  \theoremstyle{plain}
  \newtheorem{prop}[thm]{\protect\propositionname}
  \theoremstyle{plain}
  \newtheorem{cor}[thm]{\protect\corollaryname}
 \theoremstyle{remark}
  \newtheorem*{rem*}{\protect\remarkname}
  \theoremstyle{plain}
  \newtheorem*{lem*}{\protect\lemmaname}
 \theoremstyle{definition}
 \newtheorem*{defn*}{\protect\definitionname}
  \theoremstyle{plain}
  \newtheorem*{prop*}{\protect\propositionname}
  \theoremstyle{plain}
  \newtheorem*{thm*}{\protect\theoremname}
  \providecommand{\definitionname}{Definition}
  \providecommand{\lemmaname}{Lemma}
  \providecommand{\propositionname}{Proposition}
  \providecommand{\corollaryname}{Corollary}
  \providecommand{\remarkname}{Remark}
  \providecommand{\theoremname}{Theorem}
\begin{document}
\global\long\def\cE{\mathcal{E}}
\global\long\def\cN{\mathcal{N}}
\global\long\def\cR{\mathcal{R}}

\global\long\def\al{\alpha}
\global\long\def\be{\beta}
\global\long\def\ga{\gamma}
\global\long\def\de{\delta}
\global\long\def\ep{\epsilon}
\global\long\def\la{\lambda}

\global\long\def\ka{\mbox{\large{$\kappa$}}}

\global\long\def\alb{\bar{\alpha}}
\global\long\def\beb{\bar{\beta}}
\global\long\def\gab{\bar{\gamma}}
\global\long\def\deb{\bar{\delta}}
\global\long\def\epb{\bar{\epsilon}}
\global\long\def\mub{\bar{\mu}}
\global\long\def\nub{\bar{\nu}}
\global\long\def\lab{\bar{\lambda}}
\global\long\def\rhob{\bar{\rho}}

\global\long\def\Ab{\bar{A}}
\global\long\def\Bb{\bar{B}}
\global\long\def\Cb{\bar{C}}
\global\long\def\Db{\bar{D}}
\global\long\def\Eb{\bar{E}}

\global\long\def\Ib{\bar{I}}
\global\long\def\Ibp{\bar{I\:\!\!'}}

\global\long\def\thetah{\hat{\theta}}
\global\long\def\nablah{\hat{\nabla}}
\global\long\def\cnabla{\check{\nabla}}
\global\long\def\bL{\mathbb{L}}

\global\long\def\btheta{\boldsymbol{\theta}}
\global\long\def\bh{\boldsymbol{h}}

\global\long\def\La{\Lambda}
\global\long\def\Si{\Sigma}
\global\long\def\si{\sigma}
\global\long\def\vsig{\varsigma}
\global\long\def\ups{\upsilon}
\global\long\def\Ups{\Upsilon}
\global\long\def\II{I\! I}
\global\long\def\rmN{\mathrm{N}}
\global\long\def\sfH{\mathsf{H}}
\global\long\def\sfS{\mathsf{S}}
\global\long\def\sfM{\mathsf{M}}
\global\long\def\bbD{\mathbb{D}}

\global\long\def\bbR{\mathbb{R}}
\global\long\def\bbC{\mathbb{C}}
\global\long\def\bbZ{\mathbb{Z}}
\global\long\def\scrK{\mathscr{K}}
\global\long\def\id{\mathrm{id}}
\global\long\def\fX{\mathfrak{X}}
\global\long\def\d{\mathrm{d}}

\global\long\def\cT{\mathcal{T}}
\global\long\def\cA{\mathcal{A}}
\global\long\def\cG{\mathcal{G}}
\global\long\def\cH{\mathcal{H}}
\global\long\def\cU{\mathcal{U}}
\global\long\def\SU{\mathrm{SU}}
\global\long\def\End{\mathrm{End}}

\global\long\def\fg{\mathfrak{g}}
\global\long\def\fp{\mathfrak{p}}
\global\long\def\fh{\mathfrak{h}}

\global\long\def\rarrow{\rightarrow}
\global\long\def\hrarrow{\hookrightarrow}
\global\long\def\hook{\lrcorner}

\newcommand{\lpl}{
  \mbox{$
  \begin{picture}(12.7,8)(-.5,-1)
  \put(2,0.2){$+$}
  \put(6.2,2.8){\oval(8,8)[l]}
  \end{picture}$}}

\def\sideremark#1{\ifvmode\leavevmode\fi\vadjust{\vbox to0pt{\vss
 \hbox to 0pt{\hskip\hsize\hskip1em
 \vbox{\hsize3cm\tiny\raggedright\pretolerance10000
  \noindent #1\hfill}\hss}\vbox to8pt{\vfil}\vss}}}%

                                                   %
\newcommand{\edz}[1]{\sideremark{#1}}
\def\idx#1{{\em #1\/}}

\title{CR Embedded Submanifolds of CR Manifolds}

\author{Sean N.\ Curry and A.\ Rod Gover}
\address{S.N.C. \& A.R.G.:Department of Mathematics\\
  The University of Auckland\\
  Private Bag 92019\\
  Auckland 1142\\
  New Zealand} 
\email{sean.curry@auckland.ac.nz}
\email{r.gover@auckland.ac.nz}

\begin{abstract}
We develop a complete local theory for CR embedded submanifolds of CR manifolds in a way which parallels
the Ricci calculus for Riemannian submanifold theory. We define a
normal tractor bundle in the ambient standard tractor bundle along the
submanifold and show that the orthogonal complement of this bundle is
not canonically isomorphic to the standard tractor bundle of the
submanifold. By determining the subtle relationship between
submanifold and ambient CR density bundles we are able to invariantly
relate these two tractor bundles, and hence to invariantly relate the
normal Cartan connections of the submanifold and ambient manifold by a
tractor analogue of the Gauss formula. This leads also to CR analogues 
of the Gauss, Codazzi, and Ricci equations. The tractor Gauss formula
includes two basic invariants of a CR embedding which, along with the
submanifold and ambient curvatures, capture the jet data of the
structure of a CR embedding. These objects therefore form the basic
building blocks for the construction of local invariants of the
embedding. From this basis we develop a broad calculus for the
construction of the invariants and invariant differential operators of
CR embedded submanifolds.

The CR invariant tractor calculus of CR embeddings is developed
concretely in terms of the Tanaka-Webster calculus of an arbitrary
(suitably adapted) ambient contact form. This enables straightforward
and explicit calculation of the pseudohermitian invariants of the
embedding which are also CR invariant. These are extremely difficult
to find and compute by more na\"ive methods. We conclude by
establishing a CR analogue of the classical Bonnet theorem in
Riemannian submanifold theory.
\end{abstract}
\subjclass[2010]{Primary 32V05, 53B15; Secondary 32V30, 53B25, 53A30 }

\thanks{ARG acknowledges support from the Royal
  Society of New Zealand via Marsden Grant 13-UOA-018}

\maketitle

\section{Introduction}

Hypersurface type CR geometry is motivated by the biholomorphic
equivalence problem for complex domains, and is rooted in the result
of Poincar\'e that the analogue of the Riemann mapping theorem fails
for domains of complex dimension greater than one \cite{Poincare}.
 On the side of geometry key pioneering work was
developed by Cartan, Tanaka, and Chern-Moser in which it was seen that the
structure is invariantly captured by a prolonged system now known as a
Cartan connection \cite{Cartan-RealHypInC2,Chern-Moser,Tanaka-pseudoConfConnection}. The fundamental role of CR geometry in analysis was significantly strengthened by the result
of Fefferman that any biholomorphic map between smoothly bounded strictly pseudoconvex domains in
$\bbC^{n+1}$ extends smoothly to the boundary, and so induces a CR
diffeomorphism between the boundaries \cite{FeffermanBihol}; so
Poincar\'e's result may be recovered by a simple counting of
invariants argument (that was in fact proposed in \cite{Poincare}). This
brought to the fore the role of CR invariants as tools for
distinguishing domains. Hypersurface type CR geometry is an important
example in a class of structures known as parabolic geometries that
also includes conformal geometry, projective differential geometry,
and many other structures. Seeking to determine the asymptotic
expansion of the Bergman kernel, Fefferman initiated a programme for
the explicit construction of CR, and more widely parabolic, invariants
\cite{FeffermanParabolic}. There has subsequently been much interest and
progress on this \cite{BaileyEastwoodGraham,Hirachi,Gover-Advances}.

The study of CR embeddings and immersions (in CR manifolds) is also
closely connected with the study of holomorphic mappings between
domains. Although open questions remain about when proper holomorphic
mappings between domains in $\bbC^{m+1}$ and $\bbC^{n+1}$ extend
smoothly \cite{Bedford, Webster-MappingBalls}, if a holomorphic map between smoothly
bounded domains does extend in this way then it induces a CR map
between the boundaries.  So again CR invariants of the boundaries play
a fundamental role.  The Chern-Moser moving frames approach to the CR
Cartan connection has been effectively applied to the study of CR
embeddings and immersions in the important work of Webster
\cite{Webster-Rigidity} on CR rigidity for real codimension two
embeddings. This theme is significantly extended in the article
\cite{EbenfeltHuangZaitsev-Rigidity} of Ebenfelt, Huang and Zaitsev
where rigidity is established when the codimension is not too
large. These works have strong applications to the study of proper
holomorphic maps between balls and to the study of Milnor links of
isolated singularities of analytic varieties
\cite{Webster-MappingBalls,EbenfeltHuangZaitsev-Rigidity}. The Chern-Moser 
approach has also been applied in related work generalising the Schwarz 
reflection principle to several complex variables, where invariant 
nondegeneracy conditions on CR maps play a key role 
\cite{Faran-Reflection,Lamel-Reflection}.

Despite the strong specific results mentioned, and geometric studies by several authors \cite{Dragomir,DragomirMinor,DragomirTomassini,Markowitz-CR,Minor,Takemura}, a significant gap has
remained in the general theory for CR embeddings and immersions. A
basic general theory should enable the straightforward construction of
local CR invariants, but in fact to this point very few invariants are
known. In particular using existing approaches there has been no scope
for a general theory of invariant construction, as the first step in a
Fefferman-type invariants programme cf.\ \cite{FeffermanParabolic}. Closely related is the need to
construct CR invariant differential operators required for geometric
analysis. Again no general theory for their construction has been
previously advanced. The aim of this article is to close this gap. We
develop a general CR invariant treatment that on the one hand is
conceptual and on the other provides a practical and constructive
approach to treating the problems mentioned. The final package may be
viewed as, in some sense, an analogue of the usual Ricci calculus
approach to Riemannian submanifold theory, which is in part based
around the Gauss formula. Our hope is that this may be easily used by
analysts or geometers not already strongly familiar with CR geometry;
for this reason we have attempted to make the treatment largely self
contained. The theory and tools developed here may also be viewed as
providing a template for the general problem of treating parabolic
submanifolds in parabolic geometries. This is reasonably well
understood in the conformal setting
\cite{BaileyEastwoodGover-Thomas'sStrBundle,BurstallCalderbank-Conformal,Gover-AlmostEinstein,Grant,Stafford,Vyatkin-Thesis} but little is known
in the general case. The CR case treated here is considerably more
subtle than the conformal analogue as it involves dealing with a
non-maximal parabolic.

\subsection{CR Embeddings}
Abstractly, a nondegenerate hypersurface-type CR manifold is a smooth manifold $M^{2n+1}$ equipped with a contact distribution $H$ on which there is a formally integrable complex structure $J:H\rightarrow H$. We refer to such manifolds simply as \emph{CR manifolds}. A \emph{CR mapping} between two CR manifolds is a smooth mapping whose tangent map restricts to a complex linear bundle map between the respective contact distributions. A \emph{CR embedding} is a CR mapping which is also an embedding. 

Typically in studying CR embeddings one works with an arbitrary choice of contact form for the contact distribution in the ambient manifold (ambient \emph{pseudohermitian structure}). A CR embedding is said to be \emph{transversal} if at every point in the submanifold there is a submanifold tangent vector which is transverse to the ambient contact distribution (this is automatic if the ambient manifold is strictly pseudoconvex). Assuming transversality the ambient contact form then pulls back to a pseudohermitian contact form on the submanifold. Associated with these contact forms are their respective Tanaka-Webster connections, and these can be used to construct pseudohermitian invariants of the embedding. The task of finding some, let alone all, pseudohermitian invariants which are in fact CR invariants (not depending on the additional choice of ambient contact form) is very difficult, unless one can find a manifestly invariant approach. We give such an approach. Our approach uses the natural invariant calculus on CR manifolds, the \emph{CR tractor calculus}. In the CR tractor calculus the \emph{standard tractor bundle} and \emph{normal tractor (or Cartan) connection} play the role analogous to the (holomorphic) tangent bundle and Tanaka-Webster connection in pseudohermitian geometry.

\subsection{Invariant Calculus on CR Manifolds}

Due to the work of Cartan, Tanaka, and Chern-Moser we may view a CR
manifold $(M,H,J)$ as a Cartan geometry of type $(G,P)$ with $G$ a
pseudo-special unitary group and $P$ a parabolic subgroup of $G$. The
\emph{tractor bundles} are the associated vector bundles on $M$
corresponding to representations of $G$, the standard tractor bundle
corresponding to the standard representation. The normal Cartan
connection then induces a linear connection on each tractor bundle
\cite{CapGover-TracCalc}. In order to relate the CR tractor calculus
to the Tanaka-Webster calculus of a choice of pseudohermitian contact
form we work with the direct construction of the CR standard tractor
bundle and connection given in \cite{GoverGraham-CRInvtPowers}. This
avoids the need to first construct the Cartan bundle.

 To fully treat CR submanifolds one
needs to work with \emph{CR density line bundles}, and their
Tanaka-Webster calculus. From the Cartan geometric point of view the
CR density bundles $\cE(w,w')$ on $M^{2n+1}$ are the complex line
bundles associated to one dimensional complex representations of $P$,
and include the \emph{canonical bundle} $\scrK$ as $\cE(-n-2,0)$. The
bundle $\cE(1,0)$ is the dual of an $(n+2)^{th}$ root of $\scrK$ and
\begin{equation*}
\cE(w,w')=\cE(1,0)^w\otimes \overline{\cE(1,0)}^{w'}
\end{equation*}
where $w-w'\in \mathbb{Z}$ (and $w,w'$ may be complex). Since the Tanaka-Webster connection acts on the canonical bundle it acts on all the density bundles.

\subsection{Invariant Calculus on Submanifolds and Main Results}

We seek to extend the CR tractor calculus to the setting of transversally CR embedded submanifolds of CR manifolds in order to deal with the problem of invariants. Our approach parallels the usual approach to Riemannian submanifold geometry; of central importance in the Riemannian theory of submanifolds is the second fundamental form.

\subsubsection{Normal tractors and the tractor second fundamental form} One way to understand the Riemannian second fundamental form is in terms of the turning of normal fields (i.e. as the shape operator). To define a tractor analogue of the shape operator one needs a tractor analogue of the normal bundle for a CR embedding $\iota:\Sigma^{2m+1}\rightarrow M^{2n+1}$. 

In Section \ref{sec:NormalTrac} we use a CR invariant differential splitting operator to give a CR analogue of the normal tractor of \cite{BaileyEastwoodGover-Thomas'sStrBundle} associated to a weighted unit normal field in conformal submanifold geometry. It turns out that this \emph{a priori} differential splitting gives a canonical bundle isomorphism between the Levi-orthogonal complement of $T^{1,0}\Si$ in $T^{1,0}M|_{\Si}$, tensored with the appropriate ambient density bundle, and a subbundle $\cN$ of the ambient standard tractor bundle along $\Si$ (Proposition \ref{NormalTractorConnectionProp}). The ambient standard tractor bundle carries a parallel Hermitian metric of indefinite signature and the \emph{normal tractor bundle} $\cN$ is nondegenerate since $\Si$ and $M$ are required to be nondegenerate. Thus the ambient tractor connection induces connections  $\nabla^{\cN}$ and $\nabla^{\cN^{\perp}}$ on $\cN$ and $\cN^{\perp}$ respectively. We therefore obtain (Section \ref{TractorGCR}, see also Sections \ref{sec:NormalTrac}, \ref{TractorGaussFormula}, and \ref{RelatingTractorsHC}):
\begin{prop}\label{NormalTractorsAndTractorSFFResult}
The ambient standard tractor bundle $\cT M$ splits along $\Si$ as $\cN^{\perp}\oplus\cN$, and the ambient tractor connection $\nabla$ splits as 
\begin{equation*}
\iota^*\nabla = \left(\begin{array}{cc} 
\nabla^{\cN^{\perp}}  &  -\bL^{\dagger}   \\
 \bL  &   \nabla^{\cN}  \\ 
\end{array}\right) 
\qquad \mathrm{on} \qquad \cT M|_{\Si} = \begin{array}{c} 
\phantom{i}\cN^{\perp}  \\
\oplus \\
\cN
\end{array}
\end{equation*}
where $\bL^{\dagger}(X)$ is the Hermitian adjoint of $\bL(X)$ for any $X\in\mathfrak{X}(\Si)$.
\end{prop}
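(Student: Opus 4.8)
The plan is to derive the splitting entirely from nondegeneracy of $\cN$ and parallelism of the ambient tractor metric, the remainder being the standard bookkeeping for a linear connection adapted to an orthogonal direct sum. First I would record that, by Proposition \ref{NormalTractorConnectionProp}, $\cN$ is a subbundle of $\cT M|_{\Si}$ to which the indefinite ambient Hermitian tractor metric $h$ restricts nondegenerately (this uses the nondegeneracy of both $\Si$ and $M$); hence $h$ also restricts nondegenerately to the $h$-orthogonal complement $\cN^{\perp}$, and $\cT M|_{\Si} = \cN^{\perp}\oplus\cN$ as an $h$-orthogonal direct sum. Write $\Pi^{\cN}$ and $\Pi^{\cN^{\perp}}$ for the associated bundle projections.

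Next, since $\nabla h = 0$, the pullback connection $\iota^{*}\nabla$ on $\cT M|_{\Si}$ preserves $\iota^{*}h$. Set
\begin{equation*}
\nabla^{\cN}_{X}s := \Pi^{\cN}(\iota^{*}\nabla_{X}s), \qquad \nabla^{\cN^{\perp}}_{X}t := \Pi^{\cN^{\perp}}(\iota^{*}\nabla_{X}t)
\end{equation*}
for $X\in\fX(\Si)$, $s\in\Gamma(\cN)$, $t\in\Gamma(\cN^{\perp})$; since for $f\in C^{\infty}(\Si)$ the Leibniz correction term $(Xf)s$ already lies in $\cN$ (and $(Xf)t$ in $\cN^{\perp}$), these are linear connections on $\cN$ and $\cN^{\perp}$. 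The remaining components of $\iota^{*}\nabla$ are captured by $\bL(X)t := \Pi^{\cN}(\iota^{*}\nabla_{X}t)$ and $B(X)s := \Pi^{\cN^{\perp}}(\iota^{*}\nabla_{X}s)$; the same Leibniz computation (now $\Pi^{\cN}$ annihilates $(Xf)t$) shows these are tensorial in their tractor arguments, so that $\bL\in\Gamma(T^{*}\Si\otimes\mathrm{Hom}(\cN^{\perp},\cN))$ and $B\in\Gamma(T^{*}\Si\otimes\mathrm{Hom}(\cN,\cN^{\perp}))$, and by construction $\iota^{*}\nabla$ has the asserted block form with diagonal blocks $\nabla^{\cN^{\perp}}$, $\nabla^{\cN}$, lower-left block $\bL$, and upper-right block $B$.

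It then remains to identify $B = -\bL^{\dagger}$. For $X\in\fX(\Si)$, $s\in\Gamma(\cN)$ and $t\in\Gamma(\cN^{\perp})$ orthogonality gives $h(t,s)=0$, so differentiating and using orthogonality once more to discard the $\nabla^{\cN}$- and $\nabla^{\cN^{\perp}}$-terms,
\begin{equation*}
0 = X\,h(t,s) = h(\iota^{*}\nabla_{X}t,\,s) + h(t,\,\iota^{*}\nabla_{X}s) = h(\bL(X)t,\,s) + h(t,\,B(X)s).
\end{equation*}
Since $\bL^{\dagger}(X)\colon\cN\to\cN^{\perp}$ is the Hermitian adjoint of $\bL(X)$, i.e.\ $h(\bL(X)t,s) = h(t,\bL^{\dagger}(X)s)$, the display becomes $h\big(t,\,B(X)s+\bL^{\dagger}(X)s\big)=0$ for all $t\in\Gamma(\cN^{\perp})$; nondegeneracy of $h$ on $\cN^{\perp}$ then forces $B(X) = -\bL^{\dagger}(X)$, which is the claim.

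I do not anticipate a serious obstacle within the proposition itself: the only nontrivial input is that $\cN$ is nondegenerate --- that is, Proposition \ref{NormalTractorConnectionProp} combined with the standing nondegeneracy of $\Si$ and $M$ --- which is precisely what makes the $h$-orthogonal projections, and hence the entire block decomposition, available. The substantive work of the paper lies elsewhere, in the sections cited: producing $\bL$ (the tractor second fundamental form) and $\cN^{\perp}$ concretely, pinning down the subtle non-canonical relationship between $\cN^{\perp}$ and the submanifold standard tractor bundle $\cT\Si$, and expressing $\bL$ in terms of Tanaka--Webster data.
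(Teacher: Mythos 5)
Your proof is correct and follows essentially the same route as the paper: the paper treats this proposition as the standard consequence of $\cN$ being a nondegenerate subbundle of $(\cT M|_{\Si},h)$ with $\iota^*\nabla$ metric-preserving, defining $\nabla^{\cN}$, $\nabla^{\cN^{\perp}}$ as the projected connections and $\bL$ as the (tensorial) normal component of the tangential derivative, with the $-\bL^{\dagger}$ block forced by differentiating $h(t,s)=0$ exactly as you do. You have also correctly located the paper's real content in the construction of $\cN$, the non-canonical relation of $\cN^{\perp}$ to $\cT\Si$ via $\cR(1,0)$, and the explicit Tanaka--Webster formulae for $\bL$ and $\sfS$.
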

The $\mathrm{Hom}(\cN,\cN^{\perp})$ valued 1-form $\bL^{\dagger}$ on
$\Si$ is the CR tractor analogue of the shape operator, and we term
$\bL$ the \emph{CR tractor second fundamental form}. The ambient
standard tractor bundle can be decomposed with respect to a choice of
contact form. Here it is sensible to choose an ambient contact form
whose Reeb vector field is tangent to the submanifold (called
\emph{admissible} \cite{EbenfeltHuangZaitsev-Rigidity}). We give the
components of $\bL$ with respect to an admissible ambient contact form
in Proposition \ref{tractor-sff-prop-HC} (see also Proposition
\ref{tractor-sff-prop}). The principal component of $\bL$ is the
\emph{CR second fundamental form} $\II_{\mu\nu}{^{\ga}}$ of $\Si$ in
$M$, which appears, for example, in
\cite{EbenfeltHuangZaitsev-Rigidity}.

\subsubsection{Relating submanifold and ambient densities and tractors}
Another way to understand the Riemannian second fundamental form is in terms of the normal part of the ambient covariant derivative of a submanifold vector field in tangential directions. This is achieved via the Gauss formula. In the Riemannian Gauss formula a submanifold vector field is regarded as an ambient vector field along the submanifold using the pushforward of the embedding, which relies on the tangent map. In order to give a CR tractor analogue of the Gauss formula one needs to be able to pushforward submanifold tractors to give ambient tractors along the submanifold -- one looks for a CR `standard tractor map'. One might hope for a canonical isomorphism
\begin{equation*}
\cT \Si \rightarrow \cN^{\perp}
\end{equation*}
between the submanifold standard tractor bundle and the orthogonal complement of the normal tractor bundle (these having the same rank). In the conformal case there is such a canonical isomorphism \cite{BurstallCalderbank,Gover-AlmostEinstein,Grant}, however in the CR case it turns out that there is no natural `standard tractor map' $\cT\Si \rightarrow \cT M$ in general.

The problem has to do with the necessity of relating corresponding submanifold and ambient CR density bundles. It turns out that these are not isomorphic along the submanifold, but are related by the top exterior power of the normal tractor bundle $\cN$. Rather than seeking to identify these bundles we therefore define the ratio bundles of densities
\begin{equation*}
\cR(w,w')=\cE(w,w')|_{\Si}\otimes \cE_{\Si}(w,w')^*
\end{equation*}
where $\cE(w,w')|_{\Si}$ is a bundle of ambient CR densities along
$\Si$ and $\cE_{\Si}(w,w')^*$ is dual to the corresponding submanifold
intrinsic density bundle. We obtain (in Section
\ref{RelatingDensities-HC}, see also Section
\ref{CanonicalConnectionOnRatio}):
\begin{prop}\label{RatioAndNormalBundlesResult}
Given a transversal CR embedding $\iota:\Si^{2m+1}\rightarrow M^{2n+1}$ we have a canonical isomorphism of complex line bundles
\begin{equation*}
\cR(m+2,0) \cong \Lambda^{d}\cN
\end{equation*} 
where $d=n-m$. The complex line bundles $\cR(w,w')$ therefore carry a canonical CR invariant connection $\nabla^{\cR}$ induced by $\nabla^{\cN}$.
\end{prop}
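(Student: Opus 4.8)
The plan is to realize the claimed isomorphism as the composite of two elementary natural maps and then to transport the connection. Throughout write $d=n-m$. Proposition~\ref{NormalTractorConnectionProp} identifies the normal tractor bundle as $\cN\cong\mathcal V\otimes\cE(-1,0)|_{\Si}$, where $\mathcal V:=(T^{1,0}\Si)^{\perp}\subset T^{1,0}M|_{\Si}$ is the Levi-orthogonal complement; this is a canonical subbundle of complex rank $d$, Levi-orthogonality being independent of the scale of the contact form. Taking top exterior powers gives $\Lambda^{d}\cN\cong\Lambda^{d}\mathcal V\otimes\cE(-d,0)|_{\Si}$, so the whole statement reduces to identifying $\Lambda^{d}\mathcal V$.

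The heart of the argument is a CR analogue of the adjunction formula: a canonical isomorphism $\scrK_{\Si}\cong\scrK_{M}|_{\Si}\otimes\Lambda^{d}\mathcal V$, where $\scrK_{M}=\cE(-n-2,0)$ and $\scrK_{\Si}=\cE_{\Si}(-m-2,0)$ are the canonical bundles. I would build it by contraction: for a local section $\zeta$ of $\scrK_{M}$ along $\Si$ and a section $V_{1}\wedge\cdots\wedge V_{d}$ of $\Lambda^{d}\mathcal V$, set $\Phi(\zeta\otimes V_{1}\wedge\cdots\wedge V_{d})=\iota^{*}(\iota_{V_{d}}\cdots\iota_{V_{1}}\zeta)$. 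Since $\zeta$ annihilates $T^{0,1}M\supseteq T^{0,1}\Si$ and the $V_{i}$ lie in $T^{1,0}M$, the resulting $(m+1)$-form on $\Si$ again annihilates $T^{0,1}\Si$, so $\Phi$ takes values in $\scrK_{\Si}$; it is manifestly choice-free once the canonical (Levi-orthogonal) complement is used. To see $\Phi$ is an isomorphism, choose an admissible contact form $\theta$ (Reeb field tangent to $\Si$) and a coframe $\theta^{1},\dots,\theta^{n}$ of $(1,0)$-forms dual to a frame $Z_{1},\dots,Z_{n}$ adapted to $T^{1,0}M|_{\Si}=T^{1,0}\Si\oplus\mathcal V$; then $\iota^{*}\theta^{m+a}=0$ for $a=1,\dots,d$, and $\Phi$ sends the frame $(\theta\wedge\theta^{1}\wedge\cdots\wedge\theta^{n})\otimes(Z_{m+1}\wedge\cdots\wedge Z_{n})$ to $\pm\,\iota^{*}\theta\wedge\iota^{*}\theta^{1}\wedge\cdots\wedge\iota^{*}\theta^{m}$, a local frame of $\scrK_{\Si}$; as both are line bundles this suffices. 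In density notation this reads $\Lambda^{d}\mathcal V\cong\cE_{\Si}(-m-2,0)\otimes\cE(n+2,0)|_{\Si}$. Combining with the previous paragraph, using $n+2-d=m+2$, and noting $\cR(m+2,0)=\cE(m+2,0)|_{\Si}\otimes\cE_{\Si}(m+2,0)^{*}=\cE(m+2,0)|_{\Si}\otimes\cE_{\Si}(-m-2,0)$, we obtain $\Lambda^{d}\cN\cong\cR(m+2,0)$.

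For the connection, $\nabla^{\cN}$ is induced by the ambient tractor connection, which is compatible with the parallel nondegenerate Hermitian tractor metric; hence $\nabla^{\cN}$ preserves $h|_{\cN}$ and induces a connection $\Lambda^{d}\nabla^{\cN}$ on the line bundle $\Lambda^{d}\cN\cong\cR(m+2,0)$. Since $\cR(1,0)^{\otimes(m+2)}=\cE(m+2,0)|_{\Si}\otimes\cE_{\Si}(m+2,0)^{*}=\cR(m+2,0)$, the bundle $\cR(1,0)$ is a fixed $(m+2)$-th root of $\cR(m+2,0)$, and dividing the local connection form by $m+2$ yields a well-defined connection on $\cR(1,0)$. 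Together with its conjugate on $\overline{\cR(1,0)}$, and the fact that $\cR(1,1)=\cR(1,0)\otimes\overline{\cR(1,0)}$ is a positive real line bundle admitting well-defined complex powers, this produces a canonical CR-invariant connection $\nabla^{\cR}$ on every $\cR(w,w')$ with $w-w'\in\bbZ$, recovering $\Lambda^{d}\nabla^{\cN}$ on $\cR(m+2,0)$.

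The main obstacle is the adjunction step: pinning down the density twists exactly and verifying canonicity. This is precisely the ``subtle relationship between submanifold and ambient CR density bundles'' flagged in the abstract --- in contrast to the conformal case, a contact form alone does not trivialize the CR density bundles, and the defect is captured by $\Lambda^{d}\mathcal V$. A more computational but equivalent route, carried out in Sections~\ref{RelatingDensities-HC} and~\ref{CanonicalConnectionOnRatio}, is to work directly in the Tanaka--Webster calculus of an admissible contact form and track how local frames of $\cE(w,w')$ and $\cE_{\Si}(w,w')$ transform under change of adapted coframe; this reproduces the same isomorphism and the same connection. Once the isomorphism is in hand the connection statement is routine, modulo the elementary fact that a connection on a line bundle $L^{\otimes k}$ descends to any prescribed $k$-th root of $L$.
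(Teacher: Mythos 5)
Your proposal is correct and takes essentially the same route as the paper: the heart of both arguments is the adjunction-type isomorphism relating $\scrK|_{\Si}$, $\scrK_{\Si}$ and the top exterior power of the (co)normal bundle (Lemma \ref{CanonicalBundlesRelatedLemmaHC}), which you realise dually by contracting $\zeta$ with normal vectors rather than the paper's wedge map $\zeta_{\Si}\otimes N\mapsto\eta\wedge N$ on conormals. The subsequent weight bookkeeping ($n+2-d=m+2$), the identification of $\cN_{\al}(1,0)$ with the normal cotractor bundle via Proposition \ref{NormalTractorConnectionProp}, and the descent of $\nabla^{\cN}$ from $\Lambda^{d}\cN$ to its $(m+2)^{th}$ root $\cR(1,0)$ and thence to all $\cR(w,w')$ match Sections \ref{RelatingDensities-HC} and \ref{CanonicalConnectionOnRatio}.
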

The diagonal bundles $\cR(w,w)$ are canonically trivial and the connection
$\nabla^{\cR}$ on these is flat. All the ratio bundles $\cR(w,w')$ are therefore
normed, and $\nabla^{\cR}$ is a $\mathrm{U}(1)$-connection. Using the
pseudohermitian Gauss and Ricci equations (Sections
\ref{PseudohermitianGaussSect} and \ref{PseudohermitianRicciSect}) we
calculate the curvature of $\nabla^{\cR}$ (Section
\ref{RelatingDensities-HC}, see also Section
\ref{CanonicalConnectionOnRatioCurvature}, and in particular Lemma
\ref{SchoutenDifferenceLemma}) and see that this connection is not
flat in general when $w\neq w'$. Thus rather than identifying
corresponding density bundles we should keep the ratio bundles
$\cR(w,w')$ in the picture.

We are then able to show (from Theorem \ref{DiagonalTractorMap} combined with Definitions \ref{TwistedTractorMapDefn}, \ref{difference-tractor-defn}, \ref{tractor-sff-defn} and Section \ref{RelatingTractorsHC}):
\begin{thm}\label{TwistedTractorMapResult}
Let $\iota:\Sigma^{2m+1} \rightarrow M^{2n+1}$ be a transversal CR embedding.  Then there is a canonical, metric and filtration preserving, bundle map
\begin{equation*}
\cT^{\cR}\iota: \cT\Si  \rightarrow \cT M|_{\Si} \otimes \cR(1,0)
\end{equation*}
over $\iota$, which gives an isomorphism of $\cT\Si$ with $\cN^{\perp}\otimes\cR(1,0)$. Moreover, the submanifold and ambient tractor connections are related by the tractor Gauss formula
\begin{equation*}
\nabla_X \iota_* u \: = \: \iota_*(D_X u + \sfS(X) u) + \bL(X) \iota_* u
\end{equation*}
for all $u\in\Gamma(\cT\Si)$ and $X\in \mathfrak{X}(\Si)$, where $\sfS$ is an $\mathrm{End}(\cT\Si)$ valued 1-form on $\Si$, $D$ is the submanifold tractor connection, $\nabla$ is the (pulled back) ambient tractor connection coupled with $\nabla^{\cR}$, and the pushforward map $\iota_*$ is defined using using $\cT^{\cR}\iota$.
\end{thm}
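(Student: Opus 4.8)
The plan is to construct $\cT^\cR\iota$ explicitly relative to a choice of admissible ambient contact form, check that the construction becomes canonical once the target is twisted by $\cR(1,0)$, and then obtain the Gauss formula by differentiating a pushed-forward submanifold tractor and decomposing it according to Proposition \ref{NormalTractorsAndTractorSFFResult}. First I would fix an admissible ambient contact form $\theta$ --- so that its Reeb field is tangent to $\Si$ --- and set $\theta_\Si=\iota^*\theta$. Relative to $\theta$ the ambient standard tractor bundle $\cT M$ splits as the direct sum of its composition factors (two CR density line bundles and a copy of $T^{1,0}M$ twisted by a density), and relative to $\theta_\Si$ the bundle $\cT\Si$ splits in the same way. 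Admissibility provides a $\theta$-orthogonal splitting $T^{1,0}M|_\Si=T^{1,0}\Si\oplus\cN_0$ with $\cN_0$ the Levi-orthogonal complement. The scale-dependent map then matches up corresponding composition factors, using $\d\iota$ on the middle $T^{1,0}\Si$-slot and the tautological density identifications on the outer slots --- with a factor of $\cR(1,0)$ inserted in the target precisely to absorb the failure of the ambient and submanifold CR density bundles to agree along $\Si$ (recall $\cE(1,0)|_\Si\cong\cE_\Si(1,0)\otimes\cR(1,0)$, which is the definition of $\cR(1,0)$). This construction, together with the verification that the resulting \emph{twisted} map of Definition \ref{TwistedTractorMapDefn} does not depend on the choice of admissible $\theta$, is the content of Theorem \ref{DiagonalTractorMap}: one checks it against the explicit transformation rules for the slots of $\cT M$ and $\cT\Si$ under change of contact form coming from the Gover--Graham construction. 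Canonicity is really the crux, and it works exactly because the discrepancy between $\cE(1,0)|_\Si$ and $\cE_\Si(1,0)$ is the line bundle $\cR(1,0)$, whose cocycle matches the difference between the ambient and submanifold Schouten-type correction terms (cf.\ Lemma \ref{SchoutenDifferenceLemma}).

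Filtration preservation is immediate from the construction. For the metric, I would evaluate the parallel ambient tractor Hermitian form on the image in the scale $\theta$ --- it pairs the two outer slots against one another and the middle slot against itself via the Levi form. Because $\d\iota$ is a CR map and $\theta_\Si=\iota^*\theta$, the Levi form of $\Si$ is the restriction of that of $M$ to $T^{1,0}\Si$, and the density identifications were chosen compatibly; hence the pullback of the ambient Hermitian form along $\cT^\cR\iota$ is the submanifold tractor metric twisted by the induced norm on $\cR(1,0)$, so $\cT^\cR\iota$ is isometric, in particular injective with nondegenerate image of the correct rank $m+2$. To identify the image with $\cN^\perp\otimes\cR(1,0)$ it then suffices to see that it is orthogonal to $\cN\otimes\cR(1,0)$: in the scale $\theta$, by the differential splitting formula of Section \ref{sec:NormalTrac}, $\cN$ is the graph over $\cN_0\subset T^{1,0}M|_\Si$ (sitting in the middle slot) of corrections that land in slots pairing trivially against the image of $\cT^\cR\iota$, and $\cN_0$ is Levi-orthogonal to $T^{1,0}\Si$; orthogonality follows, and equality then holds by the rank count.

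For the tractor Gauss formula, take $u\in\Gamma(\cT\Si)$ and $X\in\mathfrak{X}(\Si)$, put $\iota_*u:=\cT^\cR\iota(u)\in\Gamma(\cT M|_\Si\otimes\cR(1,0))$, and apply the pulled-back ambient tractor connection coupled with $\nabla^\cR$, which I call $\nabla$. Decomposing $\nabla_X\iota_*u$ with respect to $\cT M|_\Si\otimes\cR(1,0)=(\cN^\perp\oplus\cN)\otimes\cR(1,0)$ as in Proposition \ref{NormalTractorsAndTractorSFFResult}, the $\cN$-component is $\bL(X)\iota_*u$, by the definition of the tractor second fundamental form (Definition \ref{tractor-sff-defn}). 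For the $\cN^\perp$-component I would compute $\nabla_X\iota_*u$ slot-by-slot in the scale $\theta$ from the explicit ambient tractor connection formula of \cite{GoverGraham-CRInvtPowers}, compare with $\iota_*(D_Xu)$ computed in the scale $\theta_\Si$ from the submanifold tractor connection, and observe that the discrepancy is $\cT\Si$-linear in $u$ --- it involves only the differences along $\Si$ between the ambient and submanifold Tanaka--Webster connections, Schouten tensors, and torsions. That $\End(\cT\Si)$-valued $1$-form is $\sfS$, the difference tractor of Definition \ref{difference-tractor-defn}, expressed via the pseudohermitian Gauss and Ricci equations of Sections \ref{PseudohermitianGaussSect} and \ref{PseudohermitianRicciSect}. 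Reassembling the two components yields $\nabla_X\iota_*u=\iota_*(D_Xu+\sfS(X)u)+\bL(X)\iota_*u$.

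The main obstacle is twofold but of one nature: proving the scale-independence of the twisted map in the first step, and carrying out the slot-by-slot comparison that isolates $\sfS$ in the last step. Both rest on the same delicate point --- the ambient and submanifold CR density bundles are not canonically identified along $\Si$ --- and are resolved only after inserting $\cR(1,0)$ and using the curvature of $\nabla^\cR$ (Lemma \ref{SchoutenDifferenceLemma}), which is precisely what makes the correction terms in the transformation laws and in the connection formulas match up.
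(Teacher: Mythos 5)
Your proposal is correct and follows essentially the same route as the paper: the map is built slot-by-slot in an admissible scale with the density mismatch absorbed into $\cR(1,0)$, canonicity is checked against the tractor transformation laws using admissibility ($\Ups_{\al}=\Pi_{\al}^{\mu}\Ups_{\mu}$ and $\nabla_0\Ups=D_0\Ups$), and the Gauss formula then holds by construction once $\sfS$ is identified as the difference between the induced connection $\cnabla$ and $D$ and $\bL$ as the normal component. The only slight misattribution is your appeal to Lemma \ref{SchoutenDifferenceLemma} for canonicity of the map --- that lemma governs the curvature of $\nabla^{\cR}$ (i.e.\ why the twist cannot be removed), whereas canonicity itself rests only on Lemma \ref{UpsilonTangentialLemma} and the comparison of transformation laws in Theorem \ref{DiagonalTractorMap}.
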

By Proposition \ref{NormalTractorsAndTractorSFFResult} the tractor Gauss formula implies
\begin{equation*}
\nabla^{\cN^{\perp}}_X \iota_* u \: = \: \iota_*(D_X u + \sfS(X) u)
\end{equation*}
for all $u\in\Gamma(\cT\Si)$ and $X\in \mathfrak{X}(\Si)$, where
$\nabla^{\cN^{\perp}}$ is coupled with $\nabla^{\cR}$. The
\emph{difference tractor} $\sfS$ measures the failure of the ambient
tractor (or normal Cartan) connection to induce the submanifold
one. The components of $\sfS$ with respect to an admissible ambient
contact form are given in \eqref{hDifferenceTractor},
\eqref{aDifferenceTractor}, and \eqref{0DifferenceTractor} (in Section
\ref{RelatingTractorsHC} it is noted that these formulae hold in
arbitrary codimension and signature). The principal component of
$\sfS$ is the difference between ambient and submanifold
pseudohermitian Schouten tensors $P_{\mu\nub}-p_{\mu\nub}$ for a
pair of \emph{compatible} contact forms (Definition
\ref{CompatibleScalesDef}); using the pseudohermitian Gauss equation
(Section \ref{PseudohermitianGaussSect}) one can give a manifestly
invariant expression for this tensor involving the ambient Chern-Moser
tensor and the CR second fundamental form (see Lemma
\ref{SchoutenDifferenceLemma} for the case $m=n-1$).

\subsubsection{Constructing invariants}

In Section \ref{InvariantsOfSubmanifolds} we develop both the
theoretical and practical aspects of constructing invariants of CR
embeddings. We deal with the geometric part of the invariant theory
problem, using the results stated above. In particular, in Section
\ref{JetsOfStructure} we demonstrate that the tractor second
fundamental form $\bL$, the difference tractor $\sfS$, and the
submanifold and ambient tractor (or Cartan) curvatures are the basic
invariants of the CR embedding, in that they determine the higher jets
of the structure (Proposition \ref{BasicInvariantsProp}). By applying
natural differential operators to these objects and making suitable
contractions, one can start to proliferate local invariants of a CR
embedding. In practice a more refined construction is useful. The
algebraic problem of showing that one can make all invariants of a CR
embedding, suitably polynomial in the jets of the structure, is beyond
the scope of this article; despite much progress on the analogous
problems for CR or conformal manifolds, these are still far from being
completely solved (see, e.g., \cite{BaileyEastwoodGraham,Hirachi}). We
therefore turn in Section \ref{PracticalConstructions} to considering
practical constructions of invariants. In Sections
\ref{AlternativeTractorExp} and \ref{SubmanifoldInvtOps} we develop a
richer calculus of invariants than that presented for theoretical
purposes in Sections \ref{JetsOfStructure} and \ref{PackagingJets}. In
Section \ref{ComputingInvariants} we illustrate this calculus with an
example of an invariant section $\mathcal{I}$ of $\cE_{\Si}(-2,-2)$
given by a manifestly invariant tractor expression
\eqref{ExampleInvariant} involving $\bL\otimes \overline{\bL}$; we
show how to calculate $\mathcal{I}$ in terms of the pseudohermitian
calculus of a pair of compatible contact forms, yielding the
expression \eqref{ExampleInvariantComputed}.

\subsubsection{A CR Bonnet theorem}

With the setup of Proposition \ref{NormalTractorsAndTractorSFFResult} and Theorem \ref{TwistedTractorMapResult} established it is straightforward to give CR tractor analogues of the Gauss, Codazzi and Ricci equations from Riemannian submanifold theory. These are given in Section \ref{TractorGCR}. Just as in the Riemannian theory, if we specialise to the ambient flat case the (tractor) Gauss, Codazzi and Ricci equations give the integrability conditions for a Bonnet theorem or fundamental theorem of embeddings. We have (Theorem \ref{BonnetTheorem}):
\begin{thm}
Let $(\Sigma^{2m+1},H,J)$ be a signature $(p,q)$ CR manifold and suppose we have a complex rank $d$ vector bundle $\mathcal{N}$ on $\Sigma$ equipped with a signature $(p',q')$ Hermitian bundle metric $h^{\mathcal{N}}$ and metric connection $\nabla^{\mathcal{N}}$. Fix an $(m+2)^{th}$ root $\mathcal{R}$ of $\Lambda^d\mathcal{N}$, and let $\nabla^{\mathcal{R}}$ denote the connection induced by $\nabla^{\mathcal{N}}$. Suppose we have a $\mathcal{N}\otimes\mathcal{T}^*\Sigma\otimes\mathcal{R}$ valued 1-form $\bL$ which annihilates the canonical tractor of $\Si$ and an $\mathcal{A}^0\Sigma$ valued 1-form $\mathsf{S}$ on $\Sigma$ such that the connection
\begin{equation*}
\nabla := \left(\begin{array}{cc} 
D\otimes\nabla^{\mathcal{R}}+ \mathsf{S}  &  -\mathbb{L}^{\dagger}   \\
 \mathbb{L}  &   \nabla^{\mathcal{N}}  \\ 
\end{array}\right) 
\qquad \mathrm{on} \quad \begin{array}{c} 
\mathcal{T}\Sigma \otimes \mathcal{R}^*  \\
\oplus \\
\mathcal{N}\\
\end{array}
\end{equation*}
is flat (where $D$ is the submanifold tractor connection), then (locally) there exists a transversal CR embedding of $\Sigma$ into the model $(p+p',q+q')$ hyperquadric $\mathcal{H}$, unique up to automorphisms of the target, realising the specified extrinsic data as the induced data.
\end{thm}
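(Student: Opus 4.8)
The plan is to carry out, at the level of tractors, the classical fundamental-theorem-of-submanifolds argument, building on the results established earlier in the paper. Write $n=m+d$ and form the complex rank $n+2$ bundle $\mathcal{V}:=(\cT\Si\otimes\cR^*)\oplus\cN$ over $\Si$. The first task is to equip $\mathcal{V}$ with all of the structure of a CR standard tractor bundle for a signature $(p+p',q+q')$ CR manifold. It carries the Hermitian metric built from the tractor metric of $\Si$, twisted by the norm on $\cR^*$ (the $\mathrm{U}(1)$-structure for which $\nabla^{\cR}$ is unitary), together with $h^{\cN}$, and this has signature $(p+p'+1,q+q'+1)$; the canonical tractor line of $\Si$ tensored with $\cR^*$ is an isotropic line subbundle $\mathcal{V}^1\subset\cT\Si\otimes\cR^*\subset\mathcal{V}$; and, because $\cR^{\otimes(m+2)}\cong\La^{d}\cN$, a short computation gives a canonical identification $\La^{n+2}\mathcal{V}\cong\La^{m+2}\cT\Si$, so that $\mathcal{V}$ inherits the parallel complex volume form of $\cT\Si$. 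The hypotheses that $\mathsf{S}$ is $\cA^0\Si$-valued (hence filtration preserving and trace free) and that $h^{\cN},\nabla^{\cN}$ are compatible then ensure that the given connection $\nabla$ on $\mathcal{V}$ is metric and preserves this complex volume form; by hypothesis $\nabla$ is moreover flat.

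Working on a simply connected neighbourhood $U$ of any point (whence the qualifier ``locally''), flatness produces a $\nabla$-parallel trivialisation $\mathcal{V}|_U\cong U\times\mathbb{V}$ in which $\nabla$ becomes the trivial connection, the Hermitian metric becomes the standard form on $\mathbb{V}=\bbC^{p+p'+1,q+q'+1}$, and the volume form becomes the standard one. Since $\mathcal{V}^1$ is an isotropic line subbundle, its image under this trivialisation defines a smooth map $\iota\colon U\to\mathbb{P}(\mathbb{V})$ whose image lies in the projectivised null cone, that is, in the model hyperquadric $\cH$ of signature $(p+p',q+q')$. The crux is the differential of $\iota$: for a local frame $s$ of $\mathcal{V}^1$, $d\iota_x(X)$ is the class of $\nabla_X s$ in $\mathcal{V}/\mathcal{V}^1$; since $\bL$ annihilates the canonical tractor of $\Si$ the $\cN$-component of $\nabla_X s$ vanishes, and since $\mathsf{S}$ preserves the filtration the $\mathsf{S}$-term lies in $\mathcal{V}^1$, so $d\iota_x(X)$ is controlled entirely by the $\cR^*$-twisted derivative of the canonical tractor of $\Si$ under its own tractor connection. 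Invoking the explicit form of the CR standard tractor connection from the construction of \cite{GoverGraham-CRInvtPowers}, the middle-slot component of that derivative is the CR soldering form of $\Si$ (injective on $H\Si$ and $J$-linear there) while the top-slot component detects the Reeb direction; hence $\iota$ is an immersion, carries $H\Si$ into the contact distribution of $\cH$ complex linearly, and is transversal. Shrinking $U$, we obtain a transversal CR embedding $\iota\colon U\to\cH$ inducing the given CR structure on $\Si$.

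It then remains to match the extrinsic data. As $\cH$ is flat, its standard tractor bundle is $\cH\times\mathbb{V}$ with the trivial connection, so the parallel trivialisation above yields a metric, filtration- and volume-preserving isomorphism $\iota^*\cT\cH\cong\mathcal{V}$. Under it, by Proposition \ref{NormalTractorsAndTractorSFFResult}, the normal tractor bundle of $\iota$, its orthogonal complement, and the block decomposition of $\iota^*\nabla$ correspond respectively to $\cN$, $\cT\Si\otimes\cR^*$, and the matrix defining $\nabla$: reading off the blocks identifies the normal bundle metric and connection with $h^{\cN}$ and $\nabla^{\cN}$, the tractor second fundamental form with $\bL$, and, via the tractor Gauss formula of Theorem \ref{TwistedTractorMapResult}, the difference tractor with $\mathsf{S}$; Proposition \ref{RatioAndNormalBundlesResult} shows that $\cR$ is carried to the ratio density bundle $\cR(1,0)$ of the embedding. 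For uniqueness, two such embeddings furnish $\nabla$-parallel isomorphisms of the pulled back tractor bundles intertwining all of this structure; in the parallel frames such an isomorphism is a fixed element of $\mathrm{U}(p+p'+1,q+q'+1)$, and preservation of the complex volume form places it, up to the finite centre, in $\SU(p+p'+1,q+q'+1)$, so it descends to an element of $\mathrm{Aut}(\cH)$ carrying one embedding to the other.

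I expect the main obstacle to be the density and ratio-bundle bookkeeping. Unlike in the conformal Bonnet theorem, here one may not identify submanifold and ambient CR density bundles, so the auxiliary $(m+2)^{th}$ root $\cR$ must be threaded consistently through the formation of $\mathcal{V}$, the identification $\La^{n+2}\mathcal{V}\cong\La^{m+2}\cT\Si$, the verification that $\nabla$ is of $\SU$-type, and the final comparison with the $\cR(1,0)$ produced by Proposition \ref{RatioAndNormalBundlesResult}; keeping the norms and the induced connection $\nabla^{\cR}$ aligned is where the CR case genuinely departs from its conformal analogue. A secondary delicate point is the slot-by-slot analysis of the submanifold tractor connection applied to the canonical tractor of $\Si$, which is exactly what guarantees that $\iota$ is a transversal CR immersion realising the prescribed induced structure rather than merely a smooth immersion into $\cH$.
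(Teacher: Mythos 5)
Your proof is correct and follows essentially the same route as the paper: equip $(\cT\Si\otimes\cR^*)\oplus\cN$ with the induced Hermitian metric, use flatness to trivialise it over a simply connected neighbourhood, map the isotropic line $\cE_\Si(-1,0)\otimes\cR^*$ into the projectivised null cone, and verify via the soldering identities for the submanifold tractor connection (together with $\bL Z=0$ and $\sfS$ being $\cA^0\Si$-valued) that the result is a transversal CR embedding inducing the prescribed data. Your explicit check that $\La^{n+2}\mathcal{V}\cong\La^{m+2}\cT\Si$ carries a parallel volume form, pinning the trivialisation down to $\SU$ rather than $\mathrm{U}$, is a small point the paper leaves implicit.
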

The bundle $\cA^0\Si$ here is the bundle of skew-Hermitian endomorphisms of $\cT\Si$ which also preserve the natural filtration of $\cT\Si$, see Section \ref{AdjointTractorBundle}.

\subsection{Geometric Intuition}
In the case where $M$ is the standard CR sphere $\mathbb{S}^{2n+1}$ we can give a clear geometric interpretation of the normal tractor bundle $\cN$ of a CR embedded submanifold, or rather of its orthogonal complement $\cN^{\perp}$. In the conformal case a similar characterisation of the normal tractor bundle may be given via the notion of a central sphere congruence (see \cite{BurstallCalderbank}).

One may explicitly realise the standard tractor bundle of $\mathbb{S}^{2n+1}$ by considering the sphere as the space of isotropic lines in the projectivisation of $\mathbb{C}^{n+1,1}$; if $\ell$ is a complex isotropic line then a standard tractor at the point $\ell\in\mathbb{S}^{2n+1}$ is a constant vector field along $\ell$ in the ambient space $\mathbb{C}^{n+1,1}$. The tractor parallel transport on $\mathbb{S}^{2n+1}$ then comes from the affine structure of $\mathbb{C}^{n+1,1}$ and the standard tractor bundle is flat. 
\begin{figure}[ht]
\begin{center}
\definecolor{dblue}{RGB}{17,46,158}
	\begin{tikzpicture}[xscale=0.5,yscale=0.5]
	\begin{scope}[rotate=-120]
			\draw  (0,0) circle (3.5);
			\draw[dblue,dashed,xscale=1,yscale=1,domain=0:3.141,smooth,variable=\t] plot ({0.6*sin(\t r)-0.8},{3.4*cos(\t r)});
			\draw[dblue,xscale=1,yscale=1,domain=3.141:6.283,smooth,variable=\t] plot ({0.6*sin(\t r)-0.8},{3.4*cos(\t r)});
			\draw[xscale=1,yscale=1,domain=-1.9:3,smooth,variable=\t] plot ({-1.6*sin((\t) r)+0.26},{\t-0.3});
			\draw  [fill] (-1.34,1.29) ellipse (0.05 and 0.05);
		\end{scope}
		\node at (0.53,-4.15) {$\mathbb{P}\mathcal{C} \cong \mathbb{S}^{2n+1}$} ;
		\node at (-2.12,-0.2) {$\Sigma$} ;
		\node at (2.05,1) {$\ell$} ;
		\node at (-2.7,4.5) {$\mathbb{CP}^{n+1}$} ;
		\node at (-0.1,2.3) {$\mathbb{S}_{\ell}$} ;
		\draw  (13,2.5) ellipse (4 and 1);
		\draw (9.05,2.34) -- (13,-3.5) -- (16.96,2.36);
		\node at (8.5,1.81) {$\mathcal{C}$} ;
		\begin{scope}[cm={1 ,-0.45 ,0.15,1,(13.3 cm,0.2 cm)}]
			\draw[dblue]  (-2,4.1) rectangle (2,-3.6);
		\end{scope}
		\node at (15.85,-2.85) {$\mathcal{N}_{\ell}^{\perp}$} ;
		\draw (15.947,2.3) -- (13,-3.5);
		\node at (14,-0.4) {$\ell$} ;
		\node at (8.5,4.5) {$\mathbb{C}^{n+1,1}$} ;
		\draw[-latex, thick] (8.5,-0.5)--(5,-0.5);
		\node at (6.7,0) {$\mathbb{P}$} ;
	\end{tikzpicture}
	\label{NormalBundlePic}
	\caption{The orthogonal complement $\cN^{\perp}$ of the normal tractor bundle when $M=\mathbb{S}^{2n+1}$. The subspace $\cN_{\ell}^{\perp}$ intersects the cone $\mathcal{C}$ of isotropic lines in $\mathbb{C}^{n+1,1}$ in a subcone corresponding to the subsphere $\mathbb{S}_{\ell}$ tangent to $\Si$ at $\ell$.}%
\end{center}
\end{figure}
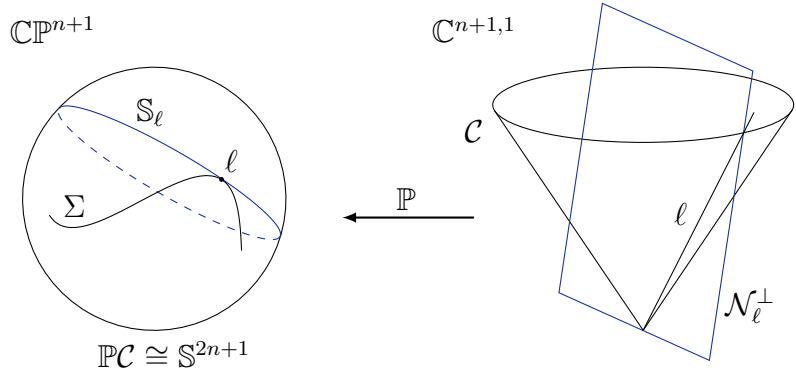
Given a point $x$ in our CR embedded submanifold $\Si^{2m+1}\subset\mathbb{S}^{2n+1}$ there is a unique totally chain CR subsphere $\mathbb{S}_{x}$ of dimension $2m+1$ which osculates $\Si$ to first order at $x$. If we view $\mathbb{S}^{2n+1}$ as the unit sphere in $\bbC^{n+1}$ then $\mathbb{S}_{x}$ is the intersection of $\mathbb{S}^{2n+1}$ with the $(m+1)$-dimensional complex affine subspace of $\bbC^{n+1}$ generated by the tangent space to $\Si$ at $x$. Viewing $\mathbb{S}^{2n+1}$ instead as a projective hyperquadric the sphere $\mathbb{S}_{\ell}$ with $x=\ell$ is the image under the projectivisation map of the intersection of the cone $\mathcal{C}$ of isotropic lines in $\mathbb{C}^{n+1,1}$ with a nondegenerate complex $(m+2)$-dimensional subspace $\cN_{\ell}^{\perp}$. 

In this case the rank $d=n-m$ normal tractor bundle $\cN$ may be viewed as giving a $\mathrm{Gr}(d,\mathbb{C}^{n+1,1})$ valued CR analogue of the Gauss map of an embedded Riemannian submanifold in Euclidean space.

\subsection{Structure of the Article}

We aim to produce a calculus of invariants for CR embeddings which is
both simple and practical, and yields a machinery for constructing
local CR invariants with formulae in terms the psuedohermitian (Tanaka-Webster)
calculus. We thus emphasise heavily the connection between the CR
tractor calculus and the pseudohermitian calculus of a fixed contact
form. Although our final results have a simple interpretation in terms
of tractor calculus, they are often established though explicit
calculation using pseudohermitian calculus. For this reason we have
devoted the first part of the article to giving a detailed exposition
of the Tanaka-Webster calculus associated to a choice of
pseudohermitian contact form (Section \ref{Tanaka-Webster-Calc}) and
an explicit description of the CR tractor calculus in terms of this
pseudohermitian calculus (Section \ref{Tractor-Calc}). Although the
results of Section \ref{Tanaka-Webster-Calc} may largely be found
elsewhere in the literature, proofs are often merely indicated;
collecting these results, and establishing them by proof, provides the
essential reference for verifying the CR invariance of our later
constructions. These results are immediately applied in Section
\ref{Tractor-Calc} where we present the CR tractor calculus, using the
explicit description of the standard tractor bundle and normal
connection given in \cite{GoverGraham-CRInvtPowers}. For the purpose
of invariant theory we introduce some CR analogues of parts of the
conformal tractor calculus not yet developed in the CR case.

In Section \ref{Submanifolds-and-Contact-Forms} we discuss the pseudohermitian geometry of CR embeddings, working in particular with pairs of compatible ambient and submanifold contact forms (see Definition \ref{CompatibleScalesDef}). We also discuss in this section the relationship between the submanifold and ambient CR density bundles. For simplicity we initially treat the minimal codimension strictly pseudoconvex case, generalising to nondegenerate transversal CR embeddings of arbitrary codimension between CR manifolds of any signature in Section \ref{HigherCodimension}.

In Section \ref{Submanifolds-and-Tractors} we develop a manifestly CR invariant approach to studying CR submanifolds using tractor calculus. Again we restrict initially to the minimal codimension strictly pseudoconvex case, generalising in Section \ref{HigherCodimension}. In Section \ref{InvariantsOfSubmanifolds} we apply this calculus to the basic geometric problems of invariant theory for CR embeddings, addressing practical constructions of invariants in Section \ref{PracticalConstructions}. In Section \ref{CRBonnet} we prove a CR analogue of the Bonnet theorem (Theorem \ref{BonnetTheorem}).

\section{Weighted Tanaka-Webster Calculus} \label{Tanaka-Webster-Calc}

\subsection{CR Geometry}

A CR manifold of hypersurface type is a triple $(M^{2n+1},H,J)$ where
$M$ is a real $(2n+1)$-dimensional manifold, $H$ is a corank one
distribution in $TM$, and $J$ is an almost complex structure on $H$
satisfying the integrability condition
\begin{gather*}
[JX,Y]+[X,JY] \in \Gamma(H)\\*
[X,Y]-[JX,JY]+J\left([JX,Y]+[X,JY]\right)=0
\end{gather*}
for any two vector fields $X,Y\in\Gamma(H)$. The almost complex structure
$J$ extends by complex linearity to act on $H\otimes\mathbb{C}$,
and since $J^{2}=-\id$ the eigenvalues of $J$ must be $\pm i$.
It is easy to see that $J$ acts by $i$ on the bundle
\begin{equation*}
T^{1,0}M:=\left\{ X-iJX\::\: X\in H\right\} \subseteq H\otimes\mathbb{C}
\end{equation*}
and by $-i$ on the bundle $T^{0,1}M=\overline{T^{1,0}M}$.
Moreover one has that $T^{1,0}M\cap T^{0,1}M= \{ 0 \}$ and 
\begin{equation*}
H\otimes\mathbb{C}=T^{1,0}M\oplus T^{0,1}M.
\end{equation*}
From the integrability condition imposed on $J$ it follows that $T^{1,0}M$
is \emph{formally integrable}, that is
\begin{equation*}
[T^{1,0}M,T^{1,0}M]\subseteq T^{1,0}M
\end{equation*}
where here we have used the same notation for the bundle $T^{1,0}M$
and its space of sections.

To simplify our discussion we assume that $M$ is orientable. Since
$H$ carries an almost complex structure it must be an orientable
vector bundle, thus the annihilator line bundle $H^{\perp}\subset T^*M$ must
also be orientable (so there exists a global section of $H^{\perp}$
which is nowhere zero). We say that the CR manifold of hypersurface
type $(M^{2n+1},H,J)$ is \emph{nondegenerate} if $H$ is a contact
distribution for $M$, that is, for any global section $\theta$ of
$H^{\perp}$ which is nowhere zero the $(2n+1)$-form $\theta\wedge \d\theta^{n}$
is nowhere zero (this is equivalent to the antisymmetric bilinear form $\d\theta$ being nondegenerate at each point when restricted to elements of $H$). If $H$ is a contact distribution then a global section $\theta$
of $H^{\perp}$ which is nowhere zero is called a contact form. We
assume that the line bundle $H^{\perp}$ has a fixed orientation so that we can
talk about positive and negative elements and sections. We also assume
that $(M^{2n+1},H,J)$ is nondegenerate. 

By the integrability condition on $J$ the bilinear form $\d\theta(\cdot,J\cdot)$ on $H$ is symmetric. The signature $(p,q)$ of this nondegenerate bilinear form on $H$ does not depend on the choice of positive contact form and is called the \emph{signature} of $(M,H,J)$.

Given a choice of contact form $\theta$ for $(M,H,J)$ we refer to
the quadruple $(M,H,J,\theta)$ as a \emph{(nondegenerate) pseudohermitian structure}.
Clearly for any two positive contact forms $\theta$
and $\thetah$ there is a smooth function $\Ups\in C^{\infty}(M)$
such that $\thetah=e^{\Ups}\theta$. One can therefore think of the
CR manifold $(M,H,J)$ as an equivalence class of pseudohermitian
structures much as we may think of a conformal manifold as an equivalence
class of Riemannian structures. In order to make calculations in CR
geometry it is often convenient to fix a choice of contact form $\theta$,
calculate, and then observe how things change if we rescale $\theta$.
We will take this approach in the following, working primarily in
terms of the pseudohermitian calculus associated with the Tanaka-Webster
connection of the chosen contact form $\theta$. In order to make real progress however we will need to make use of the CR invariant tractor calculus \cite{GoverGraham-CRInvtPowers} as a tool to produce CR invariants and invariant operators which can be expressed in terms of the Tanaka-Webster calculus.

\subsection{CR Densities}

On a CR manifold $(M^{2n+1},H,J)$ we denote the
annihilator subbundle of $T^{1,0}M$ by $\La^{0,1}M\subseteq\bbC T^{*}M$
(where by $\bbC T^{*}M$ we mean the complexified cotangent bundle).
Similarly we denote the annihilator subbundle of $T^{0,1}M$ by $\La^{1,0}M\subseteq\bbC T^{*}M$. The bundle $\La^{1,0}M$ has complex rank $n+1$ and hence $\scrK=\La^{n+1}(\La^{1,0}M)$
is a complex line bundle on $M$. The line bundle $\scrK$ is simply the bundle of $(n+1,0)$-forms on $M$, that is
\begin{equation*}
\scrK=\Lambda^{n+1,0}M:=\left\{ \omega\in\Lambda^{n+1}M\,:\, V\hook\omega=0\;\mbox{for all}\; V\in T^{1,0}M\right\},
\end{equation*}
and is known as the \emph{canonical bundle}. 
We assume that $\scrK$ admits an $(n+2)^{\mathrm{th}}$ root $\cE(-1,0)$
and we define $\cE(1,0)$ to be $\cE(-1,0)^{*}$. We then define the
\emph{CR density bundles} $\cE(w,w')$ to be $\cE(1,0)^{w}\otimes\overline{\cE(1,0)}^{w'}$
where $w, w' \in \bbC$ with $w-w'\in\bbZ$.
\begin{rem} 
The assumption that $\scrK$ admits an $(n+2)^{\mathrm{th}}$ root is equivalent to saying that the Chern class $c_1(\scrK)$ is divisible by $n+2$ in $H^2(M,\mathbb{Z})$. Note that if $M$ is a real hypersurface in $\bbC^{n+1}$ then $\scrK$ is trivial and therefore admits such a root.
\end{rem}
Note that the bundles $\cE(w,w')$ and $\cE(w,w')$ are complex conjugates of one another. In particular, each diagonal density bundle $\cE(w,w)$ is fixed under conjugation. We denote by $\cE(w,w)_{\mathbb{R}}$ the real line subbundle of $\cE(w,w)$ consisting of elements fixed by conjugation.

\subsection{Abstract Index Notation}

We freely use abstract index notation for the holomorphic tangent bundle
$T^{1,0}M$, denoting it by $\cE^{\al}$, allowing the use of lower case Greek abstract
indices from the start of the alphabet: $\al$, $\be$, $\ga$, $\de$, $\ep$, $\al'$, $\be'$,
and so on. Similarly we use the abstract index notation $\cE^{\alb}$
for $T^{0,1}M$. We denote the dual bundle of $\cE^{\al}$ by $\cE_{\al}$
and the dual bundle of $\cE^{\alb}$ by $\cE_{\alb}$. Tensor powers
of these bundles are denoted by attaching appropriate indices to the
$\cE$, so, for example, we denote $\cE^{\al}\otimes\cE_{\beta}$ by $\cE^{\al}{_{\be}}$
and $\cE_{\al}\otimes\cE_{\beb}\otimes\cE_{\ga}$ by $\cE_{\al\beb\ga}$.
We attach abstract indices to the elements or sections of our bundles
to show which bundle they belong to, so a section $V$ of $T^{1,0}M$
will be written as $V^{\al}$ and a section $\varpi$ of $(T^{1,0}M)^{*}\otimes T^{0,1}M$
will be denoted by $\varpi_{\al}{^{\beb}}$. The tensor product of
$V^{\al}$ and $\varpi_{\ga}{^{\beb}}$ is written as $V^{\al}\varpi_{\ga}{^{\beb}}$, and repeated indices denote contraction, so $\varpi(V)$ is written as $V^{\al}\varpi_{\al}{^{\beb}}$. Skew-symmetrisation over a collection of indices is indicated by enclosing them in square brackets. Correspondingly we denote the $k^{th}$ exterior power of $\cE_{\al}$ by $\cE_{[\al_1\cdots\al_k]}$. We indicate a tensor product of some (unweighted) complex vector bundle $\mathcal{V}\rightarrow M$ with the density bundle $\cE(w,w')$ by appending $(w,w')$, i.e. $\mathcal{V}(w,w')=\mathcal{V}\otimes\cE(w,w')$. 

We may conjugate elements (or sections) of $\mathcal{E}^{\alpha}$ to get
elements (or sections) of $\mathcal{E}^{\bar{\alpha}}$: we write
\begin{equation*}
V^{\bar{\alpha}}:=\overline{V^{\alpha}}
\end{equation*}
to say that $V^{\bar{\alpha}}$ is the conjugate of $V^{\alpha}$.
This extends in the obvious way to (weighted) tensor product bundles;
note that the complex conjugate bundle of $\cE_{\al}{^{\beb}}(w,w')$
is $\cE_{\alb}{^{\be}}(w',w)$.

We will occasionally use abstract index notation for the tangent bundle, denoting it by $\cE^a$ and allowing lower case Latin abstract indices from the start of the alphabet.

\subsection{The Reeb Vector Field}

Given a choice of contact form $\theta$ for $(M,H,J)$
there is a unique vector field $T\in\fX(M)$ determined by the conditions
that $\theta(T)=1$ and $T\hook\d\theta=0$; this $T$ is called the
\emph{Reeb vector field} of $\theta$. The Reeb vector field gives
us a direct sum decomposition of the tangent bundle
\begin{equation*}
TM=H\oplus\mathbb{R}T
\end{equation*}
and of the complexified tangent bundle
\begin{equation}\label{CTM-Decomp}
\bbC TM=T^{1,0}M\oplus T^{0,1}M\oplus\bbC T,
\end{equation}
where $\mathbb{R}T$ (resp. $\mathbb{C}T$) denotes the real (resp. complex) line bundle spanned by $T$. Dually, given $\theta$ we have
\begin{equation}\label{CT*M-Decomp}
\bbC T^{*}M\cong(T^{1,0}M)^{*}\oplus(T^{0,1}M)^{*}\oplus\bbC\theta.
\end{equation}

\subsection{Densities and Scales}\label{DensitiesAndScales}

\begin{defn}[\cite{Lee-FeffermanMetric}]
Given a contact form $\theta$ for $H$ we say that a section $\zeta$
of $\mathscr{K}$ is \emph{volume normalised} if it satisfies 
\begin{equation}
\theta\wedge(\d\theta)^{n}=i^{n^{2}}n!(-1)^{q}\theta\wedge(T\hook\zeta)\wedge(T\hook\bar{\zeta}).
\end{equation}
\end{defn}
 Given $\zeta$
volume normalised for $\theta$ clearly $\zeta'=e^{i\varphi}\zeta$
is also volume normalised for $\theta$ for any real valued smooth
function $\varphi$ on $M$, so that such a $\zeta$ is determined
only up to phase at each point. Note however that $\zeta\otimes\bar{\zeta}$
does not depend on the choice of volume normalised $\zeta$. Let
us fix a real $(n+2)^{th}$ root $\vsig$ of $\zeta\otimes\bar{\zeta}$
in $\mathcal{E}(-1,-1)$. If $\hat{\theta}=f\theta$ and $\hat{\zeta}$ is volume normalised for $\hat{\theta}$
then $f\vsig$ is an $(n+2)^{th}$ root of $\hat{\zeta}\otimes\bar{\hat{\zeta}}$. The map taking $\theta$ to $\vsig$ and $f\theta$ to $f\vsig$ determines an isomorphism from $H^{\perp}$ to $\cE(-1,-1)_{\mathbb{R}}$. Fixing this isomorphism simply corresponds to fixing an orientation of $\cE(-1,-1)_{\mathbb{R}}$, and we henceforth assume this is fixed. The isomorphism 
\begin{equation}\label{RealDensityIso}
H^{\perp}\cong\cE(-1,-1)_{\mathbb{R}}
\end{equation}
defines a tautological $\cE(1,1)_{\mathbb{R}}$ valued $1$-form:
\begin{defn}
The \emph{CR contact form} is the $\cE(1,1)_{\mathbb{R}}$-valued $1$-form $\btheta$ which is given by $\vsig^{-1}\theta$ where $\theta$ is any pseudohermitian contact form and $\vsig$ is the corresponding positive section of $\cE(-1,-1)_{\mathbb{R}}$.
\end{defn}

\subsection{The Levi Form\label{sub:The-Levi-Form}}

The Levi form of a pseudohermitian contact form $\theta$ is the
Hermitian form $h:T^{1,0}M\otimes T^{0,1}M\rarrow\bbC$ defined by
\begin{equation*}
(U,\overline{V})\mapsto-2i\d\theta(U,\overline{V})=2i\theta([U,\overline{V}])
\end{equation*}
for $U,V\in\Gamma(T^{1,0}M)$. The Levi form of $\theta$ may be thought
of as a section of $\cE_{\al\beb}$, which we write as $h_{\al\beb}$;
there is also an inverse of the Levi form $h^{\al\beb}$ determined
by the condition that $h^{\al\beb}h_{\ga\beb}=\delta^{\al}{_{\ga}}$ (where $\delta^{\al}{_{\ga}}$
is the identity endomorphism of $\cE^{\al}$). Note that if $\theta$
is replaced by $\thetah=e^{\Ups}\theta$ then $\hat{h}_{\al\beb}=e^{\Ups}h_{\al\beb}$
and consequently $\hat{h}^{\al\beb}=e^{-\Ups}h^{\al\beb}$, moreover
it is clear that $\hat{\vsig}=e^{\Ups}\vsig$ (where $\theta=\vsig\btheta$ and $\thetah=\hat{\vsig}\btheta$).
This allows us to define a canonical weighted Levi form:
\begin{defn}
The \emph{CR Levi form} is the $\cE(1,1)$-valued Hermitian form $\bh_{\al\beb}\in \Gamma\left(\cE_{\al\beb}(1,1)\right)$ given by $\vsig^{-1}h_{\al\beb}$ for any pseudohermitian contact form $\theta=\vsig\btheta$.
\end{defn}
In the following we use the CR Levi form $\bh_{\al\beb}$ and its inverse
$\bh^{\al\beb}$ to raise and lower indices. Note that lowering indices
with $\bh_{\al\beb}$ identifies $\cE^{\al}$ with $\cE_{\beb}(1,1)$
so that weights generally change when indices are raised and lowered.

The CR Levi form could also have been defined by the map
\begin{equation*}
(U,\overline{V})\mapsto 2i\btheta([U,\overline{V}]).
\end{equation*}
By complexifying and dualising the isomorphism \eqref{RealDensityIso} we obtain an isomorphism of $\mathcal{E}(1,1)$ with $(\bbC H^{\perp})^{*}=\bbC TM/\bbC H$. This allows us to identify $\bh$, up to a constant factor, with the usual $\bbC TM/\bbC H$-valued Levi form in CR geometry.

\begin{rem}\label{WebsterMetric}
Given a contact form $\theta$ one may also define a pseudo-Riemannian
metric $g_{\theta}$ on the tangent bundle of $M$ by taking the direct
sum of the bilinear form $\d\theta(\cdot,J\cdot)$ on $H$ (which is precisely the real part of the Levi form $h$ of $\theta$) with $\theta\otimes\theta$ on $\bbR T$. This metric is called the \emph{Webster metric}.
\end{rem}

\subsection{Decomposing Tensors}\label{DecomposingTensors}

Using the direct sum decomposition of $\bbC TM$ given by a choice
of contact form $\theta$ a real tangent vector $X$ may be represented
by the triple 
\begin{equation*}
(X^{\al},X^{\alb},X^{0})
\end{equation*}
where $X^{\al}$ is the holomorphic part of $X$, $X^{\alb}$ is the
antiholomorphic part, and $X^{0}=\btheta(X)$. Note that $X^{0}$
is a $(1,1)$ density and $X^{\alb}=\overline{X^{\al}}$. (We follow \cite{GoverGraham-CRInvtPowers}
in using $\btheta$ rather than $\theta$ in defining $X^{0}$, this
simplifies later conformal transformation laws.) Similarly we may represent
a real covector $\omega$ by the triple
\begin{equation*}
(\omega_{\al},\omega_{\alb},\omega_{0})
\end{equation*}
where $\omega_{\al}$ is the restriction of $\omega$ to holomorphic
directions, $\omega_{\alb}=\overline{\omega_{\al}}$ is the restriction of $\omega$ to antiholomorphic
directions, and the $(-1,-1)$ density $\omega_{0}$ is the $\btheta$-component
of $\omega$ (i.e. $\vsig\omega(T)$ where $\theta=\vsig\btheta$).
It is easy to see that the above decompositions extend to arbitrary
tensors or tensor fields. For instance we can represent a real
covariant $2$-tensor $T$ by the $9$-tuple
\begin{equation*}
(T_{\al\be},T_{\al\beb},T_{\alb\be},T_{\alb\beb},T_{\al0},T_{\alb0},T_{0\be},T_{0\beb},T_{00});
\end{equation*}
moreover, by reality it is enough to specify the 5-tuple
\begin{equation*}
(T_{\al\be},T_{\al\beb},T_{\al0},T_{0\be},T_{00})
\end{equation*}
since $T_{\alb\be}=\overline{T_{\al\beb}}$, $T_{\alb0}=\overline{T_{\al0}}$,
and $T_{0\beb}=\overline{T_{0\be}}$.

\subsection{The Tanaka-Webster Connection}\label{sub:The-Tanaka-Webster-Connection}

Since a choice of contact form $\theta$ for $(M,H,J)$ gives rise to a pseudo-Riemannian metric $g_{\theta}$ on $M$ (Remark \ref{WebsterMetric}) one also obtains the Levi-Civita connection $\nabla^{g_{\theta}}$ of $g_{\theta}$. Calculating with this connection is highly inconvenient however, since it does not preserve the direct sum decomposition \eqref{CTM-Decomp} of $\bbC TM$ induced by $\theta$. We instead look for a connection $\nabla$ on $M$ which still satisfies 
\begin{equation*}
\nabla g_{\theta}=0,
\end{equation*}
but whose parallel transport also preserves $H$ and (as a connection on $H$) preserves $J$; such a connection cannot be torsion free, since by the contact condition there exist $X,Y\in\Gamma(H)$ with $[X,Y]\notin\Gamma(H)$ and hence
\begin{equation*}
T^{\nabla}(X,Y)=\nabla_{X}Y-\nabla_{Y}X-[X,Y]
\end{equation*}
cannot be zero since $\nabla_{X}Y-\nabla_{Y}X\in\Gamma(H)$. It turns out that these conditions do not determine a connection on $M$ uniquely, but we can determine $\nabla$ uniquely by imposing
the following additional conditions on the torsion of $\nabla$,
\begin{equation*}
T_{\al\beb}^{\nabla}{^{\ga}}=0,\: T_{\al\beb}^{\nabla}{^{\gab}}=0,\: T_{\al\beb}^{\nabla}{^{0}}=i\bh_{\al\beb},
\end{equation*}
\begin{equation*}
T_{\al\be}^{\nabla}{^{\ga}}=0,\: T_{\al\be}^{\nabla}{^{\gab}}=0,\: T_{\al\be}^{\nabla}{^{0}}=0,
\end{equation*}
\begin{equation*}
T_{\al0}^{\nabla}{^{\ga}}=0,\: T_{\al0}^{\nabla}{^{\gab}}=-A^{\bar{\gamma}}{_{\alpha}},\:\mathrm{and}\,\; T_{\al0}^{\nabla}{^{0}}=0
\end{equation*}
for some $A^{\bar{\gamma}}{_{\alpha}}\in\Gamma(\cE^{\gab}{_{\al}}(-1,-1))$
with $A_{\al\be}$ symmetric (see \cite{Tanaka-book}, Proposition 3.1). The connection $\nabla$ determined
uniquely by these conditions is called the \emph{Tanaka-Webster connection}
of $\theta$ (it was discovered independently by Tanaka and Webster in \cite{Tanaka-pseudoConfConnection,Webster-PseudoHStrs}), and $A_{\al\be}$ is known as
the \emph{pseudohermitian torsion tensor}.

Since the Tanaka-Webster connection preserves $H$ and $g_{\theta}$ it also preserves the $g_{\theta}$-orthogonal complement of $H$, which is spanned by the Reeb vector field $T$. Since $g_{\theta}(T,T)=1$ this implies that $\nabla T=0$. Thus also 
\begin{equation*}
\nabla \theta=0,
\end{equation*}
since $\theta(\:\cdot\:)=g_{\theta}(\:\cdot\:,T)$. By definition the Tanaka-Webster connection $\nabla$ preserves the direct sum decomposition \eqref{CTM-Decomp} of $\bbC TM$ induced by $\theta$. So, by definition $\nabla$ induces a linear connection on $H$ and on $T^{1,0}M$. It therefore makes sense to take the Tanaka-Webster covariant derivative of the Levi form $h$ of $\theta$, and it is easily seen that $\nabla h=0$.

\subsubsection{Interpreting the torsion conditions}
The conditions on the torsion tensor may be alternatively
phrased by saying that for any function $f\in C^{\infty}(M)$ we have
\begin{equation}\label{torsion1}
\nabla_{\alpha}\nabla_{\bar{\beta}}f-\nabla_{\bar{\beta}}\nabla_{\alpha}f=-i\bh_{\alpha\bar{\beta}}\nabla_{0}f,
\end{equation}
\begin{equation}\label{torsion2}
\nabla_{\alpha}\nabla_{\beta}f-\nabla_{\beta}\nabla_{\alpha}f=0,
\end{equation}
and
\begin{equation}\label{PHtorsion}
\nabla_{\alpha}\nabla_{0}f-\nabla_{0}\nabla_{\alpha}f=A^{\bar{\gamma}}{_{\alpha}}\nabla_{\bar{\gamma}}f
\end{equation}
where $A_{\al\be}$ is the symmetric pseudohermitian torsion tensor.
We note that since the Tanaka-Webster connection preserves the direct sum decomposition \eqref{CT*M-Decomp} of $\bbC T^*M$ induced by $\theta$
there is no ambiguity in the notation used in the above displays; for instance
one can equivalently think of $\nabla_{\al}\nabla_{\be}f$ as the
`$\al$-component' of $\nabla$ acting on $\nabla_{\be}f$ or
as the `$\al\be$-component' of $\nabla\nabla f$. 

\subsection{The Tanaka-Webster Connection on Densities}
The Tanaka-Webster connection of a contact form acts on sections of any density bundle since it acts on sections of $\cE(-1,0)^{n+2}=\scrK$. In equation \eqref{PHtorsion} above we are already implicitly using the action of the connection on the density bundle $\cE(-1,-1)$ in the expression $\nabla_{\alpha}\nabla_{0}f$. It does not matter whether or not we think of $\nabla_0 f$ as density valued in such equations because of the following lemma.
\begin{lem}\label{TWonDiagonalDensitiesLemma}
The Tanaka-Webster connection $\nabla$ of $\theta$ on $\cE(-1,-1)$ is simply the flat connection corresponding to the trivialisation induced by the contact form $\theta$, i.e. by the section $\vsig$ satisfying $\theta = \vsig \btheta$. In particular the isomorphism \eqref{RealDensityIso} is parallel for the Tanaka-Webster connection, i.e. it intertwines the actions of $\nabla$ on $H^{\perp}$ and on $\cE(-1,-1)_{\mathbb{R}}$.
\end{lem}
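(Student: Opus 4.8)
The plan is to reduce the statement to the two defining properties of the Tanaka-Webster connection that pertain to densities: that $\nabla$ acts on the canonical bundle $\scrK = \cE(-1,0)^{n+2}$, and that $\nabla\theta = 0$ together with $\nabla T = 0$ (hence $\nabla g_\theta = 0$) as already established. First I would recall that the section $\vsig \in \Gamma(\cE(-1,-1)_{\mathbb{R}})$ attached to $\theta$ is, by construction in Section \ref{DensitiesAndScales}, the fixed real $(n+2)^{\mathrm{th}}$ root of $\zeta \otimes \bar\zeta$, where $\zeta$ is any section of $\scrK$ volume normalised for $\theta$. So it suffices to show that $\nabla(\zeta \otimes \bar\zeta) = 0$ for such a $\zeta$; taking $(n+2)^{\mathrm{th}}$ roots then gives $\nabla\vsig = 0$, which is exactly the assertion that $\nabla$ on $\cE(-1,-1)$ is the flat connection trivialised by $\vsig$.

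To see that $\nabla(\zeta\otimes\bar\zeta)=0$, I would argue pointwise and use the volume normalisation identity
\begin{equation*}
\theta\wedge(\d\theta)^{n} = i^{n^{2}} n! (-1)^{q}\, \theta\wedge(T\hook\zeta)\wedge(T\hook\bar\zeta).
\end{equation*}
The left-hand side is $\nabla$-parallel: $\d\theta$ is the curvature-type $2$-form of $\theta$ and, since $\nabla\theta = 0$ and $\nabla$ has torsion with the prescribed $(1,1)$-type pieces, one checks $\nabla(\d\theta) = 0$ (equivalently, $\d\theta = -\tfrac12 i\, h_{\al\beb}\,(\text{standard }(1,1)\text{-form})$ and $\nabla h = 0$, $\nabla\theta=0$). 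Hence $\theta\wedge(\d\theta)^n$ is a parallel section of $\Lambda^{2n+1}M$ (real weight zero). The factor $i^{n^2} n! (-1)^q$ is constant, so the right-hand side $\theta\wedge(T\hook\zeta)\wedge(T\hook\bar\zeta)$ is parallel. Now $\theta$ and $T$ are parallel, so contracting/wedging with them is a parallel operation; moreover $\zeta$ lives in $\scrK = \Lambda^{n+1,0}M$, on which $\nabla$ acts, and the map $\zeta\otimes\bar\zeta \mapsto \theta\wedge(T\hook\zeta)\wedge(T\hook\bar\zeta)$ is a $\nabla$-parallel bundle isomorphism from $\scrK\otimes\bar\scrK$ onto a parallel real line subbundle of $\Lambda^{2n+1}M\otimes\mathbb{C}$ (it is injective because $T\hook\zeta$ spans $\Lambda^{n,0}$ along $H^{1,0}$ and $\theta$ supplies the missing direction). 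Parallelism of the image section therefore forces $\nabla(\zeta\otimes\bar\zeta) = 0$.

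The rescaling compatibility is then immediate: if $\thetah = f\theta$ with $f>0$ and $\hat\zeta$ is volume normalised for $\thetah$, then by the paragraph following the volume-normalisation definition $f\vsig$ is the $(n+2)^{\mathrm{th}}$ root associated to $\thetah$, so the claim "$\nabla^{\theta}$ trivialises $\cE(-1,-1)$ via $\vsig$" is stated consistently for every contact form; there is nothing further to check here since the lemma concerns a single fixed $\theta$. Finally, the isomorphism \eqref{RealDensityIso} $H^{\perp}\cong\cE(-1,-1)_{\mathbb{R}}$ sends $\theta \mapsto \vsig$ by definition, and both $\theta$ (as a section of $H^\perp \subset T^*M$) and $\vsig$ are now $\nabla$-parallel, so the isomorphism intertwines the two induced connections. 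The main obstacle is the middle step: carefully verifying that $\nabla(\d\theta) = 0$ and that the algebraic map $\zeta\otimes\bar\zeta\mapsto \theta\wedge(T\hook\zeta)\wedge(T\hook\bar\zeta)$ is both parallel and injective, which is where the precise torsion normalisations $T^\nabla_{\al\beb}{}^0 = i\bh_{\al\beb}$ and the identities $\nabla T = \nabla\theta = \nabla h = 0$ are all used; the rest is formal.
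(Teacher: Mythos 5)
Your proof is correct and is essentially the paper's argument: both reduce the claim to showing $\zeta\otimes\bar\zeta$ is parallel for a volume-normalised $\zeta$, using that $\theta$, $\d\theta$, and $T$ are all parallel so that the volume-normalisation identity is preserved under $\nabla$, and then extract the $(n+2)^{\mathrm{th}}$ root. The paper phrases this via parallel transport preserving volume normalisation (hence preserving $\zeta$ up to phase), while you phrase it infinitesimally via the parallel injective bundle map $\zeta\otimes\bar\zeta\mapsto\theta\wedge(T\hook\zeta)\wedge(T\hook\bar\zeta)$; these are the same argument.
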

\begin{proof}
Suppose the section $\zeta$ of $\scrK$ is volume normalised for $\theta$. Parallel transporting $\zeta$ along any curve must preserve $\zeta$ up to phase (since the result of parallel transport will still be volume normalised, $\theta$, $\d\theta$, and $T$ being parallel). This implies that $\zeta\otimes\bar{\zeta}$ is parallel, but by definition $\vsig^{n+2}=\zeta\otimes\bar{\zeta}$ so that $\vsig^{n+2}$ and hence $\vsig$ is parallel.
\end{proof}
The lemma also tells us that for the Tanaka-Webster connection $\nabla$ of any contact form $\theta$ we have
\begin{equation}
\nabla\btheta =0 \qquad \mathrm{and} \qquad \nabla \bh =0.
\end{equation}
The advantage of raising and lowering indices with the CR Levi form $\bh_{\al \beb}$ is that these operations commute with \emph{any} Tanaka-Webster covariant derivative.

\subsection{Pseudohermitian Curvature}\label{PseudohermitianCurvature}

By equation \eqref{torsion1} the operator 
\begin{equation*}
\nabla_{\alpha}\nabla_{\bar{\beta}}-\nabla_{\bar{\beta}}\nabla_{\alpha}+i\bh_{\alpha\bar{\beta}}\nabla_{0}
\end{equation*}
annihilates smooth functions on $M$; moreover, this operator preserves $\cE^{\gab}$. By the Leibniz rule the above displayed operator commutes with multiplication by smooth functions when acting on sections of $\cE^{\gab}$. Thus there
is a tensor $R_{\al\beb}{^{\gab}}_{\deb}$ such that 
\begin{equation}\label{PHcurvatureDef}
\nabla_{\alpha}\nabla_{\bar{\beta}}V^{\gab}-\nabla_{\bar{\beta}}\nabla_{\alpha}V^{\gab}+i\bh_{\alpha\bar{\beta}}\nabla_{0}V^{\gab}=-R_{\al\beb}{^{\gab}}_{\deb}V^{\deb}
\end{equation}
for all sections $V^{\gab}$ of $\mathcal{E}^{\gab}$. Equivalently $R_{\al\beb}{^{\gab}}_{\deb}$ is characterised by
\begin{equation}
\nabla_{\alpha}\nabla_{\bar{\beta}}V_{\deb}-\nabla_{\bar{\beta}}\nabla_{\alpha}V_{\deb}+i\bh_{\alpha\bar{\beta}}\nabla_{0}V_{\deb}=R_{\al\beb}{^{\gab}}_{\deb}V_{\gab}
\end{equation}
for all sections $V_{\deb}$ of $\mathcal{E}_{\deb}$. Our conventions agree with those of \cite{GoverGraham-CRInvtPowers,Webster-PseudoHStrs}. We refer to this tensor, or to $R_{\al\beb\ga\deb}=\bh_{\ga\epb}R_{\al\beb}{^{\epb}}_{\deb}$,
as the \emph{pseudohermitian curvature tensor}, and it has the following
properties
\begin{equation}
R_{\al\beb\ga\deb}=R_{\ga\beb\al\deb}=\overline{R_{\be\alb\de\gab}}=R_{\al\deb\ga\beb}=R_{\ga\deb\al\beb}
\end{equation}
which we derive in section \ref{sub:Full-Tanaka-Webster-Curvature}
below. The trace
\begin{equation}
R_{\al\beb}=R_{\al\beb}{^{\gab}}_{\gab}
\end{equation}
of the pseudohermitian curvature tensor is referred to as the \emph{pseudohermitian Ricci
tensor} of $\theta$ and its trace
\begin{equation}
R=\bh^{\al\beb}R_{\al\beb}
\end{equation}
is called the \emph{pseudohermitian scalar curvature} of $\theta$. The pseudohermitian
curvature tensor can be decomposed as
\begin{equation}
R_{\al\beb\ga\deb}=S_{\al\beb\ga\deb}+P_{\al\beb}\bh_{\ga\deb}+P_{\ga\deb}\bh_{\al\beb}+P_{\al\deb}\bh_{\ga\beb}+P_{\ga\beb}\bh_{\al\deb}
\end{equation}
where $S_{\al\beb\ga\deb}$ satisfies
\begin{equation}
S_{\al\beb\ga\deb}=S_{\ga\beb\al\deb}=\overline{S_{\be\alb\de\gab}}=S_{\al\deb\ga\beb}=S_{\ga\deb\al\beb},\; S_{\al\beb}{^{\gab}}_{\gab}=0
\end{equation}
and 
\begin{equation}
P_{\al\beb}=\frac{1}{n+2}\left(R_{\al\beb}-\frac{1}{2(n+1)}R\bh_{\al\beb}\right).
\end{equation}
The tensor $S_{\al\beb\ga\deb}$ is known as the \emph{Chern-Moser
tensor} and is a CR invariant, by which we mean that if $\thetah$
is another contact form for $H$ then $\hat{S}_{\al\beb\ga\deb}=S_{\al\beb\ga\deb}$
(note that we are thinking of $S_{\al\beb\ga\deb}$ as a weighted
tensor field).

\subsection{The Full Tanaka-Webster Curvature\label{sub:Full-Tanaka-Webster-Curvature}}

The full curvature tensor of the Tanaka-Webster connection $\nabla$ of a contact form $\theta$ is defined by
\begin{equation}\label{fullTWcurv}
\nabla_a \nabla_b Y^c - \nabla_b \nabla_a Y^c + T^\nabla_{ab}{^e}\nabla_e Y^c = -R_{ab}{^c}{_d}Y^d 
\end{equation}
for any tangent vector field $Y^c$, where $T^\nabla$ is the torsion of $\nabla$ given in section \ref{sub:The-Tanaka-Webster-Connection}. The pseudohermitian curvature tensor $R_{\al\beb}{^{\gab}}_{\deb}$ is just one component of the full curvature tensor, taken with respect to the direct sum decomposition
\begin{equation}\label{CTMdsd}
\bbC TM=T^{1,0}M\oplus T^{0,1}M\oplus\bbC T
\end{equation}
and its dual. 
\begin{lem}
The full curvature tensor $R_{ab}{^c}{_d}$ of the Tanaka-Webster connection is completely determined by the components $R_{\al\beb}{^{\gab}}_{\deb}$, $R_{\al\be}{^{\gab}}_{\deb}$, and $R_{\al0}{^{\gab}}_{\deb}$.
\end{lem}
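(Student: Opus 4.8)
The plan is to combine three structural properties of the Tanaka--Webster connection established above --- that $\nabla$ preserves the splitting \eqref{CTMdsd} of $\bbC TM$ and its dual, that $\nabla T=0$ (equivalently $\nabla\theta=0$), and that $\nabla\bh=0$ (Lemma \ref{TWonDiagonalDensitiesLemma}) --- with the reality of $\nabla$ and the antisymmetry of $R_{ab}{^c}{_d}$ in the pair $ab$.

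First I would note that, since parallel transport of $\nabla$ preserves each of $T^{1,0}M$, $T^{0,1}M$ and $\bbC T$, the endomorphism $R_{ab}\in\End(\bbC TM)$ preserves this splitting for every $a,b$; hence the only a priori nonzero components of $R_{ab}{^c}{_d}$ are those of the three block-diagonal families $R_{ab}{^\ga}{_\de}$, $R_{ab}{^\gab}{_\deb}$ and $R_{ab}{^0}{_0}$. The last family vanishes since $\nabla T=0$ forces $R_{ab}{^0}{_0}=0$. For the first family I would use $\nabla\bh=0$, which says that the curvature endomorphism annihilates $\bh$, i.e. $R_{ab}{^\ep}{_\ga}\bh_{\ep\deb}+R_{ab}{^\epb}{_\deb}\bh_{\ga\epb}=0$, so that the $T^{1,0}M$-block $R_{ab}{^\ga}{_\de}$ is obtained from the $T^{0,1}M$-block $R_{ab}{^\gab}{_\deb}$ by lowering and re-raising an index with $\bh$. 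Thus everything reduces to pinning down the single family $R_{ab}{^\gab}{_\deb}$, with $a,b$ ranging over all of $\bbC TM$.

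Next I would decompose the pair $ab$ using \eqref{CTMdsd}. Antisymmetry kills $R_{00}{^\gab}{_\deb}$ and expresses $R_{0\be}{^\gab}{_\deb}$, $R_{0\beb}{^\gab}{_\deb}$, $R_{\beb\al}{^\gab}{_\deb}$ in terms of $R_{\be0}{^\gab}{_\deb}$, $R_{\beb0}{^\gab}{_\deb}$, $R_{\al\beb}{^\gab}{_\deb}$ respectively, so $R_{ab}{^\gab}{_\deb}$ is determined by the five types $R_{\al\be}{^\gab}{_\deb}$, $R_{\al\beb}{^\gab}{_\deb}$, $R_{\alb\beb}{^\gab}{_\deb}$, $R_{\al0}{^\gab}{_\deb}$, $R_{\alb0}{^\gab}{_\deb}$. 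Three of these are exactly the components named in the statement; for the remaining two I would use that $\nabla$ is real, so $\overline{R_{ab}{^c}{_d}}=R_{\bar a\bar b}{^{\bar c}}{_{\bar d}}$ (with $\bar 0=0$), giving $R_{\alb\beb}{^\gab}{_\deb}=\overline{R_{\al\be}{^\ga}{_\de}}$ and $R_{\alb0}{^\gab}{_\deb}=\overline{R_{\al0}{^\ga}{_\de}}$; each right-hand side is a $T^{1,0}M$-block component which, by the skew-Hermiticity step above, is determined by $R_{\al\be}{^\gab}{_\deb}$ and $R_{\al0}{^\gab}{_\deb}$ respectively --- both among the three named components. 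Tracing these reductions back through the previous paragraph gives the claim.

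The work here is entirely bookkeeping, and the one point to be careful about is that the $T^{1,0}M$-block must be reduced to the $T^{0,1}M$-block via $\nabla\bh=0$ (which relates components sharing the same $ab$), not via complex conjugation alone, which relates $R_{\al\be}{^\ga}{_\de}$ only to $R_{\alb\beb}{^\gab}{_\deb}$ and so would be circular: conjugation is used to flip the type of $ab$ and $\nabla\bh=0$ to flip the type of $cd$, and both are needed. No computation beyond writing $\nabla\bh=0$ and $\nabla T=0$ at the level of curvature is required.
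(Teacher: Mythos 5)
Your proof is correct and uses essentially the same ingredients, in a slightly reordered bookkeeping, as the paper's: preservation of the splitting and of $T$ to kill the off-diagonal and $00$ blocks of $R_{ab}{^c}{_d}$, reality and antisymmetry in $ab$ to reduce the first index pair to the three stated types, and parallelism of the Levi form (the paper's $R_{ab\ga\deb}=-R_{ab\deb\ga}$) to trade the $T^{1,0}M$-block for the $T^{0,1}M$-block. Your closing caution that conjugation alone would be circular and that $\nabla\bh=0$ is needed to flip the type of the $cd$ pair is exactly the point of the paper's final step.
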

\begin{proof}
Note that the tensor $R_{ab}{^c}{_d}$ is real, so that
the component $R_{\alb\be}{^{\ga}}_{\de}$ is simply the complex conjugate
of $R_{\al\beb}{^{\gab}}_{\deb}$ and so on. Also, the symmetry $R_{ab}{^{c}}_d=-R_{ba}{^{c}}_d$ translates into $R_{\al\be}{^{\gab}}_{\deb}=-R_{\be\al}{^{\gab}}_{\deb}$, $R_{\alb0}{^{\ga}}_{\de}=-R_{0\alb}{^{\ga}}_{\de}$, etcetera. Now since the Tanaka-Webster connection preserves the splitting of sections of $\bbC TM$ according to \eqref{CTMdsd} we must have
\begin{equation*}
R_{ab}{^{\ga}}_{\deb}=0,\, R_{ab}{^{\ga}}_{0}=0,\:\mbox{and}\; R_{ab}{^{0}}_{\de}=0.
\end{equation*}
Since $T$ is parallel we also have that
\begin{equation*}
R_{ab}{^{0}}_{0}=0.
\end{equation*}
From this we see that, up to conjugation and swapping the first two indices, the only nonzero components of $R_{ab}{^c}{_d}$ are
\begin{equation*}
R_{\al\beb}{^{\ga}}_{\de} \: , \; R_{\al\be}{^{\ga}}_{\de}\: , \;  R_{\al0}{^{\ga}}_{\de} \: , \; R_{\al\beb}{^{\gab}}_{\deb} \: , \; R_{\al\be}{^{\gab}}_{\deb} \; \; \mathrm{and} \;\; R_{\al0}{^{\gab}}_{\deb}.
\end{equation*}
Our conclusion follows by observing that if we lower indices using the CR Levi form then we have that
\begin{equation*}
R_{ab\ga\deb}=-R_{ab\deb\ga},
\end{equation*}
since $\bh_{\ga\deb}$ is parallel.
\end{proof}
\begin{rem}
From the last display of the above proof we have that $R_{\al\beb\ga\deb}=-R_{\al\beb\deb\ga}$. It immediately follows that $R_{\al\beb\ga\deb}=R_{\beb\al\deb\ga}=\overline{R_{\be\alb\de\gab}}$, establishing one of the claims from Section \ref{PseudohermitianCurvature}.
\end{rem}
Using \eqref{fullTWcurv} the curvature component $R_{\al\be}{^{\gab}}_{\deb}$ may also be characterised by a Ricci-type identity
\begin{equation}
\nabla_{\al}\nabla_{\be}V^{\gab}-\nabla_{\be}\nabla_{\al}V^{\gab}=-R_{\al\be}{^{\gab}}_{\deb}V^{\deb}
\end{equation}
for any section $V^{\gab}$ of $\cE^{\gab}$. Similarly for $R_{\al0}{^{\gab}}_{\deb}$ we have
\begin{equation}
\nabla_{\al}\nabla_{0}V^{\gab}-\nabla_{0}\nabla_{\al}V^{\gab}-A_{\al}^{\ep}\nabla_{\ep}V^{\gab}=-R_{\al0}{^{\gab}}_{\deb}V^{\deb}.
\end{equation}
On a section $V_{\deb}$ of $\cE_{\deb}$ we have
\begin{equation}
\nabla_{\al}\nabla_{\be}V_{\deb}-\nabla_{\be}\nabla_{\al}V_{\deb}=R_{\al\be}{^{\gab}}_{\deb}V_{\gab}
\end{equation}
by duality, and likewise for $R_{\al0}{^{\gab}}_{\deb}$. 

\subsubsection{The Bianchi symmetry}

Recall that for a connection
$\nabla$ without torsion the Bianchi symmetry comes from observing that by torsion freeness one has
\[
(\nabla_{a}\nabla_{b}-\nabla_{b}\nabla_{a})\nabla_{d}f+(\nabla_{b}\nabla_{d}-\nabla_{d}\nabla_{b})\nabla_{a}f+(\nabla_{d}\nabla_{a}-\nabla_{a}\nabla_{d})\nabla_{b}f=0
\]
for any $f\in C^{\infty}(M)$, since the curvature tensor must then satisfy
\[
(R_{ab}{^{c}}_{d}+R_{bd}{^{c}}_{a}+R_{da}{^{c}}_{b})\nabla_{c}f=0.
\] 
This approach also works for a connection with torsion and we use it below to express the consequences of the Bianchi symmetry for the curvature of the Tanaka-Webster connection in terms of the components $R_{\al\beb}{^{\gab}}_{\deb}$, $R_{\al\be}{^{\gab}}_{\deb}$, $R_{\al0}{^{\gab}}_{\deb}$, and the pseudohermitian torsion. Because so many components of $R_{ab}{^c}_d$ already vanish one obtains expressions for $R_{\al\be}{^{\gab}}_{\deb}$ and $R_{\al0}{^{\gab}}_{\deb}$ in terms of the pseudohermitian torsion.
\begin{prop}
The Bianchi symmetry for the Tanaka-Webster curvature tensor $R_{ab}{^{c}}_{d}$ is equivalent to the following identities
\begin{equation}\label{BianchiSymm2}
R_{\al\beb}{^{\gab}}_{\deb}=R_{\al\deb}{^{\gab}}_{\beb}
\end{equation}
\begin{equation}\label{BianchiSymm3}
R_{\al\be\vphantom{\deb}}{^{\gab}}_{\deb}=i\bh_{\al\deb}A_{\be}^{\gab}-i\bh_{\be\deb}A_{\al}^{\gab}
\end{equation}
\begin{equation}\label{BianchiSymm4}
R_{\al0\vphantom{\deb}}{^{\gab}}_{\deb}=\nabla_{\deb}A_{\al}^{\gab}
\end{equation}
\begin{equation}\label{BianchiSymm5}
\nabla_{\al}A_{\de}^{\gab}=\nabla_{\de}A_{\al}^{\gab}.
\end{equation}
\end{prop}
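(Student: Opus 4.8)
The plan is to obtain each of the four identities by specialising the general first Bianchi identity for a connection with torsion to the Tanaka--Webster connection $\nabla$, and then decomposing the resulting tensor equation according to $\bbC TM = T^{1,0}M\oplus T^{0,1}M\oplus\bbC T$ and its dual.

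First I would derive the torsion-corrected Bianchi identity. Dualising \eqref{fullTWcurv} to act on $1$-forms and applying it to $\nabla_d f$ for $f\in C^\infty(M)$ gives
\[
\nabla_a\nabla_b\nabla_d f-\nabla_b\nabla_a\nabla_d f+T^\nabla_{ab}{^e}\nabla_e\nabla_d f=R_{ab}{^c}{_d}\nabla_c f .
\]
Writing $\mathfrak{S}$ for the sum over the three cyclic permutations of $(a,b,d)$ and applying $\mathfrak{S}$, the parts of order higher than one in $f$ cancel after repeated use of the scalar identity $\nabla_b\nabla_d f-\nabla_d\nabla_b f=-T^\nabla_{bd}{^e}\nabla_e f$ and the Leibniz rule, leaving
\[
\mathfrak{S}\,R_{ab}{^c}{_d}=-\,\mathfrak{S}\big(\nabla_a T^\nabla_{bd}{^c}+T^\nabla_{ab}{^e}T^\nabla_{ed}{^c}\big)
\]
as an identity among tensors (its left side being the Bianchi symmetry).

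The second step is to feed in the explicit Tanaka--Webster torsion. By Section~\ref{sub:The-Tanaka-Webster-Connection}, up to conjugation and skew-symmetry in the lower pair the only nonzero torsion components are $T^\nabla_{\al\beb}{^0}=i\bh_{\al\beb}$ and $T^\nabla_{\al0}{^{\gab}}=-A^{\gab}{_\al}$, and by Lemma~\ref{TWonDiagonalDensitiesLemma} the former is parallel, so every $\nabla_e T^\nabla_{\al\beb}{^0}$ vanishes while $\nabla_e T^\nabla_{\al0}{^{\gab}}=-\nabla_e A^{\gab}{_\al}$. Decomposing the free indices of the displayed identity, and recalling from the preceding lemma that $R_{ab}{^{\ga}}{_{\deb}}=R_{ab}{^{\ga}}{_0}=R_{ab}{^0}{_\de}=R_{ab}{^0}{_0}=0$ (so that, up to conjugation, the surviving curvature components are $R_{\al\beb}{^{\gab}}{_{\deb}}$, $R_{\al\be}{^{\gab}}{_{\deb}}$ and $R_{\al0}{^{\gab}}{_{\deb}}$), I would take $c=\gab$ and $(a,b,d)$ equal in turn to $(\al,\beb,\deb)$, $(\al,\be,\deb)$, $(\al,0,\deb)$ and $(\al,\de,0)$. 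In each case all but one term of $\mathfrak{S}R$ is a forced-zero curvature component, and on the right almost every torsion product vanishes, so one is left respectively with $R_{\al\beb}{^{\gab}}{_{\deb}}-R_{\al\deb}{^{\gab}}{_{\beb}}=0$, with an expression for $R_{\al\be}{^{\gab}}{_{\deb}}$ in terms of $i\bh$ and $A$, with $R_{\al0}{^{\gab}}{_{\deb}}=\nabla_{\deb}A_{\al}^{\gab}$, and with $\nabla_{\al}A_{\de}^{\gab}-\nabla_{\de}A_{\al}^{\gab}=0$; these are \eqref{BianchiSymm2}--\eqref{BianchiSymm5}. Finally I would note that the remaining index choices ($c=\ga$, $c=0$, or other positions of the $0$-index among $a,b,d$) return either the complex conjugates of these four relations or identities that hold automatically from $R_{ab}{^c}{_d}=-R_{ba}{^c}{_d}$ and $\nabla\bh=0$; since the component decomposition is faithful, the four displayed identities are therefore equivalent to the full Bianchi symmetry.

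The conceptual content here is light; the labour is entirely the component bookkeeping -- tracking which torsion and curvature components vanish, and keeping signs and factors of $i$ straight through the cyclic sums. The one point that needs genuine care, rather than routine checking, is confirming that the ``other'' index choices really do produce no new constraints, so that equivalence (and not merely one implication) is justified.
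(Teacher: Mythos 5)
Your proposal is correct and rests on the same idea as the paper's proof: the cyclic sum of the Ricci identity applied to $\nabla f$ for the connection with torsion, decomposed according to $\bbC TM=T^{1,0}M\oplus T^{0,1}M\oplus\bbC T$, with the same bookkeeping of forced-zero curvature and torsion components reducing everything to the four index classes and their conjugates. The only difference is organizational -- you extract the general torsion-corrected first Bianchi identity $\sum_{\mathrm{cyc}}R_{ab}{}^{c}{}_{d}=-\sum_{\mathrm{cyc}}\bigl(\nabla_{a}T^{\nabla}_{bd}{}^{c}+T^{\nabla}_{ab}{}^{e}T^{\nabla}_{ed}{}^{c}\bigr)$ once and then substitute the Tanaka--Webster torsion, whereas the paper performs the equivalent computation on scalar functions separately in each case via \eqref{torsion1}--\eqref{PHtorsion} -- and your stated sign conventions do reproduce \eqref{BianchiSymm2}--\eqref{BianchiSymm5}.
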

\begin{proof}
By elementary considerations the cyclic sum of $R_{ab}{^{c}}_{d}$ with respect to the lower three indices is determined by the cyclic sums of $R_{\al\be}{^{c}}_{\de}$, $R_{\al\be}{^{c}}_{\deb}$, $R_{\al0}{^{c}}_{\de}$, and $R_{\al0}{^{c}}_{\deb}$ with respect to their lower three indices. In the case of $R_{\al\be}{^{c}}_{\de}$ we may instead cyclically permute the lower indices of $R_{\al\be}{^{\ga}}_{\de}$ since for each permutation the only nonzero part of the tensor is obtained by replacing $c$ with $\ga$. By \eqref{torsion2} $\nabla_{\al}\nabla_{\be}$ is symmetric on smooth functions so that
\begin{equation*}
(\nabla_{\al}\nabla_{\be}-\nabla_{\be}\nabla_{\al})\nabla_{\de}f+(\nabla_{\be}\nabla_{\de}-\nabla_{\de}\nabla_{\be})\nabla_{\al}f+(\nabla_{\de}\nabla_{\al}-\nabla_{\al}\nabla_{\de})\nabla_{\be}f=0
\end{equation*}
for all $f\in C^{\infty}(M)$, and hence
\begin{equation}\label{BianchiSymm1}
R_{\al\be}{^{\ga}}_{\de}+R_{\de\al}{^{\ga}}_{\be}+R_{\be\de}{^{\ga}}_{\al}=0.
\end{equation}
This expression is not listed above because it is a consequence of \eqref{BianchiSymm3}, the latter being equivalent to 
\begin{equation}\label{BianchiSymm3Conseq}
R_{\al\be}{^{\ga}}_{\de}=-i\delta_{\al\vphantom{\be}}^{\ga}A_{\vphantom{\deb}\be\de}+i\delta_{\be}^{\ga}A_{\al\de}
\end{equation}
since $R_{\al\be\ga\deb}=-R_{\al\be\deb\ga}$.

Now let $f$ be a smooth function on $M$. We similarly compute
\begin{equation*}
\left(R_{\al\be\vphantom{\deb}}{^{c}}_{\deb}+R_{\deb\al}{^{c}}_{\be\vphantom{\deb}}+R_{\be\deb}{^{\:c}}_{\al\vphantom{\deb}}\right)\nabla_{c}f.
\end{equation*}
Noting that $R_{\al\be\vphantom{\deb}}{^{c}}_{\deb}\nabla_{c}f=R_{\al\be\vphantom{\deb}}{^{\gab}}_{\deb}\nabla_{\gab}f$, and so on, we get
\begin{align*} 
&
R_{\al\be\vphantom{\deb}}{^{\gab}}_{\deb}\nabla_{\gab}f+R_{\deb\al}{^{\ga}}_{\be\vphantom{\deb}}\nabla_{\ga}f+R_{\be\deb}{^{\ga}}_{\al\vphantom{\deb}}\nabla_{\ga}f\\
 & =  (\nabla_{\al}\nabla_{\be}-\nabla_{\be}\nabla_{\al})\nabla_{\deb}f+(\nabla_{\deb}\nabla_{\al}-\nabla_{\al}\nabla_{\deb}-i\bh_{\al\deb}\nabla_{0})\nabla_{\be}f\\*
 &  \quad +(\nabla_{\be}\nabla_{\deb}-\nabla_{\deb}\nabla_{\be}+i\bh_{\be\deb}\nabla_{0})\nabla_{\al}f\\
 & =  \nabla_{\al}(\nabla_{\be}\nabla_{\deb}f-\nabla_{\deb}\nabla_{\be}f)+\nabla_{\be}(\nabla_{\deb}\nabla_{\al}f-\nabla_{\al}\nabla_{\deb}f)\\*
 & \quad  +\nabla_{\deb}(\nabla_{\al}\nabla_{\be}f-\nabla_{\be}\nabla_{\al}f)-i\bh_{\al\deb}\nabla_{0}\nabla_{\be}f+i\bh_{\be\deb}\nabla_{0}\nabla_{\al}f\\
 & =  \nabla_{\al}(-i\bh_{\be\deb}\nabla_{0}f)+\nabla_{\be}(i\bh_{\al\deb}\nabla_{0}f)\\*
 &  \quad  -i\bh_{\al\deb}\nabla_{0}\nabla_{\be}f+i\bh_{\be\deb}\nabla_{0}\nabla_{\al}f\\
 & =  i\bh_{\al\deb}(\nabla_{\be}\nabla_{0}f-\nabla_{0}\nabla_{\be}f)+i\bh_{\be\deb}(\nabla_{0}\nabla_{\al}f-\nabla_{\al}\nabla_{0}f)\\
 & =  i\bh_{\al\deb}A_{\be}^{\epb}\nabla_{\epb}f-i\bh_{\be\deb}A_{\al}^{\epb}\nabla_{\epb}f
\end{align*}
Since $f$ was arbitrary the above display holds at any point for
all functions $f$ with $\nabla_{\ga}f=0$ (or with $\nabla_{\gab}f=0$) at that point, and thus
we conclude that
\begin{equation*}
R_{\al\be\vphantom{\deb}}{^{\gab}}_{\deb}=i\bh_{\al\deb}A_{\be}^{\gab}-i\bh_{\be\deb}A_{\al}^{\gab}
\end{equation*}
and
\begin{equation*}
R_{\deb\al}{^{\ga}}_{\be\vphantom{\deb}}+R_{\be\deb}{^{\ga}}_{\al\vphantom{\deb}}=0.
\end{equation*}
By conjugation the last display is equivalent to \eqref{BianchiSymm2}, noting that  $R_{\be\deb}{^{\ga}}_{\al\vphantom{\deb}}=-R_{\deb\be}{^{\ga}}_{\al\vphantom{\deb}}$.

Similarly computing 
\begin{equation*}
R_{\al0\vphantom{\deb}}{^{c}}_{\deb}\nabla_{c}f+R_{0\deb}{^{c}}_{\al\vphantom{\deb}}\nabla_{c}f+ R_{\deb\al}{^{c}}_{0}\nabla_{c}f
\end{equation*}
we get (noting $R_{\deb\al}{^{c}}_{0}=0$)
\begin{equation*}
R_{\al0\vphantom{\deb}}{^{\gab}}_{\deb}\nabla_{\gab}f+R_{0\deb}{^{\ga}}_{\al\vphantom{\deb}}\nabla_{\ga}f=-(\nabla_{\al\vphantom{\deb}}A_{\deb}^{\ga})\nabla_{\ga}f+(\nabla_{\deb}A_{\al}^{\gab})\nabla_{\gab}f
\end{equation*}
so that
\begin{equation*}
R_{\al0\vphantom{\deb}}{^{\gab}}_{\deb}=\nabla_{\deb}A_{\al}^{\gab}.
\end{equation*}

Finally, computing the cyclic sum for $R_{\al0}{^{c}}_{\de}\nabla_c f$ we obtain 
\begin{equation*}
R_{\al0}{^{\ga}}_{\delta}\nabla_{\ga}f+R_{0\de}{^{\ga}}_{\al}\nabla_{\ga}f=-(\nabla_{\al}A_{\de}^{\gab})\nabla_{\gab}f+(\nabla_{\de}A_{\al}^{\gab})\nabla_{\gab}f
\end{equation*}
so that
\begin{equation*}
\nabla_{\al}A_{\de}^{\gab}=\nabla_{\de}A_{\al}^{\gab}
\end{equation*}
and
\begin{equation}\label{BianchiSymmCons}
R_{\al0}{^{\ga}}_{\delta}=R_{\de0}{^{\ga}}_{\al}.
\end{equation}
The identity \eqref{BianchiSymmCons} follows already from \eqref{BianchiSymm4} since by lowering indices we have $R_{\al0\ga\deb}=\nabla_{\deb}A_{\al\ga}$ and using that $R_{\al0\deb\ga}=-R_{\al0\ga\deb}$ we get that
\begin{equation}
R_{\al0}{^{\ga}}_{\delta}=-\nabla^{\ga}A_{\al\de}.
\end{equation}
\end{proof}

The expressions \eqref{BianchiSymm4} and \eqref{BianchiSymm3Conseq} agree with those given in section 1.4.2 of \cite{DragomirTomassini}, after adjusting \eqref{BianchiSymm3Conseq} by factor of two to account for their slightly different conventions (see (1.84) in \cite{DragomirTomassini}).

Note that \eqref{BianchiSymm2} implies that the pseudohermitian curvature
tensor satisfies
\begin{equation*}
R_{\al\beb\ga\deb}=R_{\ga\beb\al\deb}
\end{equation*}
(as was previously claimed) from which we also deduce that
\begin{equation*}
R_{\al\beb\ga\deb}=R_{\al\deb\ga\beb}=R_{\ga\deb\al\beb}.
\end{equation*}

\subsection{Curvature of the Density Bundles}

Although the Tanaka-Webster connection $\nabla$ of a contact form $\theta$ is flat on the diagonal density bundles $\cE(w,w)$ it is not flat on density bundles in general. The curvature of the density bundles was calculated in \cite[Prop. 2.2]{GoverGraham-CRInvtPowers}. We give this proposition with an alternate proof:
\begin{prop}\label{TWDensityCurvature}
Let $\theta$ be a pseudohermitian contact form and $\nabla$ its Tanaka-Webster connection. On a section $f$ of $\cE(w,w')$ we have
\begin{align}
\nabla_{\al}\nabla_{\be}f-\nabla_{\be}\nabla_{\al}f  &=0, \\
\nabla_{\al}\nabla_{\beb}f-\nabla_{\beb}\nabla_{\al}f+i\boldsymbol{h}_{\al\beb}\nabla_{0}f  &=\frac{w-w'}{n+2}R_{\al\beb}f, \\
\nabla_{\al}\nabla_{0}f-\nabla_{0}\nabla_{\al}f-A_{\al\ga}\nabla^{\ga}f  &=\frac{w-w'}{n+2}(\nabla^{\ga}A_{\ga\al})f.
\end{align}
\end{prop}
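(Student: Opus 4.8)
The plan is to reduce everything to the curvature of the canonical bundle $\scrK$. The Tanaka-Webster connection acts on $\cE(w,w')$ precisely because it acts on $\scrK=\cE(-n-2,0)$, and since each diagonal bundle $\cE(w,w)$ carries the flat connection (Lemma \ref{TWonDiagonalDensitiesLemma}), writing $\cE(w,w')=\cE(w-w',0)\otimes\cE(w',w')$ shows that the curvature of $\cE(w,w')$ equals that of $\cE(w-w',0)=\cE(1,0)^{\otimes(w-w')}$, i.e.\ $(w-w')$ times the curvature of $\cE(1,0)$. Since $\cE(1,0)=\cE(-1,0)^{*}$ and $\cE(-1,0)^{\otimes(n+2)}=\scrK$, the curvature of $\cE(1,0)$ is $-\tfrac{1}{n+2}$ times that of $\scrK$. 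Thus the whole proposition comes down to computing the $(\al,\be)$, $(\al,\beb)$ and $(\al,0)$ components of the full curvature of $\scrK$.

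First I would note that, by the torsion conditions of Section \ref{sub:The-Tanaka-Webster-Connection}, the three left-hand sides in the proposition are exactly the full curvature $\nabla_a\nabla_b f-\nabla_b\nabla_a f+T^{\nabla}_{ab}{^{e}}\nabla_e f$ of the line bundle $\cE(w,w')$ in the slots $(a,b)=(\al,\be),(\al,\beb),(\al,0)$: the torsion correction term $T^{\nabla}_{ab}{^{e}}\nabla_e$ vanishes in the first slot, equals $i\bh_{\al\beb}\nabla_0$ in the second, and (after raising an index with $\bh$) equals $-A_{\al\ga}\nabla^{\ga}$ in the third. So the claim reduces to identifying the three curvature components of $\scrK$ and multiplying by $-\tfrac{w-w'}{n+2}$.

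Next I would identify $\scrK=\Lambda^{n+1}\La^{1,0}M$ with $\Lambda^{n}\cE_{\al}$ as a bundle with connection, up to a parallel twist: under the splitting determined by $\theta$ one has $\La^{1,0}M\cong\cE_{\al}\oplus\bbC\theta$ with $\nabla$ preserving the summands, and $\nabla\theta=0$, so the curvature of $\scrK$ equals the curvature of $\Lambda^{n}\cE_{\al}$, which is the $T^{1,0}M$-trace of the Tanaka-Webster curvature acting on $\cE_{\al}$; in other words the three components are $R_{\al\be}{^{\ga}}_{\ga}$, $R_{\al\beb}{^{\ga}}_{\ga}$ and $R_{\al0}{^{\ga}}_{\ga}$. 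Evaluating these is then bookkeeping with identities already in hand: \eqref{BianchiSymm3Conseq} together with the symmetry of $A_{\al\be}$ gives $R_{\al\be}{^{\ga}}_{\ga}=0$; parallelism of $\bh_{\al\beb}$ (the identity $R_{ab\ga\deb}=-R_{ab\deb\ga}$ from Section \ref{sub:Full-Tanaka-Webster-Curvature}) gives $R_{\al\beb}{^{\ga}}_{\ga}=-R_{\al\beb}$; and \eqref{BianchiSymm4} with the same identity gives $R_{\al0}{^{\ga}}_{\ga}=-\nabla^{\ga}A_{\al\ga}$. Substituting, multiplying by $-\tfrac{w-w'}{n+2}$, and using $A_{\al\ga}=A_{\ga\al}$ yields the three displayed formulas.

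The only genuinely delicate point is the identification of $\scrK$ with $\Lambda^{n}\cE_{\al}$ up to a parallel twist, hence the fact that the curvature of $\scrK$ is the holomorphic trace $R_{ab}{^{\ga}}_{\ga}$ of the Tanaka-Webster curvature; after that the argument is routine. One could instead skip the reduction to $\cE(1,0)$ and write the curvature of $\cE(w,w')$ as $-\tfrac{w}{n+2}$ times that of $\scrK$ plus $-\tfrac{w'}{n+2}$ times its complex conjugate; this is equally elementary but involves a couple more conjugations, and the factor $w-w'$ then emerges because in the $(\al,\beb)$ and $(\al,0)$ slots the conjugate curvature is the negative of the unconjugated one, while in the $(\al,\be)$ slot both vanish.
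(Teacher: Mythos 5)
Your argument is correct and matches the paper's proof in all essentials: both reduce the statement to the curvature of $\scrK$, identify $\scrK$ with $\Lambda^{n}\cE_{\al}$ as a bundle with connection using the parallelism of $T$ (equivalently of $\theta$) and the fact that $\nabla$ preserves $T^{1,0}M$, and then evaluate the traced curvature components via \eqref{BianchiSymm3}, \eqref{BianchiSymm4} and the identity $R_{ab\ga\deb}=-R_{ab\deb\ga}$. The only cosmetic difference is in passing from $\scrK$ to general $(w,w')$: you factor $\cE(w,w')=\cE(w-w',0)\otimes\cE(w',w')$ and invoke flatness of the diagonal bundles (Lemma \ref{TWonDiagonalDensitiesLemma}), while the paper builds up through $(-1,0)$, $(0,-1)$ and tensor products --- the same bookkeeping in a slightly different order.
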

\begin{proof}
We first consider sections $f=\zeta$ of $\cE(-n-2,0)=\scrK$. The map 
\begin{equation*}
\zeta\mapsto (T\hook \zeta)|_{T^{1,0}M}
\end{equation*}
induces an isomorphism between the complex line bundles $\scrK$ and $\Lambda^n \left(T^{1,0}M\right)^*$. This isomorphism between $\scrK$ and $\cE_{[\al_1\cdots \al_n]}$ intertwines the action of the Tanaka-Webster connection $\nabla$ since the Reeb vector field $T$ is parallel and $\nabla$ preserves $T^{1,0}M$, so the two line bundles have the same curvature. The curvature of the top exterior power $\cE_{[\al_1\cdots \al_n]}$ of $\cE_{\al}$ is simply obtained by tracing the curvature of $\cE_{\al}$. 

Now let $f$ be a section of $\cE(-n-2,0)=\scrK$. By tracing the conjugate of \eqref{BianchiSymm3} and using the appropriate Ricci-type identity for $R_{\alb\beb}{^{\ga}}_{\de}$ we therefore obtain
\begin{equation*}
\nabla_{\alb}\nabla_{\beb}f-\nabla_{\beb}\nabla_{\alb}f  = 0
\end{equation*}
since  $R_{\alb\beb}{^{\ga}}_{\ga}=0$. Similarly, using that $R_{\al\be\ga\deb}=-R_{\al\be\deb\ga}$, we obtain from \eqref{BianchiSymm3} that $R_{\al\be}{^{\ga}}_{\ga}=0$ and hence
\begin{equation*}
\nabla_{\al}\nabla_{\be}f-\nabla_{\be}\nabla_{\al}f  =0.
\end{equation*}
Using that $R_{\al\beb}{^{\ga}}_{\ga}=R_{\al\beb}{_{\gab}}^{\gab}=-R_{\al\beb}{^{\gab}}_{\gab}=-R_{\al\beb}$
and the appropriate Ricci-type identity for $R_{\al\beb}{^{\ga}}_{\de}$ we get
\begin{equation*}
\nabla_{\al}\nabla_{\beb}f-\nabla_{\beb}\nabla_{\al}f+i\boldsymbol{h}_{\al\beb}\nabla_{0}f  =-R_{\al\beb}f.
\end{equation*}
Finally, by tracing the conjugate of \eqref{BianchiSymm4} we obtain 
\begin{equation*}
\nabla_{\alb}\nabla_{0}f-\nabla_{0}\nabla_{\alb}f-A_{\alb}^{\ga}\nabla_{\ga}f  =(\nabla_{\ga}A_{\alb}^{\ga})f
\end{equation*}
and using $R_{\al0\ga\deb}=-R_{\al0\deb\ga}$ we obtain from \eqref{BianchiSymm4} that $R_{\al0}{^{\ga}}_{\de}=-\nabla^{\ga}A_{\al\de}$ so that
\begin{equation*}
\nabla_{\al}\nabla_{0}f-\nabla_{0}\nabla_{\al}f-A_{\al}^{\gab}\nabla_{\gab}f  =-(\nabla^{\ga}A_{\al\ga})f.
\end{equation*}
This establishes the proposition for $(w,w')$ equal to $(-n-2,0)$, and for $(w,w')$ equal to $(0,-n-2)$ by conjugating.

Considering the action of the curvature operator(s) on $(n+2)^{th}$ powers of sections of $\cE(-1,0)$ and $\cE(0,-1)$ we obtain the result for $(w,w')$ equal to $(-1,0)$ or $(0,-1)$. Taking powers and tensor products then gives the full proposition.
\end{proof}

\subsection{Changing Contact Form}\label{ChangingContactForm}

Here we establish how the various pseudohermitian objects we have introduced transform
under conformal rescaling of the contact form. 
The first thing to consider is the Reeb vector field:
\begin{lem}\label{ReebTransform}
Under the transformation $\thetah=e^{\Ups}\theta$ of pseudohermitian contact forms, $\Ups\in C^{\infty}(M)$, the Reeb vector field transforms according to
\begin{equation}\label{hatT}
\hat{T}=e^{-\Ups}\left[T+\left((\d \Ups) |_{H}\circ J\right)^{\sharp}\right]
\end{equation}
where $\sharp$ denotes the usual isomorphism $H^*\rightarrow H$ induced by the 
bundle metric $\d\theta(\cdot,J\cdot)|_H$ on the contact distribution.
\end{lem}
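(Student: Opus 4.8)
The plan is to determine $\hat T$ from its two defining properties $\thetah(\hat T)=1$ and $\hat T\hook\d\thetah=0$ (the Reeb field exists and is unique by the discussion in Section ``The Reeb Vector Field''), and to read off \eqref{hatT}. First I would use the splitting $TM=H\oplus\bbR T$ attached to $\theta$ to write $\hat T=aT+Z$ with $a\in C^\infty(M)$ and $Z\in\Gamma(H)$. Since $Z\in\ker\theta$, the normalisation $\thetah(\hat T)=e^{\Ups}\theta(\hat T)=1$ forces $a=e^{-\Ups}$, and in particular $\theta(\hat T)=e^{-\Ups}$.

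Next I would compute $\d\thetah=e^{\Ups}(\d\Ups\wedge\theta+\d\theta)$ and impose $\hat T\hook\d\thetah=0$. Contracting this identity against an arbitrary $Y\in\Gamma(H)$: the $\d\Ups(\hat T)\theta(Y)$ term vanishes because $\theta(Y)=0$, and the $\d\theta(T,Y)$ term drops because $T\hook\d\theta=0$, so what survives is $\d\theta(Z,Y)=e^{-\Ups}\d\Ups(Y)$ for all $Y\in\Gamma(H)$. Writing $g_H:=\d\theta(\cdot,J\cdot)|_H$ for the symmetric nondegenerate bundle metric on $H$ and substituting $Y=JX$ (legitimate since $J$ is an automorphism of $H$), the left-hand side becomes $\d\theta(Z,JX)=g_H(Z,X)$, so the equation reads $g_H(Z,X)=e^{-\Ups}\bigl((\d\Ups)|_H\circ J\bigr)(X)$ for all $X\in\Gamma(H)$; by nondegeneracy of $g_H$ this has the unique solution $Z=e^{-\Ups}\bigl((\d\Ups)|_H\circ J\bigr)^{\sharp}$, where $\sharp$ is the musical isomorphism of $g_H$. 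Combining with $a=e^{-\Ups}$ gives precisely the claimed formula \eqref{hatT}.

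Finally I would check that contracting $\hat T\hook\d\thetah=0$ against $T$ itself yields no extra constraint, which confirms consistency. Using $\theta(T)=1$, $\theta(\hat T)=e^{-\Ups}$ and $T\hook\d\theta=0$, that contraction reduces to $\d\Ups(\hat T)=e^{-\Ups}\d\Ups(T)$, i.e. to $\d\Ups(Z)=0$; and for any $1$-form $\eta$ on $H$ one computes $g_H\bigl(\eta^{\sharp},(\eta\circ J)^{\sharp}\bigr)=(\eta\circ J)(\eta^{\sharp})=\eta(J\eta^{\sharp})=g_H(\eta^{\sharp},J\eta^{\sharp})=\d\theta(\eta^{\sharp},J^2\eta^{\sharp})=-\d\theta(\eta^{\sharp},\eta^{\sharp})=0$, so taking $\eta=(\d\Ups)|_H$ gives $\d\Ups(Z)=e^{-\Ups}g_H\bigl(\eta^{\sharp},(\eta\circ J)^{\sharp}\bigr)=0$ as needed. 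This is a routine lemma; there is no real obstacle, the only thing to watch being the bookkeeping with $J$ (and the accompanying sign conventions) when passing between $\d\theta$ and $g_H$ in the identification of $Z$ via $\sharp$.
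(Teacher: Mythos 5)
Your proof is correct and rests on the same key computation as the paper's: pairing the condition $\hat{T}\hook\d\thetah=0$ with $J$-rotated contact vectors so that the nondegenerate form $\d\theta(\cdot,J\cdot)|_H$ identifies the $H$-component of $\hat{T}$. The only difference is organisational --- you derive the formula from the two defining properties and then check the transverse direction by contracting against $T$ (which costs you the extra, correctly verified, identity $\d\Ups(Z)=0$), whereas the paper verifies the stated formula directly and disposes of the transverse direction by contracting against $\hat{T}$ itself, where antisymmetry of $\d\thetah$ makes the check immediate.
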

\begin{proof}
Defining $\hat{T}$ by \eqref{hatT} one has $\thetah(\hat{T})=1$ and
\begin{equation*}
\hat{T}\hook\d\thetah =\hat{T}\hook\d(e^{\Ups}\theta)= \d\Ups(\hat{T})\thetah-\d\Ups+e^{\Ups}\hat{T}\hook\d\theta.
\end{equation*}
We observe that
\begin{equation*}
\d\thetah(\hat{T},JY) = -\d\Ups(JY)+\d\theta(e^{\Ups}\hat{T},JY) =0
\end{equation*}
for any $Y\in\Gamma(H)$, using that $\thetah(JY)=0$ and 
\begin{equation*}
\d\theta(e^{\Ups}\hat{T},JY) = \d\theta(\left((\d \Ups) |_{H}\circ J\right)^{\sharp},JY)=\d\Ups(JY).
\end{equation*}
Now since $\d\thetah(\hat{T},\hat{T})=0$ we have $\hat{T}\hook\d\thetah=0$.
\end{proof}
If $\eta$ is a 1-form whose restrictions to $\cE^{\al}$ and $\cE^{\alb}$ are $\eta_{\al}$ and $\eta_{\alb}$ respectively then $(\eta |_H)^{\sharp}$ is a contact vector field whose antiholomorphic component is $h^{\be\alb}\eta_{\be}$ and whose holomorphic component is $h^{\al\beb}\eta_{\beb}$. It is easy to see that the restriction of $(\d \Ups) |_{H}\circ J$ to $\cE^{\al}$ is $i\nabla_{\al} \Ups$ and the restriction to $\cE^{\alb}$ is $-i\nabla_{\alb} \Ups$. These observations imply:
\begin{lem}\label{0ComponentTransform}
If a 1-form $\omega$ has components $(\:\!\omega_{\al},\omega_{\alb},\omega_0)$ with respect to some contact form $\theta$, then the components of $\omega$ with respect to $e^{\Ups}\theta$ are
\begin{equation*}
(\:\!\omega_{\al},\:\omega_{\alb},\:\omega_0+i\Ups^{\alb}\omega_{\alb}-i\Ups^{\al}\omega_{\al})
\end{equation*}
where $\Ups^{\alb}=\bh^{\be\alb}\nabla_{\be}\Ups$ and $\Ups^{\al}=\bh^{\al\beb}\nabla_{\beb}\Ups$.
\end{lem}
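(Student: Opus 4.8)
\emph{The plan} is to observe that passing from $\theta$ to $\thetah=e^\Ups\theta$ changes only the $0$-component of $\omega$, and to compute that change directly from the transformation rule for the Reeb vector field. The components $\omega_\al$ and $\omega_\alb$ are, by definition, the restrictions of $\omega$ to $T^{1,0}M$ and $T^{0,1}M$; these subbundles of $\bbC TM$ are intrinsic to the CR structure and involve no choice of contact form, so $\hat\omega_\al=\omega_\al$ and $\hat\omega_\alb=\omega_\alb$ with nothing to prove. The entire content of the lemma is therefore the formula for $\hat\omega_0$.

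For that, I would start from $\hat\omega_0=\hat\vsig\,\omega(\hat T)$, where $\hat\theta=\hat\vsig\btheta$; since $\hat\theta=e^\Ups\vsig\btheta$ we have $\hat\vsig=e^\Ups\vsig$. Substituting $\hat T=e^{-\Ups}\bigl[T+((\d\Ups)|_H\circ J)^\sharp\bigr]$ from Lemma~\ref{ReebTransform} and cancelling the exponential factors gives $\hat\omega_0=\omega_0+\vsig\,\omega(V)$, where $V:=((\d\Ups)|_H\circ J)^\sharp\in\Gamma(H)$. Since $V$ lies in $H$ its $\btheta$-component vanishes, so $\omega(V)=\omega_\al V^\al+\omega_\alb V^\alb$; and by the two observations recorded immediately before the statement, $V$ has holomorphic part $h^{\al\beb}\eta_\beb=-i\,h^{\al\beb}\nabla_\beb\Ups$ and antiholomorphic part $h^{\be\alb}\eta_\be=i\,h^{\be\alb}\nabla_\be\Ups$, where $\eta=(\d\Ups)|_H\circ J$. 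Rewriting $h^{\al\beb}=\vsig^{-1}\bh^{\al\beb}$ to pass to the CR Levi form turns these into $\vsig V^\al=-i\Ups^\al$ and $\vsig V^\alb=i\Ups^\alb$, whence $\vsig\,\omega(V)=i\Ups^\alb\omega_\alb-i\Ups^\al\omega_\al$ and therefore $\hat\omega_0=\omega_0+i\Ups^\alb\omega_\alb-i\Ups^\al\omega_\al$, as claimed.

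The argument is short and essentially forced once Lemma~\ref{ReebTransform} and the two preparatory observations are in hand, so no genuine obstacle arises. The only place demanding care — and where a stray sign or factor could creep in — is the bookkeeping of CR density weights: keeping straight that $\omega_0$ and $\vsig$ carry weight $(-1,-1)$ while $V$ is an honest (unweighted) vector field, and correctly inserting the relation $\bh^{\al\beb}=\vsig h^{\al\beb}$ when converting the answer from $h^{\al\beb}\nabla_\beb\Ups$ to $\Ups^\al$ (and similarly for the antiholomorphic index), together with the factor in $\d\theta(\cdot,J\cdot)$ relating $\d\theta$ to the Levi form.
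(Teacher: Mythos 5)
Your proof is correct and follows exactly the route the paper intends: the paper derives this lemma directly from Lemma \ref{ReebTransform} together with the two preparatory observations about the components of $((\d\Ups)|_H\circ J)^\sharp$, which is precisely the computation you carry out, with the density bookkeeping ($\hat\vsig=e^\Ups\vsig$, $h^{\al\beb}=\vsig^{-1}\bh^{\al\beb}$) handled correctly. Nothing to add.
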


\subsubsection{The Tanaka-Webster transformation laws}

We need to see how the Tanaka-Webster connection transforms under rescaling of the contact form.
\begin{prop}\label{TWtransform}
Under the transformation $\thetah=e^{\Ups}\theta$ of pseudohermitian contact forms, $\Ups\in C^{\infty}(M)$, the Tanaka-Webster connection on sections $\tau_{\beta}$ of $\mathcal{E}_{\beta}$ transforms according to
\begin{align}
\label{TWhhTransform} \nablah_{\al}\tau_{\be} & =\nabla_{\al}\tau_{\be}-\Upsilon_{\be}\tau_{\al}-\Upsilon_{\al}\tau_{\be}\\ 
\label{TWahTransform}\nablah_{\alb}\tau_{\be} & =\nabla_{\alb}\tau_{\be}+\boldsymbol{h}_{\be\alb}\Upsilon^{\ga}\tau_{\ga}\\
\label{TW0hTransform}\nablah_{0}\tau_{\be} & =\nabla_{0}\tau_{\be}+i\Upsilon^{\gab}\nabla_{\gab}\tau_{\be}-i\Upsilon^{\ga}\nabla_{\ga}\tau_{\be}-i(\Upsilon^{\ga}{_{\be}}-\Upsilon^{\ga}\Upsilon_{\be})\tau_{\ga}
\end{align}
where $\Ups_{\al}=\nabla_{\al}\Ups$, $\Ups_{\alb}=\nabla_{\alb}\Ups$,
$\Ups_{\al\beb}=\nabla_{\beb}\nabla_{\al}\Ups$, and indices
are raised using $\bh^{\al\beb}$.
\end{prop}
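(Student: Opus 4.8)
The plan is to use that the Tanaka--Webster connection is characterised by metric compatibility together with the torsion normalisation, and that this characterisation is most conveniently exploited in the commutator form \eqref{torsion1}--\eqref{PHtorsion}. Since $T^{1,0}M$ is intrinsic to the CR structure, both $\nabla$ and $\nablah$ induce connections on $\cE_\be=(T^{1,0}M)^{*}$, so their difference is a tensor; I would write $\nablah_a\tau_\be=\nabla_a\tau_\be+\Pi_{a\be}{}^\ga\tau_\ga$ and decompose the form index $a$ with respect to the $\theta$-splitting \eqref{CTM-Decomp} into components $\Pi_{\al\be}{}^\ga$, $\Pi_{\alb\be}{}^\ga$ and the $\btheta$-component $\Pi_{0\be}{}^\ga$ (that is, $\vsig$ times the part along the Reeb field $T$ of $\theta$). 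Determining these three components is, modulo one further ingredient for the $0$-direction, exactly the content of \eqref{TWhhTransform}--\eqref{TW0hTransform}. Two auxiliary observations are used throughout: first, for a function $f$ one has $\nablah_\al f=\nabla_\al f$ and $\nablah_{\alb}f=\nabla_{\alb}f$ (the restriction of $\d f$ to $T^{1,0}M$ does not depend on $\theta$), while $\nablah_0 f$ is given by Lemma~\ref{0ComponentTransform}; second, by Lemma~\ref{TWonDiagonalDensitiesLemma} together with $\hat\vsig=e^{\Ups}\vsig$, on the diagonal density bundles $\nablah-\nabla$ acts as $+\d\Ups$ on $\cE(1,1)$ and as $-\d\Ups$ on $\cE(-1,-1)$; in particular $\nabla\bh=0=\nablah\bh$.

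The holomorphic and antiholomorphic parts of $\Pi$ come out quickly. Applying \eqref{torsion2} for $\nablah$ to a function $f$ and using $\nablah_\be f=\nabla_\be f$ forces $\Pi_{[\al\be]}{}^\ga=0$, so $\Pi$ is symmetric in its two holomorphic lower indices. Applying \eqref{torsion1} for $\nablah$ to $f$, substituting the expression for $\nablah_0 f$ from Lemma~\ref{0ComponentTransform}, and separating the coefficients of $\nabla_\ga f$ and of $\nabla_{\gab}f$ (pointwise independent) gives $\Pi_{\alb\be}{}^\ga=\bh_{\be\alb}\Ups^\ga$, which is \eqref{TWahTransform}, together with its conjugate $\bh_{\al\beb}\Ups^{\gab}$ for the induced connection-difference on $\cE_{\gab}$. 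Expanding $0=\nablah_\al\bh_{\be\gab}$, using $\nabla\bh=0$, the value just found, and the $\cE(1,1)$-action of $\nablah-\nabla$ noted above, then yields $\Pi_{\al\be}{}^\ga=-\de_\al{}^\ga\Ups_\be-\de_\be{}^\ga\Ups_\al$, which is \eqref{TWhhTransform}; this is manifestly symmetric, consistently with the previous step.

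For \eqref{TW0hTransform} the extra ingredient is that the Reeb field itself changes. Using Lemma~\ref{ReebTransform}, write $\hat T=e^{-\Ups}(T+W)$ with $W\in\Gamma(H)$ whose holomorphic and antiholomorphic components satisfy $\vsig W^\ga=-i\Ups^\ga$ and $\vsig W^{\gab}=i\Ups^{\gab}$ (read off exactly as in the derivation of Lemma~\ref{0ComponentTransform}). Since $\nablah_0\tau_\be=\hat\vsig\,\nablah_{\hat T}\tau_\be$, this gives
\[
\nablah_0\tau_\be=\nabla_0\tau_\be-i\Ups^\ga\nablah_\ga\tau_\be+i\Ups^{\gab}\nablah_{\gab}\tau_\be+\Pi_{0\be}{}^\ga\tau_\ga ,
\]
and substituting the components of $\nablah$ already determined reduces the claim to identifying $\Pi_{0\be}{}^\ga$. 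This last component is pinned down by applying \eqref{PHtorsion} for $\nablah$ to a function $f$: expanding $\nablah_\al\nablah_0 f-\nablah_0\nablah_\al f$ by means of the pieces of $\Pi$ already computed, Lemma~\ref{0ComponentTransform}, and the $\nabla$-identities \eqref{torsion1} and \eqref{PHtorsion} (invoking Proposition~\ref{TWDensityCurvature} wherever a second derivative of $\Ups$ lands on a density-weighted slot), one obtains a sum of a $\nabla_{\gab}f$-term, a $\nabla_\ga f$-term, an $\nabla_0 f$-term and a bare-$f$ term. The last two cancel by \eqref{PHtorsion} for $\nabla$; matching the $\nabla_{\gab}f$-coefficient against $\hat A^{\gab}{}_\al$ gives the transformation of the pseudohermitian torsion (and checks that $\hat A_{\al\be}$ is symmetric); and, crucially, since there is no $\nabla_\ga f$-term on the right-hand side, the $\nabla_\ga f$-coefficient on the left --- which is exactly where $\Pi_{0\be}{}^\ga$ appears --- must vanish, giving $\Pi_{0\be}{}^\ga=-i\Ups^\ga{}_\be-i\Ups^\ga\Ups_\be-i\Ups^\de\Ups_\de\,\de_\be{}^\ga$. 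Reinserting this produces \eqref{TW0hTransform}.

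The main obstacle is precisely this last step. The change of the Reeb field entangles the change of connection in the $0$-direction with the correction $W\in\Gamma(H)$, and $\nabla_\al$ and $\nabla_0$ do not commute, so \eqref{PHtorsion} (and, at one point, the density curvature of Proposition~\ref{TWDensityCurvature}) must be applied with care. Compounding this, the computation is sensitive to density weights throughout --- $\bh_{\al\beb}$ is $(1,1)$-valued while $\Ups^\al$ and $\nabla_0 f$ are $(-1,-1)$-valued --- so the $\cE(\pm 1,\pm 1)$-action of $\nablah-\nabla$ from Lemma~\ref{TWonDiagonalDensitiesLemma} has to be inserted at every intermediate step; this bookkeeping, rather than any conceptual subtlety, is where slips are most likely.
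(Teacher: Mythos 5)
Your proposal is correct, but it runs in the opposite direction to the paper's argument. The paper \emph{defines} a connection $\nablah$ on $\cE_{\be}$ by the three displayed formulae, extends it to $TM$ by conjugation and by declaring $\thetah$ parallel, and then \emph{verifies} the defining properties of the Tanaka--Webster connection of $\thetah$ (preservation of the Levi form $\hat h$ and the three torsion normalisations applied to functions), concluding by uniqueness and reading off $\hat A_{\al\be}$ along the way. You instead \emph{derive} the difference tensor $\Pi_{a\be}{}^{\ga}$ from those same defining properties, one component at a time: \eqref{torsion2} forces symmetry, \eqref{torsion1} together with Lemma \ref{0ComponentTransform} pins down $\Pi_{\alb\be}{}^{\ga}=\bh_{\be\alb}\Ups^{\ga}$, compatibility with the weighted Levi form (using the $\cE(1,1)$-action of $\nablah-\nabla$ and the conjugate of the previous step) pins down $\Pi_{\al\be}{}^{\ga}$, and \eqref{PHtorsion} --- specifically the vanishing of the $\nabla_{\ga}f$-coefficient of $\nablah_{\al}\nablah_{0}f-\nablah_{0}\nablah_{\al}f$ --- pins down $\Pi_{0\be}{}^{\ga}$ once the Reeb-field correction $\vsig W^{\ga}=-i\Ups^{\ga}$, $\vsig W^{\gab}=i\Ups^{\gab}$ from Lemma \ref{ReebTransform} has been split off. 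I have checked the values you obtain, including $\Pi_{0\be}{}^{\ga}=-i\Ups^{\ga}{}_{\be}-i\Ups^{\ga}\Ups_{\be}-i\Ups^{\de}\Ups_{\de}\,\de_{\be}^{\ga}$, and they reassemble correctly into \eqref{TWhhTransform}--\eqref{TW0hTransform}. What your route buys is an explanation of where the formulae come from rather than a check of a formula pulled from the air; what it costs is that it presupposes existence of the Tanaka--Webster connection of $\thetah$ (fine, since that is the cited result of Tanaka, but worth stating --- the paper's verification simultaneously re-proves existence).

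Two small inaccuracies in your narrative of the last step, neither of which affects the argument. First, with $f$ an unweighted function there is no ``bare-$f$'' term to cancel; the only cancellation is of the $\nabla_{0}f$ terms, and it comes from \eqref{torsion1} (the contribution $i\Ups^{\gab}(-i\bh_{\al\gab}\nabla_{0}f)=\Ups_{\al}\nabla_{0}f$ against the $-\Ups_{\al}\nabla_{0}f$ arising from $-\Ups_{\al}\nablah_{0}f$), not from \eqref{PHtorsion}, whose role is only to supply the $A^{\gab}{}_{\al}\nabla_{\gab}f$ term that feeds into the transformation law for the torsion. Second, Proposition \ref{TWDensityCurvature} is not actually needed anywhere: no covariant derivatives are ever commuted on a density-weighted object in this computation, only the first-order density actions of Lemma \ref{TWonDiagonalDensitiesLemma} in contact directions.
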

\begin{rem*}
Note that on the left hand side of the last equation in
the above display the `0-component' is taken with respect to the
splitting of the cotangent bundle induced by $\thetah$, whereas on
the right hand side it is taken with respect to the splitting of the
cotangent bundle induced by $\theta$ (recall Sections \ref{DensitiesAndScales} and \ref{DecomposingTensors}). In other words the operator
$\nablah_{0}$ is taken to be $\hat{\vsig}\nablah_{\hat{T}}$ where
$\thetah=\hat{\vsig}\btheta$, whereas $\nabla_{0}=\vsig\nabla_{T}$ where
$\theta=\vsig\btheta$. Note that $\btheta(\vsig T)=\theta(T)=1$ and similarly $\btheta(\hat{\vsig}\hat{T})=1$, so $\vsig T$ and $\hat{\vsig}\hat{T}$ are natural weighted versions of the Reeb vector fields of $\theta$ and $\thetah$ respectively. 
\end{rem*}
\begin{proof}[Proof of Proposition \ref{TWtransform}] We define the connection $\nablah$ on $\cE_{\be}$ by the formulae above, and extend $\nablah$ to a connection on $TM$ in the obvious way. Precisely, we define $\nablah$ to act on $\cE_{\beb}$ by the conjugates of the above formulae, so that, e.g., $\nablah_{\al}\tau_{\beb}$ is the conjugate of $\nablah_{\alb}\tau_{\be}$ with $\tau_{\be}=\overline{\tau_{\beb}}$. This gives a connection on $\bbC H^*$ which preserves the real subbundle $H^*$ and preserves $J$. Thus by requiring $\thetah$ to be parallel for $\nablah$ we obtain a connection on $T^*M$ and hence on $TM$. To show that this is the Tanaka-Webster connection of $\thetah$ it remains only to show that $\nablah g_{\thetah}=0$ and to verify the torsion conditions.

To show that $\nablah g_{\thetah}=0$ it is sufficient to show that $\nablah$ preserves the Levi form $\hat{h}$ is of $\thetah$. This is a computation using the formulae in the proposition: By the Leibniz rule we have
\begin{equation*}
\nablah_{\al}(\tau_{\be}\zeta_{\gab})=\nabla_{\al}(\tau_{\be}\zeta_{\gab})-\Ups_{\al}(\tau_{\be}\zeta_{\gab})-\Ups_{\be}(\tau_{\al}\zeta_{\gab})+\bh_{\al\gab}\Ups^{\deb}(\tau_{\be}\zeta_{\deb})
\end{equation*}
for a simple section of $\cE_{\be\gab}$. By $\bbC$-linearity we obtain
\begin{equation*}
\nablah_{\al}\hat{h}_{\be\gab}=\nabla_{\al}\hat{h}_{\be\gab}-\Ups_{\al}\hat{h}_{\be\gab}-\Ups_{\be}\hat{h}_{\al\gab}+\bh_{\al\gab}\Ups^{\deb}\hat{h}_{\be\deb};
\end{equation*}
the terms on the right hand side of the above display cancel
in pairs since $\hat{h}_{\be\gab}=e^{\Ups}h_{\be\gab}$. By conjugate
symmetry we also get $\nablah_{\alb}\hat{h}_{\be\gab}=0$. Similarly one computes that
\begin{align*}
\nablah_{0}\hat{h}_{\al\beb} & =\nabla_{0}\hat{h}_{\al\beb}+i\Ups^{\gab}\nabla_{\gab}\hat{h}_{\al\beb}-i\Ups^{\ga}\nabla_{\ga}\hat{h}_{\al\beb}\\
 & \quad -i\left(\Ups^{\ga}{_{\al}}-\Ups^{\ga}\Ups_{\al}\right)\hat{h}_{\ga\beb}+i\left(\Ups^{\gab}{_{\beb}}-\Ups^{\gab}\Ups_{\beb}\right)\hat{h}_{\al\gab}.
\end{align*}
The terms on the right hand side of the above display
all cancel since $\nabla_{0}\hat{h}_{\al\beb}=\Ups_{0}\hat{h}_{\al\beb}$
and $\Upsilon_{\al\beb}-\Ups_{\beb\al}=i\bh_{\al\beb}\Ups_{0}$,
where $\Ups_{0}=\nabla_{0}\Ups$ and $\Ups_{\beb\al}=\nabla_{\al}\nabla_{\beb}\Ups$.

Substituting $\nablah_{\be}f$ ($=\nabla_{\be}f$) for $\tau_{\be}$
in equation \eqref{TWhhTransform} we see that
\begin{equation*}
\nablah_{\alpha}\nablah_{\beta}f-\nablah_{\beta}\nablah_{\alpha}f=0,
\end{equation*}
since $-\Upsilon_{\beta}\tau_{\alpha}-\Upsilon_{\alpha}\tau_{\beta}$
is symmetric. Similarly from \eqref{TWahTransform} we obtain
\begin{equation*}
\nablah_{\alpha}\nablah_{\beb}f-\nablah_{\beb}\nablah_{\alpha}f=-i\bh_{\alpha\bar{\beta}}\left(\nabla_{0}f+i\Ups^{\gab}\nabla_{\gab}f-i\Upsilon^{\ga}\nabla_{\ga}f \right),
\end{equation*}
and from Lemma \ref{0ComponentTransform} we
have that
\begin{equation} \label{0DerivativeTransform}
\nablah_{0}f=\nabla_{0}f+i\Ups^{\gab}\nabla_{\gab}f-i\Upsilon^{\ga}\nabla_{\ga}f.
\end{equation}

From \eqref{TW0hTransform} and \eqref{0DerivativeTransform} one has that
\begin{align}
\nonumber \nablah_{\alpha}\nablah_{0}f-\nablah_{0}\nablah_{\alpha}f= & \,\nablah_{\alpha}\left(\nabla_{0}f-i\Upsilon^{\ga}\nabla_{\ga}f+i\Ups^{\gab}\nabla_{\gab}f\right)\\*
\label{PrePHTorsionTransform} & \,-\left(\nabla_{0}\nabla_{\al}f+i\Ups^{\gab}\nabla_{\gab}\nabla_{\al}f-i\Ups^{\ga}\nabla_{\ga}\nabla_{\al}f\right.\\*
\nonumber & \,\,\qquad\qquad\qquad\quad\left.-i(\Ups^{\ga}{_{\al}}-\Ups^{\ga}\Upsilon_{\al})\nabla_{\ga}f\right).
\end{align}
One can easily compute directly that
\begin{align*}
\nablah_{\al}\left(\nabla_{0}f-i\Upsilon^{\ga}\nabla_{\ga}f+i\Ups^{\gab}\nabla_{\gab}f\right)& =\nabla_{\al}\left(\nabla_{0}f-i\Upsilon^{\ga}\nabla_{\ga}f+i\Ups^{\gab}\nabla_{\gab}f\right)\\*
& \quad +\Ups_{\al}\left(\nabla_{0}f-i\Upsilon^{\ga}\nabla_{\ga}f+i\Ups^{\gab}\nabla_{\gab}f\right)
\end{align*}
(cf.\ the proof of Proposition \ref{TWonDensitiesProp}). Substituting this into \eqref{PrePHTorsionTransform}, expanding using the Leibniz rule and simplifying one obtains 
\begin{equation*}
\nablah_{\alpha}\nablah_{0}f-\nablah_{0}\nablah_{\alpha}f=\left( A_{\al\ga}+i\Ups_{\al\ga}-i\Ups_{\al}\Ups_{\ga} \right)\nablah^{\ga}f
\end{equation*}
where $\Ups_{\al\ga}=\nabla_{\al}\nabla_{\ga}\Ups$ is symmetric.
\end{proof}
Note that in the course of the proof we have established the transformation law
\begin{equation}\label{PHTorsionTransform}
\hat{A}_{\al\be}=A_{\al\be}+i\Ups_{\al\be}-i\Ups_{\al}\Ups_{\be}
\end{equation}
for the pseudohermitian torsion.

\subsubsection{The transformation law for the pseudohermitian curvature tensor}

From the transformation laws for the Tanaka-Webster connection one can directly compute that
\begin{equation}
\hat{R}_{\al\beb\ga\deb}=R_{\al\beb\ga\deb}+\La_{\al\beb}\bh_{\ga\deb}+\La_{\ga\deb}\bh_{\al\beb}+\La_{\al\deb}\bh_{\ga\beb}+\La_{\ga\beb}\bh_{\al\deb}
\end{equation}
where
\begin{equation}\label{SchoutenTransformLambda}
\La_{\al\beb}=-\frac{1}{2}(\Ups_{\al\beb}+\Ups_{\beb\al})-\frac{1}{2}\Ups^{\ga}\Ups_{\ga}\bh_{\al\beb}.
\end{equation}
In particular this tells us that $\hat{S}_{\al\beb\ga\deb}=S_{\al\beb\ga\deb}$
and
\begin{equation}\label{SchoutenTransform}
\hat{P}_{\al\beb}=P_{\al\beb}+\La_{\al\beb}.
\end{equation}

\subsubsection{The transformation laws for the Tanaka-Webster connection on densities}\label{TWonDensities}

We also need to know how the Tanaka-Webster connection transforms
when acting on densities. These transformation laws follow from the
above since it suffices to compare the action of $\nabla$ and $\nablah$
on sections of the canonical bundle $\scrK$.
\begin{prop}\cite[Prop. 2.3]{GoverGraham-CRInvtPowers}\label{TWonDensitiesProp}
Under the transformation $\thetah=e^{\Ups}\theta$ of pseudohermitian contact forms, $\Ups\in C^{\infty}(M)$, the Tanaka-Webster connection acting on sections $f$ of $\mathcal{E}(w,w')$ transforms according to
\begin{align*}
\nablah_{\al}f & =\nabla_{\al}f+w\Ups_{\al}f\\
\nablah_{\alb}f & =\nabla_{\alb}f+w'\Ups_{\alb}f\\
\nablah_{0}f & =\nabla_{0}f+i\Ups^{\gab}\nabla_{\gab}f-i\Ups^{\ga}\nabla_{\ga}f\\*
 & \quad+\tfrac{1}{n+2}\left[(w+w')\Ups_{0}+iw\Ups^{\ga}{_{\ga}}-iw'\Ups^{\gab}{_{\gab}}+i(w'-w)\Ups^{\ga}\Ups_{\ga}\right]f.
\end{align*}
\end{prop}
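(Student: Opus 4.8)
The plan is to deduce all three formulae for general $(w,w')$ from the single case of the canonical bundle $\scrK=\cE(-n-2,0)$, i.e.\ the case $(w,w')=(-n-2,0)$, and to treat that case by tracing Proposition \ref{TWtransform}.

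For the reduction I would note that the statement propagates through the operations building every $\cE(w,w')$ out of $\scrK$. It is stable under complex conjugation (turning $(-n-2,0)$ into $(0,-n-2)$ and interchanging barred with unbarred indices, using that $\Ups_0$ and $\Ups^\ga\Ups_\ga$ are real and $\overline{\Ups^\ga{}_\ga}=\Ups^{\gab}{}_{\gab}$), and under tensor powers and roots, which all act linearly on the exponent: writing $\cE(w,w')=\cE(1,0)^w\otimes\overline{\cE(1,0)}^{w'}$ with $\cE(1,0)^{-(n+2)}=\scrK$, the $\al$- and $\alb$-laws are manifestly additive in $(w,w')$, and the $\nablah_0$-law has the shape $\nablah_0=\nabla_0+\bigl(i\Ups^{\gab}\nabla_{\gab}-i\Ups^{\ga}\nabla_{\ga}\bigr)+A_{(w,w')}$ in which the weight-independent first-order bracket --- which only records the change of weighted Reeb direction $\vsig T\rightsquigarrow\hat\vsig\hat T$ --- reproduces itself on a tensor product by the Leibniz rule, while the algebraic multipliers $A_{(w,w')}$ simply add, exactly the asserted $(w,w')$-linearity of $A_{(w,w')}$. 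So it suffices to treat $\scrK$.

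For $\scrK$ I would use the contact-form-dependent isomorphism $\Phi_\theta\colon\scrK\rarrow\cE_{[\al_1\cdots\al_n]}$, $\zeta\mapsto(T\hook\zeta)|_{T^{1,0}M}$, from the proof of Proposition \ref{TWDensityCurvature}, which intertwines the Tanaka--Webster connection $\nabla$ of $\theta$ on $\scrK$ with the (traced) Tanaka--Webster connection of $\theta$ on $\cE_{[\al_1\cdots\al_n]}$, and similarly $\Phi_{\hat\theta}$ for $\nablah$. By Lemma \ref{ReebTransform}, $\hat T-e^{-\Ups}T=e^{-\Ups}Z$ with $Z\in\Gamma(H)$; contracting $\zeta$ with the antiholomorphic part of $Z$ gives $0$, and contracting with the holomorphic part gives $0$ upon restriction to the rank-$n$ bundle $T^{1,0}M$ (then $\zeta$ has $n+1$ arguments there), so $\Phi_{\hat\theta}=e^{-\Ups}\Phi_\theta$. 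Combining the two intertwining relations, for $\zeta\in\Gamma(\scrK)$ and $\omega:=\Phi_\theta\zeta$ one obtains the $1$-form identity $\nablah\zeta=\Phi_\theta^{-1}(\nablah\omega)-\d\Ups\otimes\zeta$, the $\nablah$ on the right being that of $\hat\theta$ on $\cE_{[\al_1\cdots\al_n]}$. Tracing Proposition \ref{TWtransform} over $\cE^\al$ then gives $\nablah_\al\omega=\nabla_\al\omega-(n+1)\Ups_\al\omega$, $\nablah_{\alb}\omega=\nabla_{\alb}\omega+\Ups_{\alb}\omega$ (using $\bh_{\ga\alb}\Ups^\ga=\Ups_{\alb}$), and $\hat\vsig\nablah_{\hat T}\omega=\nabla_0\omega+i\Ups^{\gab}\nabla_{\gab}\omega-i\Ups^\ga\nabla_\ga\omega-i(\Ups^\ga{}_\ga-\Ups^\ga\Ups_\ga)\omega$ (the first-order terms passing through the trace, the algebraic term being contracted). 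Substituting these into the $1$-form identity and reading off components in the $\hat\theta$-splitting --- so that the $\d\Ups\otimes\zeta$ term must be re-expressed in the $\hat\theta$-splitting, where by Lemma \ref{0ComponentTransform} and the reality identity $i\Ups^{\gab}\Ups_{\gab}-i\Ups^\ga\Ups_\ga=0$ its $0$-component is just $\Ups_0\zeta$ --- one recovers exactly the three formulae with $(w,w')=(-n-2,0)$.

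I expect the only genuine difficulty to be bookkeeping: two splittings of $\bbC T^{*}M$ are in play at once, and one must consistently track which one each ``$0$-component'' refers to --- in particular when tracing the last transformation law of Proposition \ref{TWtransform} (already phrased in the $\hat\theta$-splitting) and when converting $\d\Ups\otimes\zeta$. The one step that is not purely mechanical is the reality cancellation $i\Ups^{\gab}\Ups_{\gab}-i\Ups^\ga\Ups_\ga=0$; with that, and the relation $\Phi_{\hat\theta}=e^{-\Ups}\Phi_\theta$ established, the rest is a routine Leibniz-and-trace computation.
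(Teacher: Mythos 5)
Your proposal is correct and follows essentially the same route as the paper's proof: the isomorphism $I_{\theta}:\zeta\mapsto(T\hook\zeta)|_{T^{1,0}M}$, the relation $I_{\thetah}=e^{-\Ups}I_{\theta}$ (proved by the same contact-plus-rank argument), the resulting conjugation of $\nablah$ on $\scrK$ to $\nablah-\d\Ups$ on $\cE_{[\al_1\cdots\al_n]}$, tracing Proposition \ref{TWtransform}, and then passing to general $(w,w')$ by conjugation and powers. You in fact supply more detail than the paper does at the steps it labels ``easily'' and ``in the obvious way'' (the trace identities for a top form, the $\hat\theta$-splitting of the $0$-component of $\d\Ups$, and the additivity of the weight-dependent term), and those details check out.
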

\begin{proof}
Since $\nabla$ preserves $T$ and $\Gamma(T^{1,0}M)$ the map 
\begin{equation*}
I_{\theta}:\zeta\mapsto (T\hook\zeta)|_{T^{1,0}M}
\end{equation*}
taking sections of $\scrK$ to sections of $\cE_{[\al_1\cdots\al_n]}$ commutes with $\nabla_X$ for all $X\in \mathfrak{X}(M)$. On the other hand $I_{\thetah}:\zeta\mapsto (\hat{T}\hook\zeta)|_{T^{1,0}M}$ intertwines the action of the connection $\nablah$. Now $I_{\thetah}=e^{-\Ups}\circ I_{\theta}$ since if $Y=\hat{T}-e^{-\Ups}T$ then $(Y\hook\zeta)|_{T^{1,0}M}=0$ for any $(n+1,0)$-form $\zeta$; to see this note that $Y$ is contact (by Lemma \ref{ReebTransform}) and the antiholomorphic part of $Y$ hooks into $\zeta$ to give zero, but also $\zeta|_{T^{1,0}M}=0$ since the rank of $T^{1,0}M$ is $n$. Thus we have
\begin{align*}
I_{\theta}(\nablah_X \zeta) & = e^{\Ups}I_{\thetah}(\nablah_X \zeta) = e^{\Ups}\nablah_X I_{\thetah}(\zeta) = e^{\Ups}\nablah_X [e^{-\Ups}I_{\theta}(\zeta)]\\ 
& = \nablah_X I_{\theta}(\zeta) - \d \Ups (X) I_{\theta}(\zeta)
\end{align*}
for any $X\in\fX(M)$, $\zeta \in \Gamma (\scrK)$. So the action of $\nablah$ on $\scrK$ is conjugate under $I_{\theta}$ to the action of $\nablah -\d \Ups$ on $\cE_{[\al_1\cdots\al_n]}$. One now easily translates using $I_{\theta}$ the transformation laws for the Tanaka-Webster connection on $\cE_{[\al_1\cdots\al_n]}$ (obtained from Proposition \ref{TWtransform} by taking traces) to the transformation laws for the Tanaka-Webster connection on the canonical bundle $\scrK$. The transformation laws for $\cE(w,w')$ then follow from those for $\scrK=\cE(-n-2,0)$ in the obvious way.
\end{proof}

\section{CR Tractor Calculus} \label{Tractor-Calc}

It is well known that nondegenerate (hypersurface type) CR geometries admit an equivalent description as parabolic Cartan geometries. The Cartan geometric description of CR manifolds was introduced by Cartan \cite{Cartan-RealHypInC2} in the case of $3$-dimensional CR manifolds, and by Tanaka \cite{Tanaka-pseudoConfConnection, Tanaka65, Tanaka76} and Chern and Moser \cite{Chern-Moser} in the general case. To a signature $(p,q)$ CR manifold $(M,H,J)$ there
is associated a canonical Cartan geometry $(\mathcal{G},\omega)$
of type $(\SU(p+1,q+1),P)$ where the subgroup $P$ of $\SU(p+1,q+1)$
is the stabiliser of a complex null line in $\bbC^{p+1,q+1}$. Moreover,
any local CR diffeomorphism of $(M,H,J)$ with another CR manifold
$(M',H',J')$ lifts to a local equivalence of the canonical Cartan geometries $(\mathcal{G},\omega)$ and $(\mathcal{G}',\omega')$. In the model case of the CR sphere $\mathcal{G}$ is simply the group $G=\SU(n+1,1)$ as a principal bundle over $\mathbb{S}^{2n+1}=G/P$ and $\omega$ is the left Maurer-Cartan form of $G$. Strictly speaking, if we do not wish to impose any global assumptions in the general case we need to quotient $\SU(p+1,q+1)$ and $P$ by their common finite cyclic center, but for the purpose of local calculus we can ignore this.

Given any representation $\mathbb{V}$ of $\SU(p+1,q+1)$ there is associated to the CR Cartan bundle $\mathcal{G}$ a vector bundle $\mathcal{V}=\mathcal{G}\times_P \mathbb{V}$ over $M$. The CR Cartan connection $\omega$ induces on $\mathcal{V}$ a linear connection $\nabla^{\mathcal{V}}$. Such bundles $\mathcal{V}$ are known as \emph{tractor bundles}, and the connection $\nabla^{\mathcal{V}}$ is the (canonical) \emph{tractor connection} \cite{CapGover-TracCalc}. If $\mathbb{T}$ is the standard representation $\bbC^{p+1,q+1}$ of $\SU(p+1,q+1)$ then $\mathcal{T}=\mathcal{G}\times_P \mathbb{T}$ is known as the \emph{standard tractor bundle}. Since $\mathbb{T}$ is a faithful representation of $P$ the CR Cartan bundle $\mathcal{G}$ may be recovered from $\cT$ as an adapted frame bundle. The Cartan connection $\omega$ is easily recovered from $\nabla^{\cT}$. Elementary representation theory tells us that all other irreducible representations of $\SU(p+1,q+1)$ are subbundles of tensor representations constructed from $\mathbb{T}$ (and $\mathbb{T^*}$) given by imposing certain tensor symmetries, so knowing the standard tractor bundle $\mathcal{T}$ and its tractor connection one can easily explicitly obtain all tractor bundles and connections. 

The tractor bundles and their tractor connections, along with certain invariant differential (splitting) operators from irreducible tensor bundles on the CR manifold into tractor bundles, form the basis of a calculus of local invariants and invariant operators for CR manifolds known as the \emph{CR tractor calculus}.

\subsection{The Standard Tractor Bundle}

There are various ways to construct the CR Cartan bundle and hence the standard tractor bundle. However for our purposes it is much better to use the direct construction of the CR standard tractor bundle and connection found in \cite{GoverGraham-CRInvtPowers}. This allows a very concrete description of the standard tractor bundle and connection in terms of the weighted tensor bundles and Tanaka-Webster calculus of Section \ref{Tanaka-Webster-Calc}. 

Since the subgroup $P$ of $\SU(p+1,q+1)$ stabilises a null complex line in $\mathbb{T}$ it also stabilises the orthogonal complement of this null line and so there is a filtration 
\begin{equation}\label{StandardTractorFiltration}
\cT^1\subset\cT^0\subset \cT^{-1}= \cT
\end{equation}
of $\cT$ by subbundles where $\cT^1$ has complex rank $1$ and $\cT^0$ has complex rank $n+1$ (and corank $1$). The starting point for the explicit construction of $\cT$ in \cite{GoverGraham-CRInvtPowers} is the observation that
\begin{equation}
\mathcal{T}^{1}=\cE(-1,0), \;\; \mathcal{T}^{0}/\mathcal{T}^1=\cE^{\al}(-1,0), \;\; \mathrm{and} \;\; \mathcal{T}^{-1}/\mathcal{T}^0=\cE(0,1).
\end{equation}

Let us introduce abstract index notation $\cE^{A}$ for $\cT$, allowing the use of capitalised Latin indices from the start of the alphabet. The dual of $\cT=\cE^{A}$ is denoted by $\cE_{A}$ and the conjugate by $\cE^{\Ab}$. Following \cite{GoverGraham-CRInvtPowers} we present the standard cotractor bundle $\cE_{A}$ rather than $\cE^{A}$ (it makes little difference since there will be a parallel Hermitian metric around). The bundle $\mathcal{E}_{A}$ comes with a naturally defined filtration, dual to \eqref{StandardTractorFiltration}. Given a choice of contact form $\theta$ for $(M,H,J)$ we identify the standard cotractor bundle $\mathcal{E}_{A}$ with
\begin{equation*}
[\cE_A]_{\theta}=\mathcal{E}(1,0)\oplus\mathcal{E}_{\alpha}(1,0)\oplus\mathcal{E}(0,-1);
\end{equation*}
we write $v_{A}\overset{\theta}{=}(\sigma,\tau_{\alpha},\rho)$, 
\begin{equation*}
v_{A}\overset{\theta}{=}\left(\begin{array}{c}
\sigma\\
\tau_{\alpha}\\
\rho
\end{array}\right), 
\quad \mathrm{or} \quad
[v_{A}]_{\theta}=\left(\begin{array}{c}
\sigma\\
\tau_{\alpha}\\
\rho
\end{array}\right)
\end{equation*}
if an element or section of $\mathcal{E}_{A}$ is represented by $(\sigma,\tau_{\alpha},\rho)$ with respect to this identification; the identifications given by two contact forms $\theta$ and $\hat{\theta}=e^{\Upsilon}\theta$
are related by the transformation law 
\begin{equation}\label{TractorTransformation}
[\cE_A]_{\theta} \ni
\left(\begin{array}{c}
\sigma\\
\tau_{\alpha}\\
\rho
\end{array}\right)
\sim
\left(
\begin{array}{ccc}
1 & 0 & 0\\
\Ups_{\al} & \delta^{\be}_{\al} & 0 \\
-\frac{1}{2}(\Ups^{\be}\Ups_{\be}+i\Ups_0) & - \Ups^{\be} & 1 
\end{array}
\right)\left(\begin{array}{c}
\sigma\\
\tau_{\be}\\
\rho
\end{array}\right)
\in [\cE_A]_{\thetah}
\end{equation}
where $\Upsilon_{\alpha}=\nabla_{\alpha}\Upsilon$ and $\Upsilon_{0}=\nabla_{0}\Upsilon$. This transformation law comes from the action of the nilpotent part $P_+$ of $P$ on $\mathbb{T}^*$ (see \cite{CapSlovak-ParabolicGeometries} for the general theory) so that $\sim$ is indeed an equivalence relation on the 
disjoint union of the spaces $[\cE_A]_{\theta}$. We can thus take the standard cotractor bundle $\mathcal{E}_{A}$ to be the quotient of the disjoint union of the $[\cE_A]_{\theta}$ over all pseudohermitian contact forms $\theta$ by the equivalence relation \eqref{TractorTransformation}.

\subsection{Splitting Tractors}\label{SplittingTractors}
From \eqref{TractorTransformation} it is clear that there is an invariant inclusion of $\cE(0,-1)$ into $\mathcal{E}_{A}$ given with respect to any contact form $\theta$ by the map
\begin{equation*}
\rho\mapsto\left(\begin{array}{c}
0\\
0\\
\rho
\end{array}\right).
\end{equation*}
Correspondingly there is an invariant section $Z_{A}$ of $\mathcal{E}_{A}(0,1)$
such that the above displayed map is given by $\rho\mapsto\rho Z_{A}$. The weight $(0,1)$ \emph{canonical tractor} $Z_A$ can be written as 
\begin{equation*}
Z_A=\left(\begin{array}{c}
0\\
0\\
1
\end{array}\right)
\end{equation*}
with respect to any choice of contact form $\theta$. 

Given a fixed choice of $\theta$, we also get the corresponding \emph{splitting tractors}
\begin{equation*}
W_A^{\be}\overset{\theta}{=}\left(\begin{array}{c}
0\\
\delta_{\al}^{\be}\\
0
\end{array}\right)
\quad \mathrm{and} \quad
Y_A\overset{\theta}{=}\left(\begin{array}{c}
1\\
0\\
0
\end{array}\right)
\end{equation*}
which both have weight $(-1,0)$. A standard cotractor $v_{A}\overset{\theta}{=}(\sigma,\tau_{\alpha},\rho)$ may instead be written as $v_{A}= \sigma Y_A + W^{\beta}_A\tau_{\beta}+\rho Z_A$ where we understand that $Y_A$ and $W_A^{\beta}$ are defined in terms of the splitting induced by $\theta$. If $\thetah=e^{\Ups}\theta$ then by \eqref{TractorTransformation} we have
\begin{align}
\label{Wtransformation}\hat{W}_A^{\be}&=W_A^{\be}+\Ups^{\be}Z_A, \\
\label{Ytransformation}\hat{Y}_A&=Y_A-\Ups_{\be}W_A^{\be}-\frac{1}{2}(\Ups_{\be}\Ups^{\be}-i\Ups_{0})Z_A.
\end{align}

\subsection{The Tractor Metric} 
Since the group $P$ preserves the inner product on $\mathbb{T}=\bbC^{p+1,q+1}$ the standard tractor bundle $\cT=\cG\times_P \mathbb{T}$ carries a natural signature $(p+1,q+1)$ Hermitian bundle metric. We denote this bundle metric by $h_{A\bar{B}}$, and its inverse by $h^{A\bar{B}}$. Explicitly the tractor metric $h_{A\bar{B}}$ is given with respect to any contact form $\theta$ by
\begin{equation}
h_{A\bar{B}}=Z_A Y_{\bar{B}} + \bh_{\al\beb}W^{\al}_A W^{\beb}_{\bar{B}} + Y_A Z_{\bar{B}}
\end{equation}
where $Z_{\bar{B}}$, $W^{\beb}_{\bar{B}}$, and $Y_{\bar{B}}$ are the respective conjugates of $Z_{B}$, $W^{\be}_{B}$, and $Y_{B}$. One can easily check directly using \eqref{Wtransformation} and \eqref{Ytransformation} that the above expression does not depend on the choice of $\theta$. Dually, the inverse tractor metric is given by
\begin{equation}\label{TractorMetric}
h^{A\bar{B}}v'_A \overline{v_B} = \sigma\overline{\rho'}+\boldsymbol{h}^{\alpha\bar{\beta}}\tau_{\alpha}\overline{\tau'_{\beta}}+\rho\overline{\sigma'}
\end{equation}
for any two sections $v_{A}\overset{\theta}{=}(\sigma,\tau_{\alpha},\rho)$ and $v'_{A}\overset{\theta}{=}(\sigma',\tau'_{\alpha},\rho')$ of $\cE_A$.

We use the tractor metric to identify $\cE^A$ with $\cE_{\bar{A}}$, the latter of which can be described explicitly as the disjoint union of the spaces
\begin{equation}
[\cE_{\bar{A}}]_{\theta}=\mathcal{E}(0,1)\oplus\mathcal{E}_{\alb}(0,1)\oplus\mathcal{E}(-1,0)
\end{equation}
(over all pseudohermitian contact forms $\theta$) modulo the equivalence relation obtained by conjugating \eqref{TractorTransformation}. Identifying $\mathcal{E}_{\alb}(0,1)$ with $\mathcal{E}^{\al}(-1,0)$ via the CR Levi form we write a standard tractor as  $v^{A}\overset{\theta}{=}(\sigma,\tau^{\alpha},\rho)$ or 
\begin{equation*}
v^{A}\overset{\theta}{=}\left(\begin{array}{c}
\sigma\\
\\
\tau^{\alpha}\\
\\
\rho
\end{array}\right)
\in
\begin{array}{c}
\mathcal{E}(0,1)\\
\oplus\\
\mathcal{E}^{\al}(-1,0)\\
\oplus\\
\mathcal{E}(-1,0)
\end{array}.
\end{equation*}
We may also raise and lower indices on the splitting tractors in order to write $v^{A}= \sigma Y^A+\tau^{\beta}W^A_{\beta}+\rho Z^A$ with respect to $\theta$.

With these conventions the pairing between $v^{A}\overset{\theta}{=}(\sigma,\tau^{\alpha},\rho)$ and $v'_{A}\overset{\theta}{=}(\sigma',\tau'_{\alpha},\rho')$ is given by
\begin{equation}
v^A v'_A = \si\rho'+\tau^{\al}\tau'_{\alpha}+\rho\si'.
\end{equation}
The various contractions of the splitting tractors (for a given $\theta$) are described by the following table
\begin{equation}\label{SplittingTractorContractions}
\begin{array}{c|ccc}
 & Y_{A} & W_{A\beb} & Z_{A}\\
\hline 
Y^{A\vphantom{\hat{A}}} & 0 & 0 & 1\\
W_{\al}^{A} & 0 & \bh_{\al\beb} & 0\\
Z^{A\vphantom{\hat{A}}} & 1 & 0 & 0
\end{array}
\end{equation}
which reflects the form of the tractor metric $h_{A\bar{B}}$.

\subsection{The Tractor Connection}\label{The-Tractor-Connection}

In order to define the canonical (normal) tractor connection we need two further
curvature objects. These are
\begin{equation}\label{Ttensor}
T_{\al}=\frac{1}{n+2}(\nabla_{\al}P_{\be}{^{\be}}-i\nabla^{\be}A_{\al\be})
\end{equation}
and 
\begin{equation}\label{Sscalar}
S=-\frac{1}{n}(\nabla^{\al}T_{\al}+\nabla^{\alb}T_{\alb}+P_{\al\beb}P^{\al\beb}-A_{\al\be}A^{\al\be}).
\end{equation}
These expressions appear in \cite{GoverGraham-CRInvtPowers} and can be determined from the following formulae for
the tractor connection by the normalisation condition on the tractor
curvature (which amounts to certain traces of curvature tensors vanishing,
see \cite{CapGover-FeffermanSpace}). Of course the $S$ and
$T_{\al}$ terms are also needed to make sure that the formulae for
the tractor connection given below transform correctly so as to give
a well defined connection on $\cE_{A}$.

On any section $v_{A}\overset{\theta}{=}(\sigma,\tau_{\alpha},\rho)$
of $\cE_{A}$ the \emph{standard tractor connection} $\nabla^{\cT}$ (or simply $\nabla$) is defined by
the following formulae
\begin{equation}\label{hhTractorConnection}
\nabla_{\beta}v_{A}\overset{\theta}{=}\left(\begin{array}{c}
\nabla_{\beta}\sigma-\tau_{\beta}\\
\nabla_{\beta}\tau_{\alpha}+iA_{\alpha\beta}\sigma\\
\nabla_{\beta}\rho-P_{\beta}{^{\alpha}}\tau_{\alpha}+T_{\beta}\sigma
\end{array}\right),
\end{equation}
\begin{equation}\label{ahTractorConnection}
\nabla_{\bar{\beta}}v_{A}\overset{\theta}{=}\left(\begin{array}{c}
\nabla_{\bar{\beta}}\sigma\\
\nabla_{\bar{\beta}}\tau_{\alpha}+\boldsymbol{h}_{\alpha\bar{\beta}}\rho+P_{\alpha\bar{\beta}}\sigma\\
\nabla_{\bar{\beta}}\rho-iA_{\bar{\beta}}{^{\alpha}}\tau_{\alpha}-T_{\bar{\beta}}\sigma
\end{array}\right),
\end{equation}
and
\begin{equation}\label{0hTractorConnection}
\nabla_{0}v_{A}\overset{\theta}{=}\left(\begin{array}{c}
\nabla_{0}\sigma+\frac{i}{n+2}P\sigma-i\rho\\
\nabla_{0}\tau_{\alpha}+\frac{i}{n+2}P\tau_{\alpha}-iP_{\alpha}{^{\beta}}\tau_{\beta}+2iT_{\alpha}\sigma\\
\nabla_{0}\rho+\frac{i}{n+2}P\rho+2iT^{\alpha}\tau_{\alpha}+iS\sigma
\end{array}\right)
\end{equation}
where $P=P_{\be}{^{\beb}}$.
Using \eqref{PHTorsionTransform}, \eqref{SchoutenTransform}, and Proposition \ref{TWtransform} combined with Proposition \ref{TWonDensitiesProp} one may check directly that the above formulae transform appropriately under rescaling of the contact form $\theta$ (i.e. are compatible with \eqref{TractorTransformation}, and with Lemma \ref{ReebTransform} in the case of \eqref{0hTractorConnection}) so that they give a well defined connection on $\cE_A$.

Coupling the tractor connection with the Tanaka-Webster connection of some contact form $\theta$, the tractor connection is given on the splitting operators by (cf.\ \cite{CapGover-FeffermanSpace}, and also \cite[Proposition 3.1]{EbenfeltHuangZaitsev-Rigidity})
\begin{align}
\label{hNablaY}
\nabla_{\be}Y_A\; &= \;iA_{\al\be}W_A^{\al} + T_{\be}Z_A \\*
\nabla_{\be}W_A^{\al}\; &= \; -\de^{\al}_{\be}Y_A  -P_{\be}{^{\al}}Z_A \\*
\label{Soldering1}
\nabla_{\be}Z_A \; &= \; 0 \\
\nabla_{\beb}Y_A\; &= \;P_{\al\beb}W_A^{\al} - T_{\beb}Z_A \\*
\nabla_{\beb}W_A^{\al}\; &= \; iA^{\al}_{\beb}Z_A \\*
\label{Soldering2}
\nabla_{\beb}Z_A \; &= \; \bh_{\al\beb}W^{\al}_A,
\end{align}
and
\begin{align}
\nabla_{0}Y_A\; &= \;\tfrac{i}{n+2}P Y_A + 2iT_{\al}W_A^{\al} + iS Z_A \\*
\nabla_{0}W_A^{\al}\; &= \; -iP_{\be}{^{\al}}W_A^{\be} + \tfrac{i}{n+2}PW_A^{\al} + 2iT^{\al}Z_A \\*
\label{Soldering3}
\nabla_{0}Z_A \; &= \; -iY_A + \tfrac{i}{n+2}P Z_A.
\end{align}
Using either set of formulae for the tractor connection one can easily show by direct calculation that $\nabla$ preserves $h_{A\bar{B}}$.

\subsubsection{Weyl connections on the tangent bundle}\label{WeylConnections}

The expression \eqref{0hTractorConnection} for $\nabla_{0} v_A$ may be simplified if one absorbs the terms involving $P_{\al}{^{\be}}$ and its trace $P$ into the definition
of the connection on the tangent bundle we are using. This amounts
to working with, in the terminology of \cite{CapSlovak-ParabolicGeometries},
the \emph{Weyl connection} determined by $\theta$ rather than the
Tanaka-Webster connection of $\theta$. The Weyl connection $\nabla^{W}$
determined by $\theta$ agrees with the Tanaka-Webster connection when
differentiating in contact directions, but when differentiating in
the Reeb direction one has
\begin{equation}\label{0WeylConnection}
\nabla_{0}^{W}\tau_{\al}=\nabla_{0}\tau_{\alpha}-iP_{\alpha}{^{\beta}}\tau_{\beta}
\end{equation}
for a section $\tau_{\al}$ of $\cE_{\al}$ (the action on $\cE_{\alb}$ is given by conjugating \eqref{0WeylConnection} and $\nabla_{0}^{W}T=0$). Using the isomorphism $I_{\theta}$ of $\cE(-n-2,0)$ with $\cE_{[\al_1 \cdots \al_n]}$ from the proof of Proposition \ref{TWonDensitiesProp} one obtains from \eqref{0WeylConnection} that
\begin{equation}\label{0WeylConnectionOnDensities}
\nabla_{0}^{W}\si=\nabla_{0}\si+\frac{i}{n+2}P\si
\end{equation}
for a section $\sigma$ of $\cE(1,0)$. Using the Weyl connection rather than the Tanaka-Webster connection in the expression \eqref{0hTractorConnection} for $\nabla_0 v_A$ one has the simpler expression
\begin{equation}\label{0hTractorConnectionWeyl}
\nabla_{0}v_{A}\overset{\theta}{=}\left(\begin{array}{c}
\nabla_{0}^{W}\si-i\rho\\
\nabla_{0}^{W}\tau_{\al}+2iT_{\alpha}\sigma\\
\nabla_{0}^{W}\rho+2iT^{\alpha}\tau_{\alpha}+iS\sigma
\end{array}\right).
\end{equation}

\subsection{The Adjoint Tractor Bundle}\label{AdjointTractorBundle}

Another important bundle on CR manifolds is the \emph{adjoint tractor bundle} $\cA=\cG\times_P \fg$. Since $\fg$ is the space of skew-Hermitian endomorphisms of $\bbC^{p+1,q+1}$ with respect to the signature $(p+1,q+1)$ inner product we may identify $\cA$ with the bundle of $h_{A\bar{B}}$-skew-Hermitian endomorphisms of the standard tractor bundle. Thus we think of $\cA$ as the subbundle of $\cE_A{^B}=\cE_A\otimes \cE^B$ whose sections $t_A{^B}$ satisfy
\begin{equation*}
t_{A\bar{B}}=-\overline{t_{B\bar{A}}}.
\end{equation*}
Since the standard tractor connection $\nabla^{\cT}$ is Hermitian, it induces a connection on $\cA\subset \mathrm{End}(\cT)$ and this is the usual (normal) tractor connection on $\cA$.

The adjoint tractor bundle carries a natural filtration 
\begin{equation}
\cA^2 \subset \cA^1 \subset \cA^0 \subset \cA^{-1} \subset \cA^{-2} = \cA
\end{equation}
corresponding to a $P$-invariant filtration of $\mathfrak{su}(p+1,q+1)$. In particular, $\cA^0=\cG\times_P \mathfrak{p}$ where $\mathfrak{p}=\mathrm{Lie}(P)$. Sections $t$ of $\cA^{-1}$ are those skew-Hermitian endomorphisms which satisfy $t_A{^B}Z^A Z_B = 0$, and sections $\cA^{0}$ are those which additionally satisfy $t_A{^B}Z^A W^{\be}_B = 0$. In any parabolic geometry the subbundle $\cA^1=\cG\times \mathfrak{p}_+$, where $\mathfrak{p}_+$ is the nilpotent part of $\mathfrak{p}$ (in this case $\mathfrak{p}_+$ is a Heisenberg algebra), is canonically isomorphic to $T^*M$. Here the isomorphism is given explicitly by the map 
\begin{equation}\label{CotangentIntoAdjoint}
(\upsilon_{\al},\upsilon_{\alb},\upsilon_0)\mapsto \upsilon_{\al} W_A^{\al} Z_{\bar{B}} - 
\upsilon_{\beb} Z_A W_{\bar{B}}^{\beb} - i \upsilon_0 Z_A Z_{\bar{B}}
\end{equation}
with respect to any contact form $\theta$. Dual to \eqref{CotangentIntoAdjoint} there is a bundle projection from $\cA^*\cong\cA$ to $TM$. Explicitly, the resulting isomorphism of $\cA/\cA^0$ with $TM$ is given with respect to $\theta$ by
\begin{equation}\label{AdjointOntoTangent}
X^{\al}W_{\al}^A Y^{\bar{B}} - X^{\beb}Y^A W^{\bar{B}}_{\beb} + i X^0 Y^A Y^{\bar{B}} + \cA^0 \mapsto (X^{\al},X^{\alb}, X^0).
\end{equation}

\subsection{The Tractor Curvature}\label{TractorCurvature}

The curvature of the standard tractor connection agrees with the usual ($\fg$-valued) curvature of the canonical Cartan connection when the latter is thought of as an adjoint tractor ($\cA=\cG\times_P\fg$) valued two form. To normalise our conventions with the index notation we define the curvature of the tractor bundle by
\begin{equation}
\nabla_a \nabla_b v_C - \nabla_b \nabla_a v_C + T^{\nabla}_{ab}{^e}\nabla_e v_C = -\ka_{ab C}{^D}v_D
\end{equation}
for all sections $v_C$ of $\cE_C$, where $\nabla$ denotes the tractor connection coupled with any connection on the tangent bundle and $T^{\nabla}$ is the torsion of that connection on the tangent bundle. Since we allow for the use of connections with torsion on the tangent bundle in the above, we may compute $\ka_{ab C}{^D}v_D$ explicitly in its decomposition with respect to any contact form $\theta$ using the weighted Tanaka-Webster calculus developed in Section \ref{Tanaka-Webster-Calc}. The resulting expressions were given in \cite{GoverGraham-CRInvtPowers} (cf.\ \cite{Chern-Moser}); we have
\begin{align}
\label{hhTractorCurvature}
\ka_{\al\be C}{^D}& =\; 0,  \\
\label{haTractorCurvature}
\ka_{\al\beb C}{^D}& \overset{\theta}{=} 
\left(
\begin{array}{ccc}
0 & 0 & 0\\
iV_{\al\beb\ga} & S_{\al\beb\ga}{^{\de}} & 0 \\
U_{\al\beb} & -iV_{\al\beb}{^{\de}}  & 0 
\end{array}\right), \\
\label{h0TractorCurvature}
\ka_{\al0 C}{^D} &\overset{\theta}{=} 
\left(
\begin{array}{ccc}
0 & 0 & 0\\
Q_{\al\ga} & V_{\al}{^{\de}}_{\ga} & 0 \\
Y_{\al} & -iU_{\al}{^{\de}}  & 0 
\end{array}\right), 
\end{align}
where
\begin{align}
\label{Vtensor}
V_{\al\beb\ga} &=\nabla_{\beb}A_{\al\ga}+i\nabla_{\ga} P_{\al\beb}-iT_{\ga}\bh_{\al\beb}-2iT_{\al}\bh_{\ga\beb},\\
U_{\al\beb} &= \nabla_{\al}T_{\beb}+\nabla_{\beb}T_{\al}+P_{\al}{^{\ga}}P_{\ga\beb} -A_{\al\ga}A^{\ga}_{\beb}+ S\bh_{\al\beb}, \\
Q_{\al\be} &= i\nabla_0 A_{\al\be} - 2i\nabla_{\be}T_{\al}+2P_{\al}{^{\ga}}A_{\ga\be},\\
Y_{\al}&=\nabla_0 T_{\al}-i\nabla_{\al}S + 2iP_{\al}^{\ga}T_{\ga}-3A_{\al}^{\gab}T_{\gab}.
\end{align}
Here the matrices appearing in \eqref{haTractorCurvature} and \eqref{h0TractorCurvature} are arranged so that the action of $\ka_{\al\beb C}{^D}$ and $\ka_{\al0 C}{^D}$ on $v_D$ is given (with respect to $\theta$) by the action of the respective matrices on the column vector representing $v_D$.
The remaining components of the tractor/Cartan curvature $\ka$ are determined by the obvious symmetries. We also have from \cite{GoverGraham-CRInvtPowers} (again cf.\ \cite{Chern-Moser}) that
\begin{align*}
V_{\al\beb\ga}=V_{\ga\beb\al},& \quad V_{\al}{^{\al}}_{\ga}=0,\\*
U_{\al\beb}=\overline{U_{\be\alb}},& \quad U_{\al}{^{\al}}=0,\\*
\mathrm{and} \;\; Q_{\al\be}&=Q_{\be\al}.
\end{align*}

The tractor connection on the CR sphere $\mathbb{S}^{2n+1}$ is flat, and for a general strictly pseudoconvex CR manifold the tractor curvature is precisely the obstruction to being locally CR equivalent to the sphere (see Theorem \ref{LocallyFlatTheorem}).

\subsection{Invariant Tractor Operators}\label{InvariantOps}

The tractor calculus can be used to give a uniform construction of curved analogues for almost all CR invariant differential operators between irreducible bundles on the model CR sphere. The key idea behind this is to apply Eastwood's `curved translation principle' \cite{EastwoodRice} to the tractor covariant exterior derivative $\d^{\nabla}$ using certain invariant \emph{differential splitting operators} constructed via the `BGG machinery' of \cite{CSS,CalderbankDiemer}. Important exceptional cases are dealt with in \cite{GoverGraham-CRInvtPowers} where the authors construct CR invariant powers of the sublaplacian on curved CR manifolds using the tractor D-operator (which extends one of the BGG splitting operators to a family of operators parametrised by weight). Such invariant differential splitting operators are also very useful in the problem of constructing invariants of CR structures, since they allow the jets of the structure (or rather of some invariant curvature tensor) to be packaged in a tractorial object which can be further differentiated invariantly. In the following we present the most basic and important of these (families of) invariant operators.

\subsubsection{The tractor D-operator(s)}\label{SectionTractorD-op}
Let $\mathcal{E}^{\Phi}$ denote any tractor bundle and
let $\mathcal{E}^{\Phi}(w,w')$ denote the tensor product of $\mathcal{E}^{\Phi}$
with $\mathcal{E}(w,w')$. 
\begin{defn}
The \emph{tractor D-operator} of  \cite{GoverGraham-CRInvtPowers}
\begin{equation*}
\mathsf{D}_{A}:\mathcal{E}^{\Phi}(w,w')\rightarrow\mathcal{E}_{A}\otimes\mathcal{E}^{\Phi}(w-1,w')
\end{equation*}
is defined by
\begin{equation}
\mathsf{D}_{A}f^{\Phi}\overset{\theta}{=}\left(\begin{array}{c}
w(n+w+w')f^{\Phi}\\
(n+w+w')\nabla_{\alpha}f^{\Phi}\\
-\left(\nabla^{\beta}\nabla_{\beta}f^{\Phi}+iw\nabla_{0}f^{\Phi}+w(1+\frac{w'-w}{n+2})Pf^{\Phi}\right)
\end{array}\right)
\end{equation}
where $\nabla$ denotes the tractor connection coupled to
the Tanaka-Webster connection of $\theta$.
\end{defn}
One may easily check directly that $\mathsf{D}_{A}$, as defined,
does not depend on the choice of $\theta$. This operator is an analogue 
of the Thomas tractor D-operator in conformal geometry \cite{BaileyEastwoodGover-Thomas'sStrBundle}. Observe that $\mathsf{D}_A$ is a splitting operator (has a bundle map as left inverse) except at weights where $w(n+w+w')=0$.

Related to the tractor $D$-operator is the $\theta$ dependent operator $\tilde{\mathsf{D}}_A$ given by $wf^{\Phi}Y_A+(\nabla_{\al}f^{\Phi})W^{\al}_A$ on a section $f^{\Phi}$ of $\mathcal{E}^{\Phi}(w,w')$. The operator $\mathbb{D}_{AB}$ defined by
\begin{equation}
\mathbb{D}_{AB}f^{\Phi}=2Z_{[A}\tilde{\mathsf{D}}_{B]}f^{\Phi}
\end{equation}
does not depend on the choice of $\theta$. The operator $\mathbb{D}_{AB}$ has a partner $\mathbb{D}_{A\bar{B}}$ defined by 
\begin{equation}
\mathbb{D}_{A\bar{B}}f^{\Phi}= Z_{\bar{B}} \tilde{\mathsf{D}}_A f^{\Phi} - Z_A \tilde{\mathsf{D}}_{\bar{B}} f^{\Phi} - Z_A Z_{\bar{B}}\left[i\nabla_0f^{\Phi}+\tfrac{w'-w}{n+2}Pf^{\Phi}\right]
\end{equation}
where $\tilde{\mathsf{D}}_{\bar{B}} f^{\Phi}=\overline{\tilde{\mathsf{D}}_{B} \overline{f^{\Phi}}}$. Note that if $f^{\Phi}$ has weight $(0,0)$ then
\begin{equation}\label{haDoubleD-OnWeight0}
\mathbb{D}_{A\bar{B}}f^{\Phi}= Z_{\bar{B}} W^{\al}_A\nabla_{\al} f^{\Phi} - Z_A W^{\beb}_{\bar{B}}\nabla_{\beb} f^{\Phi} - iZ_A Z_{\bar{B}}\nabla_0f^{\Phi},
\end{equation}
cf.\ \eqref{CotangentIntoAdjoint}, so that $\mathbb{D}_{A\bar{B}}$ takes sections of $\mathcal{E}^{\Phi}$ to sections of $\cA\otimes \mathcal{E}^{\Phi}$. The pair of invariant operators  $\mathbb{D}_{AB}$ and $\mathbb{D}_{A\bar{B}}$ acting on sections of $\mathcal{E}^{\Phi}(w,w')$ are called \emph{double-D-operators} \cite{Gover-Aspects}.

\begin{rem}\label{FundamentalAndDouble-D}
The less obvious operator $\mathbb{D}_{A\bar{B}}$ comes from coupling the fundamental derivative of \cite{CapGover-TracCalc} on densities with the tractor connection to give an operator on weighted tractors. The conformal double-D-operator on the Fefferman space, which comes from similarly twisting the fundamental derivative on conformal densities with the conformal tractor connection, can be seen to induce the pair of operators $\mathbb{D}_{A\bar{B}}$, $\mathbb{D}_{AB}$ (and the conjugate operator $\mathbb{D}_{\bar{A}\bar{B}}$) on the underlying CR manifold \cite[Theorem 3.7]{CapGover-FeffermanSpace}.
\end{rem}

\subsubsection{Middle operators}\label{MiddleOps}

One can also create CR invariant differential splitting operators which take weighted sections of tensor bundles of $\cE^{\al}$ to weighted tractors. These are analogues of operators in conformal geometry used by Eastwood for `curved translation' (see, e.g., \cite{EastwoodConformal}). We only construct the particular operators from this family that we will need in the following.

\begin{defn}
The \emph{middle operator} acting on sections of $\cE_{\al}(w,w')$ is the operator $\sfM_A^{\al}:\cE_{\al}(w,w')\rightarrow\cE_{A}(w-1,w')$ given with respect to a choice of contact form $\theta$ by
\begin{equation}\label{hMiddleOperator}
\sfM_A^{\al} \tau_{\al} = (n+w')W_A^{\al}\tau_{\al} - Z_A\nabla^{\al}\tau_{\al}.
\end{equation}
\end{defn}
To see that the operator defined by \eqref{hMiddleOperator} is invariant one simply observes (by combining Proposition \ref{TWtransform} with Proposition \ref{TWonDensitiesProp}) that if $\thetah=e^{\Ups}\theta$ then
\begin{equation}
\hat{\nabla}^{\al}\tau_{\al}=\nabla^{\al}\tau_{\al}+(n+w')\Ups^{\al}\tau_{\al}
\end{equation}
for $\tau_{\al}$ of weight $(w,w')$, and on the other hand from \eqref{Wtransformation} we have $\hat{W}^{\al}_A = W^{\al}_A +\Ups^{\al}Z_A$.
\begin{rem} The operator $\sfM_A^{\al}$ defined by \eqref{hMiddleOperator} is a differential splitting operator, except when $w'=-n$, in which case $\tau_{\al}\mapsto\nablah^{\al}\tau_{\al}$ is an invariant operator and $\sfM_A^{\al}$ simply becomes (minus) the composition of this operator with the bundle map $\rho\mapsto\rho Z_A$ (for $\rho$ of appropriate weight).
\end{rem}

In the same manner, by observing that when $\thetah=e^{\Ups}\theta$ we have
\begin{equation}
\hat{\nabla}^{\al}\tau_{\al\beb}=\nabla^{\al}\tau_{\al\beb}+(n+w'-1)\Ups^{\al}\tau_{\al\beb} -\Ups_{\beb}\tau_{\al}{^{\al}}
\end{equation}
for $\tau_{\al\beb}$ of weight $(w,w')$, we see that there is an invariant operator on trace free sections of $\cE_{\al\beb}(w,w')$ given by
\begin{equation}\label{hMiddleOperatorTwo}
\sfM_A^{\al} \tau_{\al\beb} = (n+w'-1)W_A^{\al}\tau_{\al\beb} - Z_A\nabla^{\al}\tau_{\al\beb}.
\end{equation}
Conjugating one obtains an operator $\sfM_{\bar{B}}^{\beb}$ on trace free sections of $\cE_{\al\beb}(w,w')$ given by
\begin{equation}\label{aMiddleOperatorTwo}
\sfM_{\bar{B}}^{\beb} \tau_{\al\beb} = (n+w-1)W_{\bar{B}}^{\beb}\tau_{\al\beb} - Z_{\bar{B}}\nabla^{\beb}\tau_{\al\beb}.
\end{equation}

\subsection{The Curvature Tractor}\label{CurvatureTractorSection}

The operators $\sfM_A^{\al}$ defined above are all first order, so in particular they are `strongly invariant' meaning that we may couple the Tanaka-Webster connection $\nabla$ used in their definitions with the tractor connection (on any tractor bundle $\cE^{\Phi}$) to obtain invariant operators $\sfM_A^{\al}$ on sections of $\cE_{\al}{^{\Phi}}(w,w')$ and on trace free sections of $\cE_{\al\beb}{^{\Phi}}(w,w')$. We use these strongly invariant middle operators to define a CR analogue of the conformal `W-tractor' of \cite{Gover-Aspects}.

\begin{defn} The \emph{curvature tractor} of a CR manifold is the section of $\cE_{A\bar{B}C\bar{D}}$ given by
\begin{equation}\label{CurvatureTractor}
\ka_{A\bar{B}C\bar{D}}=\sfM_A^{\al}\sfM_{\bar{B}}^{\beb}\ka_{\al\beb C\bar{D}}
\end{equation}
where $\ka_{\al\beb C\bar{D}}=\ka_{\al\beb}{_C}{^E} h_{E\bar{D}}$.
\end{defn}

\begin{rem}
The expression for the curvature tractor $\ka_{A\bar{B}C\bar{D}}$ does not involve the ($\theta$-dependent) component $\ka_{\al0 C\bar{D}}$ of the tractor curvature $\ka_{ab C\bar{D}}$. One way to include this component in a CR invariant tractor is to define
\begin{equation}\label{CurvatureTractorAlternative}
\ka_{AB\bar{B'}C\bar{D}}= \sfM_A^{\al} \left(
\ka_{\al\be C\bar{D}} W_B^{\be} Z_{\bar{B'}} - 
\ka_{\al\beb C\bar{D}} Z_B W_{\bar{B'}}^{\beb} - i \ka_{\al0 C\bar{D}} Z_B Z_{\bar{B'}} 
\right),
\end{equation}
where we have used the map $T^*M\hookrightarrow\cA$, given explicitly by \eqref{CotangentIntoAdjoint}, on the `$b$' index of $\ka_{abC\bar{D}}$ to obtain $\ka_{aB\bar{B'}C\bar{D}}$ and then applied $\sfM_A^{\al}$ to $\ka_{\al B\bar{B'}C\bar{D}}$. Alternatively one can apply the tensorial map $T^*M\hookrightarrow\cA$ to both the `$a$' and `$b$' indices of $\ka_{abC\bar{D}}$ to obtain $\ka_{A\bar{A'}B\bar{B'}C\bar{D}}$ (as is done in Section \ref{PackagingJets}).
\end{rem}

\subsection{Projecting Parts}

If a standard tractor $v$ lies in subbundle $\cT^{s}$ of $\cT$, $s=1,0,-1$ (see \eqref{StandardTractorFiltration}), then the image of $v$ under the projection
\begin{equation*}
\cT^{s}\rightarrow \cT^{s}/\cT^{s+1}
\end{equation*}
(where the subbundle $\cT^{2}$ is the zero section) is called a \emph{projecting part} of $v$. A projecting part may be zero. Since the filtration of the standard tractor bundle induces a filtration of all corresponding tensor bundles (and hence all tractor bundles), we may define a notion of projecting part(s) similarly for sections of any tractor bundle. 

The notion can be easily formalised using the splitting tractors of Section \ref{SplittingTractors}. The invariant `top slot' $v^A Z_A$ of a standard tractor is always a projecting part. If this top slot vanishes, then the `middle slot' $v^A W_A^{\al}$ is independent of the choice of $\theta$ by \eqref{Wtransformation} and is a projecting part. If both $v^A Z_A=0$ and $v^A W_A^{\al}=0$, then the `bottom slot' $v^A Y_A$ is independent of the choice of $\theta$ by \eqref{Ytransformation} and is a projecting part of $v^A$.

To see how this works for higher valence tractors consider a tractor $t^{AB}$ in $\cE^{[AB]}$. Skewness implies $t^{AB}Z_A Z_B=0$, so $t^{AB}Z_A W_B^{\be}$ is independent of the choice of $\theta$ by \eqref{Wtransformation} and is a projecting part. If $t^{AB}Z_A W_B^{\be}=0$ then both $t^{AB}W_A^{\al} W_B^{\be}$ and $t^{AB}Z_A Y_B$ are independent of the choice of $\theta$, and are both called projecting parts (the relevant composition factor of $\cE^{[AB]}$ splits as a direct sum).

\section{CR Embedded Submanifolds and Contact Forms}\label{Submanifolds-and-Contact-Forms}

We turn now to the main subject of the article. We suppose that $\iota:\Si\hrarrow M$ is a CR embedding of a nondegenerate CR manifold $(\Si^{2m+1},H_{\Si},J_{\Si})$
into $(M^{2n+1},H,J)$, that is $\iota$ is an embedding for which
$T\iota$ maps $H_{\Si}$ into $H$ and
\begin{equation*}
J\circ T\iota=T\iota\circ J_{\Si}.
\end{equation*}
Equivalently (the complex linear extension of) $T\iota$ maps
$T^{1,0}\Si$ into $T^{1,0}M$. 

Suppose $(M^{2n+1},H,J)$ has signature $(p,q)$. Without loss of generality $q\leq p$ ($q$ is often alternatively called the signature). If $q<m$ (in particular, if $M$ is strictly pseudoconvex) then $T_x\iota(T_x\Si)\not\subset H_x$ for all $x\in \Si$. In this case a choice of contact form $\theta$ for $H$ induces a choice of contact form for $H_{\Si}$ by pullback. If $q\geq m$ then we need to impose the condition $T_x\iota(T_x\Si)\not\subset H_x$ for all $x\in \Si$ as an additional assumption; such a CR embedding is said to be \emph{transversal}. (Note that if $T_x\iota(T_x\Si)\subset H_x$ then $T_x\iota(T^{1,0}_x\Si)\subset T^{1,0}_x M$ is a totally isotropic subspace, but the maximum dimension of such a subspace is the signature $q$, so $q\geq m$.) We consider transversal CR embeddings in the following. 

We will work in terms of a pair of pseudohermitian structures $(M,H,\\ J,\theta)$ and $(\Si,H_{\Si}, J_{\Si},\iota^{*}\theta)$ and
aim for constructions which are invariant under ambient rescalings
$\theta\rightarrow\thetah=e^{\Ups}\theta$. More precisely, our goal
is to construct operators and quantities which may be expressed in
terms of the Tanaka-Webster calculus of $\theta$ and of $\iota^{*}\theta$
which are invariant under the replacement of the pair $(\theta,\iota^{*}\theta)$
with $(e^{\Ups}\theta,\iota^{*}(e^{\Ups}\theta))$. 

For simplicity
we will initially restrict our attention to the case where $m=n-1$
($m\geq 1$) and where both manifolds are strictly pseudoconvex (i.e. have positive definite Levi form for positively oriented contact forms). The general codimension (and signature) case is treated in Section \ref{HigherCodimension}, and much carries over directly.

\subsection{Notation}
We fix a bundle of $(1,0)$-densities on $\Si$, that is the dual of an $(m+2)^{th}$
root of $\scrK_{\Si}$, and denote it by $\cE_{\Si}(1,0)$. The corresponding
$(w,w')$-density bundles are denoted $\cE_{\Si}(w,w')$. We use abstract
index notation $\cE^{\mu}$ for $T^{1,0}\Si$, and allow the use of Greek indices from the later part of the alphabet: $\mu$, $\nu$, $\la$, $\rho$, $\mu'$, and so on. Of course
then $\cE^{\mub}$ denotes $T^{0,1}\Si$, $\cE_{\mu}$ denotes $(T^{1,0}\Si)^{*}$,
and so on. We denote the CR Levi form of $\Si$ by $\bh_{\mu\nub}$
and its inverse by $\bh^{\mu\nub}$. We also occasionally use abstract index notation for $T\Si$, denoting it by $\cE^i$ and allowing indices $i$, $j$, $k$, $l$, etcetera.

We identify $\Si$ with its image under $\iota$ and write $\cE^{\al}|_{\Si}\rightarrow\Si$
for the restriction of $\cE^{\al}\rightarrow M$ to fibers over $\Si$
(so $\cE^{\al}|_{\Si}=\iota^{*}\cE^{\al}$). We define the section $\Pi_{\mu}^{\al}$
of $\cE^{\al}|_{\Si}\otimes\cE_{\mu}$ to be (the complex linear extension
of) $T\iota$ as a map from $T^{1,0}\Si$ into $T^{1,0}M$, i.e.
if $X\in T^{1,0}\Si$ and $Y=T\iota(X)$ then $Y^{\al}=\Pi_{\mu}^{\al}X^{\mu}$.
We define the section $\Pi_{\al}^{\mu}$ to be the map from $T^{1,0}M|_{\Si}$
onto $T^{1,0}\Sigma$ given by orthogonal projection with respect
to the CR Levi form.
Clearly $\Pi_{\al}^{\mu}\Pi_{\nu}^{\al}=\de_{\nu}^{\mu}$, and $\Pi_{\mu}^{\al}\Pi_{\be}^{\mu}$
is simply the orthogonal projection map from $T^{1,0}M|_{\Si}$ onto $T\iota(T^{1,0}\Sigma)$
given by the Levi form. It is also clear that
\begin{equation}
\bh_{\mu\nub}=\Pi_{\mu}^{\al}\Pi_{\nub}^{\beb}\bh_{\al\beb}
\end{equation}
along $\Si$.

\subsection{Compatible Scales}

In developing the pseudohermitian and CR tractor calculus we have
been making use of the fact that a choice of contact form $\theta$
for $M$ gives us a direct sum decomposition of the complexified tangent
bundle
\begin{equation*}
\bbC TM=T^{1,0}M\oplus T^{0,1}M\oplus\bbC T,
\end{equation*}
$T$ being the Reeb vector field of $\theta$. Now the contact form
$\theta_{\Sigma}=\iota^{*}\theta$ for $\Si$ also determines a direct
sum decomposition
\begin{equation}\label{SigmaComplexTangentSplitting}
\bbC T\Si=T^{1,0}\Si\oplus T^{0,1}\Si\oplus\bbC T_{\Si}
\end{equation}
where $T_{\Si}$ is the Reeb vector field of $\theta_{\Sigma}$. It
is easy to see that in general these two Reeb vector fields will not
agree along $\Si$. Clearly this will become a problem for us when
we try to relate components of ambient tensor fields (decomposed w.r.t.
$\theta$) with components of submanifold tensor fields (decomposed
w.r.t. $\theta_{\Si}$). To remedy this problem we will make use of
a basic lemma (cf.\ \cite[Lemma 4.1]{EbenfeltHuangZaitsev-Rigidity}).
\begin{lem}\label{CompatibleScales}
Let $\iota:\Si\hrarrow M$ be a CR embedding between nondegenerate
CR manifolds. If $\theta_{\Si}$ is a contact form for $\Si$ with Reeb vector 
field $T_{\Si}$, then there exists a contact form $\theta$ for $M$ with $\iota^{*}\theta=\theta_{\Si}$ and whose Reeb vector field agrees with $T_{\Si}$ along $\Si$. Moreover, the $1$-jet of $\theta$ is uniquely determined along $\Si$.
\end{lem}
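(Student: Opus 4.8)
The plan is to construct $\theta$ directly along $\Si$ to first order, then extend arbitrarily, and finally check that the conditions pin down the $1$-jet. First I would work pointwise: at a point $x\in\Si$ the transversality hypothesis gives $T_x\Si\not\subset H_x$, so there is a unique $v\in T_x\Si$ with $v\equiv T_{\Si,x}$ modulo $H_{\Si,x}$ that is transverse to $H_x$. I want the ambient contact form $\theta$ to satisfy $\theta_x = \text{(the unique extension of }\iota^*\theta|_{H_{\Si,x}}\text{ to }T_xM\text{ that kills } H_x\text{-components and is normalized by }\theta_x(v)=1)$. This is possible precisely because $T_xM = H_x \oplus \mathbb{R}v$ (transversality) and $\iota^*\theta = \theta_\Si$ forces the restriction of $\theta_x$ to $T_x\iota(T_x\Si)$, while requiring $H_x\subset\ker\theta_x$ determines it on a complement; one checks these are consistent, i.e. that $\theta_x$ so defined does restrict to $(\theta_\Si)_x$ on $T_x\iota(T_x\Si)$. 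Doing this at every point of $\Si$ gives a well-defined smooth section of $H^\perp$ along $\Si$, hence (shrinking to a tubular neighbourhood and extending) a contact form $\theta$ on $M$ near $\Si$ with $\iota^*\theta = \theta_\Si$ and $\theta|_\Si$ prescribed.

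Next I would arrange that the ambient Reeb vector field $T$ agrees with $T_\Si$ along $\Si$. Recall $T$ is determined by $\theta(T)=1$ and $T\hook d\theta = 0$. By construction $\theta_x$ annihilates $H_x$ and $\theta_x(v)=1$, so $T_x$ and $v$ differ by an element of $H_x$; to make $T_x = v = T_{\Si,x}$ I need $v\hook d\theta|_x = 0$, i.e. $d\theta_x(v,\,\cdot\,)|_{H_x} = 0$. This is a condition on the first derivatives of $\theta$ transverse to $\Si$, not on $\theta|_\Si$: along directions in $T_x\Si$ the value $d\theta_x(v,w)$ for $w\in T_x\Si$ equals $d\theta_\Si{}_x(T_{\Si,x},w)$ (since $\theta$ pulls back to $\theta_\Si$) which vanishes by definition of $T_\Si$; for $w$ in a complement of $T_x\Si$ in $T_xM$ I have freedom in choosing the normal derivative of $\theta$, and I use it to kill $d\theta_x(v,w)$. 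A short linear-algebra argument (using that $d\theta$ is nondegenerate on $H$, and that $H_x\cap T_x\Si$ has the right dimension) shows this is solvable and that the solution is unique modulo terms vanishing to higher order along $\Si$.

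Finally, uniqueness of the $1$-jet. Suppose $\theta$ and $\theta'$ both satisfy $\iota^*\theta=\iota^*\theta'=\theta_\Si$ and have Reeb fields agreeing with $T_\Si$ on $\Si$. Writing $\theta' = e^\Ups\theta$ with $\Ups\in C^\infty(M)$, the condition $\iota^*\theta'=\iota^*\theta$ forces $\Ups|_\Si = 0$, so $d\Ups|_\Si$ annihilates $T\Si$; it remains to show $d\Ups|_\Si = 0$ entirely, i.e. that the normal derivative of $\Ups$ vanishes on $\Si$. Here I would invoke the transformation law for the Reeb vector field, Lemma \ref{ReebTransform}: $\hat T = e^{-\Ups}[T + ((d\Ups)|_H\circ J)^\sharp]$. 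Evaluated along $\Si$, where $\Ups = 0$ and $\hat T = T = T_\Si$, this gives $((d\Ups)|_H\circ J)^\sharp = 0$ on $\Si$, hence $(d\Ups)|_{H_x} = 0$ for all $x\in\Si$. Combined with $(d\Ups)|_{T_x\Si} = 0$ and transversality (so that $H_x + T_x\Si = T_xM$), this yields $d\Ups|_\Si = 0$, so $\theta$ and $\theta'$ have the same $1$-jet along $\Si$. The main obstacle is the compatibility check in the first two steps — verifying that the pointwise prescription of $\theta|_\Si$ together with the transverse first-order condition forcing $T = T_\Si$ can be satisfied simultaneously and smoothly — which is where transversality and the nondegeneracy of $d\theta$ on $H$ are essential; once that is in hand, the uniqueness is a clean consequence of Lemma \ref{ReebTransform}.
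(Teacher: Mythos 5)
Your proposal is correct and follows essentially the same route as the paper: the value of $\theta$ along $\Si$ is forced by $\iota^{*}\theta=\theta_{\Si}$ together with $\theta\in\Gamma(H^{\perp})$ and transversality, the tangential part of $T_{\Si}\hook \d\theta$ vanishes automatically because $\iota^{*}\d\theta=\d\theta_{\Si}$, and the remaining (normal) components are killed by a unique choice of normal derivative — the paper simply packages this freedom as a conformal factor $e^{f}$ with $f|_{\Si}=0$ and prescribes $\d f$ on $TM|_{\Si}/T\Si$. Your uniqueness argument via Lemma \ref{ReebTransform} is a valid, slightly more explicit rendering of the same fact.
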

\begin{proof}
Fix a contact form $\theta'$ for $M$ with $\iota^{*}\theta'=\theta_{\Si}$.
Let $f$ be an arbitrary smooth (real valued) function on $M$ with
$f|_{\Sigma}\equiv0$, and consider the contact form $\theta=e^{f}\theta'$.
First of all we have
\begin{equation*}
(T\iota\cdot T_{\Sigma})\lrcorner\mathrm{d}\theta=e^{f}(T\iota\cdot T_{\Sigma})\lrcorner\mathrm{d}\theta'+e^{f}\mathrm{d}f
\end{equation*}
along $\Si$ since $\theta'(T\iota\cdot T_{\Sigma})=(\iota^{*}\theta')(T_{\Sigma})=\theta_{\Sigma}(T_{\Sigma})=1$.
Now since $\iota^{*}\theta=\theta_{\Si}$ we have
\begin{equation*}
\iota^{*}((T\iota\cdot T_{\Sigma})\lrcorner\mathrm{d}\theta)=T_{\Si}\hook\d\theta_{\Si}=0.
\end{equation*}
This means that $(T\iota\cdot T_{\Sigma})\lrcorner\mathrm{d}\theta$
is zero when restricted to tangential directions. Consequently, we
only need to see if we can make $(T\iota\cdot T_{\Sigma})\lrcorner\mathrm{d}\theta$
zero on the quotient space $TM|_{\Sigma}/T\Sigma$. This requires
choosing $f$ such that along $\Si$
\begin{equation*}
\d f=-(T\iota\cdot T_{\Sigma})\lrcorner\mathrm{d}\theta'
\end{equation*}
on $TM|_{\Sigma}/T\Sigma$, which simply amounts to prescribing the
normal derivatives of $f$ off $\Sigma$. Choosing such an $f$ we
have that $(T\iota\cdot T_{\Sigma})\lrcorner\mathrm{d}\theta=0$ and
$\theta(T\iota\cdot T_{\Sigma})=\theta_{\Sigma}(T_{\Sigma})=1$ as
required.
\end{proof}
\begin{defn}\label{CompatibleScalesDef}
A pair of contact forms $\theta$, $\theta_{\Si}$ for $M$ and $\Si$
respectively will be called \emph{compatible} if $\theta_{\Si}=\iota^{*}\theta$
and the Reeb vector field of $\theta$ restricts to the Reeb vector
field of $\theta_{\Si}$ along $\Si$. A contact form $\theta$ which
is compatible with $\iota^{*}\theta$, i.e. whose Reeb vector field is tangent to $\Si$, will be said to be \emph{admissible} \cite{EbenfeltHuangZaitsev-Rigidity}.
\end{defn}
We will work primarily in terms of compatible contact forms in the
following. When working in terms of compatible contact forms $\theta$
for $M$ and $\theta_{\Si}$ for $\Si$ we identify the density bundles
$\cE(1,1)|_{\Si}$ with $\cE_{\Si}(1,1)$ using the trivialisations
of these bundles induced by $\theta$ and $\theta_{\Si}$ respectively
(in fact this identification is canonical, i.e. it is independent of
the choice of compatible contact forms). We also identify the Reeb
vector field $T_{\Si}$ of $\theta_{\Si}$ with $T|_{\Si}$ where
$T$ is the ambient Reeb vector field. This means that the `$0$-component'
of $X\in T\Si$ taken with respect to either $\theta_{\Si}$ or $\theta$ (identifying $X$ with $T\iota\cdot X$) is the same, and that
our ambient and intrinsic decompositions of tensors will always be
nicely compatible.
\begin{rem}\label{CompatibleScalesRemark}
Note that Lemma \ref{CompatibleScales} holds for general codimension CR embeddings (with the same proof). We can therefore continue to work with compatible contact forms in the general codimension case discussed in Section \ref{HigherCodimension}.
\end{rem}

\subsection{Normal Bundles}\label{sub:Holomorphic-Normal-Fields}

Clearly $T^{1,0}\Si$ has complex corank one inside $T^{1,0}M|_{\Si}$.
The CR Levi form determines then a canonical complex line bundle $\cN^{\al}\subset\cE^{\al}|_{\Si}$
whose sections are those $V^{\al}$ for which
$\Pi_{\al}^{\mu}V^{\al}\equiv0$. There is also the corresponding dual complex
line bundle $\cN_{\al}\subset\cE_{\al}|_{\Si}$ whose sections $V_{\al}$
satisfy $V_{\al}\Pi_{\mu}^{\al}\equiv0$. 
\begin{rem}
Given any choice of ambient contact form $\theta$ the manifold $M$ gains a Riemannian structure from the Webster metric $g_{\theta}$. One can therefore treat $\Sigma$ as a Riemannian submanifold, in particular we have a Riemannian normal bundle to $\Si$. This Riemannian normal bundle will be the same for any admissible contact form $\theta$, and we denote it by $N\Si$. Complexifying we see that $\bbC N\Si = \cN^{\al}\oplus \cN^{\alb}$ where $\cN^{\al}$ is the $i$-eigenspace of $J$.
\end{rem}
\subsubsection{Unit Normal Fields}
Given a choice of ambient contact form $\theta$, one may ask that a section $N_{\al}$ be unit with respect to the Levi form of $\theta$. However, for CR geometry it is more natural to work with sections of the bundle $\cN_{\al}(1,0)=\cN_{\al}\otimes\cE(1,0)|_{\Si}$, which is normed by the CR Levi form. Thus we make the following definition:
\begin{defn}
By a \emph{(weighted) unit holomorphic conormal field} we mean a section $N_{\al}$ of $\cN_{\al}(1,0)$ for which $\bh^{\al\beb}N_{\al}N_{\beb}=1$ where $N_{\beb}=\overline{N_{\be}}$. The field $N^{\al}=\bh^{\al\beb}N_{\beb}$ obtained from such an $N_{\al}$ will be referred to as a \emph{(weighted) unit holomorphic normal field}. 
\end{defn}
\begin{rem} 
The bundles $\cN_{\al}(w+1,-w)$ are also normed by the CR Levi form, but the natural weight for conormals is indeed $(1,0)$. The line bundle $\cN_{\al}(1,0)$ plays an important role in the following since it relates ambient and intrinsic density bundles (see \eqref{ambient-intrinsic-densities} below). Moreover, $\cN_{\al}(1,0)$ can be canonically identified with a non-null subbundle of the ambient cotractor bundle $\cE_A|_{\Si}$, and hence carries a canonical CR invariant connection (see Proposition \ref{NormalTractorConnectionProp}).
\end{rem}

If $N_{\al}$ is a unit holomorphic conormal
then so is $N'_{\al}=e^{i\varphi}N_{\al}$ for any $\varphi\in C^{\infty}(\Si)$,
and $N'^{\al}=e^{-i\varphi}N^{\al}$. However, the
combinations $N^{\al}N_{\be}$ and $N_{\al}N_{\beb}$ are independent
of the choice of holomorphic conormal, and these satisfy
\begin{equation}
\delta_{\be}^{\al}=\Pi_{\be}^{\al}+N^{\al}N_{\be}\quad\mbox{and}\quad\bh_{\al\beb}=\bh_{\mu\nub}\Pi_{\al}^{\mu}\Pi_{\beb}^{\nub}+N_{\al}N_{\beb}
\end{equation}
along $\Si$, where $\Pi_{\be}^{\al}$ is the tangential orthogonal projection $\Pi^{\al}_{\mu}\Pi_{\be}^{\mu}$ and $\bh_{\mu\nub}$ is the CR Levi form of $\Si$.

\subsection{Tangential Derivatives}

Let $\theta$ be an admissible ambient contact form with Tanaka-Webster connection
$\nabla$. The pullback connection $\iota^{*}\nabla$ allows us to
differentiate sections of ambient tensor bundles along $\Si$ in directions
tangential to $\Si$. Recall that we may think of the Tanaka-Webster connection
$\nabla$ as a triple of `partial connections' $(\nabla_{\al},\nabla_{\alb},\nabla_{0})$.
Now suppose that the Reeb vector field $T$ of $\theta$ is tangent
to $\Si$, then $\theta$ and $\theta_{\Si}=\iota^{*}\theta$ are
compatible. Then we can break up $\iota^{*}\nabla$ into a corresponding
triple $(\nabla_{\mu},\nabla_{\mub},\nabla_{0})$.
Precisely, $\nabla_{\mu}$ is defined to act on sections of $\cE^{\al}|_{\Sigma}$
according to the formula
\begin{equation}
\nabla_{\mu}\tau^{\al}=\Pi_{\mu}^{\be}\nabla_{\be}\tilde{\tau}^{\al}
\end{equation}
where $\tilde{\tau}^{\al}$ is any extension of the section $\tau^{\al}$
of $\cE^{\al}|_{\Sigma}$ to a neighbourhd of $\Sigma$, and $\nabla_{\mu}$
is defined similarly on sections of $\cE^{\alb}|_{\Sigma}$, $\cE_{\al}|_{\Sigma}$,
and so on. We define $\nabla_{\mub}$ similarly, and define $\nabla_{0}$
on sections of $\cE^{\al}|_{\Sigma}$ by the formula
\begin{equation}
\nabla_{0}\tau^{\al}=\nabla_{0}\tilde{\tau}^{\al}
\end{equation}
along $\Si$, where $\tilde{\tau}^{\al}$ is any extension of $\tau^{\al}$, and
similarly on sections of $\cE^{\alb}|_{\Sigma}$, $\cE_{\al}|_{\Sigma}$,
and so on (note the independence of the choice of the extension relies
on the fact that $T$ is tangential to $\Si$). 
\begin{rem*}
We have identified
$\cE(1,1)|_{\Si}$ with $\cE_{\Si}(1,1)$ and $T|_{\Si}$ and with
the Reeb vector field $T_{\Si}$ of $\theta_{\Si}$, thus splitting
$\iota^{*}\nabla$ up into $(\nabla_{\mu},\nabla_{\mub},\nabla_{0})$
corresponds precisely to restricting $\iota^{*}\nabla$ to the respective
summands in the direct sum decomposition \eqref{SigmaComplexTangentSplitting} induced by $\theta_{\Si}$.
\end{rem*}

\subsubsection{The normal Tanaka-Webster connection}
The ambient Tanaka-Webster connection also induces a connection on the normal bundle.

\begin{defn} Given an admissible ambient contact form $\theta$, we define the \emph{normal Tanaka-Webster connection} $\nabla^{\perp}$ on $\cN_{\al}$ by differentiating tangentially using the Tanaka-Webster connection $\nabla$ of $\theta$ and then projecting orthogonally onto $\cN_{\al}$ using the Levi form.
\end{defn}

\subsection{The Submanifold Tanaka-Webster Connection}

We may define a connection $D$ on $T^{1,0}\Si=\cE^{\mu}$
(which we identify with $T\iota(T^{1,0}\Si)$ in $T^{1,0}M|_{\Si}$)
by differentiating in tangential directions using $\iota^{*}\nabla$
and projecting the result back onto $T^{1,0}\Si=\cE^{\mu}$ orthogonally
with respect to the Levi form. This means that if $\tau^{\mu}$ is
a section of $\cE^{\mu}$ then we have
\begin{equation}
D_{\nu}\tau^{\mu}=\Pi_{\al}^{\mu}\nabla_{\nu}\tau^{\al}
\end{equation}
where $\tau^{\al}=\Pi_{\la}^{\al}\tau^{\la}$. One may define $D$
to act also on $T^{0,1}\Si=\cE^{\mub}$ by the analogous formula
\begin{equation}
D_{\nu}\tau^{\mub}=\Pi_{\alb}^{\mub}\nabla_{\nu}\tau^{\alb}.
\end{equation}
Thus $D$ may be thought of as a connection on $H_{\Si}$ which preserves
$J_{\Si}$. One may then extend $D$ to a connection on $T\Si$ by
requiring that $T_{\Si}$ be parallel. 
\begin{rem*}
Equivalently one may define
$D$ as a connection on $T\Si$ from the start by differentiating
tangent vectors to $\Si$ in tangential directions using $\iota^{*}\nabla$
and projecting the result back onto $T\Si$ orthogonally with respect
to the Webster metric of $\theta$.
\end{rem*}
Provided $\theta$ and $\theta_{\Si}$ are compatible, the connection $D$ constructed in this manner will be the Tanaka-Webster connection of $\theta_{\Si}$ (cf.\ \cite[Theorem 6.4]{DragomirTomassini}):
\begin{prop}\label{SubmanifoldTWprop}
If $\theta$, $\theta_{\Si}$ are contact forms for $M$ and $\Si$
respectively which are compatible, that is, $\theta_{\Si}=\iota^{*}\theta$
and the Reeb vector field of $\theta$ is tangential to $\Si$, then
the connection $D$ on $T\Si$ induced by the Tanaka-Webster connection
$\nabla$ of $\theta$ (and projection with respect to the ambient
Webster metric) is the Tanaka-Webster connection of $\theta_{\Si}$.
\end{prop}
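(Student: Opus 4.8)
The plan is to check directly that $D$ satisfies the properties which \emph{uniquely} characterise the Tanaka--Webster connection of $\theta_{\Si}$: that it preserves $H_{\Si}$ and $J_{\Si}$, that it is metric for the Webster metric $g_{\theta_{\Si}}$, and that its torsion obeys the normalisations listed in Section~\ref{sub:The-Tanaka-Webster-Connection} (with a symmetric induced pseudohermitian torsion). By the uniqueness statement recalled there this is enough. The first two points are easy. Since $D$ is obtained by differentiating with $\iota^{*}\nabla$ and then projecting onto the summands of $\bbC T\Si=T^{1,0}\Si\oplus T^{0,1}\Si\oplus\bbC T_{\Si}$, with $DT_{\Si}=0$, it manifestly preserves $T^{1,0}\Si$ (hence $H_{\Si}$ and $J_{\Si}$) and has $T_{\Si}$ parallel. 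For metricity one uses that \emph{compatibility} identifies $g_{\theta_{\Si}}$ with $\iota^{*}g_{\theta}$ on $T\Si$: the two agree on $H_{\Si}$ because $\d\theta_{\Si}=\iota^{*}\d\theta$ and $J_{\Si}=J|_{H_{\Si}}$, and they agree in the Reeb direction precisely because $T_{\Si}=T|_{\Si}$ (this is exactly where compatibility is needed---for a non-compatible pair $T_{\Si}$ picks up an $H$-component and $\iota^{*}g_{\theta}\neq g_{\theta_{\Si}}$). Then $Dg_{\theta_{\Si}}=0$ follows from $\nabla g_{\theta}=0$: differentiating $\iota^{*}g_{\theta}(X,Y)$ along a tangent field $Z$, the $g_{\theta}$-normal part of $\iota^{*}\nabla_{Z}X$ pairs trivially with the tangent field $Y$.

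The torsion is the substantive part. The key observation is that for $X,Y\in\Gamma(T\Si)$ one has $T^{D}(X,Y)=\mathrm{proj}^{\mathrm{tang}}_{g_{\theta}}\!\big(\iota^{*}T^{\nabla}(X,Y)\big)$, because $[X,Y]$ is again tangent to $\Si$ while the tangential projections of $\iota^{*}\nabla_{X}Y$ and $\iota^{*}\nabla_{Y}X$ are $D_{X}Y$ and $D_{Y}X$. I would then feed in the explicit ambient torsion components from Section~\ref{sub:The-Tanaka-Webster-Connection}. For $X,Y\in T^{1,0}\Si$ the ambient torsion vanishes, giving $T^{D}_{\mu\nu}{}^{\la}=T^{D}_{\mu\nu}{}^{\lab}=T^{D}_{\mu\nu}{}^{0}=0$. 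For $X\in T^{1,0}\Si$, $Y\in T^{0,1}\Si$ the only surviving ambient component is $T^{\nabla}_{\al\beb}{}^{0}=i\bh_{\al\beb}$, and using $\bh_{\mu\nub}=\Pi_{\mu}^{\al}\Pi_{\nub}^{\beb}\bh_{\al\beb}$ along $\Si$ (and the compatibility-based identification of $0$-components) this projects to $T^{D}_{\mu\nub}{}^{0}=i\bh_{\mu\nub}$, the other two components being zero. For $X\in T^{1,0}\Si$ and $Y=T_{\Si}=T|_{\Si}$ the only surviving ambient component is $T^{\nabla}_{\al0}{}^{\beb}=-A^{\beb}{}_{\al}$, whose tangential projection defines $a^{\nub}{}_{\mu}:=\Pi^{\nub}_{\beb}\Pi^{\al}_{\mu}A^{\beb}{}_{\al}$, so that $T^{D}_{\mu0}{}^{\nub}=-a^{\nub}{}_{\mu}$ while $T^{D}_{\mu0}{}^{\nu}=T^{D}_{\mu0}{}^{0}=0$.

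The one remaining point, and the step I expect to require a genuine argument rather than bookkeeping, is symmetry of the induced pseudohermitian torsion $a_{\mu\nu}$ (obtained from $a^{\nub}{}_{\mu}$ by lowering with $\bh$). For this I would use that the tangential orthogonal projection is self-adjoint with respect to the Levi form---equivalently $\bh_{\al\beb}\Pi^{\al}_{\mu}=\bh_{\mu\nub}\Pi^{\nub}_{\beb}$ along $\Si$, which is forced by the defining property of $\Pi^{\mu}_{\al}$ together with $\bh_{\mu\nub}=\Pi_{\mu}^{\al}\Pi_{\nub}^{\beb}\bh_{\al\beb}$---to rewrite $a_{\mu\nu}=\Pi^{\al}_{\mu}\Pi^{\be}_{\nu}A_{\al\be}$, which is symmetric because the ambient $A_{\al\be}$ is. With all the Tanaka--Webster normalisations now verified, the uniqueness statement of Section~\ref{sub:The-Tanaka-Webster-Connection} identifies $D$ with the Tanaka--Webster connection of $\theta_{\Si}$. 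Throughout, the only delicate thread is the consistent treatment of Reeb directions: every appearance of a $0$-component rests on the compatibility hypothesis $T_{\Si}=T|_{\Si}$, and the proposition genuinely fails without it.
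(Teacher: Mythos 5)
Your proof is correct, and for the substantive step it takes a genuinely different route from the paper. The paper verifies the torsion normalisation indirectly: it picks $f\in C^{\infty}(\Si)$, extends it to $\tilde f$ with vanishing $g_{\theta}$-normal derivatives along $\Si$, and pushes the ambient commutator identities \eqref{torsion1}--\eqref{PHtorsion} through the tangential projections to obtain the corresponding identities for $D$ (this also yields $A_{\mu\nu}=\Pi_{\mu}^{\al}\Pi_{\nu}^{\be}A_{\al\be}$ as a byproduct, recorded as Corollary \ref{SubmanifoldTWpropCor}). You instead observe that $T^{D}(X,Y)$ is the tangential projection of $T^{\nabla}(X,Y)$ for tangent fields $X,Y$ --- valid precisely because $[X,Y]$ is again tangent --- and then read the submanifold torsion components directly off the ambient normalisation, using $\bh_{\mu\nub}=\Pi_{\mu}^{\al}\Pi_{\nub}^{\beb}\bh_{\al\beb}$ and the self-adjointness of the Levi-orthogonal projection for the symmetry of the induced $a_{\mu\nu}$. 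Your argument is cleaner at this step (no auxiliary extension of a function is needed, and the identity $T^{D}=\mathrm{proj}\circ T^{\nabla}$ is pure tensor algebra), while the paper's version keeps the development uniform with the function-based phrasing of the torsion conditions in Section \ref{sub:The-Tanaka-Webster-Connection} and with the extension technique reused in the proofs of the Gauss and Codazzi equations. You are also more careful than the paper about metricity: the identification $g_{\theta_{\Si}}=\iota^{*}g_{\theta}|_{T\Si}$ (which indeed hinges on $T_{\Si}=T|_{\Si}$) and the standard normal-part argument for $Dg_{\theta_{\Si}}=0$ are left implicit in the paper's proof, and your observation that compatibility is exactly what makes the Reeb directions and the $0$-components match is the right way to see where the hypothesis enters.
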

\begin{proof}
We need to show that $D$ preserves $(H_{\Si},J_{\Si},\theta_{\Si})$ and satisfies the torsion conditions of Section \ref{sub:The-Tanaka-Webster-Connection}. It is clear that $D$ preserves the decomposition \eqref{SigmaComplexTangentSplitting} and gives a linear connection on each of the three direct summands. This implies that $D$ preserves $H$ and $J$ in the appropriate senses.  Since $\nabla$ preserves the Reeb vector field $T$, $\iota^{*}\nabla$ preserves $T|_{\Si}=T_{\Si}$ and hence $DT_{\Si}=0$. Since $D$ preserves $T_{\Si}$ and $H$ it must also preserve $\theta_{\Si}$.

Now let $f\in C^{\infty}(\Si)$ and choose an extension $\tilde{f}$
of $f$ to $M$ such that along $\Si$ we have $\nabla_{\al}\tilde{f}=\Pi_{\al}^{\mu}\nabla_{\mu}f$,
i.e. the derivative of $\tilde{f}$ vanishes in $g_{\theta}$-normal directions along $\Si$ (these directions don't depend on $\theta$ so long as we choose $\theta$ admissible). Then we have that $\nabla_{\beb}\tilde{f}=\Pi_{\beb}^{\lab}\nabla_{\lab}f$
along $\Si$ and hence also that
\begin{align*}
D_{\mu}D_{\nub}f-D_{\nub}D_{\mu}f & =D_{\mu}\nabla_{\nub}f-D_{\nub}\nabla_{\mu}f\\
 & =\Pi_{\nub}^{\beb}\nabla_{\mu}(\Pi_{\beb}^{\lab}\nabla_{\lab}f)-\Pi_{\mu}^{\al}\nabla_{\nub}(\Pi_{\al}^{\la}\nabla_{\la}f)\\
 & =\Pi_{\nub}^{\beb}\nabla_{\mu}\nabla_{\beb}\tilde{f}-\Pi_{\mu}^{\al}\nabla_{\nub}\nabla_{\al}\tilde{f}\\
 & =\Pi_{\mu}^{\al}\Pi_{\nub}^{\beb}(\nabla_{\al}\nabla_{\beb}\tilde{f}-\nabla_{\beb}\nabla_{\al}\tilde{f})\\
 & =\Pi_{\mu}^{\al}\Pi_{\nub}^{\beb}(-i\bh_{\al\be}\nabla_{0}\tilde{f})\\
 & =-i\bh_{\mu\nub}D_{0}f
\end{align*}
where we have used that $D_{\mu}f=\nabla_{\mu}f$ and $D_{\nub}f=\nabla_{\nub}f$
as well as that $D_{0}f=\nabla_{0}f=\nabla_{0}\tilde{f}$ along $\Si$
. Similarly we may easily compute that
\[
D_{\mu}D_{\nu}f-D_{\nu}D_{\mu}f=0.
\]
Finally we have
\begin{align*}
D_{\mu}D_{0}f-D_{0}D_{\mu}f & =\nabla_{\mu}\nabla_{0}f-\nabla_{0}\nabla_{\mu}f\\
 & =\Pi_{\mu}^{\al}(\nabla_{\al}\nabla_{0}\tilde{f}-\nabla_{0}\nabla_{\al}\tilde{f})\\
 & =\Pi_{\mu}^{\al}A^{\gab}{_{\al}}\nabla_{\gab}\tilde{f}\\
 & =\Pi_{\mu}^{\al}\Pi_{\gab}^{\lab}A^{\gab}{_{\al}}\nabla_{\lab}f=A^{\lab}{_{\mu}}D_{\lab}f
\end{align*}
where $A_{\mu\nu}=\Pi_{\mu}^{\al}\Pi_{\nu}^{\be}A_{\al\be}$. Since
$f$ was arbitrary, we conclude that $D$ is the Tanaka-Webster connection
of $\theta_{\Si}$.
\end{proof}
\begin{cor}\label{SubmanifoldTWpropCor}
Given an admissible ambient contact form $\theta$ with pseudohermitian torsion $A_{\al\be}$, the pseudohermitian torsion of $\theta_{\Si}=\iota^*\theta$ is $A_{\mu\nu}= \Pi_{\mu}^{\al}\Pi_{\nu}^{\be}A_{\al\be}$.
\end{cor}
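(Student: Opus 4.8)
The plan is to read the statement off directly from the computation already carried out in the proof of Proposition \ref{SubmanifoldTWprop}. Since $\theta$ is admissible, its Reeb vector field is tangent to $\Si$, so $(\theta,\iota^{*}\theta)$ is a compatible pair in the sense of Definition \ref{CompatibleScalesDef}; hence Proposition \ref{SubmanifoldTWprop} applies, and the connection $D$ on $T\Si$ obtained by tangential differentiation with $\iota^{*}\nabla$ followed by Levi-orthogonal projection is precisely the Tanaka-Webster connection of $\theta_{\Si}=\iota^{*}\theta$. By the torsion characterisation \eqref{PHtorsion}, the pseudohermitian torsion $\hat{A}_{\mu\nu}$ of $\theta_{\Si}$ is then the unique symmetric tensor with
\begin{equation*}
D_{\mu}D_{0}f - D_{0}D_{\mu}f = \hat{A}^{\lab}{}_{\mu}\, D_{\lab} f
\end{equation*}
for every $f\in C^{\infty}(\Si)$.

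Next I would invoke the final display in the proof of Proposition \ref{SubmanifoldTWprop}, where it is shown that
\begin{equation*}
D_{\mu}D_{0}f - D_{0}D_{\mu}f = \Pi_{\mu}^{\al}\Pi_{\gab}^{\lab}\, A^{\gab}{}_{\al}\, \nabla_{\lab} f = A^{\lab}{}_{\mu}\, D_{\lab} f,
\end{equation*}
with $A_{\mu\nu}=\Pi_{\mu}^{\al}\Pi_{\nu}^{\be}A_{\al\be}$. Since $f$ ranges over all of $C^{\infty}(\Si)$, the value $D_{\lab}f$ at any fixed point of $\Si$ can be an arbitrary element of $\cE_{\lab}$; comparing the two displays therefore forces $\hat{A}^{\lab}{}_{\mu}=A^{\lab}{}_{\mu}$, whence $\hat{A}_{\mu\nu}=\Pi_{\mu}^{\al}\Pi_{\nu}^{\be}A_{\al\be}$. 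Symmetry of this tensor is automatic from symmetry of $A_{\al\be}$, so it is indeed the pseudohermitian torsion of $\theta_{\Si}$.

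Thus the corollary is really just a repackaging of (a line in) the proof of Proposition \ref{SubmanifoldTWprop}, and there is no substantive obstacle. The one point worth stating carefully is the index bookkeeping in the second display: one needs that the projector $\Pi$ intertwines index raising by the ambient Levi form $\bh_{\al\beb}$ and by the submanifold Levi form $\bh_{\mu\nub}$, so that $\Pi_{\mu}^{\al}\Pi_{\gab}^{\lab}A^{\gab}{}_{\al}$ genuinely equals the index-raised form of $\Pi_{\mu}^{\al}\Pi_{\nu}^{\be}A_{\al\be}$. This is immediate from the identity $\bh_{\mu\nub}=\Pi_{\mu}^{\al}\Pi_{\nub}^{\beb}\bh_{\al\beb}$ together with $\Pi_{\al}^{\mu}\Pi_{\nu}^{\al}=\de_{\nu}^{\mu}$ from the Notation subsection, and requires no new input.
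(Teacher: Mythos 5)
Your proposal is correct and matches the paper's approach: the paper gives no separate proof of this corollary, reading it off (exactly as you do) from the final display in the proof of Proposition \ref{SubmanifoldTWprop}, where the identity $D_{\mu}D_{0}f-D_{0}D_{\mu}f=A^{\lab}{_{\mu}}D_{\lab}f$ with $A_{\mu\nu}=\Pi_{\mu}^{\al}\Pi_{\nu}^{\be}A_{\al\be}$ is established for arbitrary $f$. Your added remarks on uniqueness via the torsion characterisation \eqref{PHtorsion} and on the compatibility of index raising with $\bh_{\mu\nub}=\Pi_{\mu}^{\al}\Pi_{\nub}^{\beb}\bh_{\al\beb}$ are accurate and fill in the small details the paper leaves implicit.
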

\begin{rem}\label{SubmanifoldTWpropRemark}
Note that Proposition \ref{SubmanifoldTWprop} and Corollary \ref{SubmanifoldTWpropCor} hold in the general codimension case by the same arguments.
\end{rem}

\subsection{The Second Fundamental Form}\label{sub:SFF}

We can now define a second fundamental form using an analogue of the
Gauss formula from Riemannian submanifold geometry. 
\begin{defn}\label{PHsff}
Given $\theta$ and $\theta_{\Si}$ compatible with respective Tanaka-Webster
connections $\nabla$ and $D$ we define the \emph{(pseudohermitian) second fundamental form} by
\begin{equation}\label{GaussFormula}
\nabla_{X}Y=D_{X}Y+\II(X,Y),
\end{equation}
for all $X,Y\in\fX(\Si)$, where we implicitly identify submanifold vector fields with tangential ambient vector fields along $\Si$ and use the pullback connection $\iota^*\nabla$ on the left hand side.
\end{defn}
Clearly $\II(X,Y)$ is tensorial in $X$ and $Y$, and is normal bundle ($N\Si$) valued. It is also clear from the definition that $\II(\,\cdot\,,T_{\Si})=0$ and that 
$\II(\,\cdot\,,\cdot\,)|_{H_{\Si}}$ is complex linear (with respect to $J$ and $J_{\Si}$) in the second
argument, that is 
\begin{equation*}
\II(\,\cdot\,,J_{\Si}\cdot\,)|_{H_{\Si}}=J\II(\,\cdot\,,\cdot\,)|_{H_{\Si}}.
\end{equation*}
In fact, these properties also hold for the first argument, $\II$ being symmetric.
\begin{prop}\label{SFFprop}
The only nonzero components of the (pseudohermitian) second fundamental form $\II$ are $\II_{\mu\nu}{^{\ga}}$ and its conjugate. Moreover
\begin{equation}
\II_{\mu\nu}{^{\ga}}=\II_{\nu\mu}{^{\ga}},
\end{equation}
so that $\II$ is symmetric.
\end{prop}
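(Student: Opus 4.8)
The plan is to use the Gauss formula \eqref{GaussFormula} together with the fact that both $\nabla$ and $D$ preserve the splitting $\bbC T\Si=T^{1,0}\Si\oplus T^{0,1}\Si\oplus\bbC T_{\Si}$ in the compatible-scales setup, reading off which components of $\II$ can be nonzero and then deducing symmetry from the torsion conditions. First I would record that $\II(\,\cdot\,,T_{\Si})=0$, which is immediate: since $\theta$, $\theta_\Si$ are compatible, $\nabla T=0$ gives $\iota^*\nabla T_{\Si}=0$, and $D T_{\Si}=0$ by Proposition \ref{SubmanifoldTWprop}, so $\II(X,T_{\Si})=\nabla_X T_{\Si}-D_X T_{\Si}=0$; by symmetry of $\II$ (already asserted before the proposition) also $\II(T_{\Si},\,\cdot\,)=0$. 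This kills every component of $\II$ with a $0$ index. Next, for $X\in\Gamma(T^{1,0}\Si)$ and $Y\in\Gamma(T^{1,0}\Si)$, the ambient derivative $\nabla_X Y$ lands in $\Gamma(T^{1,0}M|_\Si)$ (the Tanaka-Webster connection preserves $T^{1,0}M$ and $X,Y$ are $(1,0)$), while $D_X Y\in\Gamma(T^{1,0}\Si)$; hence $\II(X,Y)=\nabla_X Y-D_X Y$ is a section of $T^{1,0}M|_\Si$, and since $D_X Y$ is exactly the orthogonal projection of $\nabla_X Y$ onto $T^{1,0}\Si$, the difference lies in $\cN^\al$. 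Thus $\II_{\mu\nu}{}^{\ga}$ (valued in $\cN^\ga\subset\cE^\ga|_\Si$) is the only component with all-$(1,0)$ lower indices. Conjugating gives the component $\II_{\mub\nub}{}^{\gab}$.

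It remains to rule out the mixed component $\II_{\mu\nub}{}^{c}$. Here the argument is: for $X\in\Gamma(T^{1,0}\Si)$, $\overline Y\in\Gamma(T^{0,1}\Si)$, the ambient derivative $\nabla_X\overline Y$ again lies in $\Gamma(\bbC H|_\Si)$ with no $\bbC T$ part (the Tanaka-Webster connection preserves each summand of \eqref{CTM-Decomp}), so $\II(X,\overline Y)$ has vanishing $0$-component; and its $T^{1,0}M$ part is orthogonal to $T^{1,0}\Si$ while its $T^{0,1}M$ part is orthogonal to $T^{0,1}\Si$. But we must check these normal pieces actually vanish. For this I would use that the torsion difference is controlled: comparing the torsion of $\iota^*\nabla$ (restricted tangentially) with that of $D$, and using $\II(X,T_{\Si})=0$, one finds that the antisymmetric part $\II(X,\overline Y)-\II(\overline Y,X)$ equals the difference of torsions $T^{\nabla}(X,\overline Y)-T^{D}(X,\overline Y)$ evaluated tangentially, which by the torsion normalisation $T_{\al\beb}^\nabla{}^{\ga}=T_{\al\beb}^\nabla{}^{\gab}=0$ and $T_{\al\beb}^\nabla{}^{0}=i\bh_{\al\beb}$ (and the same on $\Si$, pulled back via Corollary \ref{SubmanifoldTWpropCor}, noting $\bh_{\mu\nub}=\Pi_\mu^\al\Pi_{\nub}^{\beb}\bh_{\al\beb}$) is zero. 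Since $\II$ is symmetric, $\II(X,\overline Y)=\II(\overline Y,X)$, and being symmetric and having its antisymmetrisation vanish forces... actually more directly: symmetry already says $\II_{\mu\nub}{}^{c}=\II_{\nub\mu}{}^{c}$, and I would instead argue that the $\cN^\ga$-valued piece $\II_{\mu\nub}{}^{\ga}$ is computed by $\Pi^\ga_\al$ composed with... no—cleaner is to note $\nabla_\mu\tau^{\alb}$ for $\tau^{\alb}$ tangential stays tangential. Concretely: if $\tau^{\al}=\Pi^\al_\la\tau^\la$ then $D_\mu\tau^\la=\Pi^\la_\al\nabla_\mu\tau^\al$, so $\II_{\mu}{}_\la{}^{\ga}=\nabla_\mu\tau^\ga-\Pi^\ga_\al\Pi^\al_\la\nabla_\mu\tau^\la\cdot(\dots)$; tracking weights and using $\delta^\al_\be=\Pi^\al_\be+N^\al N_\be$ shows $\II_{\mu\nu}{}^\ga=N^\ga N_\al\nabla_\mu(\Pi^\al_\nu)$, manifestly $\cN^\ga$-valued, and the analogous computation with a $(0,1)$ second slot gives $\II_{\mu\nub}{}^{\gab}=N^{\gab}N_{\alb}\Pi^\al_\mu\nabla_\al(\Pi^{\alb}_{\nub})$; but by formal integrability $[T^{1,0},T^{1,0}]\subset T^{1,0}$ this last expression can be shown to vanish, or one invokes that $D$ is a genuine connection on $H_\Si$ preserving $J_\Si$ so that the $(0,1)$-valued piece of $\nabla_\mu$ acting on $(0,1)$ tangential sections must already be tangential.

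The main obstacle I anticipate is precisely the vanishing of $\II_{\mu\nub}{}^{\ga}$ and $\II_{\mu\nub}{}^{\gab}$: the ``only nonzero components'' claim hinges on showing the mixed second fundamental form vanishes, and the cleanest route is to exploit that $D$ (the intrinsic Tanaka-Webster connection) preserves $J_\Si$ and $H_\Si$ — established in Proposition \ref{SubmanifoldTWprop} — together with the fact that $\nabla_\mu$ applied to a section of $\cE^{\alb}|_\Si$ that is tangential produces again a tangential section, which follows because the tangential $(0,1)$ subbundle $T^{0,1}\Si$ is the $(-i)$-eigenbundle of the parallel complex structure and $\nabla$ commutes with $J$ on $H$. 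Given that, symmetry $\II_{\mu\nu}{}^\ga=\II_{\nu\mu}{}^\ga$ is then immediate from the symmetry of $\II$ (which in turn follows from $T^\nabla_{\al\be}{}^\ga=T^\nabla_{\al\be}{}^{\gab}=T^\nabla_{\al\be}{}^0=0$ and the corresponding vanishing on $\Si$, so that the torsion contributions to $\nabla_X Y-\nabla_Y X-D_X Y+D_Y X$ cancel). I would write the torsion bookkeeping carefully but leave the routine index manipulations compressed.
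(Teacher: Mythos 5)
Your handling of the purely holomorphic component is fine: for $X,Y\in\Gamma(T^{1,0}\Si)$ the difference $\nabla_XY-D_XY$ lies in $\cN^{\al}$ by construction, and the symmetry $\II_{\mu\nu}{}^{\ga}=\II_{\nu\mu}{}^{\ga}$ follows from $T^{\nabla}_{\al\be}{}^{c}=0$ together with the corresponding vanishing for $D$. But there are two genuine gaps. The first is a circularity: you dispose of the components $\II_{0\nu}{}^{c}$ by appealing to ``symmetry of $\II$ (already asserted before the proposition)''. Symmetry of $\II$ is the \emph{conclusion} of Proposition \ref{SFFprop}; before it, only $\II(\,\cdot\,,T_{\Si})=0$ (the second slot) is known from the definition. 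The vanishing of $\II_{0\mu}{}^{\ga}$ is not free: the paper derives it from the torsion identity \eqref{PHtorsion} applied to a function $f$ with prescribed conormal $1$-jet along $\Si$, and the same computation forces the nontrivial constraint $\Pi^{\al}_{\mu}A_{\al\be}N^{\be}=0$ on the pseudohermitian torsion of an admissible contact form (Corollary \ref{ATanNorCor}) --- a point Remark \ref{Qremark} flags as contradicting earlier literature, so it cannot be waved through. (A bracket-based substitute does exist: write $\nabla_TY=[T,Y]+T^{\nabla}(T,Y)$, use $\nabla_YT=0$ and tangency of $[T,Y]$, and decompose with respect to \eqref{CTM-Decomp}; but you would have to actually carry this out.)

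The second gap is the mixed component. From the torsion comparison you correctly reach $\II(X,\overline{Y})=\II(\overline{Y},X)$, but then you stall, and your fallback claims do not work: that $D$ preserves $J_{\Si}$ is automatic from its definition as a projection and says nothing about whether the \emph{normal} part of $\nabla_{\mu}\tau^{\alb}$ vanishes --- which is exactly the assertion $\II_{\mu\nub}{}^{\gab}=0$ being proved. The one missing line is that $\II(X,\overline{Y})$ is valued in $\cN^{\alb}$ while $\II(\overline{Y},X)$ is valued in $\cN^{\al}$, and these subbundles of $\bbC N\Si$ intersect trivially, so their equality forces both to vanish. The paper achieves the same separation differently: it contracts against a unit conormal $N$ and exploits the phase freedom $N\mapsto e^{i\varphi}N$ (in particular $N\mapsto iN$) in the identity $N_{\gab}\II_{\mu\nub}{}^{\gab}=N_{\ga}\II_{\nub\mu}{}^{\ga}$ obtained from $\nabla_{\al}\nabla_{\beb}f=\nabla_{\beb}\nabla_{\al}f$ along $\Si$. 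Either mechanism closes the argument; as written, yours does not.
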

\begin{proof}
Since $\II(\,\cdot\,,T_{\Si})=0$ and $\II(\,\cdot\,,\cdot\,)|_{H_{\Si}}$ is complex linear in the second argument, to prove the first claim it suffices to show that $\II_{0\nu}{^{\ga}}=0$ and $\II_{\mub\nu}{^{\ga}}=0$.

Let $N_{\al}$ be a section of $\cN_{\al}$ such that $h^{\al\beb}N_{\al}N_{\beb}=1$. From the Gauss formula \eqref{GaussFormula} we have that
\begin{equation}\label{GaussFormulaComponents}
\nabla_i V^{\ga} = \Pi^{\ga}_{\la} D_i V^{\la}+\II_{i\nu}{^{\ga}}V^{\nu}
\end{equation}
for any section $V^{\la}$ of $\cE^{\la}$, where $V^{\ga}=\Pi^{\ga}_{\la}V^{\la}$. Contracting the above display with $N_{\ga}$ and replacing the `$i$' index with `$\mu$', `$\mub$', and `$0$' respectively gives
\begin{align*}
N_{\ga}\II_{\mu\nu}{^{\ga}}=-\Pi^{\ga}_{\nu}\nabla_{\mu}N_{\ga}&, \quad
N_{\ga}\II_{\mub\nu}{^{\ga}}=-\Pi^{\ga}_{\nu}\nabla_{\mub}N_{\ga}, \\*
\mathrm{and} \quad N_{\ga}\II_{0\nu}{^{\ga}}&=-\Pi^{\ga}_{\nu}\nabla_{0}N_{\ga},
\end{align*}
since $N_{\ga}V^{\ga}=0$ for all $V^{\la}\in\Gamma(\cE^{\la})$. By conjugating, one also has that $N_{\gab}\II_{\nu\mub}{^{\gab}}=-\Pi^{\gab}_{\mub}\nabla_{\nu}N_{\gab}$.

Now let $f$ be a real valued function on $M$ which vanishes on $\Si$ and for which $(\nabla_{\al}f,\nabla_{\alb}f,\nabla_{0}f)$ is equal
to $(N_{\al},N_{\alb},0)$ along $\Si$. (Note that we must require $\nabla_{0}f$ to be zero along $\Si$ since $T$ is tangent to $\Si$ and we ask that $f|_{\Si}\equiv0$. Such an $f$ exists because we are simply prescribing the normal derivatives of $f$ off $\Sigma$. Any such $f$ is, locally about $\Sigma$, a defining function for a real hypersurface in $M$ containing $\Sigma$ which is $g_{\theta}$-orthogonal to the real part and tangent to the imaginary part of $N^{\al}$.) From \eqref{torsion1} and \eqref{torsion2} we have that
\begin{equation*}
\nabla_{\al}\nabla_{\be}f=\nabla_{\be}\nabla_{\al}f\quad\mathrm{and}\quad\nabla_{\al}\nabla_{\beb}f=\nabla_{\beb}\nabla_{\al}f
\end{equation*}
along $\Si$. Projecting tangentially along $\Si$ we immediately have that
\begin{equation*}
N_{\ga}\II_{\mu\nu}{^{\ga}}=N_{\ga}\II_{\nu\mu}{^{\ga}}\quad\mathrm{and}\quad N_{\gab}\II_{\mu\nub}{^{\gab}}=N_{\ga}\II_{\nub\mu}{^{\ga}}.
\end{equation*}
The first of these implies that $\II_{\mu\nu}{^{\ga}}=\II_{\nu\mu}{^{\ga}}$.
Since $N_{\ga}$ was arbitrary the second implies that $\II_{\mu\nub}{^{\gab}}=0$ (replacing $N_{\ga}$ with $iN_{\ga}$ gives a minus sign).

Using the same function $f$, \eqref{PHtorsion} states
\begin{equation*}
\nabla_{\al}\nabla_{0}f-\nabla_{0}\nabla_{\al}f=A^{\gab}{_{\al}}\nabla_{\gab}f.
\end{equation*}
Applying $\Pi_{\mu}^{\al}$ to both sides of the above
display we get that
\begin{equation*}
-\Pi_{\mu}^{\al}\nabla_{0}\nabla_{\al}f=\Pi_{\mu}^{\al}A^{\gab}{_{\al}}\nabla_{\gab}f
\end{equation*}
along $\Si$ (since $\Pi_{\mu}^{\al}\nabla_{\al}\nabla_{0}f$ is zero
along $\Si$). We conclude that
\begin{equation*}
N_{\ga}\II_{0\mu}{^{\ga}}=N_{\gab}\Pi_{\mu}^{\al}A_{\al}^{\gab}.
\end{equation*}
Again, since $N_{\ga}$ was arbitrary we must have
\begin{equation}\label{ThookSSF}
\II_{0\mu}{^{\ga}}=0\quad\mathrm{and}\quad N_{\gab}\Pi_{\mu}^{\al}A_{\al}^{\gab}=0.
\end{equation}
\end{proof}
The second of the expressions \eqref{ThookSSF} should be seen as a constraint on the pseudohermitian torsion of an admissible contact form. We state this as a corollary:
\begin{cor}\label{ATanNorCor}
If $\theta$ is an admissible ambient contact form then the pseudohermitian torsion of $\theta$ satisfies
\begin{equation}
\Pi^{\al}_{\mu} A_{\al\be} N^{\be}=0
\end{equation}
for any holomorphic normal field $N^{\be}$.
\end{cor}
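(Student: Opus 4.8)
The plan is to recognise that this corollary is not really a new statement but a repackaging of the second identity in \eqref{ThookSSF}, which was already established in the course of proving Proposition \ref{SFFprop}. There, after fixing a unit holomorphic conormal $N_{\al}$ and a real function $f$ on $M$ vanishing on $\Si$ with $(\nabla_{\al}f,\nabla_{\alb}f,\nabla_{0}f)=(N_{\al},N_{\alb},0)$ along $\Si$ (admissibility together with $f|_{\Si}\equiv 0$ is exactly what forces $\nabla_{0}f=0$ there), the torsion identity \eqref{PHtorsion} was applied to $f$ and projected tangentially, yielding along $\Si$ both $\II_{0\mu}{}^{\ga}=0$ and $N_{\gab}\Pi_{\mu}^{\al}A_{\al}^{\gab}=0$. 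So the first step is simply to invoke that computation.

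The remaining step is a change of index position. Since by Lemma \ref{TWonDiagonalDensitiesLemma} the CR Levi form is parallel, raising and lowering indices with $\bh$ commutes with everything in sight; writing $N^{\be}=\bh^{\be\gab}N_{\gab}$ and using $A_{\al\be}\bh^{\be\gab}=A_{\al}^{\gab}$ one gets
\begin{equation*}
\Pi_{\mu}^{\al}A_{\al\be}N^{\be}\;=\;N_{\gab}\Pi_{\mu}^{\al}A_{\al}^{\gab}\;=\;0
\end{equation*}
along $\Si$. Finally, since $\cN^{\be}$ is a complex line bundle (as are its weighted versions), the assignment $N^{\be}\mapsto\Pi_{\mu}^{\al}A_{\al\be}N^{\be}$ is a bundle map on it; its vanishing on the nonzero section $\bh^{\be\gab}N_{\gab}$ therefore forces it to vanish on every holomorphic normal field $N^{\be}$, which is the claim.

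All of the analytic content — constructing the defining-type function $f$, using admissibility to kill $\nabla_{0}f$ along $\Si$, and correctly computing the tangential projection of $\nabla_{0}\nabla_{\al}f$ — has already been carried out inside the proof of Proposition \ref{SFFprop}, so I anticipate no genuine obstacle: the statement really is a corollary, and the only work is index bookkeeping. If a self-contained argument were preferred one could simply reproduce the relevant few lines of that proof, but there is little point in doing so.
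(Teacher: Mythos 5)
Your proposal is correct and is exactly the paper's own route: the authors explicitly present this corollary as a restatement of the second identity in \eqref{ThookSSF}, which is derived inside the proof of Proposition \ref{SFFprop} by applying \eqref{PHtorsion} to the defining-type function $f$ and separating the $N_{\ga}$- and $N_{\gab}$-linear parts. Your index bookkeeping $\Pi_{\mu}^{\al}A_{\al\be}N^{\be}=N_{\gab}\Pi_{\mu}^{\al}A_{\al}{}^{\gab}=0$ and the observation that tensoriality in $N^{\be}$ extends the vanishing from one unit normal to all holomorphic normal fields are both sound.
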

\begin{rem}\label{sffHigherCodimensionRemark}
Note that in the higher codimension case (of CR embeddings) if one defines the (pseudohermitian) second fundamental form of a pair of compatible contact forms as in Definition \ref{PHsff} then Proposition \ref{SFFprop} holds with the proof unchanged (and consequently Corollary \ref{ATanNorCor} also holds).
\end{rem}
\begin{rem}\label{Qremark}
Our claim that $\II(T,\,\cdot\,)=0$, and the above corollary, disagree with \cite{Dragomir} and the book \cite{DragomirTomassini}. Our claim that $\II(T,\,\cdot\,)=0$ is confirmed however by the later article \cite{DragomirMinor}.
\end{rem}
\subsubsection{The CR second fundamental form}
We shall now see that the component $\II_{\mu\nu}{^{\ga}}$ does not depend on the choice of compatible contact forms $\theta$ and $\theta_{\Si}$.
\begin{lem}\label{CRsffLemma}
Given compatible contact forms $\theta$ and $\theta_{\Si}$ one has
\begin{equation}\label{CRsff}
\II_{\mu\nu}{^{\ga}}=-N^{\ga}\Pi_{\nu}^{\be}\nabla_{\mu}N_{\be}
\end{equation}
for any unit holomorphic conormal field $N_{\al}$.
\end{lem}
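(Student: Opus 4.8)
The plan is to derive \eqref{CRsff} as an essentially immediate consequence of the computation already performed in the proof of Proposition \ref{SFFprop}. Recall that there, for any section $N_\al$ of $\cN_\al$ with $\bh^{\al\beb}N_\al N_\beb = 1$, we obtained
\[
N_\ga \II_{\mu\nu}{^\ga} = -\Pi^\ga_\nu \nabla_\mu N_\ga .
\]
What remains is to promote this scalar identity to the full tensor identity, and the only extra ingredient needed is that the $\ga$-slot of $\II_{\mu\nu}{^\ga}$ takes values in the normal line bundle $\cN^\ga$.

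First I would record that $\II_{\mu\nu}{^\ga}$ is $\cN^\ga$-valued in its upper index. This is already implicit in the fact, noted after Definition \ref{PHsff}, that $\II$ is $N\Si$-valued, so that its complexification lands in $\cN^\ga \oplus \cN^{\gab}$ and Proposition \ref{SFFprop} isolates the $\cN^\ga$-valued piece $\II_{\mu\nu}{^\ga}$. If one prefers a direct argument, contract the component Gauss formula \eqref{GaussFormulaComponents} with the tangential projector $\Pi^\la_\ga$; using $\Pi^\la_\ga \nabla_i V^\ga = D_i V^\la$ and $\Pi^\la_\ga \Pi^\ga_\rho = \de^\la_\rho$ one finds $\Pi^\la_\ga \II_{i\nu}{^\ga} V^\nu = 0$ for all $V^\nu$, hence $\Pi^\la_\ga \II_{\mu\nu}{^\ga} = 0$, which says precisely that $\II_{\mu\nu}{^\ga}\in\Gamma(\cN^\ga)$ in the $\ga$-index.

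Next, since $\cN^\ga$ is a complex line bundle and the unit holomorphic normal $N^\ga = \bh^{\ga\beb}N_\beb$ is a nowhere-vanishing section of it with $N_\ga N^\ga = \bh^{\ga\beb}N_\ga N_\beb = 1$, every section $\Xi^\ga$ of $\cN^\ga$ may be written $\Xi^\ga = (N_\de \Xi^\de)\, N^\ga$. Applying this with $\Xi^\ga = \II_{\mu\nu}{^\ga}$ and inserting the displayed identity from the proof of Proposition \ref{SFFprop} gives
\[
\II_{\mu\nu}{^\ga} = (N_\de \II_{\mu\nu}{^\de})\, N^\ga = -N^\ga \Pi^\be_\nu \nabla_\mu N_\be ,
\]
which is \eqref{CRsff}. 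As a sanity check, the right-hand side is independent of the phase in the choice of conormal: under $N_\al \mapsto e^{i\varphi}N_\al$ one has $N^\ga\mapsto e^{-i\varphi}N^\ga$ and $\nabla_\mu N_\be\mapsto e^{i\varphi}(\nabla_\mu N_\be + i\,\nabla_\mu\varphi\, N_\be)$, and the extra term dies because $\Pi^\be_\nu N_\be = 0$, $N_\be$ annihilating $T^{1,0}\Si$.

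There is no real obstacle here; the computation is short once Proposition \ref{SFFprop} is in hand. The only point deserving a little care is the first step, distinguishing the genuine statement that $\II_{\mu\nu}{^\ga}$ lies in $\cN^\ga$ from the weaker statement that its contraction with one chosen conormal has a prescribed value — hence my preference to either invoke the $N\Si$-valuedness of $\II$ or to spell out the one-line projector computation above.
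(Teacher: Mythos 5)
Your proof is correct and follows essentially the same route as the paper's: the paper contracts the component Gauss formula with $N_{\ga}$ and uses $N_{\ga}\nabla_{\mu}V^{\ga}=-V^{\ga}\nabla_{\mu}N_{\ga}$, which is exactly the identity you import from the proof of Proposition \ref{SFFprop}. The only difference is that you make explicit the step the paper leaves tacit — that $\II_{\mu\nu}{^{\ga}}$ is $\cN^{\ga}$-valued in its upper index and $\cN^{\ga}$ is a line bundle in this codimension-one setting, so the contraction with $N_{\ga}$ determines the whole tensor — which is a worthwhile clarification but not a different argument.
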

\begin{proof}
From the Gauss formula (cf.\ \eqref{GaussFormulaComponents}) we have 
\begin{equation*}
\nabla_{\mu} V^{\ga} = \Pi^{\ga}_{\la} D_{\mu} V^{\la}+\II_{\mu\nu}{^{\ga}}V^{\nu}
\end{equation*}
for any section $V^{\la}$ of $\cE^{\la}$, where $V^{\ga}=\Pi^{\ga}_{\la}V^{\la}$. Contracting the above display with $N_{\ga}$ and using that $N_{\ga}\nabla_{\mu}V^{\ga}=-V^{\ga}\nabla_{\mu}N_{\ga}$ yields the result.
\end{proof}
\begin{cor}\label{CRsffCor}
The component $\II_{\mu\nu}{^{\ga}}$ of the pseudohermitian second fundamental form does not depend on the pair of compatible contact forms used to define it.
\end{cor}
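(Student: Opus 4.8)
The plan is to argue directly from the formula \eqref{CRsff} of Lemma \ref{CRsffLemma} and to track how its right-hand side behaves under a change of compatible scale. First I would isolate the only $\theta$-dependent ingredient in $-N^{\ga}\Pi_{\nu}^{\be}\nabla_{\mu}N_{\be}$, namely the Tanaka-Webster connection $\nabla$. Indeed $\Pi_{\mu}^{\al}$ is the complex-linear extension of $T\iota$, the tangential projection $\Pi_{\nu}^{\be}$ and the raising $N^{\ga}=\bh^{\ga\deb}N_{\deb}$ are defined using the CR Levi form $\bh_{\al\beb}$, and a (weighted) unit holomorphic conormal $N_{\al}$ is simply a unit-length section of the CR-invariant line bundle $\cN_{\al}(1,0)$, determined independently of $\theta$ up to phase. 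Hence the same $N_{\al}$ can be used for every admissible ambient contact form, and only the dependence of $\nabla_{\mu}N_{\be}=\Pi_{\mu}^{\al}\nabla_{\al}N_{\be}$ on $\theta$ needs to be controlled.

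Given two pairs of compatible contact forms I would write $\thetah=e^{\Ups}\theta$ for some $\Ups\in C^{\infty}(M)$ (any two ambient contact forms differ by such a factor), so that $\thetah_{\Si}=\iota^{*}\thetah$. Since $N_{\be}$ is a section of $\cE_{\be}(1,0)$, combining the transformation law \eqref{TWhhTransform} of Proposition \ref{TWtransform} for the $\cE_{\be}$ index with the weight-$(1,0)$ case of Proposition \ref{TWonDensitiesProp} for the density factor gives $\nablah_{\al}N_{\be}=\nabla_{\al}N_{\be}-\Ups_{\be}N_{\al}-\Ups_{\al}N_{\be}+\Ups_{\al}N_{\be}=\nabla_{\al}N_{\be}-\Ups_{\be}N_{\al}$, where $\Ups_{\be}=\nabla_{\be}\Ups$. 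Contracting with $\Pi_{\mu}^{\al}\Pi_{\nu}^{\be}N^{\ga}$, the correction term becomes $-N^{\ga}(\Pi_{\nu}^{\be}\Ups_{\be})(\Pi_{\mu}^{\al}N_{\al})$, which vanishes identically because $\Pi_{\mu}^{\al}N_{\al}=0$ by the definition of $\cN_{\al}$. Therefore $N^{\ga}\Pi_{\nu}^{\be}\nablah_{\mu}N_{\be}=N^{\ga}\Pi_{\nu}^{\be}\nabla_{\mu}N_{\be}$, and applying Lemma \ref{CRsffLemma} to each pair yields $\hat{\II}_{\mu\nu}{}^{\ga}=-N^{\ga}\Pi_{\nu}^{\be}\nablah_{\mu}N_{\be}=-N^{\ga}\Pi_{\nu}^{\be}\nabla_{\mu}N_{\be}=\II_{\mu\nu}{}^{\ga}$, as claimed.

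The argument is short and has no real obstacle once Lemma \ref{CRsffLemma} is available; the entire content is that the single ``dangerous'' term produced by conformally rescaling the Tanaka-Webster connection is annihilated by the conormal $N_{\al}$ — reflecting the fact that in \eqref{CRsff} the ambient index $\be$ is contracted against $N_{\be}$ only through its normal component. The one point to phrase carefully is that, although the transformation computation above is valid for an arbitrary $\Ups\in C^{\infty}(M)$, one should only compare those rescalings for which $e^{\Ups}\theta$ is again admissible, since $\hat{\II}_{\mu\nu}{}^{\ga}$ is defined only for a compatible pair; this restriction plays no role in the cancellation. (One could alternatively phrase the whole proof as the observation that the right-hand side of \eqref{CRsff} is the manifestly $\theta$-independent object obtained by contracting the fixed projectors with the tangential ambient covariant derivative of a fixed unit conormal, but the transformation computation makes the cancellation explicit.)
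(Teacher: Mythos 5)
Your proposal is correct and follows essentially the same route as the paper: both combine the transformation laws of Propositions \ref{TWtransform} and \ref{TWonDensitiesProp} for the weight $(1,0)$ conormal, observe that the $-\Ups_{\al}N_{\be}$ and $+\Ups_{\al}N_{\be}$ terms cancel, and kill the remaining $-\Ups_{\be}N_{\al}$ term by contracting against $\Pi^{\al}_{\mu}N_{\al}=0$, then invoke Lemma \ref{CRsffLemma}. Your remark that one need only compare admissible rescalings, and that the same unit section of $\cN_{\al}(1,0)$ serves for all of them, is a worthwhile clarification but does not change the argument.
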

\begin{proof}
Combining the Tanaka-Webster transformation laws of Proposition \ref{TWtransform} and Proposition \ref{TWonDensitiesProp} we have that
\begin{equation*}
\nablah_{\mu}N_{\be}
=\nabla_{\mu}N_{\be}-\Pi^{\al}_{\mu}\Ups_{\be}N_{\al}-\Ups_{\mu}N_{\be} + \Ups_{\mu}N_{\be}
=\nabla_{\mu}N_{\be}
\end{equation*}
since $N_{\be}$ has weight $(1,0)$. The claim then follows from \eqref{CRsff}.
\end{proof}
We therefore term $\II_{\mu\nu}{^{\ga}}$ the \emph{CR second fundamental form}.
\begin{rem}
The pseudohermitian second fundamental form $\II$ (of a pair of compatible contact forms) is not CR invariant, even though $\II_{\mu\nu}{^{\ga}}$ is, since the direct sum decompositions of $\bbC TM$ and $\bbC T^*\Si$ change under rescaling of the ambient and submanifold contact forms.
\end{rem}

Recall that we write $\Pi_{\be}^{\al}$ for the tangential orthogonal projection $\Pi^{\al}_{\mu}\Pi_{\be}^{\mu}$ on the ambient holomorphic tangent bundle along $\Si$. The following lemma will be useful in the derivations of Section \ref{Pseudohermitian-GCR}:
\begin{lem}\label{ProjectorDerivLemma}
For any admissible ambient contact form we have
\begin{equation}\label{ProjectorDeriv1}
\nabla_{\mu}\Pi^{\ga}_{\be}=\II_{\mu\nu}{^{\ga}}\Pi^{\nu}_{\be}, \qquad 
\nabla_{\mu}\Pi^{\beb}_{\gab}=\II_{\mu}{^{\nub}}_{\gab}\Pi^{\beb}_{\nub},
\end{equation}
\begin{equation}\label{ProjectorDeriv2}
\nabla_{\mub}\Pi^{\gab}_{\beb}=\II_{\mub\nub}{^{\gab}}\Pi^{\nub}_{\beb}, \qquad 
\nabla_{\mub}\Pi^{\be}_{\ga}=\II_{\mub}{^{\nu}}_{\ga}\Pi^{\be}_{\nu},
\end{equation}
and
\begin{equation}\label{ProjectorDeriv3}
\nabla_{0}\Pi^{\ga}_{\be}=0,  \qquad
\nabla_{0}\Pi^{\beb}_{\gab}=0.
\end{equation}
\end{lem}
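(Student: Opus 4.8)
The plan is to write each projector as the identity minus a rank-one ``normal'' piece and differentiate, using that the Tanaka--Webster connection $\nabla$ of an admissible $\theta$ annihilates $\delta^{\al}_{\be}$ and $\bh_{\al\beb}$. Fix a unit holomorphic conormal $N_{\al}$. Then $\Pi^{\ga}_{\be}=\delta^{\ga}_{\be}-N^{\ga}N_{\be}$ and, dually, $\Pi^{\beb}_{\gab}=\delta^{\beb}_{\gab}-N^{\beb}N_{\gab}$, while inverting the Levi form relation $\bh_{\al\beb}=\bh_{\mu\nub}\Pi^{\mu}_{\al}\Pi^{\nub}_{\beb}+N_{\al}N_{\beb}$ gives $\bh^{\al\beb}=\bh^{\mu\nub}\Pi^{\al}_{\mu}\Pi^{\beb}_{\nub}+N^{\al}N^{\beb}$. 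Since $\nabla\delta=0$, everything reduces to understanding the tangential/normal decomposition of $\nabla_{\bullet}N_{\al}$ and $\nabla_{\bullet}N^{\ga}$ for $\bullet\in\{\mu,\mub,0\}$, together with the consequence $\bh^{\al\beb}\nabla_{\bullet}(N_{\al}N_{\beb})=\nabla_{\bullet}(1)=0$ of the unit condition.

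The three inputs I would use are: (i) Lemma~\ref{CRsffLemma} rewritten as $\Pi^{\be}_{\nu}\nabla_{\mu}N_{\be}=-N_{\ga}\II_{\mu\nu}{}^{\ga}$, pinning down the tangential part of $\nabla_{\mu}N_{\al}$; (ii) the vanishings $\II_{0\nu}{}^{\ga}=0$ and $\II_{\mu\nub}{}^{\gab}=0$ from Proposition~\ref{SFFprop}, which via the identity $N_{\ga}\II_{0\nu}{}^{\ga}=-\Pi^{\ga}_{\nu}\nabla_{0}N_{\ga}$ from its proof, and via the Gauss formula for antiholomorphic submanifold fields, give $\Pi^{\ga}_{\nu}\nabla_{0}N_{\ga}=0$ and $\Pi^{\gab}_{\nub}\nabla_{\mu}N_{\gab}=0$; and (iii) the conjugate $\II_{0\nub}{}^{\gab}=0$ giving $\Pi^{\gab}_{\nub}\nabla_{0}N_{\gab}=0$. (For the second statement in (ii) one pairs $\nabla_{\mu}N_{\gab}$ against a tangential antiholomorphic field and uses that $\nabla_{\mu}$ of such a field is purely tangential precisely because $\II_{\mu\nub}{}^{\gab}=0$.) Feeding these and the inverse Levi form expansion into $\nabla_{\mu}N^{\ga}=\bh^{\ga\epb}\nabla_{\mu}N_{\epb}$ yields $\nabla_{\mu}N^{\ga}=N^{\ga}(N^{\epb}\nabla_{\mu}N_{\epb})$, i.e. $\nabla_{\mu}N^{\ga}$ is proportional to $N^{\ga}$, and likewise $\nabla_{0}N^{\ga}\propto N^{\ga}$.

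Then I would simply substitute. For \eqref{ProjectorDeriv1}, $\nabla_{\mu}\Pi^{\ga}_{\be}=-(\nabla_{\mu}N^{\ga})N_{\be}-N^{\ga}\nabla_{\mu}N_{\be}$; writing $\nabla_{\mu}N_{\be}$ as its conormal part $N_{\be}(N^{\al}\nabla_{\mu}N_{\al})$ plus its tangential part $-\Pi^{\nu}_{\be}N_{\sigma}\II_{\mu\nu}{}^{\sigma}$, and using that $N^{\ga}N_{\sigma}\II_{\mu\nu}{}^{\sigma}=\II_{\mu\nu}{}^{\ga}$ because $\II_{\mu\nu}{}^{\sigma}$ is already $\cN^{\sigma}$-valued, the two ``$N^{\ga}N_{\be}(\cdots)$'' terms combine into $-N^{\ga}N_{\be}\,\nabla_{\mu}(\bh^{\al\beb}N_{\al}N_{\beb})=0$, leaving $\nabla_{\mu}\Pi^{\ga}_{\be}=\II_{\mu\nu}{}^{\ga}\Pi^{\nu}_{\be}$. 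The second identity in \eqref{ProjectorDeriv1} is the analogous computation for $\Pi^{\beb}_{\gab}$: after substituting, the surviving term is $\bh^{\nu\lab}\Pi^{\beb}_{\lab}\II_{\mu\nu}{}^{\ga}N_{\ga}N_{\gab}$, and replacing $N_{\ga}N_{\gab}=\bh_{\ga\gab}-\bh_{\la\rhob}\Pi^{\la}_{\ga}\Pi^{\rhob}_{\gab}$ the second piece dies because $\Pi^{\la}_{\ga}\II_{\mu\nu}{}^{\ga}=0$, giving $\II_{\mu}{}^{\nub}{}_{\gab}\Pi^{\beb}_{\nub}$. Conjugating gives \eqref{ProjectorDeriv2}, and \eqref{ProjectorDeriv3} comes out identically, the $\II$ terms now absent since $\II_{0\nu}{}^{\ga}=0$ and $\II_{0\nub}{}^{\gab}=0$; concretely $\nabla_{0}\Pi^{\ga}_{\be}=-N^{\ga}N_{\be}\,\nabla_{0}(\bh^{\al\beb}N_{\al}N_{\beb})=0$. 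The only genuine work, and the step to be careful about, is the index bookkeeping in the $\Pi^{\beb}_{\gab}$ case — in particular keeping track that $\II_{\mu\nu}{}^{\ga}$ is normal-valued so that every contraction of it against a tangential projector vanishes.
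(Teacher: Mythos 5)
Your proof is correct, but it is precisely the route the paper's own proof names in its first sentence and then deliberately declines: the authors write that the identities ``follow immediately by differentiating $\delta^{\ga}_{\be}-N^{\ga}N_{\be}$, however we wish to give a proof that will also work in the higher codimension case.'' Instead of expanding the projector in terms of a unit normal, the paper characterises $\nabla_{\mu}\Pi^{\ga}_{\be}$ by its two contractions: pairing against an arbitrary tangential field $V^{\be}=\Pi^{\be}_{\nu}V^{\nu}$ and invoking the Gauss formula gives $\Pi^{\be}_{\nu}\nabla_{\mu}\Pi^{\ga}_{\be}=\II_{\mu\nu}{}^{\ga}$, while pairing against $N^{\be}$ and using $\II_{\mu\nub}{}^{\deb}=0$ gives $N^{\be}\nabla_{\mu}\Pi^{\ga}_{\be}=0$; the second identity of \eqref{ProjectorDeriv1} is then obtained by raising and lowering indices with the parallel Levi form rather than by a second direct computation. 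The advantage of the paper's formulation is that it never uses the rank-one identity $\rmN^{\al}_{\be}=N^{\al}N_{\be}$, so it transfers verbatim to codimension $d>1$, where the lemma is reused implicitly (the Gauss, Codazzi and Ricci propositions are asserted in Section \ref{HigherCodimension} to hold ``with the same proofs''). Your argument leans on rank-one structure at several points — most visibly in the step ``$\nabla_{\mu}N^{\ga}$ is proportional to $N^{\ga}$'' and in the cancellation via $\nabla_{\mu}(\bh^{\al\beb}N_{\al}N_{\beb})=0$ — and would need to be rerun with an orthonormal normal frame (the proportionality becoming membership in $\cN^{\ga}$ modulo frame rotation, and the scalar cancellation becoming metricity of the normal connection) to cover the general case. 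What your version buys in exchange is explicitness: it exhibits the full tangential/normal decomposition of $\nabla_{\bullet}N_{\al}$ and $\nabla_{\bullet}N^{\ga}$, which the paper's contraction argument leaves implicit. Your index bookkeeping in the $\Pi^{\beb}_{\gab}$ case, including the identification $N^{\ga}N_{\sigma}\II_{\mu\nu}{}^{\sigma}=\II_{\mu\nu}{}^{\ga}$ and the use of $\Pi^{\la}_{\ga}\II_{\mu\nu}{}^{\ga}=0$, checks out.
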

\begin{proof}
These follow immediately by differentiating $\delta^{\ga}_{\be}-N^{\ga}N_{\be}$, however we wish to give a proof that will also work in the higher codimension case. Pick a section $V^{\nu}$ and let $V^{\be}=\Pi^{\be}_{\nu}V^{\nu}$. Then 
\begin{equation*}
\nabla_{\mu} V^{\ga} = \nabla_{\mu} (\Pi^{\ga}_{\be}V^{\be})=(\nabla_{\mu} \Pi^{\ga}_{\be})V^{\be} + \Pi^{\ga}_{\be}\nabla_{\mu} V^\be.
\end{equation*}
Noting that $\Pi^{\ga}_{\be}\nabla_{\mu} V^\be=\Pi^{\ga}_{\nu}D_{\mu} V^{\nu}$, from the Gauss formula we have
\begin{equation}
\Pi^{\be}_{\nu}\nabla_{\mu}\Pi^{\ga}_{\be}=\II_{\mu\nu}{^{\ga}}
\end{equation}
since $V^{\nu}$ was arbitrary. Now on the other hand if $N^{\al}$ is any unit holomorphic normal then
\begin{equation}
N^{\be}\nabla_{\mu} \Pi^{\ga}_{\be} = - (\nabla_{\mu} N^{\be})\Pi^{\ga}_{\be} = 0
\end{equation}
since $\II_{\mu\nub}{^{\deb}}=0$. The previous two displays imply the first equation of \eqref{ProjectorDeriv1}, and the second then follows by raising and lowering indices. Conjugating these gives \eqref{ProjectorDeriv2}. The expressions \eqref{ProjectorDeriv3} are proved similarly using that $\II_{0\nu}{^{\ga}}=0$ and $\II_{0\nub}{^{\gab}}=0$.
\end{proof}

\subsection{The Pseudohermitian Gauss, Codazzi, and Ricci Equations}\label{Pseudohermitian-GCR}

Here we give pseudohermitian analogues of the Gauss, Codazzi, and Ricci equations from Riemannian submanifold theory. Real forms of these equations can be found in chapter 6 of \cite{DragomirTomassini}, note that $Q=0$ in the pseudohermitian Codazzi equation they give (cf.\ Remark \ref{Qremark}).

When working with compatible contact forms we denote the ambient and submanifold Tanaka-Webster connections by $\nabla$ and $D$ respectively. We write 
\begin{equation}
\rmN^{\al}_{\be} = \delta_{\be}^{\al} - \Pi_{\be}^{\al}
\end{equation}
for the orthogonal projection onto $\cN^{\al}\subset \cE^{\al}|_{\Si}$. In this case  $\rmN^{\al}_{\be} = N^{\al}N_{\be}$ for any unit holomorphic normal $N^{\al}$. We adopt the convention of replacing uppercase root letters with lowercase root letters for submanifold curvature tensors, so the pseudohermitian curvature tensor of $\theta_{\Si}$ will be denoted by $r_{\mu\nub\la\rhob}$, the pseudohermitian Ricci curvature by $r_{\mu\nu}$, and so on. For the ambient curvature tensors along $\Si$ we will use submanifold abstract indices to denote tangential projections, for example
\begin{equation*}
R_{\mu\nub\la\rhob}=
\Pi^{\al}_{\mu}\Pi^{\beb}_{\nub}\Pi^{\ga}_{\la}\Pi^{\deb}_{\rhob}R_{\al\beb\ga\deb}
\quad \mathrm{and} \quad 
R_{\mu\nub \ga \deb}= \Pi^{\al}_{\mu}\Pi^{\beb}_{\nub}R_{\al\beb\ga\deb}.
\end{equation*}

\subsubsection{The pseudohermitian Gauss equation}\label{PseudohermitianGaussSect}
\begin{prop} 
Given compatible contact forms, the submanifold pseudohermitian curvature is related to the ambient curvature via
\begin{equation}\label{pseudohermitian-Gauss}
R_{\mu\nub\la\rhob}=r_{\mu\nub\la\rhob}+\bh_{\ga\deb}\II_{\mu\la}{^{\ga}}\II_{\nub\rhob}{^{\deb}}.
\end{equation}
\end{prop}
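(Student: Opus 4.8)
The plan is to establish \eqref{pseudohermitian-Gauss} by the pseudohermitian counterpart of the classical computation of the Gauss equation, carried out in components. Fix an arbitrary section $V^{\rhob}$ of $\cE^{\rhob}$ on $\Si$ and regard it along $\Si$ as the section $V^{\gab}=\Pi^{\gab}_{\rhob}V^{\rhob}$ of $\cE^{\gab}|_{\Si}$. Since the ambient Ricci-type identity \eqref{PHcurvatureDef} is tensorial in its argument it may be applied to any local extension of $V^{\gab}$ off $\Si$ and then restricted, which gives
\[
(\nabla_{\al}\nabla_{\beb}-\nabla_{\beb}\nabla_{\al}+i\bh_{\al\beb}\nabla_{0})V^{\gab}=-R_{\al\beb}{^{\gab}}_{\deb}\,\Pi^{\deb}_{\rhob}V^{\rhob}\qquad\text{along }\Si.
\]

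The first step is to contract with $\Pi^{\al}_{\mu}\Pi^{\beb}_{\nub}$ and to recognise that the tangentially projected ambient curvature operator is just $\nabla_{\mu}\nabla_{\nub}-\nabla_{\nub}\nabla_{\mu}+i\bh_{\mu\nub}\nabla_{0}$ acting on $V^{\gab}$ regarded as a section over $\Si$ (with the tangential ambient derivatives). Admissibility of $\theta$ is used here so that $\nabla_{0}=\nabla_{T}$ involves no choice of extension, and the only correction terms that could obstruct the identification, namely $(\nabla_{\mu}\Pi^{\beb}_{\nub})$ and $(\nabla_{0}\Pi^{\beb}_{\gab})$, both vanish: the former because $\nabla_{\mu}\Pi^{\beb}_{\nub}=\II_{\mu\nub}{^{\beb}}=0$ by Proposition \ref{SFFprop} (the mixed components of $\II$ are zero), the latter by Lemma \ref{ProjectorDerivLemma}. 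Writing $R_{\mu\nub}{^{\gab}}_{\rhob}$ for the tangential projection of the ambient pseudohermitian curvature, we obtain $(\nabla_{\mu}\nabla_{\nub}-\nabla_{\nub}\nabla_{\mu}+i\bh_{\mu\nub}\nabla_{0})V^{\gab}=-R_{\mu\nub}{^{\gab}}_{\rhob}V^{\rhob}$.

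The second step is to expand the left-hand side using the Gauss formula \eqref{GaussFormulaComponents} (i.e.\ Definition \ref{PHsff}) and its conjugate. Because the component of $\II$ with first slot holomorphic and value in $\cN^{\gab}$ vanishes, while the fully antiholomorphic component $\II_{\nub\rhob}{^{\gab}}$ is in general nonzero, one gets $\nabla_{\mu}V^{\gab}=\Pi^{\gab}_{\rhob}D_{\mu}V^{\rhob}$ and $\nabla_{0}V^{\gab}=\Pi^{\gab}_{\rhob}D_{0}V^{\rhob}$ (again Lemma \ref{ProjectorDerivLemma}), but $\nabla_{\nub}V^{\gab}=\Pi^{\gab}_{\rhob}D_{\nub}V^{\rhob}+\II_{\nub\rhob}{^{\gab}}V^{\rhob}$. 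Differentiating a second time, the sole term producing a quadratic expression in $\II$ is $\nabla_{\mu}$ applied to the normal field $\II_{\nub\rhob}{^{\gab}}V^{\rhob}$: a direct consequence of Lemma \ref{ProjectorDerivLemma} is the pseudohermitian Weingarten identity $\Pi^{\bar{\tau}}_{\gab}\nabla_{\mu}\xi^{\gab}=-\II_{\mu}{^{\bar{\tau}}}_{\deb}\xi^{\deb}$ for any section $\xi^{\gab}$ of $\cN^{\gab}$, which therefore contributes $-\II_{\mu}{^{\bar{\tau}}}_{\deb}\II_{\nub\rhob}{^{\deb}}V^{\rhob}$ to the tangential part; in the opposite order $\nabla_{\nub}\nabla_{\mu}V^{\gab}$ the analogous term would involve the vanishing component of $\II$, so no quadratic term occurs there. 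This is exactly why \eqref{pseudohermitian-Gauss} carries a single $\II\,\overline{\II}$ term rather than the difference of two as in the Riemannian case. Taking the tangential part of the whole identity, and using the submanifold analogue of \eqref{PHcurvatureDef} to identify the $D$-commutator as $-r_{\mu\nub}{^{\bar{\tau}}}_{\rhob}V^{\rhob}$, one is left with $R_{\mu\nub}{^{\bar{\tau}}}_{\rhob}V^{\rhob}=r_{\mu\nub}{^{\bar{\tau}}}_{\rhob}V^{\rhob}+\II_{\mu}{^{\bar{\tau}}}_{\deb}\II_{\nub\rhob}{^{\deb}}V^{\rhob}$; lowering the free index with $\bh_{\la\bar{\tau}}$, using $\bh_{\la\bar{\tau}}\II_{\mu}{^{\bar{\tau}}}_{\deb}=\bh_{\ga\deb}\II_{\mu\la}{^{\ga}}$, and the arbitrariness of $V^{\rhob}$, gives \eqref{pseudohermitian-Gauss}.

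The step I expect to be the main obstacle is the bookkeeping of the two preceding paragraphs: verifying that the tangential projection of the ambient curvature operator introduces no spurious terms (resting on the vanishing of the mixed components of $\II$ together with Lemma \ref{ProjectorDerivLemma}), and correctly isolating the tangential part of the ambient covariant derivative of the normal-valued field $\II_{\nub\rhob}{^{\gab}}V^{\rhob}$, since that is the only source of the quadratic term $\bh_{\ga\deb}\II_{\mu\la}{^{\ga}}\II_{\nub\rhob}{^{\deb}}$.
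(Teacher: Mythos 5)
Your proposal is correct and is essentially the paper's own argument: both rest on $D$ being the tangential projection of $\nabla$ (Proposition \ref{SubmanifoldTWprop}), on Lemma \ref{ProjectorDerivLemma} together with the vanishing of the mixed components of $\II$, and on the observation that only the $\nabla_{\mu}\nabla_{\nub}$ ordering produces the single quadratic term $\bh_{\ga\deb}\II_{\mu\la}{^{\ga}}\II_{\nub\rhob}{^{\deb}}$. The only difference is organisational — you start from the ambient Ricci identity and project, whereas the paper expands $D_{\mu}D_{\nub}V^{\lab}$ directly and compares — and this does not change the substance of the proof.
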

\begin{proof}
Let $V$ be a section of $T^{1,0}\Si$, let $V^{\gab}=\Pi_{\lab}^{\gab}V^{\lab}$,
and let $\tilde{V}^{\gab}$ be a smooth extension of $V^{\gab}$ to
a neighbourhood of $\Si$. Proposition \ref{SubmanifoldTWprop} says that $D_{\nub}V^{\lab}=\Pi^{\lab}_{\gab}\Pi^{\beb}_{\nub}\nabla_{\beb}\tilde{V}^{\gab}$ and thus
\begin{align*}
D_{\mu}D_{\nub}V^{\lab} & =\Pi_{\deb}^{\lab}\Pi_{\nub}^{\epb}\nabla_{\mu}\left(\Pi_{\gab}^{\deb}\Pi_{\epb}^{\beb}\nabla_{\beb}\tilde{V}^{\gab}\right)\\
 & =\Pi_{\deb}^{\lab}\Pi_{\nub}^{\epb}(\nabla_{\mu}\Pi_{\gab}^{\deb})\Pi_{\epb}^{\beb}\nabla_{\beb}\tilde{V}^{\gab}+\Pi_{\deb}^{\lab}\Pi_{\nub}^{\epb}\Pi_{\gab}^{\deb}(\nabla_{\mu}\Pi_{\epb}^{\beb})\nabla_{\beb}\tilde{V}^{\gab}\\*
 & \quad+\Pi_{\deb}^{\lab}\Pi_{\nub}^{\epb}\Pi_{\gab}^{\deb}\Pi_{\epb}^{\beb}\nabla_{\mu}\nabla_{\beb}\tilde{V}^{\gab}\\
 & =\II_{\mu}{^{\lab}}_{\gab}\nabla_{\nub}V^{\gab}+\Pi_{\gab}^{\lab}\Pi_{\mu}^{\al}\Pi_{\nub}^{\beb}\nabla_{\al}\nabla_{\beb}\tilde{V}^{\gab},
\end{align*}
where we have used \eqref{ProjectorDeriv1} of Lemma \ref{ProjectorDerivLemma} in the final step. Since $\rmN^{\de}_{\ga}V^{\ga}=0$ we have $\rmN^{\deb}_{\gab}\nabla_{\nub}V^{\gab}=-V^{\gab}\nabla_{\nub}\rmN^{\deb}_{\gab}=V^{\rhob}\II_{\nub\rhob}{^{\deb}}$ using \eqref{ProjectorDeriv2} of Lemma \ref{ProjectorDerivLemma}, and hence by writing $\II_{\mu}{^{\lab}}_{\gab}$ as $\II_{\mu}{^{\lab}}_{\deb}\rmN^{\deb}_{\gab}$ we obtain 
\begin{equation*}
D_{\mu}D_{\nub}V^{\lab}=\II_{\mu}{^{\lab}}_{\deb}\II_{\nub\rhob}{^{\deb}}V^{\rhob}+\Pi_{\gab}^{\lab}\Pi_{\mu}^{\al}\Pi_{\nub}^{\beb}\nabla_{\al}\nabla_{\beb}\tilde{V}^{\gab}. 
\end{equation*}
By a similar calculation with the roles of $\mu$ and $\nu$ interchanged we obtain
\begin{equation*}
D_{\nub}D_{\mu}V^{\lab}=\Pi_{\gab}^{\lab}\Pi_{\mu}^{\al}\Pi_{\nub}^{\beb}\nabla_{\beb}\nabla_{\al}\tilde{V}^{\gab}; 
\end{equation*}
no second fundamental form terms arise since $\II_{\mu\nub}{^{\gab}}=0$. Noting that $D_0 V^{\lab}=\Pi^{\lab}_{\gab}\nabla_0 V^{\gab}$ we have the result.
\end{proof}
\begin{rem}
The above proposition holds with the same proof in the general codimension setting. The equation \eqref{pseudohermitian-Gauss} can also be found in \cite{EbenfeltHuangZaitsev-Rigidity} where it (or its trace free part) is the key to proving rigidity for CR embeddings into the sphere with sufficiently low codimension because it allows one to show that the intrinsic pseudohermitian curvature determines the second fundamental form $\II_{\mu\nu}{^{\ga}}$.
\end{rem}

\subsubsection{The pseudohermitian Codazzi equation}\label{PseudohermitianCodazziSect}

\begin{prop}\label{pseudohermitian-Codazzi-prop}
Given compatible contact forms, 
\begin{equation}\label{pseudohermitian-Codazzi}
R_{\mu\nub}{^{\gab}}_{\rhob} \rmN_{\gab}^{\deb} = -D_{\mu}\II_{\nub\rhob}{^{\deb}}
\end{equation}
where the submanifold Tanaka-Webster connection $D$ is coupled with the normal Tanaka-Webster connection $\nabla^{\perp}$.
\end{prop}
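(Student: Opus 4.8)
The plan is to mimic the derivation of the pseudohermitian Gauss equation (Section~\ref{PseudohermitianGaussSect}), but now tracking the \emph{normal} component of the relevant iterated covariant derivatives rather than the tangential one. Fix compatible contact forms $\theta$ and $\theta_\Si$, let $N_\al$ be a unit holomorphic conormal, and let $\rmN^\al_\be = N^\al N_\be$ be the orthogonal projection onto $\cN^\al$. The starting point is the Gauss formula in the form \eqref{GaussFormulaComponents}, together with the derivative identities for the projectors $\Pi^\ga_\be$ collected in Lemma~\ref{ProjectorDerivLemma}, which by Remark following that lemma's proof (and Remark~\ref{sffHigherCodimensionRemark}) hold in the present setting.

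First I would take a section $V^\la$ of $\cE^\la$, set $V^\ga = \Pi^\ga_\la V^\la$, pick a smooth ambient extension $\tilde V^\ga$, and compute $D_\mu D_{\nub} V^\ga$ and $D_{\nub} D_\mu V^\ga$ using Proposition~\ref{SubmanifoldTWprop} to express $D_{\nub} V^\ga = \Pi^\ga_{\deb}\Pi^{\beb}_{\nub}\nabla_{\beb}\tilde V^{\deb}$ (here $D$ on $\cE^\ga$ is coupled with $\nabla^\perp$, so strictly one works with both the tangential and normal pieces). Applying $\nabla_\mu$ and expanding by the Leibniz rule, the derivatives of the projectors produce $\II$-terms via \eqref{ProjectorDeriv1}--\eqref{ProjectorDeriv2} of Lemma~\ref{ProjectorDerivLemma}. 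Then I would contract the whole identity with $\rmN^{\deb}_{\gab}$ (equivalently with $N_{\gab}$), so that the purely tangential curvature pieces drop out and only the normal components survive. Using the Ricci-type identity \eqref{torsion1} for the ambient connection on $\tilde V^\ga$ (i.e.\ $\nabla_\al\nabla_{\beb}\tilde V^\ga - \nabla_{\beb}\nabla_\al \tilde V^\ga + i\bh_{\al\beb}\nabla_0\tilde V^\ga = -R_{\al\beb}{^\ga}{_\de}\tilde V^\de$), projected tangentially, the commutator $D_\mu D_{\nub} - D_{\nub}D_\mu$ applied to $V^\ga$ and then projected normally yields, on the one hand, the ambient curvature term $R_{\mu\nub}{^{\gab}}_{\rhob}\rmN^{\deb}_{\gab}V^{\rhob}$ (after lowering/raising with $\bh$) and, on the other hand, a term of the form $-(D_\mu \II_{\nub\rhob}{^{\deb}})V^{\rhob}$ coming from differentiating the $\II$-term produced in the first step. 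Since $V$ is arbitrary and the $i\bh_{\mu\nub}\nabla_0$ pieces cancel between the two sides (they are tangential), comparing normal components gives \eqref{pseudohermitian-Codazzi}.

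The main obstacle is bookkeeping: one must be careful that the connection $D$ appearing on the left of \eqref{pseudohermitian-Codazzi} is genuinely the submanifold Tanaka--Webster connection coupled with $\nabla^\perp$, so that when $D_\mu$ hits $\II_{\nub\rhob}{^{\deb}}$ it differentiates the $\deb$ index normally and the $\nub,\rhob$ indices tangentially; this is exactly the combination that arises naturally from decomposing $\nabla_\mu \II_{\nub\rhob}{^{\deb}}$ using Lemma~\ref{ProjectorDerivLemma}, but it needs to be spelled out. A secondary subtlety is ensuring that the $V^\ga$ we differentiate really is $\Pi^\ga_\la V^\la$ for a submanifold section $V^\la$ and that no $\cN^\al$-valued cross terms contaminate the tangential projections; this is handled by the vanishing $\II_{\mu\nub}{^{\gab}}=0$ and $\II_{0\nu}{^\ga}=0$ from Proposition~\ref{SFFprop}. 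Once the normal projection is taken consistently, the computation is parallel to the Gauss equation proof and no genuinely new input is needed.
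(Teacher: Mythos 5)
Your proposal is sound, but it runs the computation in the direction dual to the paper's. The paper differentiates a suitably extended \emph{normal} field $N^{\ga}$ twice and takes the \emph{tangential} projection of the commutator: the second fundamental form enters through the shape-operator identity $\II_{\nub}{^{\la}}_{\de}N^{\de}=-\Pi^{\la}_{\ga}\nabla_{\nub}N^{\ga}$, the extension is normalized to have vanishing normal derivatives along $\Si$ (which keeps the Leibniz bookkeeping minimal), and one first obtains $R_{\mu\nub}{^{\la}}_{\de}N^{\de}=(D_{\mu}\II_{\nub}{^{\la}}_{\deb})N^{\deb}$, recovering \eqref{pseudohermitian-Codazzi} only after invoking the antisymmetry $R_{\mu\nub\lab\de}=-R_{\mu\nub\de\lab}$. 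You instead differentiate a pushed-forward \emph{tangential} field twice and take the \emph{normal} projection, so the $\II$-terms come straight from the Gauss formula and you land on the stated identity with no final index conversion; the price is that you inherit the heavier projector/Leibniz bookkeeping of the Gauss-equation proof, since your extension $\tilde{V}$ is arbitrary and only the full commutator (with the $\nabla_{0}$ correction) is tensorial in $V$. The equivalence of the two computations is exactly metric compatibility, $\nabla\bh=0$, i.e.\ the skew-symmetry $R_{ab\ga\deb}=-R_{ab\deb\ga}$ that the paper uses at the end. Two small points to tidy: the identity as stated concerns $R_{\mu\nub}{^{\gab}}_{\rhob}$, so you should differentiate an antiholomorphic tangential field $V^{\gab}=\Pi^{\gab}_{\lab}V^{\lab}$ (with your unbarred $V^{\ga}$ the roles of $\mu$ and $\nub$ swap and you obtain the conjugate identity, which is equivalent); and the $i\bh_{\mu\nub}\nabla_{0}$ term does not ``cancel between the two sides'' --- it occurs once in the Ricci identity and its normal projection vanishes because $\rmN^{\deb}_{\gab}\nabla_{0}V^{\gab}=\II_{0\rhob}{^{\deb}}V^{\rhob}=0$, which is precisely the $\II_{0\nu}{^{\ga}}=0$ fact you cite.
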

\begin{proof}
Let  $N^{\ga}$ be an unweighted unit normal field and let $\tilde{N}^{\ga}$ be an extension of $N^{\ga}$ to all of $M$ such that, along $\Si$, 
$\rmN_{\be}^{\al}\nabla_{\al}\tilde{N}^{\ga}=0$ and  $\rmN_{\beb}^{\alb}\nabla_{\alb}\tilde{N}^{\ga}=0$.
Then along $\Si$ we have
\begin{equation*}
\nabla_{\beb}\tilde{N}^{\ga} =-\II_{\beb}{^{\ga}}_{\de}N^{\de}+\rmN^{\ga}_{\de}\nabla_{\beb}\tilde{N}^{\de}
\end{equation*}
where $\II_{\beb}{^{\ga}}_{\de}:=\Pi_{\beb}^{\nub}\Pi_{\la}^{\ga}\II_{\nub}{^{\la}}_{\de}$, using that 
$\II_{\nub}{^{\la}}_{\de}N^{\de}=-\Pi_{\ga}^{\la}\nabla_{\nub}N^{\ga}$.
Thus we compute that
\begin{align*}
\Pi_{\mu}^{\al}\Pi_{\nub}^{\beb}\Pi_{\ga}^{\la}\nabla_{\al}\nabla_{\beb}\tilde{N}^{\ga}  
&
= \Pi_{\nub}^{\beb}\Pi_{\ga}^{\la}\left(-\nabla_{\mu}(\II_{\beb}{^{\ga}}_{\de}N^{\de})+(\nabla_{\mu}\rmN^{\ga}_{\de})\nabla_{\beb}N^{\de} \right) \\*
&
= -D_{\mu}(\II_{\nub}{^{\la}}_{\de}N^{\de})
\end{align*}
along $\Si$, where in the first step we used that $\Pi^{\la}_{\ga}\rmN^{\ga}_{\de}=0$ and in the second step we used \eqref{ProjectorDeriv1} to show that $\Pi^{\la}_{\ga}\nabla_{\mu}\rmN^{\ga}_{\de}=0$ and Proposition \ref{SubmanifoldTWprop}.
Now on the other hand (since $\II_{\mu\nub}{^{\gab}}=0$) we have
\begin{equation*}
\nabla_{\al}\tilde{N}^{\ga} = \rmN^{\ga}_{\de}\nabla_{\al}\tilde{N}^{\de}
\end{equation*}
along $\Si$, and this time we compute that
\begin{equation*}
\Pi_{\mu}^{\al}\Pi_{\nub}^{\beb}\Pi_{\ga}^{\la}\nabla_{\beb}\nabla_{\al}\tilde{N}^{\ga} 
 =-\II_{\nub}{^{\la}}_{\de}\nabla_{\mu}N^{\de}
 =-\II_{\nub}{^{\la}}_{\de}\nabla^{\perp}_{\mu}N^{\de}
\end{equation*}
since $\nabla_{\nub}\rmN^{\ga}_{\de}=-\II_{\nub}{^{\rho}}_{\de}\Pi^{\ga}_{\rho}$ by \eqref{ProjectorDeriv2}.
Putting these together we get 
\begin{equation*}
\Pi_{\mu}^{\al}\Pi_{\nub}^{\beb}\Pi_{\ga}^{\la}\left(\nabla_{\al}\nabla_{\beb}-\nabla_{\beb}\nabla_{\al}\right)\tilde{N}^{\ga}=-(D_{\mu}\II_{\nub}{^{\la}}_{\deb})N^{\deb}
\end{equation*}
along $\Si$. Since $\II_{0\mu}{^{\ga}}=0$ we have $\Pi^{\la}_{\ga}\nabla_0 N^{\ga}=0$ and hence from \eqref{PHcurvatureDef} we obtain
\begin{equation}
R_{\mu\nub}{^{\la}}_{\de}N^{\de}=(D_{\mu}\II_{\nub}{^{\la}}_{\deb})N^{\deb}.
\end{equation}
Noting that $R_{\mu\nub\lab\de}=-R_{\mu\nub\de\lab}$ then gives the result.
\end{proof}

\subsubsection{The pseudohermitian Ricci equation}\label{PseudohermitianRicciSect}
Given a compatible pair of contact forms we let $R^{\cN^{\alb}}$ denote the curvature of the normal Tanaka-Webster connection $\nabla^{\perp}$ on the antiholomorphic normal bundle $\cN^{\alb}$. With our conventions we have 
\begin{equation}\label{haNormalTWcurvature}
 \left( \nabla^{\perp}_{\mu}\nabla^{\perp}_{\nub}N^{\gab} -\nabla^{\perp}_{\nub}\nabla^{\perp}_{\mu}N^{\gab}+i\bh_{\mu\nub}\nabla_{0}^{\perp}N^{\gab} \right)
 = -R^{\cN^{\alb}}_{\mu\nub}{^{\gab}}_{\deb}N^{\deb}
\end{equation}
for any section $N^{\alb}$ of $\cN^{\alb}$, where we have coupled the normal Tanaka-Webster connection $\nabla^{\perp}$ with the submanifold Tanaka-Webster connection $D$.  The \emph{pseudohermitian Ricci equation} relates the component $R^{\cN^{\alb}}_{\mu\nub}{^{\gab}}_{\deb}$ of $R^{\cN^{\alb}}$ to the component $R_{\mu\nub}{^{\gab'}}_{\deb'}\rmN^{\gab}_{\gab'}\rmN^{\deb'}_{\deb}$ of the ambient pseudohermitian curvature tensor:
\begin{prop}\label{pseudohermitian-Ricci-equation-prop}
Given compatible contact forms, 
\begin{equation}\label{pseudohermitian-Ricci-equation}
R^{\cN^{\alb}}_{\mu\nub}{^{\gab}}_{\deb}=R_{\mu\nub}{^{\gab'}}_{\deb'}\rmN^{\gab}_{\gab'}\rmN^{\deb'}_{\deb} + 
\bh^{\la\rhob}\II_{\mu\la\deb}\II_{\nub\rhob}{^{\gab}}.
\end{equation}
\end{prop}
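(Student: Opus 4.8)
The plan is to follow the same pattern as the proofs of the pseudohermitian Gauss equation \eqref{pseudohermitian-Gauss} and Codazzi equation \eqref{pseudohermitian-Codazzi}: I express the curvature of the normal Tanaka--Webster connection $\nabla^{\perp}$ on $\cN^{\alb}$ by commuting two tangential covariant derivatives applied to an extended normal field, and compare with the ambient pseudohermitian curvature via the Ricci-type identity \eqref{PHcurvatureDef}.

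First I would fix a section $N^{\gab}$ of $\cN^{\gab}$ and choose an extension $\tilde{N}^{\gab}$ to a neighbourhood of $\Si$ in $M$ whose $\cN$-normal derivatives vanish along $\Si$, exactly as in the proof of Proposition \ref{pseudohermitian-Codazzi-prop}. Splitting $\iota^{*}\nabla$ into tangential and normal parts along $\Si$ gives the Gauss--Weingarten decomposition
\[
\nabla_{\mu}\tilde{N}^{\gab} \;=\; -(\nabla_{\mu}\Pi^{\gab}_{\deb})N^{\deb} \;+\; \nabla^{\perp}_{\mu}N^{\gab} \qquad \text{along } \Si ,
\]
in which the first term is a second fundamental form contraction by Lemma \ref{ProjectorDerivLemma} (here one uses that $\Pi^{\deb}_{\gab}\tilde{N}^{\gab}$ vanishes on $\Si$, hence so does its tangential derivative), the analogous decomposition holding in the $\nub$ direction, while by \eqref{ProjectorDeriv3} one simply has $\nabla_{0}\tilde{N}^{\gab}=\nabla^{\perp}_{0}N^{\gab}$ along $\Si$.

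Next I would iterate, computing $\nabla^{\perp}_{\mu}\nabla^{\perp}_{\nub}N^{\gab}$ and $\nabla^{\perp}_{\nub}\nabla^{\perp}_{\mu}N^{\gab}$ along $\Si$, differentiating the occurring projectors with Lemma \ref{ProjectorDerivLemma} and replacing purely tangential ambient derivatives by the submanifold connection $D$ (coupled to $\nabla^{\perp}$) via Proposition \ref{SubmanifoldTWprop}. As in the Gauss computation, each such expression is the normal projection of $\Pi^{\al}_{\mu}\Pi^{\beb}_{\nub}\nabla_{\al}\nabla_{\beb}\tilde{N}^{\gab}$ plus a composition of two second fundamental form (Weingarten) factors, with the unwanted tangential pieces dropping out because $\II_{\mu\nub}{^{\gab}}=0$ and $\II_{0\nu}{^{\ga}}=0$ (Proposition \ref{SFFprop}). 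Taking the commutator, the ambient second-derivative part collapses, through \eqref{PHcurvatureDef} and normal projection, to $-R_{\mu\nub}{^{\gab'}}_{\deb'}\rmN^{\gab}_{\gab'}\rmN^{\deb'}_{\deb}N^{\deb}-i\bh_{\mu\nub}\nabla^{\perp}_{0}N^{\gab}$, and the Weingarten-composition part assembles into (minus) the quadratic-in-$\II$ term $\bh^{\la\rhob}\II_{\mu\la\deb}\II_{\nub\rhob}{^{\gab}}N^{\deb}$ appearing in \eqref{pseudohermitian-Ricci-equation}; on the other side \eqref{haNormalTWcurvature} identifies the commutator with $-R^{\cN^{\alb}}_{\mu\nub}{^{\gab}}_{\deb}N^{\deb}-i\bh_{\mu\nub}\nabla^{\perp}_{0}N^{\gab}$. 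The $i\bh_{\mu\nub}\nabla^{\perp}_{0}N^{\gab}$ terms cancel, and since $N^{\deb}$ was an arbitrary section of $\cN^{\gab}$, comparing coefficients gives \eqref{pseudohermitian-Ricci-equation}.

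The main obstacle is the bookkeeping of the second fundamental form terms: one must verify that the surviving $\II$-quadratic contribution is exactly $\bh^{\la\rhob}\II_{\mu\la\deb}\II_{\nub\rhob}{^{\gab}}$ (equivalently the composition $\II_{\mu}{^{\rhob}}_{\deb}\II_{\nub\rhob}{^{\gab}}$ of Weingarten operators, one in direction $\mu$ and the conjugate one in direction $\nub$), with the correct contractions and conjugations, and that every mixed term --- one factor of $\II$ and one ambient covariant derivative of $\tilde{N}$ --- cancels between the $\mu\nub$ and $\nub\mu$ computations. This is the same kind of delicate cancellation as in the Gauss equation proof, only slightly heavier because a normal field is being differentiated, so that both the Weingarten operator and the curvature of $\nabla^{\perp}$ enter. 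As with the Gauss and Codazzi equations, the argument uses only Lemma \ref{ProjectorDerivLemma}, Proposition \ref{SubmanifoldTWprop} and the vanishing components of $\II$, so it carries over unchanged to arbitrary codimension.
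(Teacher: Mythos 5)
Your proposal is correct and follows essentially the same route as the paper's proof: commuting the normally-projected tangential derivatives of a suitably extended normal field, using Lemma \ref{ProjectorDerivLemma} to differentiate the projectors, and comparing \eqref{haNormalTWcurvature} with the normal projection of \eqref{PHcurvatureDef}. The only cosmetic difference is that the paper writes $\nabla^{\perp}=\rmN\circ\nabla$ and finds that the quadratic $\II$ contribution survives only in the $\nub\mu$ ordering (as $\II_{\nub\lab}{^{\gab}}\II_{\mu}{^{\lab}}_{\deb}$), rather than organising the computation via an explicit Gauss--Weingarten split; the bookkeeping you flag works out exactly as you anticipate.
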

\begin{proof}
To facilitate calculation we couple the connection $\nabla^{\perp}$ with the submanifold Tanaka-Webster connection $D$; we also couple $\nabla$ with $D$.
If $N^{\gab}$ is a holomorphic normal field then
\begin{align*}
\nabla^{\perp}_{\mu}\nabla^{\perp}_{\nub}N^{\gab} &= \nabla^{\perp}_{\mu}(\rmN^{\gab}_{\deb}\nabla_{\nub}N^{\deb})\\*
&= \rmN^{\gab}_{\epb}\nabla_{\mu}(\rmN^{\epb}_{\deb}\nabla_{\nub}N^{\deb})\\*
&= \rmN^{\gab}_{\epb}\left( -\II_{\mu}{^{\lab}}_{\deb}\Pi^{\epb}_{\lab}\nabla_{\nub}N^{\deb} + \rmN^{\epb}_{\deb}\nabla_{\mu}\nabla_{\nub}N^{\deb}\right)\\*
&= \rmN^{\gab}_{\deb}\nabla_{\mu}\nabla_{\nub}N^{\deb}
\end{align*}
On the other hand, when we interchange the roles of $\mu$ and $\nu$ we obtain
\begin{align*}
\nabla^{\perp}_{\nub}\nabla^{\perp}_{\mu}N^{\gab} &=
\rmN^{\gab}_{\epb}\left( -\II_{\nub\lab}{^{\epb}}\Pi^{\lab}_{\deb}\nabla_{\mu}N^{\deb} + \rmN^{\epb}_{\deb}\nabla_{\nub}\nabla_{\mu}N^{\deb}\right)\\*
&= \II_{\nub\lab}{^{\gab}}\II_{\mu}{^{\lab}}_{\deb}N^{\deb}  + \rmN^{\gab}_{\deb}\nabla_{\nub}\nabla_{\mu}N^{\deb}
\end{align*}
Now observe that if one extends $N^{\gab}$ off $\Si$ such that $N^{\al}\nabla_{\al}\tilde{N}^{\gab}=0$ and  $N^{\alb}\nabla_{\alb}\tilde{N}^{\gab}=0$, then
\begin{equation*}
\Pi^{\al}_{\mu}\Pi^{\beb}_{\nub}\nabla_{\al}\nabla_{\beb}\tilde{N}^{\deb} = 
\nabla_{\mu}\nabla_{\nub}N^{\deb}
\quad \mathrm{and} \quad 
\Pi^{\al}_{\mu}\Pi^{\beb}_{\nub}\nabla_{\beb}\nabla_{\al}\tilde{N}^{\deb} = 
\nabla_{\nub}\nabla_{\mu}N^{\deb}.
\end{equation*}
Thus by \eqref{haNormalTWcurvature} and \eqref{PHcurvatureDef} (noting that $\nabla^{\perp}_0 N^{\gab}=\nabla_0 N^{\gab}$) one has the result.
\end{proof}
\begin{rem}\label{CurvatureSignRemark}
Since $\cN^{\alb}$ is a line bundle we may think of the curvature $R^{\cN^{\alb}}$ instead as a two form. By convention we take $R^{\cN^{\alb}}_{\mu\nub}$ to be $R^{\cN^{\alb}}_{\mu\nub}{^{\gab}}_{\gab}$, which means that $R^{\cN^{\alb}}$ is minus the usual curvature two form of the connection $\nabla^{\perp}$ on the line bundle $\cN^{\alb}$. With this convention we may write
\begin{equation}\label{pseudohermitian-Ricci-equation-minimal-codim}
R^{\cN^{\alb}}_{\mu\nub}=R_{\mu\nub N\bar{N}}+
\bh_{\ga\deb}\bh^{\la\rhob}\II_{\mu\la}{^{\ga}}\II_{\nub\rhob}{^{\deb}}
\end{equation}
where $R_{\mu\nub N\bar{N}}=R_{\mu\nub \ga\deb}N^{\ga}N^{\deb}$ for any weight $(1,0)$ unit normal field $N^{\al}$. Also, since $\nabla^{\perp}$ is Hermitian with respect to the Levi form of $\theta$ (on $\cN^{\alb}$), one has that $R^{\cN^{\alb}}=R^{\cN_{\al}}$ as two forms. Moreover, by duality one has that $R^{\cN_{\al}}=-R^{\cN^{\al}}$.
\end{rem}

\subsection{Relating Density Bundles}\label{Relating-Densities}
We have already been using compatible contact forms to identify the
density bundles $\cE(1,1)|_{\Si}$ and $\cE_{\Si}(1,1)$, and have
commented in passing that this identification does not in fact depend
on any choice of (compatible) contact forms. Let $\vsig$ be a positive real
element of $\cE(1,1)|_{\Si}$, then there is a unique real
element $\vsig_{\Si}$ in $\cE_{\Si}(1,1)$ such that $\vsig^{-1}\btheta$
pulls back to $\vsig_{\Si}\btheta_{\Si}$ under $\iota$. This correspondence 
induces an isomorphism of complex line bundles.
In this way we obtain canonical identifications between all diagonal
density bundles $\cE(w,w)|_{\Si}$ and $\cE_{\Si}(w,w)$. These identifications also agree with those induced by trivialising the ambient and intrinsic (diagonal) density bundles using an ambient
contact form $\theta$ and its pullback $\iota^{*}\theta$ respectively.

On the other hand it is not \emph{a priori} obvious whether one may canonically identify
the density bundles $\cE(1,0)|_{\Si}$ and $\cE_{\Si}(1,0)$, and
therefore identify all corresponding density bundles $\cE(w,w')|_{\Si}$
and $\cE_{\Si}(w,w')$. We require that any isomorphism of $\cE(1,0)|_{\Si}$
with $\cE_{\Si}(1,0)$ should be compatible with the identification
of $\cE(1,1)|_{\Si}$ with $\cE_{\Si}(1,1)$ already defined. Any
two such isomorphisms of $\cE(1,0)|_{\Si}$ with $\cE_{\Si}(1,0)$
are related by an automorphism of $\cE(1,0)|_{\Si}$ given by multiplication
by $e^{i\varphi}$ for some $\varphi\in C^{\infty}(\Si)$. This is
precisely the same freedom as in the choice of a unit holomorphic
conormal, in fact, we shall see below that these two choices are intrinsically
connected.

\subsubsection{Densities and holomorphic conormals}\label{DensitiesAndNormals}

Let $\Lambda_{\perp}^{1,0}\Si$ denote the
subbundle of $\Lambda^{1,0}M|_{\Sigma}$ consisting of all forms $N$
which vanish on the tangent space of $\Si$. The bundle $\Lambda_{\perp}^{1,0}\Si$ may be canonically identified with $\cN_{\al}$ by restriction to $T^{1,0}M$.
\begin{lem}\label{CanonicalBundlesRelatedLemma}
Along $\Si$ the ambient and submanifold canonical bundles are related by the canonical isomorphism
\begin{equation} \label{CanonicalBundlesRelated}
\scrK|_{\Si} \cong \scrK_{\Si} \otimes \Lambda_{\perp}^{1,0}\Si.
\end{equation}
\end{lem}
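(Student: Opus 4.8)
The plan is to construct the isomorphism \eqref{CanonicalBundlesRelated} explicitly at the level of $(n+1,0)$-forms and then check it is well-defined and independent of auxiliary choices. Recall $\scrK = \Lambda^{n+1,0}M$ has rank one with fibre spanned by forms annihilated (via interior product) by all of $T^{1,0}M$, and similarly $\scrK_\Si = \Lambda^{m+1,0}\Si$ with $m = n-1$, so $\scrK_\Si = \Lambda^{m+1,0}\Si = \Lambda^{n,0}\Si$ here. The natural map goes the other way from the display as written: given $\zeta \in \scrK|_\Si$ and a holomorphic conormal form $N \in \Lambda^{1,0}_\perp\Si \cong \cN_\al$, I would produce a section of $\scrK_\Si$ by the operation $\zeta \mapsto \iota^*\big( N^\sharp \hook \zeta \big)$ where $N^\sharp$ is a $(1,0)$-vector field dual to $N$; more invariantly, contracting $\zeta$ with the unit holomorphic normal $N^\al$ and pulling back to $\Si$ lands in $\Lambda^{n,0}\Si = \scrK_\Si$. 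This defines a bundle map $\scrK|_\Si \otimes \cN^\al \to \scrK_\Si$, equivalently (since $\cN^\al$ is a line bundle dual to $\cN_\al \cong \Lambda^{1,0}_\perp\Si$) a map $\scrK|_\Si \to \scrK_\Si \otimes \Lambda^{1,0}_\perp\Si$, which is the content of \eqref{CanonicalBundlesRelated}.

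First I would set up the contraction carefully. Fix a point of $\Si$, choose a local frame for $T^{1,0}M$ adapted to the splitting $\cE^\al|_\Si = T^{1,0}\Si \oplus \cN^\al$ of Section \ref{sub:Holomorphic-Normal-Fields}, say $Z_1,\dots,Z_n$ with $Z_1,\dots,Z_m$ tangent to $\Si$ and $Z_n = N^\al$ the unit holomorphic normal (relative to a chosen unit conormal $N_\al$). Let $\omega^1,\dots,\omega^n$ be the dual coframe in $\Lambda^{1,0}M$, so $\omega^n$ restricts to the conormal $N$ and $\omega^1,\dots,\omega^m$ pull back under $\iota$ to a coframe for $\Lambda^{1,0}\Si$. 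A generator of $\scrK|_\Si$ is $\zeta = \omega^1 \wedge \cdots \wedge \omega^n$, and the operation $\iota^*(N^\al \hook \zeta) = \iota^*(Z_n \hook \zeta) = \pm\, \iota^*(\omega^1 \wedge \cdots \wedge \omega^m)$ is a generator of $\scrK_\Si$. Thus $\zeta \mapsto (N^\al \hook \zeta) \otimes N$, pulled back to $\Si$, sends a generator of $\scrK|_\Si$ to a generator of $\scrK_\Si \otimes \Lambda^{1,0}_\perp\Si$, proving the map is a bundle isomorphism.

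The key point to verify — and the main obstacle — is \emph{canonicity}: the map must not depend on the choice of unit conormal $N_\al$ (equivalently unit normal $N^\al$), nor on the adapted frame. Scaling $N_\al \mapsto e^{i\varphi} N_\al$ forces $N^\al \mapsto e^{-i\varphi} N^\al$, so $N^\al \hook \zeta$ scales by $e^{-i\varphi}$ while the conormal tensor factor $N$ scales by $e^{i\varphi}$, and the two cancel in the tensor product $\scrK_\Si \otimes \Lambda^{1,0}_\perp\Si$; this is exactly why one must tensor by $\Lambda^{1,0}_\perp\Si \cong \cN_\al$ rather than try to land in $\scrK_\Si$ alone. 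For frame independence, I would note that $N^\al \hook \zeta$ depends only on $\zeta$ and the contraction, and that pulling back via $\iota^*$ kills any ambiguity in $N^\al$ modulo $T^{1,0}\Si$ (since $\zeta \in \Lambda^{n+1,0}$ is already annihilated by all of $T^{1,0}M$, interior product by a tangential $(1,0)$-vector gives a form still annihilated by $T^{1,0}\Si$ whose pullback must vanish by a rank count — $\Lambda^{n}\Si$ of tangential $(1,0)$-directions has the right dimension but the relevant restriction argument is as in the proof of Proposition \ref{TWonDensitiesProp}). Finally I would remark that the construction is manifestly independent of any choice of contact form, since it uses only the CR structures and the embedding, which gives the canonical isomorphism claimed; the same argument verbatim handles the higher-codimension case with $\Lambda^{1,0}_\perp\Si$ of rank $d = n-m$ and $\scrK_\Si = \Lambda^{m+1,0}\Si$, contracting with a frame for the normal bundle.
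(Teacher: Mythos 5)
Your construction is correct, but it builds the isomorphism in the opposite direction from the paper, which simply maps $\zeta_{\Si}\otimes N\mapsto \eta\wedge N$ where $\eta$ is any $n$-form in $\Lambda^{n}(\Lambda^{1,0}M|_{\Si})$ pulling back to $\zeta_{\Si}$; the only thing to check there is independence of the extension $\eta$, which is immediate since any two extensions differ by something of the form $N\wedge\mu$. Your inverse map $\zeta\mapsto \iota^*(N^{\sharp}\hook\zeta)\otimes N$ needs two extra ingredients the paper avoids: a dual normal $N^{\sharp}$ (hence, implicitly, the Levi form or at least the identification of $\cN^{\al}$ with the dual of $\cN_{\al}$), and the vanishing of $\iota^*(V\hook\zeta)$ for tangential $V$ — your rank-count argument for the latter is the right one and is exactly the argument used in the paper's proof of Proposition \ref{TWonDensitiesProp}. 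One small point: your canonicity check only treats the phase freedom $N\mapsto e^{i\varphi}N$, and the closing assertion that contact-form independence is ``manifest'' is not quite automatic, since an unweighted unit conormal rescales under $\theta\mapsto e^{\Ups}\theta$; but your cancellation argument applies verbatim to any rescaling $N\mapsto fN$, $N^{\sharp}\mapsto f^{-1}N^{\sharp}$ (equivalently, the map depends only on the invariant projector $N^{\al}N_{\be}$), so this is a presentational gap rather than a mathematical one.
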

\begin{proof}
The map from $\Lambda^{n}(\Lambda^{1,0}\Si)\otimes \Lambda_{\perp}^{1,0}\Si$ to $\Lambda^{n+1}(\Lambda^{1,0}M|_{\Si})$ is given by
\begin{equation}
\zeta_{\Si} \otimes N \mapsto \eta\wedge N
\end{equation}
where $\eta$ is any element of $\Lambda^{n}(\Lambda^{1,0}M|_{\Si})$ with $\iota^*\eta=\zeta_{\Si}$.
\end{proof}
Given a section $\zeta_{\Si} \otimes N$ of $\scrK_{\Si} \otimes \Lambda_{\perp}^{1,0}\Si$ we write $\zeta_{\Si} \wedge N$ for the corresponding section of $\scrK|_{\Si}$.
The above lemma is the key to relating ambient and submanifold densities:
\begin{cor}\label{ambient-intrinsic-densities-cor}
The ambient and submanifold density bundles are related via the canonical isomorphism
\begin{equation}\label{ambient-intrinsic-densities}
\mathcal{E}(-n-1,0)|_{\Si}\cong\mathcal{E}_{\Sigma}(-n-1,0)\otimes \cN_{\al}(1,0).
\end{equation}
\end{cor}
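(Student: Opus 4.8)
The plan is to derive the density isomorphism directly from the canonical bundle isomorphism \eqref{CanonicalBundlesRelated} of Lemma \ref{CanonicalBundlesRelatedLemma} by taking appropriate roots, being careful about the weights. First I would recall that $\scrK = \cE(-n-2,0)$ on the ambient manifold $M^{2n+1}$, while $\scrK_{\Si} = \cE_{\Si}(-m-2,0) = \cE_{\Si}(-n-1,0)$ since $m = n-1$. Also, under the canonical identification $\La^{1,0}_{\perp}\Si \cong \cN_{\al}$ (by restriction to $T^{1,0}M$, as noted just before the lemma), the line bundle $\La^{1,0}_{\perp}\Si$ is unweighted; but to match weights in the eventual density statement I would rewrite it as $\cN_{\al}$ and then, using the tautological $\cE(1,1)|_{\Si} \cong \cE_{\Si}(1,1)$ identification, note that $\cN_{\al}(1,0)$ is precisely the object that is normed by the CR Levi form (as emphasised in Section \ref{sub:Holomorphic-Normal-Fields}).

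The key step is a weight bookkeeping computation. Lemma \ref{CanonicalBundlesRelatedLemma} gives
\begin{equation*}
\cE(-n-2,0)|_{\Si} \;\cong\; \cE_{\Si}(-n-1,0) \otimes \cN_{\al},
\end{equation*}
where on the right I have substituted $\scrK_{\Si} = \cE_{\Si}(-n-1,0)$. I want to re-express this so that the ambient density bundle appearing has the same weight $(-n-1,0)$ as the submanifold one. Tensoring both sides by $\cE(1,0)|_{\Si}$ and using the diagonal identification $\cE(1,1)|_{\Si} \cong \cE_{\Si}(1,1)$ together with $\cE(1,0) = \cE(1,1)\otimes\cE(0,-1)$ would let me trade ambient weight for submanifold weight, but a cleaner route is: tensor \eqref{CanonicalBundlesRelated} (in the displayed form above) by $\cE(1,0)|_{\Si}$ to obtain
\begin{equation*}
\cE(-n-1,0)|_{\Si} \;\cong\; \cE_{\Si}(-n-1,0)\otimes\cN_{\al}(1,0),
\end{equation*}
where the factor $\cN_{\al}$ has absorbed the extra $\cE(1,0)|_{\Si}$ to become $\cN_{\al}(1,0)$. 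This is exactly \eqref{ambient-intrinsic-densities}. The main subtlety — and the step I would spell out most carefully — is checking that the two instances of "$\cE(1,0)$" are being interpreted consistently: the one implicit in writing $\cN_{\al}(1,0) = \cN_{\al}\otimes\cE(1,0)|_{\Si}$ is the \emph{ambient} $(1,0)$-density bundle restricted to $\Si$, so no identification of $\cE(1,0)|_{\Si}$ with $\cE_{\Si}(1,0)$ is needed or assumed; indeed the whole point (as the surrounding text stresses) is that such an identification is \emph{not} canonical, and \eqref{ambient-intrinsic-densities} is what one gets instead.

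I expect the only real obstacle to be presentational: ensuring the reader sees that Lemma \ref{CanonicalBundlesRelatedLemma}, originally phrased with the unweighted bundle $\La^{1,0}_{\perp}\Si$, yields a statement purely about density bundles once one substitutes the weights $\scrK = \cE(-n-2,0)$, $\scrK_{\Si} = \cE_{\Si}(-n-1,0)$, identifies $\La^{1,0}_{\perp}\Si \cong \cN_{\al}$, and tensors through by $\cE(1,0)|_{\Si}$. There is no hard analytic or geometric content beyond Lemma \ref{CanonicalBundlesRelatedLemma} itself; the corollary is a formal consequence. So the proof I would write is simply: \emph{Tensor the isomorphism \eqref{CanonicalBundlesRelated} of Lemma \ref{CanonicalBundlesRelatedLemma} with $\cE(1,0)|_{\Si}$, use $\scrK = \cE(-n-2,0)$ and $\scrK_{\Si} = \cE_{\Si}(-n-1,0)$ together with the canonical identification $\La^{1,0}_{\perp}\Si \cong \cN_{\al}$ (restriction to $T^{1,0}M$), and read off \eqref{ambient-intrinsic-densities}.}
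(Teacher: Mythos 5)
Your proposal is correct and is essentially identical to the paper's own proof: both take the canonical bundle isomorphism $\scrK|_{\Si}\cong\scrK_{\Si}\otimes\La^{1,0}_{\perp}\Si$ of Lemma \ref{CanonicalBundlesRelatedLemma}, substitute $\scrK=\cE(-n-2,0)$ and $\scrK_{\Si}=\cE_{\Si}(-n-1,0)$, identify $\La^{1,0}_{\perp}\Si$ with $\cN_{\al}$, and tensor with $\cE(1,0)|_{\Si}$. Your added remark that the weight $(1,0)$ on $\cN_{\al}(1,0)$ is the \emph{ambient} density bundle restricted to $\Si$ is a correct and worthwhile clarification, consistent with the paper's convention $\cN_{\al}(1,0)=\cN_{\al}\otimes\cE(1,0)|_{\Si}$.
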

\begin{proof}
By definition $\cE(-n-2,0)=\scrK$ and $\cE_{\Si}(-n-1,0)=\scrK_{\Si}$. Using this in \eqref{CanonicalBundlesRelated}, tensoring both sides with $\cE(1,0)|_{\Si}$, and identifying $\Lambda_{\perp}^{1,0}\Si$ with $\cN_{\al}$ gives the result.
\end{proof}
Thus any trivialisation of the line bundle $\cN_{\al}(1,0)$ gives an identification of the corresponding ambient and submanifold density bundles along $\Si$. One can check that if the trivialisation of $\cN_{\al}(1,0)$ is given by a unit holomorphic conormal then the resulting identification of density bundles will be compatible with the usual identification of $\cE(w,w)|_{\Si}$ with $\cE_{\Si}(w,w)$; this amounts to the claim that, given compatible contact forms $\theta$ and $\theta_{\Si}$, if $\zeta_{\Si}$ is a section of $\scrK_{\Si}$ volume normalised for $\theta_{\Si}$ and $N$ is a section of $\Lambda_{\perp}^{1,0}\Si$ which is normalised with respect to the Levi form of $\theta$ (i.e. satisfies $h^{\al\beb}N_{\al}\overline{N_{\be}}=1$) then the section $\zeta_{\Si}\wedge N$ of $\scrK|_{\Si}$ is volume normalised for $\theta$.
\begin{rem}
The preceding observation motivates the search for a canonical unit holomorphic conormal. One way to approach this search is to observe that for any unit holomorphic conormal $N_{\al}$ the field $\varpi_{\mub}:=N^{\al}\nabla_{\mub}N_{\al}=-N^{\alb}\nabla_{\mub}N_{\alb}$ does not depend on the choice of admissible ambient contact form used to define $\nabla_{\mub}$ and a calculation shows that $\varpi_{\mub}$ satisfies $\nabla_{[\mub}\varpi_{\nub]}=0$. In the case where one has local exactness of the tangential Cauchy-Riemann complex of $\Si$ at $(0,1)$-forms one can then (locally) define a canonical unit holomorphic conormal $N_{\al}$ for which $\varpi_{\mub}=\nabla_{\mub}f$ with $f$ a real valued function; the \emph{a priori} phase freedom in the unit normal is used to eliminate the imaginary part of $f$, leaving no further freedom. However, for smooth (rather than real analytic) embeddings the required local exactness may not hold, as was famously demonstrated by Lewy for the three dimensional Heisenberg group \cite{Lewy}. In the following it will become plain that we should keep $\cN_{\al}(1,0)$ in the picture, rather than trivialise it, and thus we have not pursued this direction further.
\end{rem}

\subsection{Relating Connections on Density Bundles}\label{RelatingConnectionsOnDensities}

Given an admissible ambient contact form $\theta$, the normal Tanaka-Webster connection $\nabla^{\perp}$ on $\cN_{\al}$ can equivalently be thought of as the connection on $\Lambda_{\perp}^{1,0}\Si$ defined by differentiating tangentially using the Tanaka-Webster connection $\nabla$ and then projecting using the Webster metric $g_{\theta}$.
\begin{lem} \label{ConnectionsOnCanonicalBundlesRelatedLemma}
Given any pair of compatible contact forms the isomorphism \eqref{CanonicalBundlesRelated} of Lemma \ref{CanonicalBundlesRelatedLemma} intertwines the respective Tanaka-Webster connections:
\begin{align*}
\scrK|_{\Si} &\: \cong \:\scrK_{\Si} \: \otimes \Lambda_{\perp}^{1,0}\Si \\*
\iota^*\nabla \hphantom{|} &\: \cong \:\:\: D \:\: \otimes \;\; \nabla^{\perp}.
\end{align*}
\end{lem}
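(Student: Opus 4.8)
The statement asserts that the canonical bundle isomorphism $\scrK|_{\Si}\cong\scrK_{\Si}\otimes\Lambda_{\perp}^{1,0}\Si$ of Lemma \ref{CanonicalBundlesRelatedLemma} is parallel, where the left side carries the pullback of the ambient Tanaka-Webster connection and the right side carries $D\otimes\nabla^{\perp}$. My approach is to verify this directly on a local frame adapted to a unit holomorphic conormal, reducing the claim to three easy bracket identities that follow from Proposition \ref{SFFprop} and Lemma \ref{ProjectorDerivLemma}.

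First I would set up notation: pick an admissible ambient $\theta$ with Tanaka-Webster connection $\nabla$, so $\theta$ and $\theta_{\Si}=\iota^*\theta$ are compatible and $D$, $\nabla^{\perp}$ are as in Propositions \ref{SubmanifoldTWprop} and the definition of the normal Tanaka-Webster connection. Fix a unit holomorphic conormal $N_{\al}$ locally, which trivialises $\Lambda_{\perp}^{1,0}\Si\cong\cN_{\al}$, and a local frame $Z_1,\dots,Z_n$ of $T^{1,0}M$ along $\Si$ adapted so that $Z_1,\dots,Z_{n-1}$ project to a frame of $T^{1,0}\Si$ and $Z_n$ is dual to $N_{\al}$ (i.e.\ $N^{\al}$ spans $\cN^{\al}$). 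Then a local section of $\scrK|_{\Si}$ is $Z^1\wedge\cdots\wedge Z^n$ up to a function, and under the isomorphism \eqref{CanonicalBundlesRelated} it corresponds to $(Z^1\wedge\cdots\wedge Z^{n-1})\otimes N$ where $N=Z^n$ restricted to $T^{1,0}M$; more invariantly, if $\zeta_{\Si}$ is a local section of $\scrK_{\Si}$ then $\zeta_{\Si}\wedge N$ is the corresponding section of $\scrK|_{\Si}$, and I compute $\iota^*\nabla(\zeta_{\Si}\wedge N)$ in a tangential direction.

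The core computation: since the isomorphism of Lemma \ref{CanonicalBundlesRelatedLemma} is built from wedging, and since wedge is parallel for $\nabla$, it suffices to show that $\nabla_X$ acting tangentially on a section of $\scrK|_{\Si}$ decomposes as the Leibniz sum of $D_X$ on the $\scrK_{\Si}$-factor and $\nabla^{\perp}_X$ on the $\Lambda_{\perp}^{1,0}\Si$-factor. Using the identification of $\scrK$ with $\cE_{[\al_1\cdots\al_n]}$ via $\zeta\mapsto(T\hook\zeta)|_{T^{1,0}M}$ (intertwining $\nabla$, as in the proof of Proposition \ref{TWonDensitiesProp}) and the analogous identification for $\scrK_{\Si}$, the claim reduces to: the action of $\iota^*\nabla$ on $\Lambda^{n}(T^{1,0}M|_{\Si})^*$, in a tangential direction, splits under the decomposition $T^{1,0}M|_{\Si}=T\iota(T^{1,0}\Si)\oplus\cN^{\al}$ as $D$ on the top exterior power of the first summand tensor $\nabla^{\perp}$ on the (dual of the) second. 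This is exactly the statement that the off-diagonal blocks of $\iota^*\nabla$ with respect to this splitting — namely $\Pi^{\mu}_{\al}\nabla_X(\rmN^{\al}_{\be})$ and $\rmN^{\al}_{\be}\nabla_X(\Pi^{\be}_{\mu})$ for tangential $X$ — are captured by the second fundamental form, together with the vanishing $\II_{0\nu}{^{\ga}}=0$ and $\II_{\mub\nu}{^{\ga}}=0$ from Proposition \ref{SFFprop}. Concretely, Lemma \ref{ProjectorDerivLemma} gives $\nabla_{\mu}\Pi^{\ga}_{\be}=\II_{\mu\nu}{^{\ga}}\Pi^{\nu}_{\be}$, which is purely tangential-to-tangential, $\nabla_{\mub}\Pi^{\be}_{\ga}=\II_{\mub}{^{\nu}}_{\ga}\Pi^{\be}_{\nu}$ and $\nabla_0\Pi^{\ga}_{\be}=0$; after contracting with $N^{\be}$ or $N_{\ga}$ the relevant cross terms vanish precisely because the only nonzero second fundamental form component is $\II_{\mu\nu}{^{\ga}}$ (and its conjugate), and in particular $\II_{\mu\nub}{^{\gab}}=0$ kills the would-be normal contribution to the $\scrK_{\Si}$ derivative. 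So in each of the three tangential directions $\nabla_{\mu}$, $\nabla_{\mub}$, $\nabla_0$ the trace over the first summand picks up only the submanifold Tanaka-Webster connection $D$ and the residual $\cN_{\al}$-piece is by definition $\nabla^{\perp}$; taking the tensor product gives the claim.

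**Main obstacle.** The one place requiring care is bookkeeping rather than substance: making sure that the decomposition of $\Lambda^{n}(T^{1,0}M|_{\Si})^*$ induced by the orthogonal splitting $T^{1,0}M|_{\Si}=T\iota(T^{1,0}\Si)\oplus\cN^{\al}$ really is the tensor product of $\Lambda^{n-1}(T^{1,0}\Si)^*$ with $\cN_{\al}$ compatibly with the wedge map of Lemma \ref{CanonicalBundlesRelatedLemma}, and then tracking that the extra $\cE(1,0)$ twist needed to pass from canonical bundles to the density bundles $\cE(w,w')$ (as in Corollary \ref{ambient-intrinsic-densities-cor}) is handled by the already-established diagonal identification $\cE(1,1)|_{\Si}\cong\cE_{\Si}(1,1)$, which is parallel by Lemma \ref{TWonDiagonalDensitiesLemma}. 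Once the frame-level identities above are in hand, the conclusion for $\scrK$ follows, and hence for all $\cE(w,w')$ along $\Si$ by taking powers and tensoring with the parallel diagonal identification; since $\nabla^{\perp}$, $D$ are by construction the relevant induced connections, there are no further transformation-law checks needed.
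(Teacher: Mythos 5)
Your overall strategy is the same as the paper's: realise a section of $\scrK|_{\Si}$ as a wedge product with a single conormal factor, apply the Leibniz rule for $\iota^*\nabla$, and argue that the cross terms drop out so that only $D$ on the $\scrK_{\Si}$-factor and $\nabla^{\perp}$ on the $\Lambda^{1,0}_{\perp}\Si$-factor survive. (The paper does exactly this in one line: writing the section as $\eta\wedge N$, it observes that $(\nabla_X\eta)\wedge N=(\Pi_{\Si}\nabla_X\eta)\wedge N$ and $\eta\wedge(\nabla_X N)=\eta\wedge(\nabla^{\perp}_X N)$.)

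The gap is in your justification of why the cross terms disappear. You attribute this to the vanishing statements $\II_{0\nu}{}^{\ga}=0$, $\II_{\mub\nu}{}^{\ga}=0$, $\II_{\mu\nub}{}^{\gab}=0$ from Proposition \ref{SFFprop}. But the off-diagonal blocks of $\iota^*\nabla$ relative to the splitting $\cE_{\al}|_{\Si}=\cE_{\mu}\oplus\cN_{\al}$ do \emph{not} all vanish: for example the tangential component of $\nabla_{\mu}N_{\be}$ is $\Pi^{\be}_{\al}\nabla_{\mu}N_{\be}=-N_{\be}\II_{\mu\nu}{}^{\be}\Pi^{\nu}_{\al}$, which is essentially the CR second fundamental form and is generically nonzero; likewise the $\cN_{\al}$-component of $\nabla_{\mub}$ applied to a tangential covector is nonzero. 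Relatedly, your assertion that $\nabla_{\mu}\Pi^{\ga}_{\be}=\II_{\mu\nu}{}^{\ga}\Pi^{\nu}_{\be}$ is ``purely tangential-to-tangential'' is false: the index $\ga$ there is normal-valued. What actually kills the surviving cross terms is the wedge itself: a normal-valued correction to the derivative of a tangential factor is proportional to the conormal $N$, which already occurs in the product (and $\cN_{\al}$ has rank one), while a tangential-valued correction to $\nabla_X N$ is wedged against a top form on the tangential factor and dies by a dimension count. Equivalently, the induced connection on a determinant line is the trace of the connection matrix, and off-diagonal blocks never contribute to the trace. Your phrase ``the trace over the first summand'' hints at this but the argument as written leans on Proposition \ref{SFFprop} instead; once you replace that appeal by the wedge/trace observation the proof closes up, and in fact neither Proposition \ref{SFFprop} nor Lemma \ref{ProjectorDerivLemma} is needed for this lemma at all.
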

\begin{proof}
Let $\zeta_{\Si}\otimes N$ be a section of $\scrK_{\Si}\otimes\Lambda^{1,0}_{\perp}\Si$. Let $\eta$ be any section of $\Lambda^{n}(\Lambda^{1,0}M|_{\Si})$ which pulls back to $\zeta_{\Si}$. Then $\zeta_{\Si}\wedge N := \eta \wedge N$. If $X\in T\Si$ then
\begin{equation*}
\nabla_X (\zeta_{\Si}\wedge N) = (\nabla_X \eta)\wedge N + \eta\wedge (\nabla_X N),
\end{equation*}
but $(\nabla_X \eta)\wedge N=(\Pi_{\Si}\nabla_X \eta)\wedge N$ which is the section of $\scrK|_{\Si}$ corresponding to $(D_X \zeta_{\Si})\otimes N$ (here $\Pi_{\Si}$ denotes submanifold tangential projection with respect to $g_{\theta}$), and $\eta\wedge (\nabla_X N)=\eta\wedge (\nabla^{\perp}_X N)$.
\end{proof}
Observing that the connection $\nabla^{\perp}$ on $\cN_{\al}(1,0)$ agrees with the coupling of  $\nabla^{\perp}$ on $\cN_{\al}$ with $\iota^*{\nabla}$ on $\cE(1,0)|_{\Si}$ we have the following corollary:
\begin{cor}\label{RelatingTWconnectionsOnDensities}
Given any pair of compatible contact forms the (canonical) isomorphism \eqref{ambient-intrinsic-densities} of Corollary \ref{ambient-intrinsic-densities-cor} intertwines the respective Tanaka-Webster connections:
\begin{align*}
\mathcal{E}(-n-1,0)|_{\Si} &\: \cong \:\mathcal{E}_{\Sigma}(-n-1,0) \otimes \cN_{\al}(1,0)\\*
\iota^*\nabla\hphantom{---} &\: \cong \:\hphantom{---} D \hphantom{--}\:\:\: \otimes \;\;\;\: \nabla^{\perp}.
\end{align*}
\end{cor}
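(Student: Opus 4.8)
The plan is to obtain this as an immediate consequence of Lemma \ref{ConnectionsOnCanonicalBundlesRelatedLemma}, by tensoring the entire picture there with the ambient density line bundle $\cE(1,0)|_{\Si}$ carrying the pullback Tanaka-Webster connection $\iota^*\nabla$.

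First I would recall how \eqref{ambient-intrinsic-densities} was built in Corollary \ref{ambient-intrinsic-densities-cor}: it is nothing but the isomorphism \eqref{CanonicalBundlesRelated} of Lemma \ref{CanonicalBundlesRelatedLemma}, rewritten using $\scrK=\cE(-n-2,0)$, $\scrK_{\Si}=\cE_{\Si}(-n-1,0)$ and the canonical identification $\Lambda^{1,0}_{\perp}\Si\cong\cN_{\al}$, and then tensored over $\Si$ with $\cE(1,0)|_{\Si}$. Thus \eqref{ambient-intrinsic-densities} is precisely the isomorphism of Lemma \ref{ConnectionsOnCanonicalBundlesRelatedLemma} tensored with $\mathrm{id}_{\cE(1,0)|_{\Si}}$, so it suffices to track the connections through this tensoring.

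On the ambient side the Tanaka-Webster connection is defined on every density bundle $\cE(w,w')$ through its action on $\scrK=\cE(-n-2,0)$, hence is automatically compatible with tensor products; the tensor-product connection on $\cE(-n-2,0)|_{\Si}\otimes\cE(1,0)|_{\Si}$ induced by $\iota^*\nabla$ is therefore just $\iota^*\nabla$ on $\cE(-n-1,0)|_{\Si}$. On the submanifold side Lemma \ref{ConnectionsOnCanonicalBundlesRelatedLemma} gives $D\otimes\nabla^{\perp}$ on $\scrK_{\Si}\otimes\Lambda^{1,0}_{\perp}\Si$, and tensoring with $(\cE(1,0)|_{\Si},\iota^*\nabla)$ produces $D\otimes(\nabla^{\perp}\otimes\iota^*\nabla)$ on $\cE_{\Si}(-n-1,0)\otimes\cN_{\al}\otimes\cE(1,0)|_{\Si}$. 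The only point requiring a moment's care — and the one I expect to be the sole (minor) obstacle — is checking that $\nabla^{\perp}\otimes\iota^*\nabla$ on $\cN_{\al}\otimes\cE(1,0)|_{\Si}=\cN_{\al}(1,0)$ is exactly the normal Tanaka-Webster connection $\nabla^{\perp}$ on $\cN_{\al}(1,0)$. This is the observation recorded just before the statement, and it holds because the orthogonal projection $\cE^{\al}|_{\Si}\to\cN^{\al}$ used to define $\nabla^{\perp}$ acts only on the tensorial slot and commutes with the density factor; so differentiating tangentially with $\iota^*\nabla$ on $\cN_{\al}(1,0)$ and projecting decouples as $\nabla^{\perp}$ on $\cN_{\al}$ coupled with $\iota^*\nabla$ on $\cE(1,0)|_{\Si}$. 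Combining this with the ambient-side identification and Lemma \ref{ConnectionsOnCanonicalBundlesRelatedLemma} then yields the corollary; beyond this decoupling check the argument is purely formal bookkeeping with tensor products of connections.
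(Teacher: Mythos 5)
Your proposal is correct and follows exactly the route the paper takes: the corollary is obtained by tensoring the intertwining statement of Lemma \ref{ConnectionsOnCanonicalBundlesRelatedLemma} with $(\cE(1,0)|_{\Si},\iota^*\nabla)$ and invoking the observation (stated immediately before the corollary) that $\nabla^{\perp}$ on $\cN_{\al}(1,0)$ coincides with the coupling of $\nabla^{\perp}$ on $\cN_{\al}$ with $\iota^*\nabla$ on $\cE(1,0)|_{\Si}$. Your extra care over the decoupling of the projection from the density factor is the only nontrivial point, and you handle it correctly.
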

This means that if we want to identify corresponding ambient and submanifold density bundles (along $\Si$) in such a way that the ambient and submanifold Tanaka-Webster connections of a pair of compatible contact forms agree (in the sense that $\iota^*\nabla=D$), then we must trivialise $\cN_{\al}(1,0)$ using a section which is parallel for the normal Tanaka-Webster connection $\nabla^{\perp}$. This is not a CR invariant condition on the section of $\cN_{\al}(1,0)$, and the following lemma shows that it is not possible to find a parallel section in general because of curvature:
\begin{lem}\label{WeightedNormalTWcurvatureLemma} 
Let $\theta$ and $\theta_{\Si}$ be compatible contact forms and let $R^{\cN_{\al}(1,0)}$ denote the curvature of $\nabla^{\perp}$ on the bundle $\cN_{\al}(1,0)$, then the $(1,1)$-component of $R^{\cN_{\al}(1,0)}|_{H_{\Si}}$ satisfies
\begin{equation}\label{WeightedNormalTWcurvature}
R^{\cN_{\al}(1,0)}_{\mu\nub}=\frac{n+1}{n+2}R_{\mu\nub}-r_{\mu\nub},
\end{equation}
where $R_{\mu\nub}=\Pi^{\al}_{\mu}\Pi^{\beb}_{\nub}R_{\al\beb}$.
\end{lem}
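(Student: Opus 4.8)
The plan is to obtain \eqref{WeightedNormalTWcurvature} from the additivity of line-bundle curvature under tensor products, applied to the canonical isomorphism relating the ambient and submanifold canonical bundles, together with the curvature of the density bundles computed in Proposition \ref{TWDensityCurvature}.

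First I would observe that, since $m=n-1$, the submanifold canonical bundle is $\scrK_{\Si}=\cE_{\Si}(-m-2,0)=\cE_{\Si}(-n-1,0)$, so the canonical isomorphism of Corollary \ref{ambient-intrinsic-densities-cor} reads $\cE(-n-1,0)|_{\Si}\cong\cE_{\Si}(-n-1,0)\otimes\cN_{\al}(1,0)$, and by Corollary \ref{RelatingTWconnectionsOnDensities} this isomorphism intertwines $\iota^{*}\nabla$ with $D\otimes\nabla^{\perp}$ (for compatible $\theta$, $\theta_{\Si}$). Since the curvature of a tensor product of line-bundle connections is the sum of the two curvatures, comparing $(1,1)$-components — using throughout the sign convention of Remark \ref{CurvatureSignRemark}, so that $(\nabla_{\mu}\nabla_{\nub}-\nabla_{\nub}\nabla_{\mu}+i\bh_{\mu\nub}\nabla_{0})s=-R^{L}_{\mu\nub}s$ for a line bundle $L$ — yields
\[
R^{\cE(-n-1,0)|_{\Si}}_{\mu\nub}=R^{\cE_{\Si}(-n-1,0)}_{\mu\nub}+R^{\cN_{\al}(1,0)}_{\mu\nub}.
\]
It then remains to identify the two curvatures not involving $\cN_{\al}(1,0)$: Proposition \ref{TWDensityCurvature} applied to $\Si$ with $(w,w')=(-m-2,0)$ gives $R^{\cE_{\Si}(-n-1,0)}_{\mu\nub}=r_{\mu\nub}$, while applied to $M$ with $(w,w')=(-n-1,0)$ it gives $R^{\cE(-n-1,0)}_{\al\beb}=\tfrac{n+1}{n+2}R_{\al\beb}$ for the ambient bundle.

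The one step requiring care, and the main obstacle, is relating $R^{\cE(-n-1,0)|_{\Si}}_{\mu\nub}$ to the tangential projection $\Pi^{\al}_{\mu}\Pi^{\beb}_{\nub}R^{\cE(-n-1,0)}_{\al\beb}$ of the ambient curvature. Since $\iota^{*}\nabla$ is a pullback connection its curvature two-form is the pullback of that of $\nabla$, but I must check that the extra $i\bh_{\mu\nub}\nabla_{0}$ term built into the $R_{\mu\nub}$ notation (which reflects the Tanaka-Webster torsion $T_{\mu\nub}{}^{0}=i\bh_{\mu\nub}$) matches up correctly under pullback. This is exactly where admissibility of $\theta$ is used: it guarantees $(\iota^{*}\nabla)_{0}=\nabla_{0}$ along $\Si$ and $\bh_{\mu\nub}=\Pi^{\al}_{\mu}\Pi^{\beb}_{\nub}\bh_{\al\beb}$, so the correction terms agree. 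Concretely I would verify this by choosing an extension of a local section off $\Si$ and projecting the ambient identity of Proposition \ref{TWDensityCurvature} tangentially, in the spirit of the pseudohermitian Gauss equation of Section \ref{PseudohermitianGaussSect}; the would-be second fundamental form corrections drop out because $\II_{\mu\nub}{}^{\gab}=0$ (Proposition \ref{SFFprop}) and $\nabla_{0}$ annihilates the relevant projectors (Lemma \ref{ProjectorDerivLemma}), leaving $R^{\cE(-n-1,0)|_{\Si}}_{\mu\nub}=\tfrac{n+1}{n+2}R_{\mu\nub}$ with $R_{\mu\nub}=\Pi^{\al}_{\mu}\Pi^{\beb}_{\nub}R_{\al\beb}$.

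Finally, substituting $R^{\cE(-n-1,0)|_{\Si}}_{\mu\nub}=\tfrac{n+1}{n+2}R_{\mu\nub}$ and $R^{\cE_{\Si}(-n-1,0)}_{\mu\nub}=r_{\mu\nub}$ into the additivity identity and solving for $R^{\cN_{\al}(1,0)}_{\mu\nub}$ gives \eqref{WeightedNormalTWcurvature}. Everything beyond the projection step above is a routine substitution, so the proof should be short once that point is pinned down.
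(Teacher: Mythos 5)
Your proof is correct, and it takes a genuinely different route from the paper's. The paper's proof starts from the pseudohermitian Ricci equation \eqref{pseudohermitian-Ricci-equation-minimal-codim}, which gives $R^{\cN_{\al}}_{\mu\nub}=R_{\mu\nub N\bar{N}}+\bh_{\ga\deb}\II_{\mu\la}{^{\ga}}\II_{\nub}{^{\la\deb}}$, adds the curvature of $\cE(1,0)|_{\Si}$, and then uses the once-contracted pseudohermitian Gauss equation to eliminate both $R_{\mu\nub N\bar{N}}$ and the second fundamental form terms. You instead compute the curvature of both sides of the connection-intertwining isomorphism $\cE(-n-1,0)|_{\Si}\cong\cE_{\Si}(-n-1,0)\otimes\cN_{\al}(1,0)$ of Corollaries \ref{ambient-intrinsic-densities-cor} and \ref{RelatingTWconnectionsOnDensities}, using additivity of line-bundle curvature under tensor product together with Proposition \ref{TWDensityCurvature} applied on $M$ and on $\Si$. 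This bypasses the Gauss and Ricci equations entirely and makes it conceptually clear why $\II$ is absent from the final formula, whereas in the paper's argument it only cancels between the two equations. The step you flag as delicate --- matching the tangential restriction of the torsion-corrected curvature operator of $\iota^*\nabla$ on $\cE(-n-1,0)|_{\Si}$ with $\Pi^{\al}_{\mu}\Pi^{\beb}_{\nub}$ applied to the ambient one --- is indeed where compatibility enters (via $T_{\Si}=T|_{\Si}$ and $\bh_{\mu\nub}=\Pi^{\al}_{\mu}\Pi^{\beb}_{\nub}\bh_{\al\beb}$), but it is simpler than your discussion suggests: $\iota^*\nabla$ on a restricted line bundle involves no orthogonal projection, so no second-fundamental-form corrections can arise in the first place and only the $i\bh_{\mu\nub}\nabla_{0}$ terms need checking; the paper performs the same silent restriction for $\cE(1,0)|_{\Si}$ in its own proof. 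Your sign conventions are consistent with Remark \ref{CurvatureSignRemark} throughout, so the bookkeeping closes and you land exactly on \eqref{WeightedNormalTWcurvature}.
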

\begin{proof}
By Proposition \ref{TWDensityCurvature} the $(1,1)$-component of the restriction to $H$ of the curvature of the Tanaka-Webster connection on the line bundle $\cE(1,0)$ is  $\frac{1}{n+2}R_{\al\beb}$. Thus the $(1,1)$-component of the the restriction to $H_{\Si}$ of the curvature of $\iota^*\nabla$ on $\cE(1,0)|_{\Si}$ is $\frac{1}{n+2}R_{\mu\nub}$. Combining this with the Ricci equation \eqref{pseudohermitian-Ricci-equation-minimal-codim} for $R^{\cN^{\alb}}_{\mu\nub}=R^{\cN_{\al}}_{\mu\nub}$ we have
\begin{equation*}
R^{\cN_{\al}(1,0)}_{\mu\nub}=R_{\mu\nub N\bar{N}} +\bh_{\ga\deb}\II_{\mu\la}{^{\ga}}\II_{\nub}{^{\la\deb}}-\frac{1}{n+2}R_{\mu\nub}.
\end{equation*}
Using the once contracted Gauss equation
\begin{equation*}
R_{\mu\nub} - R_{\mu\nub N\bar{N}} = r_{\mu\nub} + \bh_{\ga\deb}\II_{\mu\la}{^{\ga}}\II_{\nub}{^{\la\deb}}
\end{equation*}
obtained from \eqref{pseudohermitian-Gauss} we have the result.
\end{proof}
\begin{rem}
Here, because of our conventions (cf.\ Remark \ref{CurvatureSignRemark}), we take $R^{\cN_{\al}(1,0)}$ to be minus the usual curvature of $\cN_{\al}(1,0)$ as a line bundle.
\end{rem}

\subsection{The Ratio Bundle of Densities}\label{RatioBundleSec}

The observations of Sections \ref{Relating-Densities} and \ref{RelatingConnectionsOnDensities} motivate us to look at the relationship between corresponding ambient and submanifold density bundles rather than seeking to identify them (along $\Si$). We therefore make the following definition:
\begin{defn}\label{ratio-bundle-defn}
The \emph{ratio bundle of densities} of weight $(w,w')$ is the complex line bundle
\begin{equation}\label{RatioBundleOfDensities}
\cR(w,w'):=\cE(w,w')|_{\Si}\otimes\cE_{\Sigma}(-w,-w')
\end{equation}
on the submanifold $\Si$. Equivalently $\cR(w,w')$ is the bundle whose sections are endomorphisms from $\cE_{\Sigma}(w,w')$ to $\cE(w,w')|_{\Si}$.
\end{defn}
Note that the bundles $\cR(w,w)$ are canonically trivial, and therefore $\cR(w,w')$ is canonically isomorphic to $\cR(w-w',0)$. Also by definition $\cR(-n-1,0)$ is canonically isomorphic to $\cN_{\al}(1,0)$, and we make this into an identification
\begin{equation}
\cR(-n-1,0)=\cN_{\al}(1,0).
\end{equation}

\subsection{The Canonical Connection on the Ratio Bundles} \label{CanonicalConnectionOnRatio}

Borrowing insight from Section \ref{Submanifolds-and-Tractors} below we observe that the bundle $\cN_{\al}(1,0)$ carries a natural CR invariant connection, which induces connections on the density ratio bundles $\cR(w,w')$. The reason is that $\cN_{\al}(1,0)$ is canonically isomorphic to a subbundle $\cN_A$ of the ambient cotractor bundle $\cE_A$ along $\Si$ which has an invariant connection induced by the ambient tractor connection (Proposition \ref{NormalTractorConnectionProp}). We denote this canonical invariant connection on $\cR(w,w')$ by $\nabla^{\cR}$. It turns out to be very naturally expressed in terms of Weyl connections (recall Section \ref{WeylConnections}). Hence we make the following definition:
\begin{defn} 
Given an admissible ambient contact form $\theta$, the \emph{normal Weyl connection} $\nabla^{W,\perp}$ on $\cN^{\al}(w,w')$ is the connection induced by $\nabla^{W}$ (projecting tangential derivatives of sections back into $\cN^{\al}$ using the Levi form). Dually, the connection $\nabla^{W,\perp}$ acts on $\cN_{\al}(-w,-w')$.
\end{defn}
For calculational purposes we will need the following lemma:
\begin{lem}\label{CanonicalNormalConnectionLemma}
Given an admissible ambient contact form $\theta$ the connection $\nabla^{W,\perp}$ on $\cN_{\al}(1,0)$ acts on a section $\tau_{\al}$ by
\begin{equation}
\nabla_{\mu}^{W,\perp}\tau_{\al}=\nabla^{\perp}_{\mu}\tau_{\al}, \quad 
\nabla_{\mub}^{W,\perp}\tau_{\al}=\nabla^{\perp}_{\mub}\tau_{\al}
\end{equation}
and
\begin{equation}\label{0NormalWeylConnection}
\nabla_{0}^{W,\perp}\tau_{\al}=\nabla^{\perp}_{0}\tau_{\al}-iP_{N\bar{N}}\tau_{\al}+\frac{i}{n+2}P\tau_{\al}
\end{equation}
where $P_{N\bar{N}}=P_{\al\beb}N^{\al}N^{\beb}$ for any (weighted) unit holomorphic normal $N^{\al}$ and $P=P_{\al}{^{\al}}$.
\end{lem}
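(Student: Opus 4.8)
The statement asserts three formulas describing how $\nabla^{W,\perp}$ acts on $\cN_{\al}(1,0)$; the first two say that in contact directions the normal Weyl connection coincides with the normal Tanaka-Webster connection $\nabla^{\perp}$, and the third gives the correction term in the Reeb direction. The key observation is that $\nabla^{W,\perp}$ is, by definition, built from the ambient Weyl connection $\nabla^W$ in exactly the same way that $\nabla^{\perp}$ is built from the ambient Tanaka-Webster connection $\nabla$: differentiate tangentially, then project orthogonally into the normal bundle using the Levi form. So the whole lemma reduces to comparing $\nabla^W$ with $\nabla$ on the relevant ambient bundle $\cE_{\al}(1,0)|_{\Si}$ and tracking what the tangential-then-project operation does to the difference term.

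First I would recall from Section \ref{WeylConnections} that $\nabla^W$ agrees with $\nabla$ when differentiating in contact directions (this is the definition of the Weyl connection), which immediately gives the first two displayed equations: projecting $\nabla_{\mu}\tau_{\al}$ (resp.\ $\nabla_{\mub}\tau_{\al}$) orthogonally onto $\cN_{\al}$ is exactly $\nabla^{\perp}_{\mu}\tau_{\al}$ (resp.\ $\nabla^{\perp}_{\mub}\tau_{\al}$), and the same projection of $\nabla^W$ gives $\nabla^{W,\perp}$. For the Reeb direction I would combine two facts about how $\nabla^W$ and $\nabla$ differ on $\cE_{\al}(1,0)$: on $\cE_{\al}$ one has $\nabla_0^W\tau_{\al} = \nabla_0\tau_{\al} - iP_{\al}{}^{\be}\tau_{\be}$ from \eqref{0WeylConnection}, and on the density factor $\cE(1,0)$ one has $\nabla_0^W\si = \nabla_0\si + \tfrac{i}{n+2}P\si$ from \eqref{0WeylConnectionOnDensities}; by the Leibniz rule, on $\cE_{\al}(1,0)$ the difference is $\nabla_0^W\tau_{\al} = \nabla_0\tau_{\al} - iP_{\al}{}^{\be}\tau_{\be} + \tfrac{i}{n+2}P\tau_{\al}$. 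Now restrict to a section $\tau_{\al}$ of the subbundle $\cN_{\al}(1,0)\subset \cE_{\al}(1,0)|_{\Si}$ and apply the orthogonal projection $\rmN_{\be}^{\al}=N^{\al}N_{\be}$ onto $\cN_{\al}$ (using some weighted unit holomorphic normal $N^{\al}$, with $\tau_{\al} = (N^{\be}\tau_{\be})N_{\al}$). The projection of the $\nabla_0\tau_{\al}$ term is by definition $\nabla_0^{\perp}\tau_{\al}$ (recall $\nabla_0^{\perp} = \nabla_0^{W,\perp}$ has no Weyl correction issue here; rather $\nabla_0^{\perp}$ is the Tanaka-Webster normal connection in the Reeb direction), the projection of the $\tfrac{i}{n+2}P\tau_{\al}$ term is itself since it is already normal-valued, and the projection of $-iP_{\al}{}^{\be}\tau_{\be}$ is $-i\rmN_{\al}^{\ga}P_{\ga}{}^{\be}\rmN_{\be}^{\de}\tau_{\de}$-type contraction, which equals $-iP_{\ga\deb}N^{\ga}N^{\deb}\tau_{\al} = -iP_{N\bar N}\tau_{\al}$ using $\bh^{\al\beb}N_{\al}N_{\beb}=1$ and that lowering/raising indices commutes with contractions (Lemma \ref{TWonDiagonalDensitiesLemma}).

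The one genuinely subtle point, and the step I would be most careful about, is the bookkeeping of which "$0$-component" and which projection is meant in the definition of $\nabla^{\perp}$ versus $\nabla^{W,\perp}$: one needs $\theta$ admissible so that $T$ is tangent to $\Si$, which is precisely what makes $\nabla_0$ well-defined on sections along $\Si$ (as noted after the definition of tangential derivatives in Section \ref{sub:Holomorphic-Normal-Fields}), and one must check that the orthogonal projection with respect to the Levi form that defines $\nabla^{\perp}$ is compatible with the one used for $\nabla^{W,\perp}$ — they are the same projection $\rmN_{\be}^{\al} = N^{\al}N_{\be}$, so this is really just a matter of unwinding definitions, but it should be stated explicitly. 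Finally I would remark that the answer is manifestly independent of the choice of weighted unit holomorphic normal $N^{\al}$ (since $P_{N\bar N}$ and the combinations $N^{\al}N_{\be}$ are, as noted in Section \ref{sub:Holomorphic-Normal-Fields}), which is a sanity check consistent with $\nabla^{W,\perp}$ being canonically defined from $\theta$ alone.
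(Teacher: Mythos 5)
Your proof is correct and follows the same route as the paper, which simply observes that the lemma is immediate from the definitions of the Weyl and normal Weyl connections together with \eqref{0WeylConnection} and \eqref{0WeylConnectionOnDensities}; your expansion of the Reeb-direction computation (Leibniz on $\cE_{\al}\otimes\cE(1,0)$, then projection of $-iP_{\al}{}^{\be}\tau_{\be}$ to $-iP_{N\bar N}\tau_{\al}$ using $\tau_{\be}=N_{\be}(N^{\de}\tau_{\de})$) is exactly the intended verification. One small slip: the parenthetical ``$\nabla_0^{\perp}=\nabla_0^{W,\perp}$'' contradicts the very formula you are proving and should be deleted — you mean only that the projection of $\nabla_0\tau_{\al}$ is, by definition, $\nabla_0^{\perp}\tau_{\al}$.
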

\begin{proof}
This follows immediately from the definitions of the Weyl and normal Weyl connections and the formula \eqref{0WeylConnectionOnDensities}.
\end{proof}
The connection $\nabla^{\cR}$ on $\cR(-n-1,0)=\cN_{\al}(1,0)$ turns out to agree precisely with the normal Weyl connection of \emph{any} admissible contact form. In particular the normal Weyl connection $\nabla^{W,\perp}$ on the bundle $\cN_{\al}(1,0)$ does not depend on the choice of admissible ambient contact form. This follows from Proposition \ref{NormalTractorConnectionProp} below, but here we give a direct proof. Before we prove this we make an important technical observation, stated in the following lemma:
\begin{lem}\label{UpsilonTangentialLemma}
Let $\theta$ be an admissible ambient contact form. The contact form $\thetah=e^{\Ups}\theta$ is admissible if and only if
\begin{equation}
\Ups_{\al}=\Pi_{\al}^{\be}\Ups_{\be}
\end{equation}
along $\Si$.
\end{lem}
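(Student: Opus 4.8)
The statement to prove is Lemma~\ref{UpsilonTangentialLemma}: given an admissible ambient contact form $\theta$, the rescaled form $\thetah = e^{\Ups}\theta$ is admissible if and only if $\Ups_{\al} = \Pi_{\al}^{\be}\Ups_{\be}$ along $\Si$ (equivalently, the $(1,0)$-part of $\d\Ups$ restricted to $H|_\Si$ is tangential, i.e.\ $\rmN^{\be}_{\al}\Ups_{\be} = N^{\be}N_{\al}\nabla_{\be}\Ups = 0$). The natural approach is to compute directly how the Reeb vector field moves under the rescaling using Lemma~\ref{ReebTransform} and then impose tangency to $\Si$.

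First I would recall that admissibility of $\thetah$ means precisely that its Reeb vector field $\hat T$ is tangent to $\Si$. By Lemma~\ref{ReebTransform},
\begin{equation*}
\hat{T}=e^{-\Ups}\left[T+\left((\d \Ups) |_{H}\circ J\right)^{\sharp}\right],
\end{equation*}
and since $\theta$ is already admissible, $T$ is tangent to $\Si$. Therefore $\hat T$ is tangent to $\Si$ if and only if the contact vector field $Y := \left((\d \Ups)|_H \circ J\right)^{\sharp}$ is tangent to $\Si$ along $\Si$. The scalar factor $e^{-\Ups}$ is irrelevant. Next, using the observation recorded just before Lemma~\ref{0ComponentTransform} — that for a $1$-form $\eta$ with holomorphic and antiholomorphic parts $\eta_\al$, $\eta_{\alb}$, the vector field $(\eta|_H)^\sharp$ has antiholomorphic component $h^{\be\alb}\eta_\be$ and holomorphic component $h^{\al\beb}\eta_{\beb}$, together with the fact that the restriction of $(\d\Ups)|_H \circ J$ to $\cE^\al$ is $i\nabla_\al\Ups = i\Ups_\al$ and to $\cE^{\alb}$ is $-i\Ups_{\alb}$ — I would read off that the holomorphic part of $Y$ is $-i\Ups^{\al} := -i\bh^{\al\beb}\Ups_{\beb}$ and the antiholomorphic part is $i\Ups^{\alb} := i\bh^{\be\alb}\Ups_\be$ (up to the overall sign conventions, which I would fix carefully against the excerpt).

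Then tangency of $Y$ to $\Si$ is equivalent to its holomorphic component lying in $T^{1,0}\Si$, i.e.\ to $\rmN^{\al}_{\be}\Ups^{\be} = 0$, or contracting with $\bh$, to $\Ups_{\al} = \Pi_{\al}^{\be}\Ups_{\be}$ along $\Si$ — using $\delta^{\al}_{\be} = \Pi^{\al}_{\be} + N^{\al}N_{\be}$ from the subsection on unit normal fields. (Reality means the antiholomorphic condition is the conjugate of this, so nothing extra is needed.) This gives both implications at once.

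The only mild subtlety — and the step I would be most careful with — is bookkeeping of signs and of the identification of the "$0$-component": $Y$ as written has no Reeb component, but I must confirm that $\hat T - e^{-\Ups}T$ has vanishing $\theta$-component as well as checking the $H$-part, so that tangency of the $H$-part is genuinely equivalent to tangency of the whole vector. This is immediate since both $\hat T$ and $e^{-\Ups}T$ pair to $e^{-\Ups}$ with... actually more directly: $Y$ lies in $H$ by construction, and $H = H_\Si \oplus N\Si$ as a $g_\theta$-orthogonal decomposition along $\Si$ (for admissible $\theta$), so $Y$ tangent to $\Si$ $\iff$ $Y \in H_\Si$ $\iff$ its normal projection vanishes $\iff$ $\rmN^\al_\be \Ups^\be = 0$. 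I would present this cleanly and leave the one-line index manipulation to the reader.
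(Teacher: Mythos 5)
Your proof is correct and follows the same route as the paper, which simply cites the Reeb transformation law of Lemma \ref{ReebTransform} and the tangency of both $T$ and $\hat{T}$; you have merely spelled out the component bookkeeping (holomorphic part $-i\Ups^{\al}$ of $\bigl((\d\Ups)|_{H}\circ J\bigr)^{\sharp}$ and the equivalence of its tangency with $\rmN^{\be}_{\al}\Ups_{\be}=0$) that the paper leaves implicit.
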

\begin{proof}
This follows immediately from the transformation law for the Reeb vector field given in Lemma \ref{ReebTransform} since both $T$ and $\hat{T}$ must be tangent to $\Si$.
\end{proof}
\begin{prop}\label{NormalWeylConnectionInvariant}
The normal Weyl connection $\nabla^{W,\perp}$ on the bundle $\cN_{\al}(1,0)$ does not depend on the choice of admissible contact form.
\end{prop}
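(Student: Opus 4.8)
The plan is to verify directly that the formulae of Lemma~\ref{CanonicalNormalConnectionLemma} for $\nabla^{W,\perp}$ on $\cN_{\al}(1,0)$ are unchanged if we replace an admissible contact form $\theta$ by another admissible contact form $\thetah=e^{\Ups}\theta$, using the transformation laws already established. By Lemma~\ref{UpsilonTangentialLemma}, admissibility of both $\theta$ and $\thetah$ forces $\Ups_{\al}=\Pi_{\al}^{\be}\Ups_{\be}$ along $\Si$, i.e.\ $N^{\al}\Ups_{\al}=0$ for any unit holomorphic normal $N^{\al}$; this normal-vanishing of $\Ups$ in holomorphic directions is the crucial constraint that makes the potentially obstructing terms drop out. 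First I would treat the tangential ($\mu$ and $\mub$) components. On a section $\tau_{\al}$ of $\cN_{\al}(1,0)$, combining Proposition~\ref{TWtransform} (acting on the $\cE_{\be}$ index) with Proposition~\ref{TWonDensitiesProp} (acting on the $\cE(1,0)$ factor, with $w=1$, $w'=0$) gives $\nablah_{\mu}\tau_{\al}=\nabla_{\mu}\tau_{\al}-\Pi_{\mu}^{\be}\Ups_{\al}\tau_{\be}-\Ups_{\mu}\tau_{\al}+\Ups_{\mu}\tau_{\al}$, exactly as in the proof of Corollary~\ref{CRsffCor}; projecting orthogonally onto $\cN_{\al}$ kills the $-\Pi_{\mu}^{\be}\Ups_{\al}\tau_{\be}$ term (it is tangential), so $\nablah^{\perp}_{\mu}\tau_{\al}=\nabla^{\perp}_{\mu}\tau_{\al}$, and likewise for $\nablah^{\perp}_{\mub}\tau_{\al}$ — and these tangential normal-Weyl derivatives coincide with the tangential normal Tanaka--Webster derivatives by Lemma~\ref{CanonicalNormalConnectionLemma}.

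The substantive step is the Reeb-direction component. Here I would use \eqref{0NormalWeylConnection}, which expresses $\nabla_{0}^{W,\perp}\tau_{\al}$ in terms of $\nabla^{\perp}_{0}\tau_{\al}$, $P_{N\bar N}$, and the trace $P$. One needs three ingredients: (i) the transformation law for $\nabla_0$ on $\cN_{\al}(1,0)$, obtained by restricting the $\nablah_0\tau_{\be}$ law of Proposition~\ref{TWtransform} coupled with the density law of Proposition~\ref{TWonDensitiesProp} and projecting normally; (ii) the transformation \eqref{SchoutenTransform}, $\hat P_{\al\beb}=P_{\al\beb}+\La_{\al\beb}$ with $\La_{\al\beb}$ given by \eqref{SchoutenTransformLambda}, contracted with $N^{\al}N^{\beb}$ to get $\hat P_{N\bar N}=P_{N\bar N}+\La_{N\bar N}$; (iii) the trace transformation for $P=P_{\be}{^{\beb}}$. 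Using $N^{\al}\Ups_{\al}=0$ one computes $\La_{N\bar N}=-\tfrac12(N^{\al}N^{\beb}\Ups_{\al\beb}+N^{\al}N^{\beb}\Ups_{\beb\al})-\tfrac12\Ups^{\ga}\Ups_{\ga}$, and since $\Ups$ vanishes in normal holomorphic directions along $\Si$, the terms $N^{\al}N^{\beb}\Ups_{\al\beb}$ reduce (after using that $\nabla_{\mub}N^{\al}$ lies in the holomorphic tangent plus normal directions and that $\II_{\mu\nub}{}^{\gab}=0$) to expressions that exactly match the extra terms produced by the $\nabla_0$ transformation law and the $iP/(n+2)$ term. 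The upshot is that the $-iP_{N\bar N}\tau_{\al}$ and $+\tfrac{i}{n+2}P\tau_{\al}$ corrections in \eqref{0NormalWeylConnection} are precisely engineered to cancel the anomaly in the transformation of $\nabla^{\perp}_0\tau_{\al}$, so $\nablah_{0}^{W,\perp}\tau_{\al}=\nabla_{0}^{W,\perp}\tau_{\al}$.

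I expect the Reeb-direction bookkeeping to be the main obstacle: one must carefully track which second-derivative-of-$\Ups$ terms survive the normal projection, use the symmetry/antisymmetry relations $\Ups_{\al\beb}-\Ups_{\beb\al}=i\bh_{\al\beb}\Ups_0$, and invoke $N^{\al}\Ups_{\al}=0$ together with its tangential derivative $\nabla_{\mu}(N^{\al}\Ups_{\al})=0$ to convert $N^{\al}N^{\beb}\Ups_{\al\beb}$-type terms into terms involving $\II_{\mu\nu}{}^{\ga}$, $\Ups_{\mu}$, and the normal-bundle data, all of which then cancel. A cleaner route — and the one the paper signals — is simply to defer: Proposition~\ref{NormalTractorConnectionProp} will identify $\cN_{\al}(1,0)$ with the non-null subbundle $\cN_A\subset\cE_A|_{\Si}$ and show $\nabla^{W,\perp}$ is the restriction of the manifestly CR-invariant ambient tractor connection to $\cN_A$, which gives invariance for free. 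I would present the direct computation as above for self-containedness, noting that the tractor argument provides an independent confirmation.
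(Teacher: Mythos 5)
Your treatment of the contact directions is correct and matches the paper's argument: the transformation laws \eqref{TWhhTransform}, \eqref{TWahTransform} and Proposition \ref{TWonDensitiesProp} for weight $(1,0)$, together with $\Pi^{\be}_{\mu}\tau_{\be}=0$, the tangentiality of $\Ups_{\al}$ from Lemma \ref{UpsilonTangentialLemma}, and $\rmN^{\be}_{\de}\Pi^{\alb}_{\mub}\bh_{\be\alb}=0$, give $\nablah^{\perp}_{\mu}\tau=\nabla^{\perp}_{\mu}\tau$ and $\nablah^{\perp}_{\mub}\tau=\nabla^{\perp}_{\mub}\tau$. The Reeb-direction step, however, contains a genuine error: your target identity $\nablah_{0}^{W,\perp}\tau_{\al}=\nabla_{0}^{W,\perp}\tau_{\al}$ is false and cannot be established. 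The point (flagged in the Remark after Proposition \ref{TWtransform} and in Lemma \ref{0ComponentTransform}) is that the ``$0$-component'' on the two sides refers to different splittings of $\bbC T^*\Si$, because the Reeb vector field itself changes under $\theta\mapsto e^{\Ups}\theta$. What one must prove is that the $0$-component transforms exactly as the $0$-component of a \emph{fixed} connection should, namely
\begin{equation*}
\nablah^{W,\perp}_0 \tau \;=\; \nabla^{W,\perp}_0 \tau \;+\; i\Ups^{\mub}\nabla^{W,\perp}_{\mub}\tau \;-\; i\Ups^{\mu}\nabla^{W,\perp}_{\mu}\tau .
\end{equation*}
If you aim instead for equality of the two $0$-components, the first-order terms $i\Ups^{\gab}\nabla_{\gab}\tau-i\Ups^{\ga}\nabla_{\ga}\tau$ produced by \eqref{TW0hTransform} and Proposition \ref{TWonDensitiesProp} have nowhere to go: the zeroth-order corrections $-iP_{N\bar N}\tau+\tfrac{i}{n+2}P\tau$ in \eqref{0NormalWeylConnection}, transforming via \eqref{SchoutenTransformLambda}--\eqref{SchoutenTransform}, can only absorb the zeroth-order anomaly (the $\Ups^{\ga}{_{\be}}$, $\Ups_0$, and $\Ups^{\ga}\Ups_{\ga}$ terms), not first-order derivative terms. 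So the computation as you describe it would not close up.

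A secondary point: your proposed manipulation of $N^{\al}N^{\beb}\Ups_{\al\beb}$-type terms via $\nabla_{\mu}(N^{\al}\Ups_{\al})=0$ and the second fundamental form is an unnecessary detour. In the correct computation the term $-i\rmN^{\be}_{\de}\Ups^{\ga}{_{\be}}\tau_{\ga}$ from \eqref{TW0hTransform} cancels directly against the $-\Ups^{\ga}{_{\be}}$ contribution inside $i\hat P_{\be}{^{\ga}}\tau_{\ga}$ (after symmetrising with $\Ups_{\al\beb}-\Ups_{\beb\al}=i\bh_{\al\beb}\Ups_0$); no conversion into $\II$ is needed and $\II$ never enters. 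Your fallback via Proposition \ref{NormalTractorConnectionProp} is a legitimate alternative route to invariance, but it does not repair the direct computation as stated.
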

\begin{proof}
Fix a pair of compatible contact forms $\theta$, $\theta_{\Si}$ and suppose $\thetah=e^{\Ups}\theta$ is any other admissible ambient contact form. Let $\tau_{\al}$ be a section of $\cN_{\al}(1,0)$. Extend $\tau_{\al}$ arbitrarily off $\Si$. When differentiating in contact directions the connections $\nabla^{W,\perp}$ and $\nabla^{\perp}$ agree, so from \eqref{TWhhTransform} and Proposition \ref{TWonDensitiesProp} we have
\begin{align*}
\nablah^{W,\perp}_{\mu} \tau_{\vphantom{\hat{T}}\de} & = \rmN^{\be}_{\de}\Pi^{\al}_{\mu}\nablah_{\al} \tau_{\be} \\
& = \rmN^{\be}_{\de}\Pi^{\al}_{\mu}(\nabla_{\al}\tau_{\be} -\Ups_{\be}\tau_{\al} -\Ups_{\al}\tau_{\be} + \Ups_{\al}\tau_{\be})\\
& = \rmN^{\be}_{\de}\Pi^{\al}_{\mu}\nabla_{\al} \tau_{\be}
\end{align*}
since $\Pi^{\al}_{\mu}\tau_{\al}=0$ (note that $\rmN^{\be}_{\ga}\Ups_{\be}$ also vanishes since $\theta$ and $\thetah$ are admissible). Similarly, from \eqref{TWahTransform} and Proposition \ref{TWonDensitiesProp} we have
\begin{align*}
\nablah^{W,\perp}_{\mub} \tau_{\vphantom{\hat{T}}\de} & = \rmN^{\be}_{\de}\Pi^{\alb}_{\mub}\nablah_{\alb} \tau_{\be} \\
& = \rmN^{\be}_{\de}\Pi^{\alb}_{\mub}(\nabla_{\alb}\tau_{\be} +\bh_{\be\alb}\Ups^{\ga}\tau_{\ga})\\
& = \rmN^{\be}_{\de}\Pi^{\alb}_{\mub}\nabla_{\alb} \tau_{\be}
\end{align*}
since $\rmN^{\be}_{\de}\Pi^{\al}_{\mu}\bh_{\be\alb}=0$. 

The operators $\nabla^W_0$ and $\nabla_0$ acting on $\tau_{\al}$ are related by
\begin{equation*}
\nabla^W_0 \tau_{\al} = \nabla_0 \tau_{\al} -iP_{\al}{^{\be}}\tau_{\be}+\frac{i}{n+2}P\tau_{\al}.
\end{equation*}
Now, on the one hand, by \eqref{TW0hTransform} and Proposition \ref{TWonDensitiesProp}, noting that $\rmN^{\be}_{\de}\Ups_{\be}=0$ by Lemma \ref{UpsilonTangentialLemma}, we have
\begin{align*}
\nablah^{\perp}_0 \tau_{\de} & = \rmN^{\be}_{\de} [ \nabla_0 \tau_{\be} + i\Ups^{\gab}\nabla_{\gab}\tau_{\be} - i\Ups^{\ga}\nabla_{\ga}\tau_{\be} -i\Ups^{\ga}{_{\be}}\tau_{\ga} \\*
& \qquad \quad  + \tfrac{1}{n+2}(\Ups_0 +i\Ups^{\ga}{_{\ga}}-i\Ups^{\ga}\Ups_{\ga})\tau_{\be} ].
\end{align*}
On the other hand from \eqref{SchoutenTransformLambda} and \eqref{SchoutenTransform}, noting that  $\Ups_{\al\beb}+\Ups_{\beb\al}=2\Ups_{\beb\al}-i\bh_{\al\beb}\Ups_0$ by \eqref{torsion1}, we have
\begin{align*}
\rmN^{\be}_{\de}[i\hat{P}_{\be}{^{\ga}}\tau_{\ga}-\tfrac{i}{n+2}\hat{P}\tau_{\be}] 
& = 
\rmN^{\be}_{\de}[i(P_{\be}{^{\ga}}  -\Ups^{\ga}{_{\be}} +\tfrac{i}{2}\Ups_0\delta^{\ga}_{\be} -\tfrac{1}{2}\Ups^{\ep}\Ups_{\ep}\delta^{\ga}_{\be})\tau_{\ga}
\\*
&\qquad \quad - \tfrac{i}{n+2}(P  -\Ups^{\ga}{_{\ga}} +\tfrac{i n}{2}\Ups_0 -\tfrac{n}{2}\Ups^{\ep}\Ups_{\ep})\tau_{\be}].
\end{align*}
Since $\tfrac{1}{2}-\tfrac{n}{2(n+2)}=\tfrac{1}{n+2}$ we obtain that
\begin{equation*}
\nablah^{W,\perp}_0 \tau_{\de} = \nabla^{W,\perp}_0 \tau_{\be} + i\Ups^{\mub}\nabla^{W,\perp}_{\mub}\tau_{\be} - i\Ups^{\mu}\nabla^{W,\perp}_{\mu}\tau_{\be},
\end{equation*}
as required (recall that the `0-direction' has a different meaning on the left and right hand sides of the above display, cf.\ Lemma \ref{0ComponentTransform}).
\end{proof}
\begin{rem}
Both Lemma \ref{UpsilonTangentialLemma} and Proposition \ref{NormalWeylConnectionInvariant} hold in the general codimension case with the same proof (as does Proposition \ref{NormalTractorConnectionProp}).
\end{rem}
We therefore take $\nabla^{\cR}$ to be the connection induced on the ratio bundles by the normal Weyl connection of an admissible contact form on $\cN_{\al}(1,0)$, and give later in Proposition \ref{NormalTractorConnectionProp} of Section \ref{sec:NormalTrac} the tractor explanation for this invariant connection. In order to compute with $\nabla^{\cR}$ we will need the following lemma:
\begin{lem}\label{RatioConnectionLemma}
In terms of a compatible pair of contact forms, $\theta$, $\theta_{\Si}$, the connection $\nabla^{\cR}$ on a section $\phi\otimes\si$ of $\cE(w,w')|_{\Si}\otimes\cE_{\Sigma}(-w,-w')$ is given by 
\begin{equation}
\nabla^{\cR}_{\mu}(\phi\otimes\si)=(\nabla_{\mu}\phi)\otimes\si+\phi\otimes(D_{\mu}\si),
\end{equation}
\begin{equation}
\nabla^{\cR}_{\mub}(\phi\otimes\si)=(\nabla_{\mub}\phi)\otimes\si+\phi\otimes(D_{\mub}\si),
\end{equation}
and
\begin{equation}
\nabla^{\cR}_{0}(\phi\otimes\si)=(\nabla_{0}\phi)\otimes\si+\phi\otimes(D_{0}\si) + \tfrac{w-w'}{n+1}(iP_{N\bar{N}}-\tfrac{i}{n+2}P)\phi\otimes\si.
\end{equation}
\end{lem}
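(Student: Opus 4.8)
The plan is to reduce the whole statement to the single line bundle $\cR(-n-1,0)=\cN_{\al}(1,0)$, on which $\nabla^{\cR}$ is by construction the normal Weyl connection $\nabla^{W,\perp}$, and then propagate across all weights using that $\nabla^{\cR}$ (just like $\iota^*\nabla$ on ambient densities and $D$ on submanifold densities) is defined compatibly with tensor products, duals, conjugation and roots in the family $\{\cR(w,w')\}$. Fixing a compatible pair $\theta,\theta_{\Si}$, write $\nabla^{\mathrm{ob}}$ for the \emph{obvious} connection on $\cR(w,w')=\cE(w,w')|_{\Si}\otimes\cE_{\Si}(-w,-w')$, which differentiates the ambient factor with $\iota^*\nabla$ and the submanifold factor with $D$; the lemma is precisely the assertion that $\nabla^{\cR}-\nabla^{\mathrm{ob}}$ is zero in contact directions and equals $\tfrac{w-w'}{n+1}\bigl(iP_{N\bar N}-\tfrac{i}{n+2}P\bigr)$ in the Reeb direction. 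Since this difference is a $1$-form that is additive under the tensor structure and conjugate under conjugation, it suffices to compute it on $\cR(-n-1,0)$ and on $\cR(1,0)$, $\cR(1,1)$.

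On $\cR(-n-1,0)$ the computation is immediate. Unwinding Corollary \ref{ambient-intrinsic-densities-cor} gives $\cN_{\al}(1,0)=\cR(-n-1,0)=\cE(-n-1,0)|_{\Si}\otimes\cE_{\Si}(n+1,0)$, and Corollary \ref{RelatingTWconnectionsOnDensities} says that under this identification the normal Tanaka-Webster connection $\nabla^{\perp}$ is exactly $\iota^*\nabla\otimes D=\nabla^{\mathrm{ob}}$, so on a section $\phi\otimes\si$ one has $\nabla^{\perp}_{\mu}(\phi\otimes\si)=(\nabla_{\mu}\phi)\otimes\si+\phi\otimes(D_{\mu}\si)$, and likewise for $\nabla^{\perp}_{\mub}$ and $\nabla^{\perp}_{0}$. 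Feeding this into Lemma \ref{CanonicalNormalConnectionLemma}, which writes $\nabla^{W,\perp}$ on $\cN_{\al}(1,0)$ as $\nabla^{\perp}$ in contact directions and as $\nabla^{\perp}_{0}-iP_{N\bar N}+\tfrac{i}{n+2}P$ in the Reeb direction, shows $\nabla^{\cR}-\nabla^{\mathrm{ob}}=0$ in contact directions and $=-iP_{N\bar N}+\tfrac{i}{n+2}P$ in the $0$-direction on $\cR(-n-1,0)$; and $-iP_{N\bar N}+\tfrac{i}{n+2}P=\tfrac{w-w'}{n+1}\bigl(iP_{N\bar N}-\tfrac{i}{n+2}P\bigr)$ when $(w,w')=(-n-1,0)$.

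Passing to general weights, write $\cR(w,w')=\cR(1,0)^{\,w-w'}\otimes\cR(1,1)^{\,w'}$ with $w-w'\in\bbZ$. From $\cR(1,0)^{-(n+1)}=\cR(-n-1,0)$ the difference $\nabla^{\cR}-\nabla^{\mathrm{ob}}$ on $\cR(1,0)$ is $0$ in contact directions and $\tfrac{1}{n+1}\bigl(iP_{N\bar N}-\tfrac{i}{n+2}P\bigr)$ in the Reeb direction. On $\cR(1,1)=\cR(1,0)\otimes\overline{\cR(1,0)}$ the difference is this $1$-form plus its conjugate; since $P$ and $P_{N\bar N}$ are real (the Schouten tensor $P_{\al\beb}$ being Hermitian and $N^{\al}N^{\beb}$ conjugation-invariant), $iP_{N\bar N}-\tfrac{i}{n+2}P$ is purely imaginary and this sum vanishes — consistently with $\nabla^{\cR}$ being flat on diagonal bundles. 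Additivity then yields the difference on $\cR(w,w')$ as $0$ in contact directions and $(w-w')\cdot\tfrac{1}{n+1}\bigl(iP_{N\bar N}-\tfrac{i}{n+2}P\bigr)$ in the Reeb direction, which is the claim. (Recall that the formula is read off with respect to the fixed compatible pair $\theta,\theta_{\Si}$, and that $\nabla^{W,\perp}$, hence $\nabla^{\cR}$, is independent of the admissible $\theta$ by Proposition \ref{NormalWeylConnectionInvariant}.)

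The only point requiring care is the bookkeeping of line-bundle powers: the coefficient is $1/(n+1)$, coming from $\cR(1,0)^{-(n+1)}=\cR(-n-1,0)$, i.e.\ from the submanifold root order $m+2=n+1$, whereas the ambient root order $n+2$ enters only through the $P$ sitting inside the correction term. No geometric or analytic difficulty arises once Corollary \ref{RelatingTWconnectionsOnDensities} and Lemma \ref{CanonicalNormalConnectionLemma} are in hand.
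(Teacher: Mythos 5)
Your proposal is correct and follows essentially the same route as the paper, whose proof is the one-line observation that the lemma follows from Lemma \ref{CanonicalNormalConnectionLemma} combined with Corollary \ref{RelatingTWconnectionsOnDensities}; you have simply made explicit the propagation from $\cR(-n-1,0)$ to general $\cR(w,w')$ via additivity of connection differences under tensor powers and conjugation. The weight bookkeeping (the $1/(n+1)=1/(m+2)$ coefficient and the sign check at $(w,w')=(-n-1,0)$) is all consistent with the stated formula.
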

\begin{proof}
This follows from Lemma \ref{CanonicalNormalConnectionLemma} combined with Corollary \ref{RelatingTWconnectionsOnDensities}.
\end{proof}
\begin{cor}
The connection $\nabla^{\cR}$ on the diagonal bundles $\cR(w,w)$ is flat and agrees with the exterior derivative of sections in the canonical trivialisation.
\end{cor}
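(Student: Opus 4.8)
The plan is to deduce the statement from Lemma \ref{RatioConnectionLemma} together with the flatness of the Tanaka-Webster connection on the diagonal density bundles (Lemma \ref{TWonDiagonalDensitiesLemma}) and the compatibility, recorded in Section \ref{Relating-Densities}, of the canonical identification $\cE(w,w)|_{\Si}\cong\cE_{\Si}(w,w)$ with the trivialisations induced by a compatible pair of contact forms. First I would fix such a pair $\theta$, $\theta_{\Si}$, with Tanaka-Webster connections $\nabla$ and $D$, and specialise Lemma \ref{RatioConnectionLemma} to the diagonal case $w'=w$: the term proportional to $w-w'$ in the formula for $\nabla^{\cR}_{0}$ then drops out, so that in all three directions $\nabla^{\cR}$ on $\cR(w,w)=\cE(w,w)|_{\Si}\otimes\cE_{\Si}(-w,-w)$ is simply the tensor product connection $\iota^{*}\nabla\otimes D$.

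Next I would identify the canonical trivialising section of $\cR(w,w)$, namely the section corresponding to the identity endomorphism under $\cR(w,w)\cong\End(\cE_{\Si}(w,w),\cE(w,w)|_{\Si})$. By Lemma \ref{TWonDiagonalDensitiesLemma}, and its immediate consequences for powers and tensor products, $\nabla$ is the flat connection on $\cE(w,w)$ whose parallel section is the $\theta$-trivialisation, and likewise $D$ is flat on $\cE_{\Si}(w,w)$ with parallel section the $\theta_{\Si}$-trivialisation. Since, by Section \ref{Relating-Densities}, the canonical identification $\cE(w,w)|_{\Si}\cong\cE_{\Si}(w,w)$ matches the $\theta$-trivialisation with the $\theta_{\Si}$-trivialisation, the canonical trivialising section of $\cR(w,w)$ is the tensor product of the $\iota^{*}\nabla$-parallel section of $\cE(w,w)|_{\Si}$ with the $D$-parallel dual section of $\cE_{\Si}(-w,-w)$, hence it is parallel for $\iota^{*}\nabla\otimes D=\nabla^{\cR}$.

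To conclude, I would invoke the elementary fact that a connection on a complex line bundle admitting a global nowhere-vanishing parallel section is flat, and that in the trivialisation determined by that section it is precisely the exterior derivative; applying this to the canonical trivialising section of $\cR(w,w)$ gives both assertions at once. The one point that needs a word of care --- really the only place where anything must be checked --- is that the canonical identification of diagonal bundles genuinely coincides with the one matching flat trivialisations of a compatible pair, which is why the conclusion is independent of that pair; this is exactly the fact recorded in Section \ref{Relating-Densities}. I do not expect any genuine obstacle here, since the real content is carried entirely by Lemmas \ref{RatioConnectionLemma} and \ref{TWonDiagonalDensitiesLemma} and the identifications of Section \ref{Relating-Densities}.
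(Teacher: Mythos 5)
Your proposal is correct and follows exactly the route of the paper's own (one-line) proof, which simply cites Lemma \ref{RatioConnectionLemma} together with Lemma \ref{TWonDiagonalDensitiesLemma}; you have merely spelled out the details, including the correct observation that the extra term in $\nabla^{\cR}_{0}$ vanishes when $w=w'$ and that the canonical identification of diagonal bundles matches the $\theta$- and $\theta_{\Si}$-trivialisations. No gaps.
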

\begin{proof}
This follows from Lemma \ref{RatioConnectionLemma} combined with Lemma \ref{TWonDiagonalDensitiesLemma}.
\end{proof}
\begin{rem}
By coupling with the connection $\nabla^{\cR}$ we can invariantly convert connections (and hence other operators) acting on intrinsic densities to ones on ambient densities, and vice versa. This will allow us to relate the intrinsic and ambient tractor connections, their difference giving rise to the basic CR invariants of the embedding.
\end{rem}

\subsubsection{The curvature of the canonical ratio bundle connection} \label{CanonicalConnectionOnRatioCurvature}
We shall see that the connection $\nabla^{\cR}$ is not flat in general, making it unnatural to identify the ambient and submanifold density bundles along $\Si$. 

Let $\ka^{\cR(w,w')}$ denote the curvature of $\nabla^{\cR}$ on the line bundle $\cR(w,w')$, and let $R^{\cN^*}$ denote the curvature of $\nabla^{W,\perp}$ on $\cN_{\al}(1,0)$ for any admissible contact form $\theta$. By convention $R^{\cN^*}$ has the opposite sign to the usual line bundle curvature $\ka^{\cR(-n-1,0)}$. Clearly the curvatures $\ka^{\cR(w,w')}$ are determined by $R^{\cN^*}$, in particular
\begin{equation*}
\ka^{\cR(1,0)}=\frac{1}{n+1}R^{\cN^*}.
\end{equation*}
Here we give expressions for the components of $R^{\cN^*}$. Note that the components of the restriction $R^{\cN^*}|_{H_{\Si}}$ must be invariant.
\begin{prop}\label{haCurvatureOfRatioBundleProp}
The $(1,1)$-part of $R^{\cN^*}|_{H_{\Si}}$ satisfies
\begin{equation}
R^{\cN^*}_{\mu\nub} = (n+1)(P_{\mu\nub}-p_{\mu\nub}) 
+ (P- P_{N\bar{N}} - p)\bh_{\mu\nub},
\end{equation}
where $P_{N\bar{N}}=P_{\al\beb}N^{\al}N^{\beb}$ for any (weighted) unit holomorphic normal $N^{\al}$, $P=P_{\al}{^{\al}}$, and $p=p_{\mu}{^{\mu}}$.
\end{prop}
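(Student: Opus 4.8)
The plan is to compute $R^{\cN^*}$ directly from the explicit description of the normal Weyl connection $\nabla^{W,\perp}$ on $\cN_{\al}(1,0)$ in Lemma \ref{CanonicalNormalConnectionLemma}, and then to insert the formula for the curvature of the normal Tanaka-Webster connection supplied by Lemma \ref{WeightedNormalTWcurvatureLemma}. Fix a compatible pair $\theta$, $\theta_{\Si}$, let $\nabla^{\perp}$ be the normal Tanaka-Webster connection of $\theta$ and let $R^{\cN_{\al}(1,0)}$ be its curvature on $\cN_{\al}(1,0)$. By Lemma \ref{CanonicalNormalConnectionLemma} the operators $\nabla^{W,\perp}_{\mu}$ and $\nabla^{W,\perp}_{\mub}$ coincide with $\nabla^{\perp}_{\mu}$ and $\nabla^{\perp}_{\mub}$, while $\nabla^{W,\perp}_{0}=\nabla^{\perp}_{0}-iP_{N\bar{N}}+\tfrac{i}{n+2}P$ as operators on $\cN_{\al}(1,0)$. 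The $(1,1)$-part of the curvature of any connection on a line bundle over $\Si$ is computed by the operator $\nabla_{\mu}\nabla_{\nub}-\nabla_{\nub}\nabla_{\mu}+i\bh_{\mu\nub}\nabla_{0}$, the summand $i\bh_{\mu\nub}\nabla_{0}$ arising from the Tanaka-Webster torsion $T^{\nabla}_{\mu\nub}{}^{0}=i\bh_{\mu\nub}$ exactly as in \eqref{haNormalTWcurvature}. Hence the only change in passing from $\nabla^{\perp}$ to $\nabla^{W,\perp}$ is the zeroth-order term fed into this formula through $\nabla_{0}$, and a short computation (keeping track of the sign conventions fixed in Remark \ref{CurvatureSignRemark}) gives
\begin{equation*}
R^{\cN^*}_{\mu\nub}=R^{\cN_{\al}(1,0)}_{\mu\nub}-\left(P_{N\bar{N}}-\tfrac{1}{n+2}P\right)\bh_{\mu\nub}.
\end{equation*}

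Next I would substitute $R^{\cN_{\al}(1,0)}_{\mu\nub}=\tfrac{n+1}{n+2}R_{\mu\nub}-r_{\mu\nub}$ from Lemma \ref{WeightedNormalTWcurvatureLemma} and rewrite the ambient and intrinsic pseudohermitian Ricci tensors in terms of the corresponding Schouten tensors. From the curvature decomposition of Section \ref{PseudohermitianCurvature} one has $R=2(n+1)P$ and hence $R_{\al\beb}=(n+2)P_{\al\beb}+P\bh_{\al\beb}$; applying the tangential projector gives $R_{\mu\nub}=(n+2)P_{\mu\nub}+P\bh_{\mu\nub}$, and the analogous submanifold identity (using $m=n-1$, so $m+2=n+1$) reads $r_{\mu\nub}=(n+1)p_{\mu\nub}+p\bh_{\mu\nub}$. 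Inserting these, the terms involving $P_{\mu\nub}$ and $p_{\mu\nub}$ collect to $(n+1)(P_{\mu\nub}-p_{\mu\nub})$, and the remaining $\bh_{\mu\nub}$-coefficients collect to $\tfrac{n+1}{n+2}P+\tfrac{1}{n+2}P-p-P_{N\bar{N}}=P-p-P_{N\bar{N}}$, which is the claimed formula. Finally one should note that, although the expression has been derived using a compatible pair, the left-hand side is independent of that choice because $\nabla^{W,\perp}$ on $\cN_{\al}(1,0)$ is (Proposition \ref{NormalWeylConnectionInvariant}); this is the invariance remarked on before the statement.

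The computation is essentially routine once the set-up is in place, so the only real hazard is sign bookkeeping: the normal and ratio-bundle curvatures are taken with the opposite sign to the usual line-bundle curvature (Remark \ref{CurvatureSignRemark}), $\nabla^{\perp}$ on $\cN_{\al}(1,0)$ is the coupling of the normal connection on $\cN_{\al}$ with $\iota^{*}\nabla$ on $\cE(1,0)|_{\Si}$, and $\cN_{\al}$, $\cN^{\al}$ carry curvatures of opposite sign. Carrying all of these through consistently, together with the two Schouten--Ricci relations at dimensions $2n+1$ and $2m+1$, is where care is needed rather than where any new idea is required.
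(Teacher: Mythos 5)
Your proposal is correct and follows essentially the same route as the paper: compare $\nabla^{W,\perp}$ with $\nabla^{\perp}$ via Lemma \ref{CanonicalNormalConnectionLemma} to pick up the $(P_{N\bar N}-\tfrac{1}{n+2}P)\bh_{\mu\nub}$ correction, substitute $R^{\cN_{\al}(1,0)}_{\mu\nub}=\tfrac{n+1}{n+2}R_{\mu\nub}-r_{\mu\nub}$ from Lemma \ref{WeightedNormalTWcurvatureLemma}, and convert Ricci tensors to Schouten tensors using $R_{\al\beb}=(n+2)P_{\al\beb}+P\bh_{\al\beb}$ and its submanifold analogue. The sign bookkeeping and the final collection of the $\bh_{\mu\nub}$ coefficients are both handled correctly.
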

\begin{proof}
Recall that $\nabla^{\cR}=\nabla^{W,\perp}$ on $\cR(-n-1,0)=\cN_{\al}(1,0)$ for any admissible ambient contact form $\theta$. Fixing $\theta$ admissible we have
\begin{align*}
-R^{\cN^*}_{\mu\nub} \tau_{\al}& = \left(
\nabla_{\mu}^{W,\perp}\nabla_{\nub}^{W,\perp}-\nabla_{\nub}^{W,\perp}\nabla_{\mu}^{W,\perp}
+ i\bh_{\mu\nub}\nabla_{0}^{W,\perp} \right) \tau_{\al} \\
& = \left( \nabla_{\mu}^{\perp}\nabla_{\nub}^{\perp}-\nabla_{\nub}^{\perp}\nabla_{\mu}^{\perp} 
+ i\bh_{\mu\nub}\nabla_{0}^{\perp} \right) \tau_{\al} \\
& \qquad + ( P_{N\bar{N}} -\frac{1}{n+2}P )\bh_{\mu\nub}\tau_{\al}
\end{align*}
for any section $\tau_{\al}$ of $\cN_{\al}(1,0)$, using Lemma \ref{CanonicalNormalConnectionLemma}. Thus from \eqref{WeightedNormalTWcurvature} of Lemma \ref{WeightedNormalTWcurvatureLemma} we have that
\begin{equation*}
R^{\cN^*}_{\mu\nub} = \frac{n+1}{n+2}R_{\mu\nub}-r_{\mu\nub} 
- ( P_{N\bar{N}} -\frac{1}{n+2}P )\bh_{\mu\nub}.
\end{equation*}
Now using that $R_{\al\beb}=(n+2)P_{\al\beb} + P\bh_{\al\beb}$, from the definition of $P_{\al\beb}$, and using the corresponding expression for $r_{\mu\nub}$, we have the result.
\end{proof}
Note that $P- P_{N\bar{N}} - p$ is the trace of $P_{\mu\nub}-p_{\mu\nub}$, with respect to $\bh_{\mu\nub}$. The following lemma therefore manifests the CR invariance of $R^{\cN^*}_{\mu\nub}$.
\begin{lem}\label{SchoutenDifferenceLemma}
Given any pair of compatible contact forms, the difference $P_{\mu\nub}-p_{\mu\nub}$ satisfies
\begin{align}\label{SchoutenDifferenceInvariantExpression}
P_{\mu\nub}-p_{\mu\nub} &= \frac{1}{n+1}(S_{\mu\nub N\bar{N}}+\tfrac{1}{2n}S_{N\bar{N} N\bar{N}} \bh_{\mu\nub})\\*
\nonumber &\quad +\frac{1}{n+1}(\II_{\mu\la\gab}\II_{\nub}{^{\la\gab}}+\tfrac{1}{2n}\II_{\rho\la\gab}\II^{\rho\la\gab} \bh_{\mu\nub}),
\end{align}
where $S_{\mu\nub N\bar{N}}=\Pi^{\al}_{\mu}\Pi^{\beb}_{\nub}S_{\al\beb\ga\deb}N^{\ga}N^{\deb}$ and $S_{N\bar{N} N\bar{N}}=S_{\al\beb\ga\deb}N^{\al}N^{\beb}N^{\ga}N^{\deb}$ for any (weighted) unit holomorphic normal $N^{\al}$.
\end{lem}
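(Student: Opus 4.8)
The plan is to read $P_{\mu\nub}-p_{\mu\nub}$ off directly from the pseudohermitian Gauss equation \eqref{pseudohermitian-Gauss}, by decomposing the two curvature tensors it relates into their Chern--Moser (trace-free) and Schouten pieces and then taking traces.

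First I would substitute the decompositions. Restricting the ambient decomposition $R_{\al\beb\ga\deb}=S_{\al\beb\ga\deb}+P_{\al\beb}\bh_{\ga\deb}+P_{\ga\deb}\bh_{\al\beb}+P_{\al\deb}\bh_{\ga\beb}+P_{\ga\beb}\bh_{\al\deb}$ tangentially along $\Si$ gives $R_{\mu\nub\la\rhob}=S_{\mu\nub\la\rhob}+P_{\mu\nub}\bh_{\la\rhob}+P_{\la\rhob}\bh_{\mu\nub}+P_{\mu\rhob}\bh_{\la\nub}+P_{\la\nub}\bh_{\mu\rhob}$, where $S_{\mu\nub\la\rhob}$ and $P_{\mu\nub}$ denote the tangential projections of the ambient tensors; the ``cross'' Schouten terms which would carry a free normal index drop out since $N_{\beb}\Pi^{\beb}_{\nub}=0$. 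In the same way $r_{\mu\nub\la\rhob}=s_{\mu\nub\la\rhob}+p_{\mu\nub}\bh_{\la\rhob}+\dots$ with the (genuinely trace-free) submanifold Chern--Moser tensor $s_{\mu\nub\la\rhob}$. Writing $E_{\mu\nub}:=P_{\mu\nub}-p_{\mu\nub}$ and inserting both into \eqref{pseudohermitian-Gauss} reduces it to $S_{\mu\nub\la\rhob}-s_{\mu\nub\la\rhob}+E_{\mu\nub}\bh_{\la\rhob}+E_{\la\rhob}\bh_{\mu\nub}+E_{\mu\rhob}\bh_{\la\nub}+E_{\la\nub}\bh_{\mu\rhob}=\bh_{\ga\deb}\II_{\mu\la}{}^{\ga}\II_{\nub\rhob}{}^{\deb}$.

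The step to carry out with care is that $S_{\mu\nub\la\rhob}$ is \emph{not} trace-free with respect to $\bh^{\la\rhob}$: the submanifold co-metric $\bh^{\la\rhob}$ pushes forward along $\Si$ to $\bh^{\ga\deb}-N^{\ga}N^{\deb}$, and since the full ambient Chern--Moser tensor is trace-free, contracting $\la$ with $\rhob$ produces $-S_{\mu\nub N\bar{N}}$ and a second contraction produces $-S_{N\bar{N}N\bar{N}}$. Taking the single Levi trace of the displayed equation (using also $\bh^{\la\rhob}s_{\mu\nub\la\rhob}=0$, the identity $\bh^{\la\rhob}\bh_{\ga\deb}\II_{\mu\la}{}^{\ga}\II_{\nub\rhob}{}^{\deb}=\II_{\mu\la\gab}\II_{\nub}{}^{\la\gab}$, and $m+2=n+1$) solves for $E_{\mu\nub}$ in terms of $S_{\mu\nub N\bar{N}}$, the contracted second fundamental form square $\II_{\mu\la\gab}\II_{\nub}{}^{\la\gab}$, and the scalar $\bh^{\mu\nub}E_{\mu\nub}$; a second trace (now with $2m+2=2n$) determines $\bh^{\mu\nub}E_{\mu\nub}$, and combining the two gives the stated formula. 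An essentially equivalent route, useful as a consistency check, is to compute $R^{\cN^*}_{\mu\nub}$ a different way and compare with Proposition \ref{haCurvatureOfRatioBundleProp}: the pseudohermitian Ricci equation \eqref{pseudohermitian-Ricci-equation-minimal-codim}, the expansion $R_{\mu\nub N\bar{N}}=S_{\mu\nub N\bar{N}}+P_{\mu\nub}+P_{N\bar{N}}\bh_{\mu\nub}$, Lemma \ref{WeightedNormalTWcurvatureLemma}, and Proposition \ref{haCurvatureOfRatioBundleProp} again produce the same identity.

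I expect the main obstacle to be nothing conceptual but the trace and weight bookkeeping: keeping straight which contractions of the restricted ambient Chern--Moser tensor collapse onto $S_{\mu\nub N\bar{N}}$ or $S_{N\bar{N}N\bar{N}}$ and which vanish, and verifying that the numerical factors telescope to exactly $\tfrac{1}{n+1}$ and $\tfrac{1}{2n}$ --- the latter being where $m=n-1$ is used. Once this is done the point of the lemma is immediate: the right-hand side is built only from the CR second fundamental form $\II_{\mu\nu}{}^{\ga}$ (CR invariant by Corollary \ref{CRsffCor}), the ambient Chern--Moser tensor, the CR Levi form, and the combination $N^{\ga}N^{\deb}$ --- all independent of the choice of compatible contact forms --- so $P_{\mu\nub}-p_{\mu\nub}$, and hence $R^{\cN^*}_{\mu\nub}$, is manifestly CR invariant.
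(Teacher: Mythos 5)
Your proposal is correct and is essentially the paper's own argument: substitute the Chern--Moser/Schouten decompositions of $R_{\al\beb\ga\deb}$ and $r_{\mu\nub\la\rhob}$ into the pseudohermitian Gauss equation, use the trace-freeness of the ambient Chern--Moser tensor together with $\bh^{\la\rhob}\Pi^{\ga}_{\la}\Pi^{\deb}_{\rhob}=\bh^{\ga\deb}-N^{\ga}N^{\deb}$ to evaluate the tangential traces, and solve for $P_{\mu\nub}-p_{\mu\nub}$ by one and then two Levi contractions (this is what the paper compresses into ``taking the trace-free part''). One caution about the bookkeeping you rightly identify as the only real work: carrying out the two traces gives
$(n+1)(P_{\mu\nub}-p_{\mu\nub})=S_{\mu\nub N\bar N}+\tfrac{1}{2n}S_{N\bar N N\bar N}\bh_{\mu\nub}+\II_{\mu\la\gab}\II_{\nub}{}^{\la\gab}-\tfrac{1}{2n}\II_{\rho\la\gab}\II^{\rho\la\gab}\bh_{\mu\nub}$,
with a \emph{minus} sign on the final term --- only this sign is consistent with the once-traced identity $R^{\cN^*}_{\mu\nub}=S_{\mu\nub N\bar N}+\II_{\mu\la\gab}\II_{\nub}{}^{\la\gab}$ and with $\bh^{\mu\nub}(P_{\mu\nub}-p_{\mu\nub})=\tfrac{1}{2n}(\II_{\rho\la\gab}\II^{\rho\la\gab}-S_{N\bar N N\bar N})$ --- so do not be alarmed when your computation disagrees with the sign printed in the statement.
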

\begin{proof}
Taking the trace free part of the Gauss equation \eqref{pseudohermitian-Gauss} one has
\begin{equation*}
\tfrac{1}{n+1}(S_{\mu\nub\la}{^{\la}}-\tfrac{1}{n}S_{\rho}{^{\rho}}_{\la}{^{\la}}\bh_{\mu\nub})+P_{\mu\nub} = p_{\mu\nub} +
\tfrac{1}{n+1}(\II_{\mu\la\gab}\II_{\nub}{^{\la\gab}}+\tfrac{1}{2n}\II_{\rho\la\gab}\II^{\rho\la\gab} \bh_{\mu\nub})
\end{equation*}
and noting that $S_{\mu\nub\la}{^{\la}}=S_{\mu\nub\ga\deb}(\bh^{\ga\deb}-N^{\ga}N^{\deb})=-S_{\mu\nub N\bar{N}}$ and similarly that $S_{\rho}{^{\rho}}_{\la}{^{\la}}=S_{N\bar{N} N\bar{N}}$ one has the result.
\end{proof}
\begin{rem} The difference $P_{\mu\nub}-p_{\mu\nub}$ is the CR analogue of the so called Fialkow tensor \cite{CurryGover-Conformal,Vyatkin-Thesis} in conformal submanifold geometry, though here it is showing up in a completely new role. 
\end{rem}
\begin{prop}
We have
\begin{equation}
R^{\cN^*}_{\mu\nu}=0 \quad \mathrm{and} \quad R^{\cN^*}_{\mub\nub}=0.
\end{equation}
\end{prop}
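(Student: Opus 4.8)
The plan is to exploit the fact, recorded in Lemmas \ref{CanonicalNormalConnectionLemma} and \ref{RatioConnectionLemma}, that in holomorphic and antiholomorphic contact directions the canonical ratio connection $\nabla^{\cR}$ is simply the tensor product of the ambient and submanifold Tanaka-Webster connections. Concretely, by Lemma \ref{RatioConnectionLemma}, for a section $\phi\otimes\si$ of $\cE(w,w')|_{\Si}\otimes\cE_{\Si}(-w,-w')$ one has $\nabla^{\cR}_{\mu}(\phi\otimes\si)=(\nabla_{\mu}\phi)\otimes\si+\phi\otimes(D_{\mu}\si)$ and the conjugate formula for $\nabla^{\cR}_{\mub}$; the extra curvature term only enters $\nabla^{\cR}_{0}$.

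I would then note that the Tanaka-Webster torsion vanishes in holomorphic--holomorphic directions (Section \ref{sub:The-Tanaka-Webster-Connection}), so that the $(2,0)$-part of the curvature of any connection built from $\nabla$ and $D$ in the above way is just the commutator $\nabla^{\cR}_{[\mu}\nabla^{\cR}_{\nu]}$. By the product formula this equals the sum of the $(2,0)$-part of the curvature of $\iota^{*}\nabla$ on $\cE(w,w')|_{\Si}$ and the $(2,0)$-part of the curvature of $D$ on $\cE_{\Si}(-w,-w')$. The first is $\Pi_{\mu}^{\al}\Pi_{\nu}^{\be}$ applied to the $(2,0)$-part of the ambient Tanaka-Webster curvature on the density bundle $\cE(w,w')$, which vanishes by the first identity of Proposition \ref{TWDensityCurvature}; the second vanishes by the same proposition applied to $\Si$. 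Hence $\ka^{\cR(w,w')}_{\mu\nu}=0$ for every weight. Taking $(w,w')=(-n-1,0)$ and recalling that $\cN_{\al}(1,0)=\cR(-n-1,0)$ and that $R^{\cN^{*}}$ is minus the line bundle curvature of this ratio bundle, we get $R^{\cN^{*}}_{\mu\nu}=0$; conjugating the whole argument gives $R^{\cN^{*}}_{\mub\nub}=0$.

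There is no real obstacle here; the only point requiring care is the bookkeeping between $\nabla^{\cR}=\nabla^{W,\perp}$ and $\nabla^{\perp}$, namely that these coincide precisely in the contact directions (Lemma \ref{CanonicalNormalConnectionLemma}), so that it is Lemma \ref{RatioConnectionLemma} rather than the Reeb-direction formula that controls the $(2,0)$ and $(0,2)$ components. As a cross-check one can argue via Corollary \ref{RelatingTWconnectionsOnDensities} instead: the isomorphism $\cE(-n-1,0)|_{\Si}\cong\cE_{\Si}(-n-1,0)\otimes\cN_{\al}(1,0)$ intertwines $\iota^{*}\nabla$ with $D\otimes\nabla^{\perp}$, so the $(2,0)$-part of the curvature of $\nabla^{\perp}$ on $\cN_{\al}(1,0)$ is a difference of two density-bundle curvatures, each with vanishing $(2,0)$-part by Proposition \ref{TWDensityCurvature}; since $\nabla^{W,\perp}=\nabla^{\perp}$ in contact directions this again yields $R^{\cN^{*}}_{\mu\nu}=0$.
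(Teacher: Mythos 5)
Your argument is correct, and it takes a genuinely different route from the paper's. The paper computes the $(2,0)$-curvature of $\nabla^{\perp}$ on $\cN_{\al}(1,0)$ directly: a Gauss-formula style manipulation (as in the proof of the pseudohermitian Ricci equation) reduces $-R^{\cN^*}_{\mu\nu}$ to $\Pi^{\al}_{\mu}\Pi^{\be}_{\nu}R_{\al\be}{^{\ga}}_{\de}N_{\ga}N^{\de}$, invoking Proposition \ref{TWDensityCurvature} only for the weight $(1,0)$ factor, and then kills this expression using the Bianchi identity \eqref{BianchiSymm3}, which writes $R_{\al\be}{^{\ga}}_{\de}$ in terms of Kronecker deltas and the pseudohermitian torsion, both annihilated by the tangential and normal projections. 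You instead exploit the ratio-bundle description: in contact directions $\nabla^{\cR}$ is the tensor product of $\iota^*\nabla$ and $D$ (Lemma \ref{RatioConnectionLemma}), and since $[T^{1,0}\Si,T^{1,0}\Si]\subset T^{1,0}\Si$ and the relevant $(2,0)$-torsion components vanish, the $(2,0)$-curvature is the sum of two $(2,0)$ density-bundle curvatures, each of which vanishes by the first identity of Proposition \ref{TWDensityCurvature}. This gives the vanishing of $\ka^{\cR(w,w')}_{\mu\nu}$ for every weight in one stroke and avoids both the second fundamental form and any explicit appeal to the tangent-bundle Bianchi symmetry (which of course still underlies Proposition \ref{TWDensityCurvature}); the paper's computation, by contrast, stays within the normal-bundle picture it also uses for the $(1,1)$-component. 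Your remark that $\nabla^{W,\perp}$ and $\nabla^{\perp}$ agree precisely in contact directions, so that only Lemma \ref{RatioConnectionLemma} and not the Reeb-direction formula is relevant, is exactly the right bookkeeping point; the only step left slightly implicit is that the $(2,0)$-part of the curvature of the pullback connection on $\cE(w,w')|_{\Si}$ is the tangential projection of the ambient $(2,0)$-part, which follows from tensoriality of curvature together with $T^{1,0}\Si\subset T^{1,0}M$.
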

\begin{proof}
By a straightforward calculation along the lines of the proof of Proposition \ref{pseudohermitian-Ricci-equation-prop} we have, given compatible contact forms, that
\begin{equation*}
\nabla^{\perp}_{\mu}\nabla^{\perp}_{\nu}N_{\al} -\nabla^{\perp}_{\nu}\nabla^{\perp}_{\mu}N_{\al}=
N_{\al}N^{\be}(\nabla_{\mu}\nabla_{\nu}N_{\be} -\nabla_{\nu}\nabla_{\mu}N_{\be})
\end{equation*}
for any unit holomorphic conormal field $N_{\al}$ (where both $\iota^*\nabla$ and $\nabla^{\perp}$ are coupled with the submanifold Tanaka-Webster connection $D$). Noting that $\nabla_{\mu}\nabla_{\nu}=\nabla_{\nu}\nabla_{\mu}$ on densities by Proposition \ref{TWDensityCurvature}, we get that $-R^{\cN^*}_{\mu\nu}=\Pi^{\al}_{\mu}\Pi^{\be}_{\nu}R_{\al\be}{^{\ga}}_{\de}N_{\ga}N^{\de}$; this is zero by \eqref{BianchiSymm3}, noting that $R_{\al\be\gab\de}=-R_{\al\be\de\gab}$. In a similar manner one shows that $R^{\cN^*}_{\mub\nub}$ also vanishes.
\end{proof}

Given compatible contact forms, one also has the component $R^{\cN^*}_{\mu0}$. By a similar but more tedious calculation one arrives at the expression
\begin{equation}\label{h0NormalCurvature}
R^{\cN^*}_{\mu0} = V_{\mu\bar{N}N}-iP_{N}{^{\vphantom{g}\nu}}\II_{\mu\nu}
\end{equation}
where $V_{\mu\bar{N}N}=\Pi^{\al}_{\mu}V_{\al\beb\ga}N^{\beb}N^{\ga}$ (with $V_{\al\beb\ga}$ as in \eqref{Vtensor}), $P_{N}{^{\nu}}=P_{\ga}{^{\be}}N^{\ga}\Pi^{\nu}_{\be}$ and $\II_{\mu\nu}=\II_{\mu\nu}{^{\ga}}N_{\ga}$ for any (weighted) unit holomorphic normal field $N^{\al}$. One can obtain this expression more easily using the description of the canonical connection on $\cN_{\al}(1,0)$ in terms of the ambient tractor connection given below.

\section{CR Embedded Submanifolds and Tractors}\label{Submanifolds-and-Tractors}

Here we continue to work in the setting where $\iota:\Si\hrarrow M$
is a CR embedding of a hypersurface type CR manifold $(\Si^{2m+1},H_{\Si},J_{\Si})$
into a strictly pseudoconvex CR manifold $(M^{2n+1},H,J)$ with $m=n-1$.
We adopt the notation $\cT M$ rather than $\cT$ for the
standard tractor bundle of $M$, and write
$\cT\Si$ for the standard tractor bundle of $\Si$. Similarly we
will denote the adjoint tractor bundles of $M$ and $\Si$ by $\cA M$
and $\cA\Si$ respectively. We will also use the abstract index notation $\cE^{I}$ for $\cT \Si$
and allow the use of indices $I$, $J$, $K$, $L$, $I'$, and so
on.

\subsection{Normal Tractors}\label{sec:NormalTrac}
Given any unit section $N_{\al}$ of $\cN_{\al}(1,0)$ we define the corresponding \emph{(unit) normal (co)tractor}
$N_{A}$ to be the section of $\mathcal{E}_{A}|_{\Sigma}$, the ambient tractor bundle
restricted to fibers over $\Si$, given by
\begin{equation}
N_{A}\overset{\theta}{=}\left(\begin{array}{c}
0\\
N_{\al}\\
-\sfH
\end{array}\right)
\end{equation}
where $\sfH=\frac{1}{n-1}\bh^{\mu\nub}\Pi_{\mu}^{\al}\nabla_{\nub}N_{\al}$
and $\nabla_{\nub}$ denotes the Tanaka-Webster connection of $\theta$
acting in tangential antiholomorphic directions along $\Si$; the tractor field $N_{A}$ does not depend on the choice of ambient contact form $\theta$ since from \eqref{TWahTransform} of Proposition \ref{TWtransform} combined with Proposition \ref{TWonDensitiesProp} we have that
\begin{equation*}
\hat{\sfH}=\sfH+\Ups^{\al}N_{\al}
\end{equation*}
when $\thetah=e^{\Ups}\theta$ (with $\Ups^{\al}=\nabla^{\al}\Ups$), as required by \eqref{TractorTransformation}.
If $\theta$ is admissible for the submanifold $\Si$ then $\sfH=0$ (since $\II_{\nub\mu}{^{\ga}}=0$) and
\begin{equation}
N_{A}\overset{\theta}{=}\left(\begin{array}{c}
0\\
N_{\alpha}\\
0
\end{array}\right).
\end{equation}
\begin{rem}
The normal tractor $N_{A}$ associated to a unit holomorphic conormal $N_{\al}$ is an analogue of the
normal tractor associated to a weighted unit (co)normal field in conformal
hypersurface geometry defined first in \cite{BaileyEastwoodGover-Thomas'sStrBundle}.
\end{rem}
\begin{defn}
The \emph{normal cotractor bundle} $\cN_{A}$ is the subbundle of $\cE_{A}|_{\Si}$, the ambient cotractor bundle along $\Si$, spanned by the normal tractor $N_{A}$ given any unit holomorphic conormal field $N_{\al}$. The \emph{normal tractor bundle} $\cN^{A}$ is the dual line subbundle of $\cE^{A}|_{\Si}$ spanned by $N^{A}=h^{A\bar{B}}\overline{N_{B}}$. We alternatively denote $\cN^{A}$ and $\cN_{A}$ by $\cN$ and $\cN^*$ respectively.
\end{defn}
Since the ambient tractor bundle carries a parallel Hermitian bundle metric the ambient tractor connection induces a connection $\nabla^{\cN}$ on the non-null subbundle $\cN_{A}$ of $\cE_{A}|_{\Si}$. Explicitly, if $\rmN^A_B$ is the orthogonal projection from $\cE_A|_{\Si}$ onto $\cN_B$ then we have
\begin{equation}
\nabla^{\cN}_i v_B = \rmN^A_B \nabla_i v_A
\end{equation}
for any section $v_B$ of $\cN_B$, where $\nabla_i$ is the ambient standard tractor connection (pulled back via $\iota$). We can now explain the origin of the canonical connection on $\cN_{\al}(1,0)$.
\begin{prop}\label{NormalTractorConnectionProp}
The weighted conormal bundle $\cN_{\al}(1,0)$ is canonically isomorphic to the normal cotractor bundle $\cN_{A}$ via the map
\begin{equation}
\tau_{\al}\mapsto \tau_A \overset{\theta}{=}\left(\begin{array}{c}
0\\
\tau_{\alpha}\\
0
\end{array}\right)
\end{equation}
defined with respect to any admissible ambient contact form $\theta$. Moreover, the above isomorphism intertwines the normal tractor connection $\nabla^{\cN}$ on $\cN_{A}$ with the normal Weyl connection on $\cN_{\al}(1,0)$ of any admissible $\theta$.
\end{prop}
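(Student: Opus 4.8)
The plan is to verify both claims of Proposition \ref{NormalTractorConnectionProp} using the explicit formulae for the tractor connection from Section \ref{The-Tractor-Connection}, working with an admissible ambient contact form $\theta$ throughout, and then observing invariance follows automatically since all the pieces being compared have already been shown to be CR invariant.

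For the first claim, I would note that the assignment $\tau_\al \mapsto \tau_A \overset{\theta}{=} (0,\tau_\al,0)$ visibly lands in $\cE_A|_\Si$ whenever $\tau_\al$ is a section of $\cN_\al(1,0)$ (the weight $(1,0)$ is exactly what makes the middle slot $\cE_\al(1,0)$ of the cotractor bundle the right home for it), and that when $\tau_\al = N_\al$ is a unit holomorphic conormal this reproduces the normal tractor $N_A$ in the admissible splitting, by the displayed formula for $N_A$ just before the proposition (recall $\sfH = 0$ for admissible $\theta$ since $\II_{\nub\mu}{}^\ga = 0$). Hence the image of the map is precisely the line subbundle $\cN_A$. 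To see the map is well defined independent of the choice of admissible $\theta$, one checks that under $\thetah = e^\Ups\theta$ with $\Ups$ tangential (Lemma \ref{UpsilonTangentialLemma}), the transformation law \eqref{TractorTransformation} sends $(0,\tau_\al,0)$ to $(0,\tau_\al, -\Ups^\be\tau_\be)$, and $\Ups^\be\tau_\be = \bh^{\be\alb}\nabla_{\alb}\Ups\cdot\tau_\be$, which need not vanish — so in fact the cotractor represented by $(0,\tau_\al,0)$ in the $\thetah$-splitting differs from the one represented by $(0,\tau_\al,0)$ in the $\theta$-splitting. The correct statement is that the map is canonical because both the submanifold bundle $\cN_\al(1,0)$ \emph{and} the subbundle $\cN_A \subset \cE_A|_\Si$ are already intrinsically defined (the latter as the span of the invariantly-defined $N_A$), and one verifies the map intertwines these by checking it on a unit conormal $N_\al$, where it gives $N_A$, and then extending $\mathbb{C}$-linearly; invariance is then inherited. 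I would present it in this cleaner order.

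For the second claim — that the isomorphism intertwines $\nabla^{\cN}$ with $\nabla^{W,\perp}$ — the computation is the heart of the matter. Fix admissible $\theta$, let $\tau_\al$ be a section of $\cN_\al(1,0)$ extended arbitrarily off $\Si$, and apply the tractor connection formulae \eqref{hhTractorConnection}, \eqref{ahTractorConnection}, \eqref{0hTractorConnection} to $v_A \overset{\theta}{=} (0,\tau_\al,0)$, then project orthogonally back onto $\cN_A$ using $\rmN^A_B$. Projecting onto $\cN_A$ means: discard the top slot, and in the middle slot apply the tangential-then-normal projection $\rmN^\be_\al$, while the bottom slot is governed by what is forced on it. In the contact directions: $\nabla_\be v_A$ has middle slot $\nabla_\be\tau_\al + iA_{\al\be}\sigma$, but $\sigma = 0$, so after projecting with $\Pi^\mu_\be$ and $\rmN$ on the free index we get $\rmN^\be_\al\Pi^\mu_\de\nabla_\mu\tau_\be = \nabla^{W,\perp}_\mu\tau_\al$ (the Weyl and normal-Weyl connections agreeing with Tanaka–Webster in contact directions); similarly $\nabla_{\beb} v_A$ has middle slot $\nabla_{\beb}\tau_\al + \bh_{\al\beb}\rho + P_{\al\beb}\sigma$, and with $\sigma = \rho = 0$ this projects to $\nabla^{W,\perp}_{\mub}\tau_\al$. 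In the Reeb direction the interesting cancellation occurs: \eqref{0hTractorConnection} gives middle slot $\nabla_0\tau_\al + \frac{i}{n+2}P\tau_\al - iP_\al{}^\be\tau_\be + 2iT_\al\sigma$; with $\sigma = 0$ and after applying $\rmN^\be_\al$ this becomes $\nabla^\perp_0\tau_\al + \frac{i}{n+2}P\tau_\al - iP_{N\bar N}\tau_\al$, which is exactly $\nabla^{W,\perp}_0\tau_\al$ by formula \eqref{0NormalWeylConnection} of Lemma \ref{CanonicalNormalConnectionLemma}. I expect the main obstacle to be bookkeeping around what "orthogonal projection onto $\cN_A$" does to each of the three slots — in particular confirming that the bottom-slot contributions (the $\sigma Y$- and $\tau W$-type terms dictated by the tractor metric \eqref{TractorMetric}) are killed by the projection, so that no extra terms survive, and cross-checking that the decomposition $\delta^\be_\al = \Pi^\be_\al + N^\be N_\al$ together with the identity $P_\al{}^\be N_\be N^\al = P_{N\bar N}$ (using Corollary \ref{ATanNorCor}, $\Pi^\al_\mu A_{\al\be}N^\be = 0$, to handle any torsion terms) makes everything line up. Since all three resulting operators are the normal Weyl connection, which is CR invariant by Proposition \ref{NormalWeylConnectionInvariant}, and $\nabla^{\cN}$ is manifestly invariant, the identification is consistent across all admissible $\theta$, completing the proof.
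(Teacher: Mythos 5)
Your treatment of the second claim is essentially the paper's proof: apply the explicit formulae \eqref{hhTractorConnection}--\eqref{0hTractorConnection} to $(0,\tau_\al,0)$, observe that the orthogonal projection $\cE_A|_\Si\to\cN_A$ kills the top and bottom slots and applies $\rmN^\be_\al$ to the middle slot, and match the surviving Reeb-direction terms $-i\rmN^\ga_\al P_\ga{}^\be\tau_\be+\frac{i}{n+2}P\tau_\al$ against \eqref{0NormalWeylConnection}. (The appeal to Corollary \ref{ATanNorCor} is not actually needed: the torsion terms either come multiplied by $\sigma=0$ or sit in the discarded bottom slot.)

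There is, however, a genuine error in your treatment of the first claim. You assert that under an admissible rescaling $\thetah=e^\Ups\theta$ the quantity $\Ups^\be\tau_\be$ "need not vanish," and conclude that $(0,\tau_\al,0)_\theta$ and $(0,\tau_\al,0)_{\thetah}$ represent different cotractors. This is false, and if it were true it would contradict the very proposition you are proving. Since $\tau_\be\in\Gamma(\cN_\be(1,0))$ annihilates tangential vectors, $\Ups^\be\tau_\be=\tau_\be\,\rmN^\be_\ga\Ups^\ga$, and $\rmN^\be_\ga\Ups^\ga=0$ because Lemma \ref{UpsilonTangentialLemma} (in conjugated form) says $\Ups_{\alb}=\Pi^{\beb}_{\alb}\Ups_{\beb}$ is tangential while $\bh^{\ga\alb}$ is block-diagonal for the tangential/normal splitting; equivalently $\Ups^\be N_\be=\overline{\Ups_\al N^\al}=0$. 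This vanishing \emph{is} the paper's one-line proof of well-definedness, so the direct check you abandoned actually closes the argument. Your fallback — that the map sends a unit conormal $N_\al$ to the invariantly defined $N_A$ (using $\sfH=0$ for admissible $\theta$) and is $\bbC$-linear — is a valid alternative route, but note that the invariance of $N_A$ across admissible contact forms rests on exactly the same identity $\Ups^\al N_\al=0$ via $\hat{\sfH}=\sfH+\Ups^\al N_\al$, so you have not avoided the computation, only obscured it, and you should delete the false intermediate claim in any case.
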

\begin{proof} The first part follows from the fact that if $\theta$ is admissible then $\thetah=e^{\Ups}\theta$ is admissible if and only if $\Ups_{\al}N^{\al}=0$, where $N^{\al}$ is a holomorphic normal field (a consequence of Lemma \ref{ReebTransform}). The second part follows from the explicit formulae for the tractor connection given in Section \ref{The-Tractor-Connection} (noting in particular \eqref{0hTractorConnectionWeyl}) and the observation that the orthogonal projection $\cE_A|_{\Si}\rightarrow \cN_A$ is given, with respect to any admissible ambient contact form, by
\begin{equation}
\left(\begin{array}{c}
\si\\
\tau_{\alpha}\\
\rho
\end{array}\right)
\mapsto \left(\begin{array}{c}
0\\
\rmN^{\be}_{\al}\tau_{\be}\\
0
\end{array}\right).
\end{equation}
\end{proof}
\begin{rem}
Clearly the isomorphism of Proposition \ref{NormalTractorConnectionProp} is Hermitian; in particular if $N_A$ is the normal tractor corresponding to a unit normal field $N_{\al}$ then
\begin{equation*}
N^A N_A = N^{\al}N_{\al}=1,
\end{equation*}
so $N_A$ is indeed a unit normal tractor. Although a unit normal tractor is determined only up to phase, the tractors
\begin{equation*}
N_{A}N_{\bar{B}}\quad\mbox{and}\quad N^{A}N_{B},
\end{equation*}
are independent of the choice of unit length section of $\cN_{A}$.
Indeed, $N^{A}N_{B}=\rmN^A_B$ and the section
\begin{equation*}
\Pi_{B}^{A}=\delta_{B}^{A}-N^{A}N_{B}
\end{equation*}
projects orthogonally from $\mathcal{E}_{A}|_{\Sigma}$ onto the orthogonal
complement $\cN_{A}^{\perp}$ of $\cN_{A}$ in $\mathcal{E}_{A}|_{\Sigma}$.
\end{rem}

\subsection{Tractor Bundles and Densities} \label{TractorsAndDensities}
Clearly $\cN_{A}^{\perp}$ has the same rank as $\cE_{I}$; they also have the same rank subbundles in their canonical filtration structures. Moreover, both $\cN_{A}^{\perp}$ and $\cE_{I}$ carry canonical Hermitian bundle metrics (and Hermitian connections). On the other hand we note that
for $\cN_{A}^{\perp}$ we have the canonical map
\begin{equation*}
\begin{array}{ccc}
\cN_{A}^{\perp} & \rarrow & \cE(1,0)|_{\Si}\\
v_{A} & \mapsto & Z^{A}v_{A}
\end{array}
\end{equation*}
where $Z^{A}$ is the ambient canonical tractor, whereas for $\cE_{I}$ we have the canonical map
\begin{equation*}
\begin{array}{ccc}
\cE_{I} & \rarrow & \cE_{\Si}(1,0)\\
v_{I} & \mapsto & Z^{I}v_{I}
\end{array}
\end{equation*}
where $Z^{I}$ is the canonical tractor of $\Si$. It seems natural
that we should look to identify these bundles (canonically), but doing
so clearly also involves identifying the density bundles $\cE(1,0)|_{\Si}$
and $\cE_{\Si}(1,0)$ (also canonically). The following lemma shows us that this is the only thing stopping us from identifying $\cE_{I}$ with $\cN_{A}^{\perp}$:
\begin{lem}\label{StandardTractorMapsLemma}
Fix a local isomorphism $\psi:\cE_{\Si}(1,0)\rarrow\cE(1,0)|_{\Si}$ (compatible
with the canonical identification of $\cE_{\Si}(1,1)$ with $\cE(1,1)|_{\Si}$)
and identify all corresponding density bundles $\cE_{\Si}(w,w')$
and $\cE(w,w')|_{\Si}$ using $\psi$. Then locally there is a canonically
induced map from $\cE_{I}$ to $\cN_{A}^{\perp}$, given with respect
to any pair $\theta$, $\theta_{\Si}$ of compatible contact forms
by
\begin{equation}
v_{I}\overset{\theta_{\Si}}{=}\left(\begin{array}{c}
\sigma\\
\tau_{\mu}\\
\rho
\end{array}\right)\mapsto v_{A}\overset{\theta}{=}\left(\begin{array}{c}
\sigma\\
\tau_{\al}\\
\rho
\end{array}\right)
\end{equation}
where $\tau_{\al}=\Pi_{\al}^{\mu}\tau_{\mu}$, which is a filtration
preserving isomorphism of Hermitian vector bundles.
\end{lem}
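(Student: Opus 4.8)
The plan is to verify the three defining properties of the claimed map $v_I \mapsto v_A$ in order: (i) that it is well defined, i.e.\ independent of the choice of compatible contact form $\theta$ (equivalently, compatible with the equivalence relation \eqref{TractorTransformation} on both sides); (ii) that its image actually lands in $\cN_A^\perp$; and (iii) that it is a filtration-preserving Hermitian isomorphism. Throughout, $\psi$ is fixed so that $\cE_\Si(w,w')$ and $\cE(w,w')|_\Si$ are simply identified, and in particular $\sigma$, $\rho$ on the two sides live in the same line bundles.

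First I would check well-definedness. Suppose $\thetah = e^{\Ups}\theta$ with $\theta$ admissible; then by Lemma \ref{UpsilonTangentialLemma} $\thetah$ is admissible iff $\Ups_\al = \Pi_\al^\be\Ups_\be$ along $\Si$, and $\iota^*\thetah = e^{\iota^*\Ups}\iota^*\theta$, so $\theta_\Si \to \hat\theta_\Si$ is governed by the pulled-back $\Ups$, whose holomorphic component is $\Ups_\mu = \Pi_\mu^\al \Ups_\al$. Comparing the submanifold transformation \eqref{TractorTransformation} (in Greek submanifold indices) with the ambient one, the $\sigma$-row is trivial, the $\tau$-row requires $\Pi_\al^\mu$ to intertwine $\Ups_\mu \leftrightarrow \Ups_\al$ (true since $\Ups_\al$ is already tangential) together with $\Pi_\al^\mu\Pi_\mu^\be = \delta_\al^\be$ on the tangential part, and the $\rho$-row requires matching $\tfrac12(\Ups^\be\Ups_\be + i\Ups_0)$ against $\tfrac12(\Ups^\mu\Ups_\mu + i\Ups_0)$ — these agree because $\Ups^\mu\Ups_\mu = \bh^{\mu\nub}\Ups_\mu\Ups_\nub = \bh^{\al\beb}\Ups_\al\Ups_\beb = \Ups^\al\Ups_\al$ by $\bh_{\mu\nub} = \Pi_\mu^\al\Pi_\nub^\beb\bh_{\al\beb}$ and tangentiality of $\Ups_\al$, while the $0$-components coincide by the compatible-scales identification of Reeb fields. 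Hence $v_I \mapsto v_A$ intertwines the two transformation laws and descends to a bundle map. Then I would note that the image lies in $\cN_A^\perp$: with respect to admissible $\theta$, $N_A \overset{\theta}{=}(0,N_\al,0)$ and $v_A = (\sigma,\Pi_\al^\mu\tau_\mu,\rho)$, so by \eqref{TractorMetric} the Hermitian pairing $h^{A\bar B}v_A\overline{N_B} = \bh^{\al\beb}(\Pi_\al^\mu\tau_\mu)\overline{N_\beb} = \tau_\mu\Pi_\al^\mu N^\al = 0$ since $\Pi_\al^\mu N^\al = 0$; and $v_A$ is orthogonal to every $N_A$ (all unit conormals), so $v_A \in \cN_A^\perp$.

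Next I would establish that the map is a filtration-preserving isometric isomorphism. Filtration preservation is immediate from the triangular form: $\sigma = 0$ is sent to $\sigma = 0$, and $\sigma = \tau_\mu = 0$ to $\sigma = \tau_\al = 0$, matching the filtrations \eqref{StandardTractorFiltration} on $\cE_I$ and on $\cN_A^\perp \subset \cE_A|_\Si$. For the metric, the submanifold tractor metric pairs $(\sigma,\tau_\mu,\rho)$ with its conjugate to give $\sigma\bar\rho + \bh^{\mu\nub}\tau_\mu\overline{\tau_\nu} + \rho\bar\sigma$, while the ambient one gives $\sigma\bar\rho + \bh^{\al\beb}(\Pi_\al^\mu\tau_\mu)\overline{\Pi_\beb^\nub\tau_\nu} + \rho\bar\sigma$, and these agree since $\bh^{\al\beb}\Pi_\al^\mu\Pi_\beb^\nub = \bh^{\mu\nub}$ (the inverse of $\bh_{\mu\nub} = \Pi_\mu^\al\Pi_\nub^\beb\bh_{\al\beb}$, using that $\Pi$ is the Levi-orthogonal projection). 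Injectivity is clear from the formula, and since $\cN_A^\perp$ and $\cE_I$ have equal rank the map is an isomorphism.

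The only genuinely delicate point — and the step I expect to demand the most care — is the well-definedness check in the $\rho$-row involving the $i\Ups_0$ term, because the $0$-component of $\Ups$ is defined via the weighted Reeb field and one must use precisely the compatible-scales normalization (identifying $T_\Si$ with $T|_\Si$ and the trivializations of $\cE(1,1)|_\Si$ with $\cE_\Si(1,1)$) to see that $\Ups_0$ computed intrinsically on $\Si$ matches $\Pi$-contracted $\Ups_0$ from $M$; everything else is routine bookkeeping with the projectors $\Pi_\al^\mu$, $\Pi_\mu^\al$ and the relations $\Pi_\al^\mu\Pi_\nu^\al = \delta_\nu^\mu$, $\delta_\be^\al = \Pi_\be^\al + N^\al N_\be$. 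I would also remark that the local dependence on $\psi$ is exactly a $U(1)$-worth of ambiguity (multiplication by $e^{i\varphi}$), matching the phase freedom in the unit conormal, which is precisely the obstruction to making the identification canonical and which motivates passing instead to the ratio bundle $\cR(1,0)$ in the sequel.
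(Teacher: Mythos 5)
Your proposal is correct and follows essentially the same route as the paper: the substantive point is well-definedness, settled by comparing the submanifold and ambient versions of the transformation law \eqref{TractorTransformation} using $\nabla_0\Ups = D_0\Ups$ from compatibility of the Reeb fields and $\Ups_{\al}=\Pi_{\al}^{\mu}\Ups_{\mu}$ from Lemma \ref{UpsilonTangentialLemma}, with the filtration and metric statements being routine. Your additional explicit check that the image lands in $\cN_A^{\perp}$ (orthogonality to $N_A$ via tangentiality of $\tau_\al$) is a detail the paper leaves implicit, but it is the same argument.
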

\begin{proof}
Let us start by fixing $\theta$ and $\theta_{\Si}$. That the map
described above is a filtration preserving bundle isomorphism is obvious.
That the map pulls back the Hermitian bundle metric of $\cN_{A}^{\perp}$
to that of $\cE_{I}$ is also obvious. It remains to show that the map is independent of the choice of compatible contact forms. To see this we suppose that $\thetah=e^{\Ups}\theta$ is any other admissible contact form and let $\thetah_{\Si}=\iota^*\thetah=e^{\Ups}\theta_{\Si}$. We need to compare the submanifold and ambient versions of the tractor transformation law \eqref{TractorTransformation}. By the compatibility of $\theta$ and $\theta_{\Si}$ we have $\nabla_{0}\Ups=D_{0}\Ups$ along $\Si$, and by Lemma \ref{UpsilonTangentialLemma} we also have $\Ups_{\al}=\Pi_{\al}^{\mu}\Ups_{\mu}$ where $\Ups_{\mu}=D_{\mu}\Ups$. These observations ensure that the map is well defined.
\end{proof}
The local bundle isomorphism $\psi:\cE_{\Si}(1,0)\rarrow\cE(1,0)|_{\Si}$ in the above lemma can also be thought of as a nonvanishing local section (or local trivialisation) of the ratio bundle $\cR(1,0)$. The bundle $\cR(1,1)$ is canonically trivial because of the canonical isomorphism of $\cE_{\Si}(1,1)$ with $\cE(1,1)|_{\Si}$, so that $\cR(1,0)$ carries a natural Hermitian bundle metric (i.e. is a $\mathrm{U}(1)$-bundle) and the compatibility of $\psi$ with the identification $\cE_{\Si}(1,1)=\cE(1,1)|_{\Si}$ is equivalent to $\psi$ being a unit section of $\cR(1,0)$. The ratio bundle $\cR(1,0)$ will prove to be the key to relating the tractor bundles (globally) without making an unnatural (local) identification of density bundles.

\subsection{Relating Tractor Bundles}\label{RelatingTractorBundles}

If we tensor $\cE_I$ with $\cE_{\Si}(0,1)$ then choosing a submanifold contact form $\theta_{\Si}$ identifies this bundle with
\begin{equation*}
[\cE_I]_{\theta_{\Si}} \otimes\cE_{\Si}(0,1) = \cE_{\Si}(1,1)\oplus\cE_{\mu}(1,1)\oplus\cE_{\Si}(0,0)
\end{equation*}
where $\cE_{\Si}(0,0)$ is the trivial bundle $\Sigma\times\mathbb{C}$. Similarly, given an ambient contact form $\theta$ we may identify the $\cN_A^{\perp}\otimes\cE(0,1)|_{\Si}$ with
\begin{equation*}
[\cN_A^{\perp}]_{\theta}\otimes\cE(0,1)|_{\Si}=\cE(1,1)|_{\Si}\oplus\cN^{\perp}_{\al}(1,1)\oplus\cE(0,0)|_{\Si}
\end{equation*}
where $\cE(0,0)$ is the trivial bundle $M\times \bbC$ and $\cN_{\al}^{\perp}$ denotes the orthogonal complement to $\cN_{\al}$ in $\cE_{\al}|_{\Si}$. Since $\cE_{\Si}(w,w)$ is canonically identified with $\cE(w,w)|_{\Si}$ we have the following theorem:
\begin{thm}\label{DiagonalTractorMap}
There is a canonical filtration preserving bundle isomorphism
\begin{equation*}\label{DiagonalTractorMapEqn}
\Pi^I_A:\cE_I \otimes\cE_{\Si}(0,1)\rightarrow \cN_A^{\perp}\otimes\cE(0,1)|_{\Si}
\end{equation*}
given with respect to a pair of compatible contact forms $\theta$, $\theta_{\Si}$ by
\begin{equation*}
[\cE_I]_{\theta_{\Si}} \otimes\cE_{\Si}(0,1) \ni \left(\begin{array}{c}
\sigma\\
\tau_{\mu}\\
\rho
\end{array}\right)\mapsto
\left(\begin{array}{c}
\sigma\\
\tau_{\al}\\
\rho
\end{array}\right) \in [\cN_A^{\perp}]_{\theta}\otimes\cE(0,1)|_{\Si}
\end{equation*}
where $\tau_{\al}=\Pi^{\mu}_{\al}\tau_{\mu}$.
\end{thm}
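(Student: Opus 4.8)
The plan is to verify that the formula given in the statement defines a bundle map independent of the choice of compatible contact forms, that it is filtration preserving, and that it is an isomorphism. The last two properties are essentially immediate from the component description, so the substance of the argument is the well-definedness under change of scale.

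\medskip

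First I would fix a pair of compatible contact forms $\theta$, $\theta_{\Si}$ and observe, exactly as in the proof of Lemma \ref{StandardTractorMapsLemma}, that the prescribed map $\left(\begin{smallmatrix}\sigma\\ \tau_{\mu}\\ \rho\end{smallmatrix}\right)\mapsto\left(\begin{smallmatrix}\sigma\\ \tau_{\al}\\ \rho\end{smallmatrix}\right)$ with $\tau_{\al}=\Pi^{\mu}_{\al}\tau_{\mu}$ respects the filtrations $\cT\Si^1\subset\cT\Si^0\subset\cT\Si$ and the induced filtration on $\cN_A^{\perp}$ (the bottom slot maps to the bottom slot, the middle slot of $\cT\Si$ maps into $\cN^{\perp}_{\al}(1,1)$ since $\Pi^{\mu}_{\al}\tau_{\mu}$ automatically annihilates $N^{\al}$, and the top slot maps isomorphically). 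Surjectivity and injectivity then follow because $\Pi^{\al}_{\mu}$ and $\Pi^{\mu}_{\al}$ are mutually inverse between $T^{1,0}\Si$ and the orthocomplement of $\cN^{\al}$, so the middle component is an isomorphism $\cE_{\mu}(1,1)\cong\cN^{\perp}_{\al}(1,1)$, while the identifications $\cE_{\Si}(1,1)\cong\cE(1,1)|_{\Si}$ and $\cE_{\Si}(0,0)\cong\cE(0,0)|_{\Si}$ handle the outer components.

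\medskip

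The main step is independence of the compatible pair. Here I would take $\thetah=e^{\Ups}\theta$, $\thetah_{\Si}=\iota^{*}\thetah=e^{\Ups}\theta_{\Si}$, and compare the ambient transformation law \eqref{TractorTransformation} (restricted to $\cN_A^{\perp}$) with the submanifold transformation law for $\cE_I$. Since both $\theta$ and $\thetah$ are admissible, Lemma \ref{UpsilonTangentialLemma} gives $\Ups_{\al}=\Pi^{\be}_{\al}\Ups_{\be}$ along $\Si$, i.e. the ambient $\Ups_{\al}$ is precisely the tangential $\Ups_{\mu}=D_{\mu}\Ups$ pushed up by $\Pi$; and compatibility of $\theta,\theta_{\Si}$ gives $\nabla_0\Ups = D_0\Ups$ along $\Si$, so $\Ups_0$ agrees on the two sides. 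Feeding these two identities into the three rows of the transition matrix, one checks that the ambient transition of $(\sigma,\tau_{\al},\rho)$ equals the image under the map of the submanifold transition of $(\sigma,\tau_{\mu},\rho)$; the only nontrivial rows are the middle one (which needs $\Pi^{\mu}_{\al}$ to intertwine $\delta^{\nu}_{\mu}$ with $\Pi^{\al}_{\be}\delta^{\be}_{\ga}\Pi^{\ga}_{\mu}$ acting on the orthocomplement, automatic since $\Pi$ is the orthogonal projector) and the bottom one (which needs the $\Ups^{\al}\Ups_{\al}$ and $\Ups_0$ terms to match, which is exactly what the two displayed identities supply). Because $\cE_{\Si}(0,1)$ and $\cE(0,1)|_{\Si}$ transform by the same factor $e^{-\Ups}$ on $\Si$, tensoring through by these density bundles does not disturb the comparison.

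\medskip

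I do not expect a genuine obstacle here — the content is entirely bookkeeping with the transition matrices, and every ingredient (Lemma \ref{UpsilonTangentialLemma}, compatibility giving $\nabla_0\Ups=D_0\Ups$, the canonical identifications of diagonal density bundles, the orthogonal projection identities $\delta^{\al}_{\be}=\Pi^{\al}_{\be}+N^{\al}N_{\be}$) is already established. The one point that deserves a careful sentence is that the middle slot of the image lands in $\cN^{\perp}_{\al}$ rather than all of $\cE_{\al}|_{\Si}$: this is exactly the statement $N^{\al}\Pi^{\mu}_{\al}=0$, which holds by definition of $\Pi^{\mu}_{\al}$ as orthogonal projection onto $T^{1,0}\Si$. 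With well-definedness in hand the theorem follows, and one records that the resulting map $\Pi^I_A$ is, by construction, an isometry of Hermitian bundles intertwining the canonical filtrations, setting the stage for twisting by $\cR(1,0)$ in Theorem \ref{TwistedTractorMapResult}.
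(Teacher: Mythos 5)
Your proposal is correct and follows essentially the same route as the paper: the paper's proof also reduces everything to well-definedness under change of compatible contact forms, established by comparing the submanifold and ambient versions of the transformation law \eqref{TractorTransformation} using precisely the two identities you invoke, namely $\nabla_0\Ups=D_0\Ups$ from compatibility and $\Ups_{\al}=\Pi^{\mu}_{\al}\Ups_{\mu}$ from Lemma \ref{UpsilonTangentialLemma} (exactly as in the proof of Lemma \ref{StandardTractorMapsLemma}). Your additional remarks on the filtration, the middle slot landing in $\cN_{\al}^{\perp}$ via $N^{\al}\Pi^{\mu}_{\al}=0$, and the row-by-row matrix check merely spell out details the paper treats as obvious.
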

\begin{proof}
We only need to establish that the map is independent of the choice of compatible contact forms, and this follows from comparing the submanifold and ambient versions of \eqref{TractorTransformation} noting that $\nabla_0\Upsilon=D_0\Upsilon$ and $\Upsilon_{\al}=\Pi^{\mu}_{\al}\Upsilon_{\mu}$ as in the proof of Lemma \ref{StandardTractorMapsLemma}.
\end{proof}
\begin{rem}\label{DiagonalTractorMapRemark}
If $\psi:\cE_{\Si}(1,0)\rarrow\cE(1,0)|_{\Si}$ is a local bundle isomorphism (unit as a local section of $\cR(1,0)$) and we denote by $T_{\psi}\iota$ the local isomorphism $\cE_I\rightarrow\cN_A^{\perp}$ given by Lemma \ref{StandardTractorMapsLemma} then isomorphism of Proposition \ref{DiagonalTractorMap} agrees with $T_{\psi}\iota\otimes\overline{\psi}$ where this is defined.
\end{rem}
Conjugating the map \eqref{DiagonalTractorMapEqn} and raising tractor indices one gets an isomorphism
\begin{equation*}
\Pi_I^A:\cE^I\otimes \cE_{\Si}(1,0)\rightarrow (\cN^A)^{\perp}\otimes\cE(1,0)|_{\Si}
\end{equation*}
and tensoring both sides with $\cE_{\Si}(-1,0)$ one gets another isomorphism
\begin{equation}\label{TwistedTractorMap1}
\cE^I\rightarrow (\cN^A)^{\perp}\otimes\underbrace{\cE(1,0)|_{\Si}\otimes\cE_{\Si}(-1,0)}_{\cR(1,0)}
\end{equation}
which we may also denote by $\Pi_I^A$. We think of $\Pi_I^A$ as a section of $\cE_I\otimes\cE^A|_{\Si}\otimes\cR(1,0)$ and $\Pi_A^I$ as a section of $\cE^I\otimes\cE_A|_{\Si}\otimes\cR(0,1)$. Thinking about these objects as sections emphasises that they can be interpreted as maps in a variety of ways. 
\begin{defn} \label{TwistedTractorMapDefn}
The isomorphism \eqref{TwistedTractorMap1} gives an injective bundle map
\begin{equation}
\cT^{\cR}\iota : \cT\Si \rightarrow\cT M|_{\Si}\otimes\cR(1,0)
\end{equation}
which we term the \emph{twisted tractor map}.
\end{defn}
The twisted tractor map is clearly filtration preserving, and restricts to an isomorphism $\cT^{1}\Si \rightarrow\cT^{1} M|_{\Si}\otimes\cR(1,0)$. This is just the trivial isomorphism
\begin{equation}\label{TrivialIsomorphism}
\cE_{\Si}(-1,0)\cong \cE(-1,0)|_{\Si}\otimes\cE(1,0)|_{\Si}\otimes\cE_{\Si}(-1,0).
\end{equation}
Since it is filtration preserving $\cT^{\cR}\iota$ also induces an injective bundle map $\cT^{0}\Si / \cT^{1}\Si \rightarrow(\cT^{0} M|_{\Si} / \cT^{1} M|_{\Si})\otimes\cR(1,0)$ and this is simply the tangent map $\cE^{\mu}\rightarrow \cE^{\al}|_{\Si}$ tensored with the isomorphism \eqref{TrivialIsomorphism}. The map $\cT\Si / \cT^{0}\Si \rightarrow(\cT M|_{\Si} / \cT^{0} M|_{\Si})\otimes\cR(1,0)$ induced by the twisted tractor map is the isomorphism
\begin{equation*}
\cE_{\Si}(0,1)\cong \cE(0,1)|_{\Si}\otimes\cR(1,0)
\end{equation*}
which simply comes from noting that $\cR(1,0)=\cR(0,-1)$ since $\cE_{\Si}(1,1)=\cE(1,1)|_{\Si}$. Note that since $\cR(1,0)$ is Hermitian, so is $\cT M|_{\Si}\otimes\cR(1,0)$, and $\cT^{\cR}\iota$ is clearly a Hermitian bundle map. These properties characterise the twisted tractor map.

\subsubsection{The adjoint tractor map}\label{SectionAdjointTractorMap}
Since $\cR(1,1)$ is canonically trivial the section $\Pi^A_I\Pi^J_B$ gives us a canonical bundle map
\begin{equation*}
\mathrm{End}(\cT\Si)\rightarrow\mathrm{End}(\cT M).
\end{equation*}
Since the twisted tractor map is metric preserving by restricting to skew-Hermitian endomorphisms we get a map
\begin{equation*}
\cA\iota : \cA\Si \rightarrow \cA M
\end{equation*}
which we term the \emph{adjoint tractor map}. Recalling the projection $\cA M\rightarrow TM$ given by \eqref{AdjointOntoTangent} we note that the diagram
\begin{equation}
\begin{array}{ccc}
\cA \Si & \rightarrow & \cA M \\
\downarrow &  & \downarrow\\
T\Si & \rightarrow & TM\\
\end{array}
\end{equation}
is easily seen to commute. So the adjoint tractor map is a lift of the tangent map.

\subsection{Relating Tractor Connections on $\cT\Si$}\label{RelatingTracConnsSec}
Using the twisted tractor map and the connection $\nabla^\cR$ we obtain a connection $\check{\nabla}$ on the standard (co)tractor bundle induced by the ambient tractor connection. Given a standard tractor field $u^J$ and a cotractor field $v_J$ on $\Si$ we define
\begin{equation}\label{cnabla-defn}
\check{\nabla}_i u^J = \Pi^J_B \nabla_i (\Pi^B_K u^K) \quad \mathrm{and} \quad \check{\nabla}_i v_J = \Pi^B_J \nabla_i (\Pi^K_B v_K)
\end{equation}
where by $\nabla$ we mean the ambient standard tractor connection $\nabla$ differentiating in directions tangent to $\Si$ (i.e. pulled back by $\iota$) coupled with the connection $\nabla^{\cR}$.

From Section \ref{The-Tractor-Connection} the submanifold intrinsic tractor connection $D$ on a section $v_{I}\overset{\theta_{\Si}}{=}(\si,\tau_{\mu},\rho)$ is given by
\begin{equation}
D_{\mu}v_{J}\overset{\theta_{\Si}}{=}\left(\begin{array}{c}
D_{\mu}\si-\tau_{\mu}\\
D_{\mu}\tau_{\nu}+iA_{\mu\nu}\si\\
D_{\mu}\rho-p_{\mu}{^{\nu}}\tau_{\nu}+t_{\mu}\si
\end{array}\right),
\quad
D_{\mub}v_{J}\overset{\theta_{\Si}}{=}\left(\begin{array}{c}
D_{\mub}\si\\
D_{\mub}\tau_{\nu}+\bh_{\mu\nub}\rho+p_{\nu\mub}\si\\
D_{\mub}\rho-iA_{\mub}{^{\nu}}\tau_{\nu}-t_{\mub}\si
\end{array}\right),
\end{equation}
and
\begin{equation}
D_{0}v_{J}\overset{\theta_{\Si}}{=}\left(\begin{array}{c}
D_{0}\si+\frac{i}{n+1}p\si-i\rho\\
D_{0}\tau_{\nu}+\frac{i}{n+1}p\tau_{\nu}-ip_{\nu}{^{\la}}\tau_{\la}+2it_{\nu}\si\\
D_{0}\rho+\frac{i}{n+1}p\rho+2it^{\nu}\tau_{\nu}+is\si
\end{array}\right)
\end{equation}
where $t_{\mu}$ and $s$ are the submanifold intrinsic versions of $T_{\al}$ and $S$ defined by \eqref{Ttensor} and \eqref{Sscalar} respectively. By contrast, for $\check{\nabla}$ we have:
\begin{prop}\label{cnabla-prop}
The connection $\check{\nabla}$ on a section $v_{I}\overset{\theta_{\Si}}{=}(\si,\tau_{\mu},\rho)$ of $\cE_I$ is given, in terms of any ambient contact form compatible with $\theta_{\Si}$, by
\begin{equation}\label{cnabla1}
\cnabla_{\mu}v_{J}\overset{\theta_{\Si}}{=}\left(\begin{array}{c}
D_{\mu}\si-\tau_{\mu}\\
D_{\mu}\tau_{\nu}+iA_{\mu\nu}\si\\
D_{\mu}\rho-P_{\mu}{^{\nu}}\tau_{\nu}+T_{\mu}\si
\end{array}\right),
\end{equation}
\begin{equation}\label{cnabla2}
\cnabla_{\mub}v_{J}\overset{\theta_{\Si}}{=}\left(\begin{array}{c}
D_{\mub}\si\\
D_{\mub}\tau_{\nu}+\bh_{\mu\nub}\rho+P_{\nu\mub}\si\\
D_{\mub}\rho-iA_{\mub}{^{\nu}}\tau_{\nu}-T_{\mub}\si
\end{array}\right),
\end{equation}
and
\begin{equation}\label{cnabla3}
\cnabla_{0}v_{J} \overset{\theta_{\Si}}{=}\left(\begin{array}{c}
D_{0}\si+\frac{i}{n+1}P_{\la}{^{\la}}\si-i\rho\\
D_{0}\tau_{\nu}+\frac{i}{n+1}P_{\la}{^{\la}}\tau_{\nu}-iP_{\nu}{^{\la}}\tau_{\la}+2iT_{\nu}\si\\
D_{0}\rho+\frac{i}{n+1}P_{\la}{^{\la}}\rho+2iT^{\nu}\tau_{\nu}+iS\si
\end{array}\right).
\end{equation}
\end{prop}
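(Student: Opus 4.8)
The plan is to compute $\cnabla$ directly from its definition in \eqref{cnabla-defn}, namely $\cnabla_i v_J = \Pi^B_J \nabla_i(\Pi^K_B v_K)$, by choosing an ambient contact form $\theta$ compatible with $\theta_\Si$ and working entirely in the resulting pseudohermitian splittings. First I would spell out $\Pi^K_B v_K$ for a section $v_K \overset{\theta_\Si}{=}(\si,\tau_\mu,\rho)$: by Theorem \ref{DiagonalTractorMap} (and Remark \ref{DiagonalTractorMapRemark}) this is the section of $\cN_A^\perp \otimes \cR(1,0)$ represented, with respect to $\theta$, by $(\si,\Pi_\al^\mu\tau_\mu,\rho)$, where the entries are now ambient densities along $\Si$ tensored with the appropriate power of $\cR$; write $\tau_\al := \Pi_\al^\mu \tau_\mu$, which satisfies $N^\al \tau_\al = 0$. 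Then I would apply the ambient tractor connection $\nabla$ (pulled back to $\Si$, coupled with $\nabla^\cR$) in a tangential direction using the formulae \eqref{hhTractorConnection}, \eqref{ahTractorConnection}, \eqref{0hTractorConnection}, and finally project the result back into $\cN_A^\perp$ using $\Pi^A_B = \delta^A_B - N^A N_B$ and re-express in terms of submanifold quantities via $\Pi^\mu_\al$.

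The key reductions, carried out component by component, are: (i) in the holomorphic direction $\mu = \Pi^\be_\mu(\,\cdot\,)_\be$, the top slot gives $\nabla_\mu\si - \tau_\mu$, and since $\nabla^\cR$ restricted to contact directions is just $D \otimes \nabla^\perp$ (Lemma \ref{RatioConnectionLemma}), $\nabla_\mu \si = D_\mu\si$; the middle slot $\nabla_\mu\tau_\al + iA_{\al\mu}\si$ projects tangentially to $D_\mu\tau_\nu + iA_{\mu\nu}\si$ by the Gauss formula \eqref{GaussFormula} (Proposition \ref{SubmanifoldTWprop}, using $\II_{\mu\nu}{}^\ga N_\ga$-terms drop because $\Pi^\nu_\al$ kills them after contraction with $\Pi^\al_\ga$) together with Corollary \ref{SubmanifoldTWpropCor} for the torsion; the bottom slot $\nabla_\mu\rho - P_\mu{}^\al\tau_\al + T_\mu\si$ becomes $D_\mu\rho - P_\mu{}^\nu\tau_\nu + T_\mu\si$ once one writes $P_\mu{}^\al\tau_\al = P_\mu{}^\al\Pi_\al^\nu\tau_\nu = P_\mu{}^\nu\tau_\nu$ (tangential projection of the ambient Schouten tensor) — crucially these are the \emph{ambient} $P$ and $T$, not the submanifold ones, which is the whole point of the proposition. (ii) The antiholomorphic direction is handled identically, noting the extra term $\bh_{\al\beb}\rho$ projects to $\bh_{\mu\nub}\rho$ since the Levi forms agree along $\Si$ (this uses $\Pi_\al^\mu\Pi_\beb^\nub\bh_{\al\beb} = \bh_{\mu\nub}$). (iii) In the $0$-direction one must use \eqref{0hTractorConnection} together with the $\cnabla^\cR_0$-formula from Lemma \ref{RatioConnectionLemma}; I expect the extra $\tfrac{w-w'}{n+1}(iP_{N\bar N} - \tfrac{i}{n+2}P)$ terms coming from $\nabla^\cR_0$ acting on the density factors to precisely conspire with the $\tfrac{i}{n+2}P$ coefficients appearing in \eqref{0hTractorConnection} to produce the coefficient $\tfrac{i}{n+1}P_\la{}^\la$ shown in \eqref{cnabla3}, where $P_\la{}^\la$ is the ambient Schouten trace pulled back tangentially. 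One also needs $\nabla_0$ acting tangentially to agree with $D_0$ on the tangential pieces, which follows because $T$ is tangent to $\Si$ (compatibility) and from the projector-derivative identities \eqref{ProjectorDeriv3} of Lemma \ref{ProjectorDerivLemma}, which guarantee $\Pi^\ga_\be$ is $\nabla_0$-parallel, so no $\II$-correction appears.

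The main obstacle I anticipate is the $0$-component bookkeeping: one must carefully track three distinct sources of $P$-terms — the explicit $\tfrac{i}{n+2}P$ factors in the ambient formula \eqref{0hTractorConnection}, the $P_\al{}^\be$ term that projects to $P_\nu{}^\la$, and the density-ratio correction from $\nabla^\cR_0$ — and verify the weights of $\si$, $\tau_\mu$, $\rho$ (which are $(1,0)$, $(1,0)$, $(0,-1)$ for a cotractor, so the relevant $w - w'$ values differ slot by slot) so that the net coefficient collapses to $\tfrac{i}{n+1}P_\la{}^\la$ uniformly. I would also need to confirm that the $2iT_\al\si$, $2iT^\al\tau_\al$, $iS\si$ terms survive the tangential projection intact: $T_\al$ contracted with $\Pi^\al_\mu$ gives the tangentially-projected $T_\mu$, $T^\al\tau_\al = T^\al\Pi_\al^\nu\tau_\nu$ because $\tau_\al$ is already tangential (so $N^\al$ has no bearing), and $S$ is a scalar and pulls back trivially. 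Once these coefficient identities are in hand the proposition follows by simply reading off the three displays; the projection onto $\cN_A^\perp$ via $\Pi^A_B$ removes exactly the $N^A$-component of the middle slot, which is precisely the datum that would otherwise involve $\bL$ and is instead packaged separately in the tractor Gauss formula of Theorem \ref{TwistedTractorMapResult}.
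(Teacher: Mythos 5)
Your proposal is correct and follows essentially the same route as the paper's proof: a direct computation of $\Pi^B_J\nabla_i(\Pi^K_B v_K)$ using the explicit tractor connection formulae, Lemma \ref{RatioConnectionLemma}, and the tangential projection identities, with the $\cR$-twist cancelling the ambient density derivatives in contact directions and contributing the correction $-\tfrac{i}{n+1}P_{N\bar{N}}+\tfrac{i}{(n+1)(n+2)}P$ in the Reeb direction. One small simplification over what you anticipate for the $0$-component bookkeeping: no slot-by-slot case analysis is needed, since $w-w'=1$ for all three cotractor slots and the ratio-bundle correction from Lemma \ref{RatioConnectionLemma} depends only on the single overall $\cR$ factor, so the net coefficient $\tfrac{i}{n+2}P-\tfrac{i}{n+1}P_{N\bar{N}}+\tfrac{i}{(n+1)(n+2)}P=\tfrac{i}{n+1}P_{\la}{}^{\la}$ emerges uniformly in one step.
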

\begin{proof}
Choose any local isomorphism $\psi:\cE_{\Si}(1,0)\rarrow\cE(1,0)|_{\Si}$ compatible
with the canonical identification of $\cE_{\Si}(1,1)$ with $\cE(1,1)|_{\Si}$. Replacing $\si$ with $f\si$ where $f\in C^\infty(\Si,\bbC)$ we may take $\si$ to satisfy $\si\overline{\si}=\vsig_{\Si}$ where $\theta_{\Si}=\vsig_{\Si}\btheta_{\Si}$. We can thus factor the components of $v_I$ so that $v_{I}\overset{\theta_{\Si}}{=}(f\si,\xi_{\mu}\si,g\si)$, where $\xi_{\mu} \in \Gamma(\cE_{\mu})$ and $g\in \Gamma(\cE_{\Si}(-1,-1))$. If $\theta$ is an ambient contact form compatible with $\theta_{\Si}$ then (splitting the tractor bundles w.r.t. $\theta$, $\theta_{\Si}$) under the map $\cT_{\psi}\iota\otimes\overline{\psi}$ of Remark \ref{DiagonalTractorMapRemark}
\begin{equation*}
(f\si,\xi_{\mu}\si,g\si)\otimes\overline{\si}\mapsto(f\phi,\xi_{\al}\phi,g\phi)\otimes\overline{\phi}
\end{equation*}
where $\phi=\psi(\si)$ and $\xi_{\al}=\Pi_{\al}^{\mu}\xi_{\mu}$. Thus by definition we have
\begin{equation*}
v_B=\Pi^J_B v_{J}\overset{\theta}{=}\left(\begin{array}{c}
f\phi\\
\xi_{\be}\phi\\
g\phi
\end{array}\right)\otimes( \overline{\phi}\otimes \overline{\si}^{-1})
\end{equation*}
as a section of $\cE_B|_{\Si}\otimes \cR(1,0)$. Now one simply computes $\nabla_i v_B$ using the formulae \eqref{hhTractorConnection}, \eqref{ahTractorConnection}, and \eqref{0hTractorConnection} for the tractor connection along with Lemma \ref{RatioConnectionLemma} which relates $\nabla^{\cR}$ to the Tanaka-Webster connections on the ambient and intrinsic density bundles. We have
\begin{align}\label{cnablaDefCalc1}
\Pi_{C}^{B}\nabla_{\mu}v_{B} & \overset{\theta}{=}\left(\begin{array}{c}
(\nabla_{\mu}f)\phi+f\nabla_{\mu}\phi-\xi_{\mu}\phi\\
\Pi^{\be}_{\ga}(\nabla_{\mu}\xi_{\be})\phi+\xi_{\ga}\nabla_{\mu}\phi+i\Pi^{\be}_{\ga}A_{\mu\be} f\phi\\
(\nabla_{\mu}g)\phi+g\nabla_{\mu}\phi-P_{\mu}{^{\ga}}\xi_{\ga}\phi+T_{\mu}f\phi
\end{array}\right)\otimes ( \overline{\phi}\otimes \overline{\si}^{-1})\\*
\nonumber & \quad+\left(\begin{array}{c}
f\phi\\
\xi_{\ga}\phi\\
g\phi
\end{array}\right)\otimes\nabla^{\cR}_{\mu}( \overline{\phi}\otimes \overline{\si}^{-1})
\end{align}
where $A_{\mu\be}=\Pi^{\al}_{\mu}A_{\al\be}$, $P_{\mu}{^{\ga}}=\Pi^{\al}_{\mu}P_{\al}{^{\ga}}$, and $T_{\mu}=\Pi^{\al}_{\mu}T_{\al}$. By Corollary \ref{ATanNorCor} we have $\Pi^{\be}_{\ga}A_{\mu\be}=\Pi^{\nu}_{\ga}A_{\mu\nu}$. We also have $\Pi^{\be}_{\ga}\nabla_{\mu}\xi_{\be}=\Pi^{\nu}_{\ga}D_{\mu}\xi_{\nu}$. 
Now by Lemma \ref{RatioConnectionLemma} we have 
\begin{align*}
\nabla^{\cR}_{\mu}( \overline{\phi}\otimes \overline{\si}^{-1})
&=(\nabla_{\mu}\overline{\phi})\otimes\overline{\si}^{-1} + \overline{\phi}\otimes D_{\mu}(\overline{\si}^{-1})\\*
&=(\nabla_{\mu}\overline{\phi})\otimes\overline{\si}^{-1} + (\si^{-1}D_{\mu}\si) \overline{\phi}\otimes \overline{\si}^{-1}
\end{align*}
using that $\overline{\si}D_{\mu}(\overline{\si}^{-1})=-\overline{\si}^{-1}D_{\mu}\overline{\si}=\si^{-1}D_{\mu}\si$ since $D_{\mu}\vsig_{\Si}=0$. If $\theta=\vsig\btheta$ then since $\phi=\psi(\si)$ we must have $\phi\overline{\phi}=\vsig|_{\Si}$, and this implies that 
\begin{equation*}
(\nabla_{\mu}\phi)\otimes\overline{\phi}+\phi\otimes \nabla_{\mu}\overline{\phi}=0.
\end{equation*}
Using these to simplify \eqref{cnablaDefCalc1} we have
\begin{align*}
\Pi_{C}^{B}\nabla_{\mu}v_{B} & \overset{\theta}{=}\left(\begin{array}{c}
(D_{\mu}f)\phi-\xi_{\mu}\phi\\
\Pi^{\nu}_{\ga}(D_{\mu}\xi_{\nu})\phi+i\Pi^{\nu}_{\ga}A_{\mu\nu} f\phi\\
(D_{\mu}g)\phi-P_{\mu}{^{\nu}}\xi_{\nu}\phi+T_{\mu}f\phi
\end{array}\right)\otimes ( \overline{\phi}\otimes \overline{\si}^{-1})\\*
\nonumber & \quad+(\si^{-1}D_{\mu}\si)\left(\begin{array}{c}
f\phi\\
\xi_{\ga}\phi\\
g\phi
\end{array}\right)\otimes( \overline{\phi}\otimes \overline{\si}^{-1}).
\end{align*}
Applying $\Pi^{C}_{J}$ to the above display gives
\begin{equation*}
\Pi_{J}^{B}\nabla_{\mu}v_{B} \overset{\theta_{\Si}}{=}\left(\begin{array}{c}
(D_{\mu}f)\si-\xi_{\mu}\si\\
(D_{\mu}\xi_{\nu})\si+iA_{\mu\nu} f\si\\
(D_{\mu}g)\si-P_{\mu}{^{\nu}}\xi_{\nu}\si+T_{\mu}f\si
\end{array}\right)
+ 
\left(\begin{array}{c}
fD_{\mu}\si\\
\xi_{\nu}D_{\mu}\si\\
gD_{\mu}\si
\end{array}\right)
\end{equation*}
which proves \eqref{cnabla1}. Formula \eqref{cnabla2} is obtained similarly. In following the same process for \eqref{cnabla3} we obtain that
\begin{align*}
\cnabla_{0}v_{J} & \overset{\theta_{\Si}}{=}\left(\begin{array}{c}
D_{0}\si+\frac{i}{n+2}P\si-i\rho\\
D_{0}\tau_{\nu}+\frac{i}{n+2}P\tau_{\nu}-iP_{\nu}{^{\la}}\tau_{\la}+2iT_{\nu}\si\\
D_{0}\rho+\frac{i}{n+2}P\rho+2iT^{\nu}\tau_{\nu}+iS\si
\end{array}\right)\\*
& \quad +\left(-\frac{i}{(n+1)}P_{N\bar{N}}+\frac{i}{(n+2)(n+1)}P\right)\left(\begin{array}{c}
\si\\
\tau_{\nu}\\
\rho
\end{array}\right),
\end{align*}
the second term arising from the use of Lemma \ref{RatioConnectionLemma}. Simplifying this gives the result.
\end{proof}
\begin{rem}
By construction $\check{\nabla}$ preserves the tractor metric $h_{J\bar{K}}$. One can therefore obtain the formulae for $\check{\nabla}$ acting on sections of $\cE^I$ by conjugating the above formulae and using the identification of $\cE^I$ with $\cE_{\bar{I}}$ via the tractor metric.
\end{rem}

One can now easily compare the two connections $\cnabla$ and $D$ on $\cT\Si$.
\begin{defn}\label{difference-tractor-defn}
The \emph{difference tractor} $\sfS$ is the tractor endomorphism valued 1-form on $\Si$ given by the difference between  $\cnabla$ and $D$ on $\cT\Si$. Precisely, we have
\begin{equation}
\cnabla_X u = D_X u + \sfS(X) u \quad \mathrm{and} \quad \cnabla_X v = D_X v - v \circ \sfS(X)
\end{equation}
for $X\in \fX(\Si)$, $u\in \Gamma(\cT\Si)$, and $v\in\Gamma(\cT^*\Si)$.
\end{defn}
Given a contact form $\theta_{\Si}$ on $\Si$ the difference tractor $\sfS$ splits into components  $\sfS_{\mu J}{^{K}}$, $\sfS_{\mub J}{^{K}}$ and $\sfS_{0J}{^{K}}$ (with only the last of these depending on $\theta_{\Si}$). From the above formulae for  $\cnabla$ and $D$ we have, in terms of a compatible pair of contact forms,
\begin{equation}\label{hDifferenceTractor}
\sfS_{\mu J}{^{K}}= (P_{\mu}{^{\la}}-p_{\mu}{^{\la}})Z_J W^K_{\la} - (T_{\mu}-t_{\mu})Z_J Z^K,
\end{equation}
\begin{equation}\label{aDifferenceTractor}
\sfS_{\mub J}{^{K}}= -(P_{\nu\mub}-p_{\nu\mub})W^{\nu}_J Z^K + (T_{\mub}-t_{\mub})Z_J Z^K,
\end{equation}
and
\begin{align}\label{0DifferenceTractor}
\sfS_{0 J}{^{K}} & = -\tfrac{i}{m+2}(P_{\la}{^{\la}}-p)\delta^K_J + i(P_{\nu}{^{\la}}-p_{\nu}{^{\la}})W^{\nu}_J W_{\la}^K \\
\nonumber &\quad
 -2i(T_{\nu}-t_{\nu})W^{\nu}_J Z^{K} -2i(T^{\la}-t^{\la})Z_J W_{\la}^K - i(S-s) Z_J Z^K,
\end{align}
where $m+2=n+1$ in this case. Both $\sfS_{\mu J}{^{K}}$ and $\sfS_{\mub J}{^{K}}$ are invariant objects. Both have as projecting part the difference $P_{\mu\nub}-p_{\mu\nub}$; a manifestly CR invariant expression for this difference was given in Lemma \ref{SchoutenDifferenceLemma}. We can also give matrix formulae for the difference tractor, following the same conventions used in Section \ref{TractorCurvature} we have
\begin{equation*}
\sfS_{\mu J}{^{K}}= \left(
\begin{array}{ccc}
0 & 0 & 0\\
0 & 0 & 0 \\
t_{\mu}-T_{\mu} & P_{\mu}{^{\la}}-p_{\mu}{^{\la}}  & 0 
\end{array}\right),
\quad
\sfS_{\mub J}{^{K}}= \left(
\begin{array}{ccc}
0 & 0 & 0\\
p_{\nu\mub}-P_{\nu\mub} & 0 & 0 \\
T_{\mub}-t_{\mub} & 0  & 0 
\end{array}\right)
\end{equation*}
and
\begin{equation*}
\sfS_{0 J}{^{K}}= \left(
\begin{array}{ccc}
-\tfrac{i(P_{\la}{^{\la}}-p)}{m+2} & 0 & 0\\
-2i(T_{\nu}-t_{\nu}) & i(P_{\nu}{^{\la}}-p_{\nu}{^{\la}}) -\tfrac{i(P_{\la}{^{\la}}-p)}{m+2} & 0 \\
- i(S-s) & -2i(T^{\la}-t^{\la}) & -\tfrac{i(P_{\la}{^{\la}}-p)}{m+2}
\end{array}\right).
\end{equation*}

\begin{rem}\label{DifferenceTractorA0-valued}
Since both tractor connections $\cnabla$ and $D$ preserve the tractor metric on $\cT\Si$, the difference tractor must take values in skew-Hermitian endomorphisms of the tractor bundle (i.e. $\sfS$ is an $\cA\Si$-valued 1-form). This can also easily be seen from \eqref{hDifferenceTractor},  \eqref{aDifferenceTractor} and \eqref{0DifferenceTractor}, from which we see that $\sfS$ is in fact $\cA^0\Si$-valued.
\end{rem}

\subsection{The Tractor Gauss Formula}\label{TractorGaussFormula}

In order to write down the Gauss formula in Riemannian geometry one needs the tangent map (more precisely the pushforward) of the embedding, though one typically suppresses this from the notation. In order to give a standard tractor analogue we have sought a canonical `standard tractor map', but ended up instead with the twisted tractor map $\cT^{\cR}\iota$. However this poses no problem for constructing a Gauss formula, since the line bundle $\cR(1,0)$ we have had to twist with carries an invariant connection $\nabla^{\cR}$. 

Letting $\iota_*$ denote the induced map on sections coming from $\cT^{\cR}\iota$ we make he following definition:
\begin{defn}\label{tractor-sff-defn}
We define the \emph{tractor second fundamental form} $\bL$ by the \emph{tractor Gauss formula}
\begin{equation}\label{tractor-Gauss-formula}
\nabla_X \iota_* u \: = \: \iota_*(D_X u + \sfS(X) u) + \bL(X) \iota_* u
\end{equation}
which holds for any $X\in\fX(\Si)$ and $u\in \Gamma(\cT\Si)$, where $\nabla$ denotes the ambient tractor connection coupled with $\nabla^{\cR}$.
\end{defn}
This (combined with Theorem \ref{DiagonalTractorMap}) establishes Theorem \ref{TwistedTractorMapResult} for the case $m=n-1$, the result generalises straightforwardly (Section \ref{RelatingTractorsHC}).
\begin{rem}
By the definition of the difference tensor $\sfS$, for any $X\in\fX(\Si)$ and $u\in \Gamma(\cT\Si)$ we have that $\bL(X) \iota_* u$ is the orthogonal projection of $\nabla_X \iota_* u$ onto $\cN\otimes\cR(1,0)$. By definition then $\bL$ is a 1-form on $\Si$ valued in $\mathrm{Hom}(\cN^{\perp}\otimes\cR(1,0),\cN\otimes\cR(1,0))=\mathrm{Hom}(\cN^{\perp},\cN)$.
\end{rem}
Suppressing $\iota_*$ we write the tractor Gauss formula as
\begin{equation}
\nabla_X u \; = \; \underbrace{D_X u + \sfS(X) u}_{\text{`tangential \: part'}}\;\;  + \underbrace{\bL(X) u}_{\text{`normal \: part'}}
\end{equation}
for any $X\in\fX(\Si)$ and $u\in \Gamma(\cT\Si)$.

Writing $\Pi^B_J u^J$ as $u^B$ and contracting the Gauss formula on both sides with a unit normal cotractor $N_A$ we get that 
\begin{equation*}
N_C \bL(X)_{B}{^C} u^B=N_B \nabla_X u^B=-u^B\nabla_X N_B
\end{equation*}
for all sections $u^J$ of $\cE^J$ and $X\in\fX(\Si)$. Thus $\bL$ is given by
\begin{equation}\label{TractorSFF}
\bL_{i B}{^C}=-N^C\Pi^{B'}_B\nabla_{i} N_{B'}
\end{equation}
for any unit normal cotractor $N_C$. From this we have:
\begin{prop}\label{tractor-sff-prop}
With respect to a compatible pair of contact forms the components $\bL_{\mu B}{^C}$, $\bL_{\mub B}{^C}$, and $\bL_{0 B}{^C}$ of the tractor second fundamental form $\bL$ are given by
\begin{equation}\label{hTractorSFF}
\bL_{\mu B}{^C} = \II_{\mu\nu}{^{\ga}}\Pi^{\nu}_{\be}W^{\be}_B W_{\ga}^C + P_{\mu\bar{N}}N^{\ga} Z_B W_{\ga}^C,
\end{equation}
\begin{equation}\label{aTractorSFF}
\bL_{\mub B}{^C}=0,
\end{equation}
and
\begin{equation}\label{0TractorSFF}
\bL_{0 B}{^C}= i\Pi^{\be'}_{\be}P_{\be'\bar{N}}N^{\ga}W^{\be}_B W_{\ga}^C-2iT_{\bar{N}}N^{\ga} Z_B W_{\ga}^C
\end{equation}
where $N^{\al}$ is some unit holomorphic normal field, $P_{\mu\bar{N}}=\Pi^{\al}_{\mu} P_{\al\beb}N^{\beb}$, and $T_{\bar{N}}=T_{\alb}N^{\alb}$.
\end{prop}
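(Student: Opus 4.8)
The proof will be a direct computation from equation \eqref{TractorSFF}, which gives $\bL_{iB}{^C}=-N^{C}\,\Pi^{B'}_{B}\nabla_{i}N_{B'}$ for any unit normal cotractor $N_{C}$, where $\nabla$ is the ambient standard tractor connection pulled back to $\Si$. Since we only need $\bL$ in the directions $i=\mu,\mub,0$, and $0$ is a \emph{tangential} direction once we choose $\theta$ admissible (so $T$ is tangent to $\Si$), no extension of data off $\Si$ is required. The plan is: fix an admissible $\theta$ and a compatible $\theta_{\Si}$; take a unit holomorphic conormal $N_{\al}$, so that $N_{A}\overset{\theta}{=}(0,N_{\al},0)=W^{\al}_{A}N_{\al}$ and the corresponding normal tractor is $N^{C}=N^{\ga}W^{C}_{\ga}$; then expand
$\nabla_{i}N_{B'}=(\nabla_{i}W^{\al}_{B'})N_{\al}+W^{\al}_{B'}\nabla_{i}N_{\al}$
using the formulae \eqref{hNablaY}--\eqref{Soldering3} for the tractor connection on the splitting tractors together with the Tanaka-Webster calculus of $\theta$.

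\textbf{Key steps.} For the splitting-tractor part, the tangential restriction of $\nabla_{\be}W^{\al}_{A}=-\de^{\al}_{\be}Y_{A}-P_{\be}{}^{\al}Z_{A}$, contracted with $N_{\al}$, leaves only $-P_{\mu\bar{N}}Z_{B'}$ (the $Y_{B'}$-term dies because $\Pi^{\al}_{\mu}N_{\al}=0$, and $P_{\mu}{}^{\al}N_{\al}=P_{\mu\bar{N}}$ by definition); the term $\nabla_{\mub}W^{\al}_{A}=iA^{\al}_{\mub}Z_{A}$ contracted with $N_{\al}$ vanishes by Corollary \ref{ATanNorCor}, using the symmetry of $A_{\al\be}$ and conjugation; and $\nabla_{0}W^{\al}_{A}$ contributes $-iP_{\be\bar{N}}W^{\be}_{B'}+\tfrac{i}{n+2}PN_{B'}+2iT_{\bar{N}}Z_{B'}$. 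For $\nabla_{i}N_{\al}$ itself I will use that its tangential part $\Pi^{\be}_{\al}\nabla_{i}N_{\be}$ is controlled by the Gauss formula and Proposition \ref{SFFprop}: the vanishing of $\II_{\mub\nu}{^{\ga}}$ and $\II_{0\mu}{^{\ga}}$ gives $\nabla_{\mub}N_{\al}$ and $\nabla_{0}N_{\al}$ equal to a (phase-dependent) purely imaginary multiple of $N_{\al}$, while Lemma \ref{CRsffLemma}, together with the fact that $\II_{\mu\nu}{^{\ga}}$ is $\cN^{\ga}$-valued so $\II_{\mu\nu}{^{\ga}}=\II_{\mu\nu}N^{\ga}$ with $\II_{\mu\nu}=\II_{\mu\nu}{^{\ga}}N_{\ga}$, gives $\nabla_{\mu}N_{\al}=-\Pi^{\nu}_{\al}\II_{\mu\nu}$ modulo such an imaginary multiple of $N_{\al}$.

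\textbf{Assembling.} Applying the projector $\Pi^{B'}_{B}=\de^{B'}_{B}-N^{B'}N_{B}$ onto $\cN^{\perp}$ and contracting with $-N^{C}$: every term proportional to $N_{B'}$ is killed by $\Pi^{B'}_{B}$, so in particular all the phase-ambiguous $N^{\al}\nabla_{i}N_{\al}$ contributions drop out, as does the $\tfrac{i}{n+2}PN_{B'}$ term in the $0$-direction; the remaining pieces reorganise, using that $N^{\ga}N_{\ga'}$ is the projection onto $\cN$ and that $P_{\al\beb}N^{\beb}$ splits as a tangential plus a normal part, into precisely \eqref{hTractorSFF}, \eqref{aTractorSFF} and \eqref{0TractorSFF}. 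The $\mub$-component vanishes because $\Pi^{B'}_{B}\nabla_{\mub}N_{B'}=0$; in the $0$-component the normal part $P_{N\bar{N}}N_{B}$ extracted from $P_{\be\bar{N}}W^{\be}_{B}$ is exactly what converts $P_{\be\bar{N}}$ into its tangential projection $\Pi^{\be'}_{\be}P_{\be'\bar{N}}$.

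\textbf{Main obstacle.} The only genuinely delicate point is the bookkeeping of the splitting tractors and the various tangential projections (together with their tractor analogues), and the attendant need to check that the phase-dependent pieces $N^{\al}\nabla_{i}N_{\al}$ never survive into $\bL$ --- which holds because $\bL$ captures only the $\cN^{\perp}\!\to\cN$ block of $\iota^{*}\nabla$ acting on $N$, i.e.\ the tractor shape operator, which is manifestly independent of the choice of phase of the unit conormal. Once this is in hand the rest is routine substitution using Section \ref{The-Tractor-Connection}, Proposition \ref{SFFprop}, Lemma \ref{CRsffLemma} and Corollary \ref{ATanNorCor}.
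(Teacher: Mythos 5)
Your proposal is correct and follows essentially the same route as the paper: both compute $\bL_{iB}{}^{C}=-N^{C}\Pi^{B'}_{B}\nabla_{i}N_{B'}$ directly from the explicit tractor connection formulae for an admissible contact form, invoking Proposition \ref{SFFprop}, Lemma \ref{CRsffLemma} and Corollary \ref{ATanNorCor} at exactly the points you indicate (your use of the splitting-tractor identities \eqref{hNablaY}--\eqref{Soldering3} rather than the column-vector formulae \eqref{hhTractorConnection}--\eqref{0hTractorConnection} is only a cosmetic difference). The one small imprecision --- the coefficient $N^{\al}\nabla_{\mub}N_{\al}$ is not purely imaginary, only $N^{\al}\nabla_{0}N_{\al}$ is --- is harmless since, as you note, all such terms are proportional to $N_{B'}$ and are annihilated by $\Pi^{B'}_{B}$.
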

\begin{proof}
One simply chooses a unit holomorphic normal field $N^{\al}$ and corresponding normal tractor $N^A$, then calculates $\Pi^{B'}_B\nabla_{i}N_{B'}$ using the formulae \eqref{hhTractorConnection}, \eqref{ahTractorConnection}, and \eqref{0hTractorConnection} for the ambient tractor connection. Using \eqref{TractorSFF} one immediately obtains \eqref{hTractorSFF}; for \eqref{aTractorSFF} one also has to use that $\II_{\mub\nu}{^{\ga}}=0$ (by Proposition \ref{SFFprop}) and $\Pi^{\alb}_{\mub}A_{\alb\beb}N^{\beb}=0$ (by Corollary \ref{ATanNorCor}), and for \eqref{0TractorSFF} one also has to use that $\II_{0\nu}{^{\ga}}=0$ (again by Proposition \ref{SFFprop}).
\end{proof}
The proposition shows that the invariant projecting part of $\bL_{\mu B}{^C}$ is $\II_{\mu\nu}{^{\ga}}\Pi^{\nu}_{\be}$, giving a manifestly CR invariant way of defining the CR second fundamental form.

\section{Higher Codimension Embeddings} \label{HigherCodimension}

It is straightforward to adapt our treatment of CR embeddings in the minimal codimension case to general codimension transversal CR embeddings. Here
we consider a CR embedding of $\iota:\Si^{2m+1}\rightarrow M^{2n+1}$ with $n=m+d$, and $m,d>0$. We keep our notation for bundles on $\Si$ and $M$ as before. We now have a rank $2d$ real conormal bundle $N^*\Si$, and the complexification of $N^*\Si$ splits as
\begin{equation}
\bbC N^*\Si = \cN_{\al}\oplus\cN_{\alb}
\end{equation}
where $\cN_{\al}$ is the annihilator of $T^{1,0}\Si$ in $(T^{1,0}M)^*|_{\Si}=\cE_{\al}|_{\Si}$ and $\cN_{\alb}=\overline{\cN_{\al}}$. We denote by $\rmN^{\al}_{\be}$ the orthogonal projection of $\cE^{\be}|_{\Si}$ onto the holomorphic normal bundle $\cN^{\al}$, and by $\Pi^{\al}_{\be}$ the tangential projection, so that $\Pi^{\al}_{\be}+\rmN^{\al}_{\be}=\delta^{\al}_{\be}$. We will also write $\rmN^{\al\beb}$ for $\bh^{\ga\deb}\rmN^{\al}_{\ga}\rmN^{\beb}_{\deb}=\bh^{\ga\beb}\rmN^{\al}_{\ga}$.

\begin{rem} Note that in passing to the general codimension there is no restriction on the signatures (or relative signature) of the CR manifolds, provided we have a nondegenerate transversal CR embedding.
\end{rem}

\subsection{Pseudohermitian Calculus}
We may continue to work with compatible contact forms in the general codimension case (see Remark \ref{CompatibleScalesRemark}). By Remark \ref{SubmanifoldTWpropRemark} the Tanaka-Webster connection $\nabla$ of an admissible ambient contact form $\theta$ induces the Tanaka-Webster connection $D$ of $\theta_{\Si}$ via the Webster metric $g_{\theta}$ as in Proposition \ref{SubmanifoldTWprop}. We can therefore define the (pseudohermitian) second fundamental form of a pair of compatible contact forms as in Definition \ref{PHsff} (i.e. via a Gauss formula). By Remark \ref{sffHigherCodimensionRemark} the only nontrivial components of the pseudohermitian second fundamental form are $\II_{\mu\nu}{^{\ga}}$ and its conjugate. Also by Remark \ref{sffHigherCodimensionRemark} the pseudohermitian torsion of any admissible ambient contact form satisfies $\Pi^{\al}_{\mu}A_{\al\be}\rmN^{\be}_{\ga}=0$.

The higher codimension analogue of Lemma \ref{CRsffLemma} is:
\begin{lem}\label{CRsffHigherCodim}
Given compatible contact forms one has
\begin{equation}
N_{\ga}\II_{\mu\nu}{^{\ga}}=-\Pi^{\be}_{\nu}\nabla_{\mu}N_{\be}
\end{equation}
for any holomorphic conormal field.
\end{lem}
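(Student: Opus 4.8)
The plan is to mimic exactly the proof of Lemma \ref{CRsffLemma} (the minimal codimension case), since nothing in that argument used corank one. First I would recall the Gauss formula \eqref{GaussFormula} in component form: for any section $V^{\la}$ of $\cE^{\la}$, setting $V^{\ga}=\Pi^{\ga}_{\la}V^{\la}$, one has $\nabla_{\mu}V^{\ga}=\Pi^{\ga}_{\la}D_{\mu}V^{\la}+\II_{\mu\nu}{^{\ga}}V^{\nu}$, which is just \eqref{GaussFormulaComponents} with the index $i$ specialised to $\mu$. This identity is valid verbatim in arbitrary codimension (Definition \ref{PHsff}, Remark \ref{sffHigherCodimensionRemark}, Remark \ref{SubmanifoldTWpropRemark}).

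Next I would contract this identity with a holomorphic conormal field $N_{\ga}$, i.e.\ a section of $\cN_{\al}$ (now $\cN_{\al}$ has rank $d$, but that is irrelevant — $N_{\ga}$ is just any section). Since $N_{\ga}$ annihilates the tangential holomorphic bundle, $N_{\ga}\Pi^{\ga}_{\la}=0$, so the first term on the right drops out and we get $N_{\ga}\nabla_{\mu}V^{\ga}=N_{\ga}\II_{\mu\nu}{^{\ga}}V^{\nu}$. On the other hand, since $N_{\ga}V^{\ga}=0$ identically along $\Si$ (as $V^{\ga}$ is tangential), the Leibniz rule gives $N_{\ga}\nabla_{\mu}V^{\ga}=-V^{\ga}\nabla_{\mu}N_{\ga}=-V^{\nu}\Pi^{\be}_{\nu}\nabla_{\mu}N_{\be}$, using $V^{\ga}=\Pi^{\ga}_{\nu}V^{\nu}$ and then absorbing $\Pi^{\ga}_{\nu}$ — more carefully, $V^{\ga}\nabla_{\mu}N_{\ga}=\Pi^{\ga}_{\nu}V^{\nu}\nabla_{\mu}N_{\ga}$, and since we will contract against the arbitrary $V^{\nu}$ we only need the tangentially-projected derivative $\Pi^{\be}_{\nu}\nabla_{\mu}N_{\be}$. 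Comparing the two expressions for $N_{\ga}\nabla_{\mu}V^{\ga}$ and using that $V^{\nu}$ was arbitrary yields $N_{\ga}\II_{\mu\nu}{^{\ga}}=-\Pi^{\be}_{\nu}\nabla_{\mu}N_{\be}$, which is the claim.

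There is essentially no obstacle here: the only subtlety compared with the minimal codimension proof is that one cannot normalise $N_{\ga}$ to unit length and solve for $\II_{\mu\nu}{^{\ga}}$ itself (the conormal bundle is higher rank), so the statement is only about the pairing $N_{\ga}\II_{\mu\nu}{^{\ga}}$ with an arbitrary conormal — but ranging over all such $N_{\ga}$ this still pins down the normal component of $\II_{\mu\nu}{^{\ga}}$, which is all of $\II_{\mu\nu}{^{\ga}}$ since $\II$ is normal-valued. I would present this as roughly a three-line argument, noting only that $N_{\ga}\nabla_{\mu}V^{\ga}=-V^{\ga}\nabla_{\mu}N_{\ga}$ along $\Si$ and that one may replace $\nabla_{\mu}N_{\ga}$ by its tangential projection $\Pi^{\be}_{\ga'}\nabla_{\mu}N_{\be}$ when contracting against tangential $V^{\ga}$.

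\begin{proof}
This is identical to the proof of Lemma \ref{CRsffLemma}. From the Gauss formula \eqref{GaussFormula} in the form \eqref{GaussFormulaComponents} (which holds in arbitrary codimension by Remarks \ref{SubmanifoldTWpropRemark} and \ref{sffHigherCodimensionRemark}), specialised to the tangential holomorphic direction $\mu$, we have
\begin{equation*}
\nabla_{\mu} V^{\ga} = \Pi^{\ga}_{\la} D_{\mu} V^{\la}+\II_{\mu\nu}{^{\ga}}V^{\nu}
\end{equation*}
for any section $V^{\la}$ of $\cE^{\la}$, where $V^{\ga}=\Pi^{\ga}_{\la}V^{\la}$. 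Contracting with a holomorphic conormal field $N_{\ga}$ (a section of $\cN_{\al}$) kills the first term on the right, since $N_{\ga}\Pi^{\ga}_{\la}=0$. On the other hand $N_{\ga}V^{\ga}\equiv 0$ along $\Si$, so $N_{\ga}\nabla_{\mu}V^{\ga}=-V^{\ga}\nabla_{\mu}N_{\ga}=-\Pi^{\be}_{\nu}V^{\nu}\nabla_{\mu}N_{\be}$, using $V^{\ga}=\Pi^{\ga}_{\nu}V^{\nu}$. Hence $N_{\ga}\II_{\mu\nu}{^{\ga}}V^{\nu}=-\Pi^{\be}_{\nu}V^{\nu}\nabla_{\mu}N_{\be}$, and since $V^{\nu}$ was arbitrary the result follows.
\end{proof}
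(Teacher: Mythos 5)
Your proof is correct and is exactly the argument the paper uses: the paper states Lemma \ref{CRsffHigherCodim} without a separate proof precisely because it follows verbatim from the proof of Lemma \ref{CRsffLemma} (contract the component Gauss formula with $N_{\ga}$, use $N_{\ga}\Pi^{\ga}_{\la}=0$ and the Leibniz rule on $N_{\ga}V^{\ga}\equiv 0$). Your added remark — that in higher codimension one can no longer divide out a unit conormal to isolate $\II_{\mu\nu}{^{\ga}}$ itself, but that ranging over all conormals still determines it — correctly accounts for the only difference in the statement.
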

From Lemma \ref{CRsffHigherCodim} we see again that the component $\II_{\mu\nu}{^{\ga}}$ of the pseudohermitian second fundamental form does not depend on the compatible pair of contact forms used to define it (cf.\ Corollary \ref{CRsffCor}). 

The Gauss, Codazzi and Ricci equations given in the three propositions of Section \ref{Pseudohermitian-GCR} hold in the general codimension case with the same proofs (noting that the normal fields used in the proofs of Proposition \ref{pseudohermitian-Codazzi-prop} and Proposition \ref{pseudohermitian-Ricci-equation-prop} were arbitrary).

\subsection{Relating Densities}\label{RelatingDensities-HC}
As before we define $\Lambda^{1,0}_{\perp}\Si$ to be the bundle of forms in $\Lambda^{1,0}M|_{\Si}$ annihilating $T\Si$. Again we may identify $\Lambda^{1,0}_{\perp}\Si$ with $\cN_{\al}$ by restriction to $T^{1,0}M|_{\Si}$. We write $\Lambda^{d,0}_{\perp}\Si$ for the line bundle $\Lambda^d(\Lambda^{1,0}_{\perp}\Si)$. The following lemma is easily established (cf.\ Lemma \ref{CanonicalBundlesRelatedLemma} and Lemma \ref{ConnectionsOnCanonicalBundlesRelatedLemma}):
\begin{lem} \label{CanonicalBundlesRelatedLemmaHC}
Along $\Si$ the submanifold and ambient canonical bundles are related by the canonical isomorphism which intertwines the Tanaka-Webster connections of any compatible pair of contact forms
\begin{align*}
\scrK|_{\Si} &\: \cong \:\scrK_{\Si} \: \otimes \Lambda_{\perp}^{d,0}\Si \\*
\iota^*\nabla \hphantom{|} &\: \cong \:\:\: D \:\: \otimes \;\; \nabla^{\perp}.
\end{align*}
\end{lem}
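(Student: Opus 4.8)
The plan is to imitate the proof of Lemma \ref{CanonicalBundlesRelatedLemma} together with Lemma \ref{ConnectionsOnCanonicalBundlesRelatedLemma}, replacing the single conormal direction by the rank-$d$ normal bundle. First I would construct the underlying bundle isomorphism $\scrK|_{\Si}\cong\scrK_{\Si}\otimes\Lambda^{d,0}_{\perp}\Si$ by the wedge map
\begin{equation*}
\zeta_{\Si}\otimes(N_1\wedge\cdots\wedge N_d)\;\longmapsto\;\eta\wedge N_1\wedge\cdots\wedge N_d,
\end{equation*}
where each $N_j$ is a section of $\Lambda^{1,0}_{\perp}\Si$ (equivalently of $\cN_{\al}$ under restriction to $T^{1,0}M|_{\Si}$) and $\eta$ is any section of $\Lambda^n(\Lambda^{1,0}M|_{\Si})$ with $\iota^*\eta=\zeta_{\Si}$. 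One must check this is well defined: independence of the choice of extension $\eta$ holds because $\eta$ is determined modulo forms that contain at least one conormal factor, and such a factor wedged into $N_1\wedge\cdots\wedge N_d$ (which already spans all $d$ conormal directions) gives zero. The map is then injective on fibers, hence an isomorphism of complex line bundles since both sides have rank one; here one uses that $\Lambda^n(\Lambda^{1,0}M|_{\Si})$ is spanned by wedges of $m$ tangential covectors with $d$ conormal covectors, which matches $\Lambda^n(\Lambda^{1,0}\Si)\otimes\Lambda^{d}(\Lambda^{1,0}_{\perp}\Si)$ via $\iota^*$.

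Next I would verify that this isomorphism intertwines the Tanaka-Webster connections. Let $X\in T\Si$. By the Leibniz rule for the ambient Tanaka-Webster connection $\iota^*\nabla$ acting on $\eta\wedge N_1\wedge\cdots\wedge N_d$,
\begin{equation*}
\nabla_X(\eta\wedge N_1\wedge\cdots\wedge N_d)=(\nabla_X\eta)\wedge N_1\wedge\cdots\wedge N_d+\sum_{j}\eta\wedge N_1\wedge\cdots\wedge(\nabla_X N_j)\wedge\cdots\wedge N_d.
\end{equation*}
In the first term, only the component of $\nabla_X\eta$ along $\Lambda^n$ of the tangential $(1,0)$-bundle survives after wedging with all $d$ conormals, and by Proposition \ref{SubmanifoldTWprop} (valid in general codimension by Remark \ref{SubmanifoldTWpropRemark}) this tangential part is $D_X(\iota^*\eta)=D_X\zeta_{\Si}$, so the first term corresponds to $(D_X\zeta_{\Si})\otimes(N_1\wedge\cdots\wedge N_d)$. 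In each summand of the second term, only the normal-bundle part of $\nabla_X N_j$ contributes (the tangential part of $\nabla_X N_j$, when wedged with the remaining $N_k$, is a conormal times a sum that lies in the image of lower conormal order, but more simply: wedging a tangential covector into $N_1\wedge\cdots\wedge\widehat{N_j}\wedge\cdots$ leaves only $d-1$ conormal factors, and after the outer $\eta\wedge$ this has total conormal order $d-1<d$ so vanishes against the full $\Lambda^{n+1}$). Hence only $\nabla^{\perp}_X N_j$ survives, giving the $D\otimes\nabla^{\perp}$ behaviour on $\scrK_{\Si}\otimes\Lambda^{d,0}_{\perp}\Si$.

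The main obstacle I anticipate is the bookkeeping in the connection computation: making rigorous the claim that tangential parts of $\nabla_X N_j$ and normal parts of $\nabla_X\eta$ drop out of the wedge. This is exactly the higher-codimension analogue of the projection argument in Lemma \ref{ConnectionsOnCanonicalBundlesRelatedLemma}, and the cleanest way to handle it is to introduce the orthogonal (Webster-metric) decomposition of $\Lambda^{1,0}M|_{\Si}$ into the pullback of $\Lambda^{1,0}\Si$ and $\Lambda^{1,0}_{\perp}\Si=\cN_{\al}$, write the normal Tanaka-Webster connection $\nabla^{\perp}$ on $\Lambda^{d,0}_{\perp}\Si$ as the induced connection (differentiate tangentially with $\nabla$, project with $g_\theta$), and observe that both the $\eta$-factor term and the $N_j$-factor terms are controlled by these projections because any covector lying in the complementary subbundle wedges to zero against the appropriate top exterior power. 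Once this is set up, the statement follows; I would also remark (as in the minimal codimension case) that the isomorphism is canonical since it does not reference any choice of contact form, only $\iota$, and that compatibility with the earlier identification $\cE(1,1)|_\Si\cong\cE_\Si(1,1)$ is automatic because the construction is compatible with volume normalisation.
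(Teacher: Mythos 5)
Your proposal is correct and takes essentially the same route as the paper, which establishes this lemma precisely by generalising the wedge-map construction of Lemma \ref{CanonicalBundlesRelatedLemma} and the Leibniz/projection argument of Lemma \ref{ConnectionsOnCanonicalBundlesRelatedLemma} to the rank-$d$ conormal bundle. Two small points: the extension $\eta$ must be a section of $\Lambda^{m+1}(\Lambda^{1,0}M|_{\Si})$ rather than $\Lambda^{n}(\Lambda^{1,0}M|_{\Si})$ (these agree only when $d=1$, and the degree is in any case forced by the requirement $\iota^*\eta=\zeta_{\Si}$), and the vanishing of the terms involving the tangential parts of $\nabla_X N_j$ is not a pure ``conormal order'' count for arbitrary $\eta$ but does follow once, as you indicate at the end, one chooses the representative $\eta$ to be purely tangential with respect to the $g_{\theta}$-orthogonal splitting of $\Lambda^{1,0}M|_{\Si}$.
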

Identifying $\Lambda^{1,0}_{\perp}\Si$ with $\cN_{\al}$ we may write $\Lambda^{d,0}_{\perp}\Si$ as $\cN_{[\al_1 \cdots \al_d]}$. Tensoring both sides of the isomorphism of Lemma \ref{CanonicalBundlesRelatedLemmaHC} with $\cE(d,0)|_{\Si}$ we obtain (cf.\ Corollary \ref{ambient-intrinsic-densities-cor} and Corollary \ref{RelatingTWconnectionsOnDensities}):
\begin{cor} \label{DensityBundlesRelatedCorHC}
Along $\Si$ the submanifold and ambient density bundles are related by the canonical isomorphism which intertwines the Tanaka-Webster connections of any compatible pair of contact forms
\begin{align*}
\mathcal{E}(-m-2,0)|_{\Si} &\: \cong \:\mathcal{E}_{\Sigma}(-m-2,0) \otimes \cN_{[\al_1 \cdots \al_d]}(d,0)\\*
\iota^*\nabla\hphantom{---} &\: \cong \:\hphantom{---} D \hphantom{--}\:\:\:\:\:\! \otimes \hphantom{--}\;\; \nabla^{\perp}.
\end{align*}
\end{cor}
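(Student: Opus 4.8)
The statement is a direct higher-codimension analogue of Corollary \ref{ambient-intrinsic-densities-cor}, and the plan is to derive it exactly as that corollary was derived from Lemma \ref{CanonicalBundlesRelatedLemma}, now using Lemma \ref{CanonicalBundlesRelatedLemmaHC} in place of it. First I would recall that by definition $\scrK = \cE(-n-2,0)$ and $\scrK_{\Si} = \cE_{\Si}(-m-2,0)$, and that $\Lambda^{d,0}_{\perp}\Si = \cN_{[\al_1\cdots\al_d]}$ under the restriction-to-$T^{1,0}M|_{\Si}$ identification of $\Lambda^{1,0}_{\perp}\Si$ with $\cN_{\al}$. Substituting these into the isomorphism
\begin{equation*}
\scrK|_{\Si} \cong \scrK_{\Si} \otimes \Lambda^{d,0}_{\perp}\Si
\end{equation*}
of Lemma \ref{CanonicalBundlesRelatedLemmaHC} gives
\begin{equation*}
\cE(-n-2,0)|_{\Si} \cong \cE_{\Si}(-m-2,0)\otimes\cN_{[\al_1\cdots\al_d]},
\end{equation*}
and since $n = m+d$ the left side is $\cE(-m-2,0)|_{\Si}\otimes\cE(-d,0)|_{\Si}$. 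Tensoring both sides with $\cE(d,0)|_{\Si}$ and noting $\cN_{[\al_1\cdots\al_d]}\otimes\cE(d,0)|_{\Si} = \cN_{[\al_1\cdots\al_d]}(d,0)$ yields the claimed isomorphism of line bundles.

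For the connection assertion, I would invoke the second line of Lemma \ref{CanonicalBundlesRelatedLemmaHC}, which already states that the canonical isomorphism $\scrK|_{\Si}\cong\scrK_{\Si}\otimes\Lambda^{d,0}_{\perp}\Si$ intertwines $\iota^*\nabla$ with $D\otimes\nabla^{\perp}$ for any compatible pair of contact forms. The only additional point is that tensoring with the identity on $\cE(d,0)|_{\Si}$ and then identifying $\nabla^{\perp}$ on $\cN_{[\al_1\cdots\al_d]}$ coupled with $\iota^*\nabla$ on $\cE(d,0)|_{\Si}$ with $\nabla^{\perp}$ on $\cN_{[\al_1\cdots\al_d]}(d,0)$ is precisely the definition of the (weighted) normal Tanaka-Webster connection (this is the higher-codimension version of the remark preceding Corollary \ref{RelatingTWconnectionsOnDensities}); similarly on the submanifold side $D$ on $\cE_{\Si}(-m-2,0)$ is compatible with $D$ on $\cE_{\Si}(d,0)$, giving $D$ on $\cE_{\Si}(-m-2,0)$ again after the cancellation $\cE_{\Si}(-m-2,0)\otimes\cE_{\Si}(d,0)\otimes\cdots$. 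Since $\iota^*\nabla$ is the tensor product of $\iota^*\nabla$ on $\cE(-m-2,0)|_{\Si}$ with $\iota^*\nabla$ on $\cE(-d,0)|_{\Si}$ and the latter combines with the $\Lambda^{d,0}_{\perp}\Si$ factor, the intertwining statement follows by naturality of tensor products of connections.

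\textbf{Main obstacle.} There is essentially no obstacle here beyond bookkeeping: the entire content was already packaged into Lemma \ref{CanonicalBundlesRelatedLemmaHC}, whose proof is the one indicated as ``easily established'' by analogy with Lemma \ref{CanonicalBundlesRelatedLemma} and Lemma \ref{ConnectionsOnCanonicalBundlesRelatedLemma}. The only mild subtlety worth stating explicitly is the role of the transversality hypothesis: it guarantees that $\Lambda^{1,0}_{\perp}\Si$ has the expected rank $d$ (equivalently that $T^{1,0}\Si$ has corank $d$ in $T^{1,0}M|_{\Si}$), so that $\cN_{[\al_1\cdots\al_d]}$ is a genuine line bundle and the wedge-product map in the proof of Lemma \ref{CanonicalBundlesRelatedLemmaHC} is an isomorphism rather than degenerate. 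I would therefore present the proof as a two-line computation: substitute the definitions of the canonical bundles into Lemma \ref{CanonicalBundlesRelatedLemmaHC}, tensor with $\cE(d,0)|_{\Si}$, and identify the weighted normal connection, exactly mirroring the proof of Corollary \ref{ambient-intrinsic-densities-cor}.
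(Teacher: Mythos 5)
Your proposal is correct and follows exactly the paper's route: the paper likewise obtains the corollary by substituting $\scrK=\cE(-n-2,0)$, $\scrK_{\Si}=\cE_{\Si}(-m-2,0)$ and $\Lambda^{d,0}_{\perp}\Si=\cN_{[\al_1\cdots\al_d]}$ into Lemma \ref{CanonicalBundlesRelatedLemmaHC} and tensoring with $\cE(d,0)|_{\Si}$, with the weighted normal connection identified as the coupling of $\nabla^{\perp}$ with $\iota^*\nabla$ on the density factor. The only blemish is the spurious mention of a factor $\cE_{\Si}(d,0)$ on the submanifold side (no such factor arises, since one tensors with the ambient bundle $\cE(d,0)|_{\Si}$, which is absorbed entirely into $\cN_{[\al_1\cdots\al_d]}(d,0)$), but this does not affect the argument.
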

Note that the line bundle $\cN_{[\al_1 \cdots \al_d]}(d,0)$ is the $d^{th}$ exterior power of $\cN_{\al}(1,0)$. Once again this bundle will turn out to be canonically isomorphic to a subbundle $\cN^*=\cN_A$ of the ambient cotractor bundle $\cE_A|_{\Si}$ (see Section \ref{RelatingTractorsHC}), and hence once again $\cN_{\al}(1,0)$ carries a canonical invariant connection. As before this connection turns out to be explicitly realised as the normal Weyl connection on $\cN_{\al}(1,0)$ of any admissible ambient contact form. The normal Weyl connection on $\cN_{\al}(1,0)$ agrees with the normal Tanaka-Webster connection when differentiating in contact directions; when differentiating in Reeb directions the two are related by 
\begin{equation}\label{NormalWeyl0HC}
\nabla^{W,\perp}_0 \tau_{\al} = \nabla^{\perp}_0 \tau_{\al} -i\rmN^{\al'}_{\al}P_{\al'}{^{\be}}\tau_{\be}+\frac{i}{n+2}P\tau_{\al}
\end{equation}
for any section $\tau_{\al}$ of $\cN_{\al}(1,0)$. The curvature $R^{\Lambda^d\cN^*}$ of this connection on $\cN_{[\al_1 \cdots \al_d]}(d,0)$ is again generically non zero, and we have
\begin{equation}
R^{\Lambda^d\cN^*}_{\mu\nub}=(m+2)(P_{\mu\nub}-p_{\mu\nub})+(P_{\la}{^{\la}}-p)\bh_{\mu\nub}
\end{equation}
(cf.\ Lemma \ref{haCurvatureOfRatioBundleProp}), $R^{\Lambda^d\cN^*}_{\mu\nu}=0$, $R^{\Lambda^d\cN^*}_{\mub\nub}=0$, and (cf.\ \eqref{h0NormalCurvature})
\begin{equation}
R^{\Lambda^d\cN^*}_{\mu0} = V_{\mu\beb\ga}\rmN^{\ga\beb} - iP_{\ga}{^{\nu}}\II_{\mu\nu}{^{\ga}}.
\end{equation}

We thus define the ratio bundle of densities $\cR(w,w')$ as before (Definition \ref{ratio-bundle-defn}) and see from Corollary \ref{DensityBundlesRelatedCorHC} that these bundles carry a canonical connection $\nabla^{\cR}$ coming from the connection $\nabla^{\cN}$ on $\Lambda^d\cN^* = \cN_{[\al_1 \cdots \al_d]}(d,0)$. We have therefore established Proposition \ref{RatioAndNormalBundlesResult}. Using Corollary \ref{DensityBundlesRelatedCorHC} and \eqref{NormalWeyl0HC} we may relate the connection $\nabla^{\cR}$ to the coupled submanifold-ambient Tanaka-Webster connection (cf.\ Lemma \ref{RatioConnectionLemma}):
\begin{lem}\label{RatioConnectionLemmaHC}
In terms of a compatible pair of contact forms, $\theta$, $\theta_{\Si}$, the connection $\nabla^{\cR}$ on a section $\phi\otimes\si$ of $\cE(w,w')|_{\Si}\otimes\cE_{\Sigma}(-w,-w')$ is given by 
\begin{equation}
\nabla^{\cR}_{\mu}(\phi\otimes\si)=(\nabla_{\mu}\phi)\otimes\si+\phi\otimes(D_{\mu}\si),
\end{equation}
\begin{equation}
\nabla^{\cR}_{\mub}(\phi\otimes\si)=(\nabla_{\mub}\phi)\otimes\si+\phi\otimes(D_{\mub}\si),
\end{equation}
and
\begin{equation}
\nabla^{\cR}_{0}(\phi\otimes\si)=(\nabla_{0}\phi)\otimes\si+\phi\otimes(D_{0}\si) + \tfrac{w-w'}{m+2}(iP_{\al\beb}\rmN^{\al\beb}-\tfrac{i}{n+2}P)\phi\otimes\si.
\end{equation}
\end{lem}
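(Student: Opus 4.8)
The plan is to transcribe the minimal-codimension argument of Lemma~\ref{RatioConnectionLemma}, with the weighted conormal bundle replaced by its $d$-th exterior power. Since the diagonal bundles $\cR(w,w)$ are canonically trivial with flat $\nabla^{\cR}$, we have $\cR(w,w')\cong\cR(1,0)^{\,w-w'}$ with $\nabla^{\cR}$ scaling linearly in $w-w'$; moreover $\cR(-m-2,0)=\Lambda^{d}\cN^{*}=\cN_{[\al_1\cdots\al_d]}(d,0)$, on which $\nabla^{\cR}$ is by construction $\nabla^{\cN}$, the $d$-th exterior power of the normal Weyl connection $\nabla^{W,\perp}$ on $\cN_{\al}(1,0)$ of any admissible $\theta$. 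So it is enough to compute $\nabla^{\cN}$ on $\cR(-m-2,0)$ and then multiply the correction terms by $-(w-w')/(m+2)$.

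For the contact derivatives I would use that $\nabla^{W,\perp}$ agrees with the normal Tanaka--Webster connection $\nabla^{\perp}$ along $H_{\Si}$, and that by Corollary~\ref{DensityBundlesRelatedCorHC} the isomorphism $\cE(-m-2,0)|_{\Si}\cong\cE_{\Si}(-m-2,0)\otimes\cN_{[\al_1\cdots\al_d]}(d,0)$ intertwines $\iota^{*}\nabla$ with $D\otimes\nabla^{\perp}$. Hence on $\cR(-m-2,0)=\cE(-m-2,0)|_{\Si}\otimes\cE_{\Si}(m+2,0)$ the connection $\nabla^{\perp}$ is just the coupled connection $\phi\otimes\si\mapsto(\nabla_{\mu}\phi)\otimes\si+\phi\otimes(D_{\mu}\si)$ (and its conjugate), and the linear rescaling produces the first two displayed formulas; these come out weight independent because $\iota^{*}\nabla\otimes D$ has purely imaginary connection form on $\cR(1,0)$ in a volume-normalised trivialisation, which one reads off from Lemma~\ref{TWonDiagonalDensitiesLemma} and its proof.

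The Reeb-direction formula is where the real work lies. Here $\nabla^{W,\perp}$ and $\nabla^{\perp}$ on $\cN_{\al}(1,0)$ differ by \eqref{NormalWeyl0HC}; tracing that identity over the $d$-dimensional normal bundle (using $\rmN^{\al'}_{\al}\bh^{\al\beb}=\rmN^{\al'\beb}$, so that the $P_{\al'}{}^{\be}$ term traces to $P_{\al\beb}\rmN^{\al\beb}$, together with the contribution of the density weight of $\cN_{[\al_1\cdots\al_d]}(d,0)$) gives the correction of $\nabla^{\cN}_{0}$ over $\nabla^{\perp}_{0}$ on $\cR(-m-2,0)$; feeding this through Corollary~\ref{DensityBundlesRelatedCorHC} once more to rewrite $\nabla^{\perp}_{0}$ as $\phi\otimes\si\mapsto(\nabla_{0}\phi)\otimes\si+\phi\otimes(D_{0}\si)$, and then rescaling by $-(w-w')/(m+2)$, yields the last formula. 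The main obstacle is precisely the bookkeeping in this step: one must keep careful track of the trace over the normal directions, of how the weight of $\cN_{[\al_1\cdots\al_d]}(d,0)$ interacts with the Weyl-connection correction on the ambient density factor, and of the linear rescaling to arbitrary weight $(w,w')$. Everything else is a routine transcription of the case $d=1$, so I would not spell it out.
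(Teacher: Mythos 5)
Your approach is the same as the paper's: the proof offered there is precisely the sentence preceding the lemma, namely that the formulas follow from Corollary \ref{DensityBundlesRelatedCorHC} together with \eqref{NormalWeyl0HC} --- exactly the two ingredients you use --- and your reduction to $\cR(-m-2,0)=\Lambda^d\bigl(\cN_{\al}(1,0)\bigr)$ followed by linear rescaling in $w-w'$ (justified, as you say, by the flatness of the coupled connection on the diagonal bundles) is sound.

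One warning about the bookkeeping you rightly identify as the crux. On $\Lambda^d\bigl(\cN_{\al}(1,0)\bigr)$ the endomorphism part $-i\rmN^{\al'}_{\al}P_{\al'}{}^{\be}$ of \eqref{NormalWeyl0HC} contributes its trace $-iP_{\al\beb}\rmN^{\al\beb}$, but the scalar part $\tfrac{i}{n+2}P$ --- which comes from the $\cE(1,0)$ weight of each of the $d$ factors --- enters with multiplicity $d$, giving $\tfrac{id}{n+2}P$. Carrying this through, the Reeb-direction correction comes out as $\tfrac{w-w'}{m+2}\bigl(iP_{\al\beb}\rmN^{\al\beb}-\tfrac{id}{n+2}P\bigr)$, which differs from the displayed statement by that factor of $d$ (and agrees with Lemma \ref{RatioConnectionLemma} when $d=1$). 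The version with the factor $d$ is the one consistent with the rest of the paper: using $m+2+d=n+2$ it is exactly what is needed for the $(1,1)$-curvature of $\nabla^{\cR}$ to reproduce the stated $R^{\Lambda^d\cN^*}_{\mu\nub}=(m+2)(P_{\mu\nub}-p_{\mu\nub})+(P_{\la}{}^{\la}-p)\bh_{\mu\nub}$, and for $\cnabla_0$ to acquire the leading term $\tfrac{i}{m+2}P_{\la}{}^{\la}$ so that $\sfS_0$ retains the form \eqref{0DifferenceTractor}, as asserted in Section \ref{RelatingTractorsHC}. So do not be alarmed when a careful execution of your plan disagrees with the printed coefficient: the printed statement appears to carry a typo, and your method is the right way to detect and correct it.
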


\subsection{Relating Tractors}\label{RelatingTractorsHC}
As before we have a canonical isomorphism from $\cN_{\al}(1,0)$ to a subbundle $\cN_A$ of $\cE_{A}|_{\Si}$, given with respect to any admissible ambient contact form $\theta$ by
\begin{equation}
\tau_{\al}\mapsto \tau_A \overset{\theta}{=}\left(\begin{array}{c}
0\\
\tau_{\alpha}\\
0
\end{array}\right).
\end{equation}
There is a corresponding isomorphism of $\cN^{\al}(-1,0)$ with a subbundle $\cN^A$ of $\cE^A|_{\Si}$, and we alternatively denote the dual pair $\cN^A$ and $\cN_A$ by $\cN$ and $\cN^*$ respectively. The normal tractor connection $\nabla^{\cN}$ on $\cN_A$ agrees with the normal Weyl connection of any admissible ambient contact form on $\cN_{\al}(1,0)$ (cf.\ Proposition \ref{NormalTractorConnectionProp}).

Sections \ref{TractorsAndDensities} and \ref{RelatingTractorBundles} are valid without change in the general codimension case. In particular, Lemma \ref{StandardTractorMapsLemma} and Theorem \ref{DiagonalTractorMap} hold. Thus we may talk about the twisted standard tractor map
\begin{equation*}
\cT^{\cR}\iota : \cT\Si \rightarrow\cT M|_{\Si}\otimes \cR(1,0)
\end{equation*}
and the corresponding sections $\Pi^A_I$ of $\cE_I\otimes\cE^A|_{\Si}\otimes\cR(1,0)$ and $\Pi^I_A$ of $\cE^I\otimes\cE_A|_{\Si}\otimes\cR(0,1)$. This allows us to define the connection $\cnabla$ on $\cT\Si$ as in \eqref{cnabla-defn}; one can then easily establish the expressions for $\cnabla$ given in Proposition \ref{cnabla-prop} in the general codimension setting (the proof is essentially the same, with Lemma \ref{RatioConnectionLemmaHC} generalising Lemma \ref{RatioConnectionLemma}). The difference tractor $\sfS$, defined as in Definition \ref{difference-tractor-defn}, is then still given in component form by \eqref{hDifferenceTractor}, \eqref{aDifferenceTractor}, and \eqref{0DifferenceTractor}.  

We define the tractor second fundamental form $\bL$ by a tractor Gauss formula as in Definition \ref{tractor-sff-defn}. This establishes Theorem \ref{TwistedTractorMapResult}. One then also has that
\begin{equation}\label{TractorSFFonNormal}
\bL_{i B}{^C} N_C = -\Pi^{B'}_B\nabla_i N_{B'}
\end{equation}
for any section $N_{A}$ of $\cN_{A}$. From this we get (cf.\ Proposition \ref{tractor-sff-prop}):
\begin{prop}\label{tractor-sff-prop-HC}
With respect to a compatible pair of contact forms the components $\bL_{\mu B}{^C}$, $\bL_{\mub B}{^C}$, and $\bL_{0 B}{^C}$ of the tractor second fundamental form $\bL$ are given by
\begin{equation}\label{hTractorSFF-HC}
\bL_{\mu B}{^C} = \II_{\mu\nu}{^{\ga}}\Pi^{\nu}_{\be}W^{\be}_B W_{\ga}^C + P_{\mu\deb}N^{\ga\deb} Z_B W_{\ga}^C,
\end{equation}
\begin{equation}\label{aTractorSFF-HC}
\bL_{\mub B}{^C}=0,
\end{equation}
and
\begin{equation}\label{0TractorSFF-HC}
\bL_{0 B}{^C}= i\Pi^{\be'}_{\be}P_{\be'\deb}N^{\ga\deb}W^{\be}_B W_{\ga}^C -2iT_{\deb}N^{\ga\deb} Z_B W_{\ga}^C.
\end{equation}
\end{prop}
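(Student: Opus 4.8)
The plan is to run the proof of Proposition~\ref{tractor-sff-prop} essentially verbatim, replacing the rank-one normal data of the minimal-codimension case by the rank-$d$ normal bundle $\cN_{A}$. Fix a compatible pair $\theta$, $\theta_{\Si}$ with $\theta$ admissible. By \eqref{TractorSFFonNormal} the whole problem reduces to computing $\Pi^{B'}_{B}\nabla_{i}N_{B'}$ for sections $N_{A}$ of $\cN_{A}$: since $\bL$ is a $1$-form valued in $\mathrm{Hom}(\cN^{\perp},\cN)$, the quantities $\bL_{iB}{}^{C}N_{C}$, as $N_{A}$ ranges over a local frame of $\cN_{A}$, determine $\bL_{iB}{}^{C}$ completely. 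So I take an arbitrary local section $N_{\al}$ of $\cN_{\al}(1,0)$, form the corresponding section $N_{A}\overset{\theta}{=}(0,N_{\al},0)$ of $\cN_{A}$ (using the higher-codimension analogue of Proposition~\ref{NormalTractorConnectionProp}), and compute.

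The computation is purely mechanical: apply the ambient tractor connection formulae \eqref{hhTractorConnection}, \eqref{ahTractorConnection}, \eqref{0hTractorConnection} to $N_{A}$ and differentiate in directions tangential to $\Si$, i.e.\ contract the ambient contact slot with $\Pi^{\be}_{\mu}$ or $\Pi^{\beb}_{\mub}$ and use the intrinsic Reeb direction for the $0$-component (legitimate for admissible $\theta$, and this is exactly the splitting of $\iota^{*}\nabla$ induced by $\theta_{\Si}$ through \eqref{SigmaComplexTangentSplitting}). Since the top and bottom slots of $N_{A}$ vanish, the surviving entries are the middle slot $\nabla_{i}N_{\al}$, a single pseudohermitian torsion term in the bottom slot of the $\mub$-component, and the Schouten terms $-P_{i}{}^{\al}N_{\al}$ in the bottom slots of the $\mu$- and $0$-components; note $\Pi^{\be}_{\mu}N_{\be}=0$ as $N_{\be}$ is conormal. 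Then project with $\Pi^{B'}_{B}$, which (as in the remark following Proposition~\ref{NormalTractorConnectionProp}, valid unchanged here) keeps the top and bottom slots and sends the middle slot $\tau_{\al}$ to its tangential part $\Pi^{\be}_{\al}\tau_{\be}$, and read $\bL_{iB}{}^{C}N_{C}$ off \eqref{TractorSFFonNormal}.

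The three formulae then drop out using precisely the vanishing statements from the minimal-codimension proof, now in general codimension. For \eqref{hTractorSFF-HC}: Lemma~\ref{CRsffHigherCodim} converts $\Pi^{\be}_{\al}\nabla_{\mu}N_{\be}$ into the $\II_{\mu\nu}{}^{\ga}\Pi^{\nu}_{\be}$ term, while $-P_{\mu}{}^{\al}N_{\al}$ in the bottom slot becomes the $P_{\mu\deb}\rmN^{\ga\deb}$ term. For \eqref{aTractorSFF-HC}: the entire $\mub$-component is killed by $\II_{\mub\nu}{}^{\ga}=0$ together with $\Pi^{\al}_{\mu}A_{\al\be}\rmN^{\be}_{\ga}=0$ (Remark~\ref{sffHigherCodimensionRemark}). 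For \eqref{0TractorSFF-HC}: $\II_{0\nu}{}^{\ga}=0$ (equivalently $\Pi^{\ga}_{\al}\nabla_{0}N_{\ga}=0$) removes the $\nabla_{0}N$ contribution, leaving the $P_{0}$- and $T$-type terms. Passing from these frame-wise identities to the stated formulae just amounts to replacing the rank-one projector $N^{\ga}N_{\be}$ by the genuine normal projector $\rmN^{\ga}_{\be}$, which turns $P_{\mu}{}^{\al}N_{\al}N^{\ga}$-style products into $P_{\mu\deb}\rmN^{\ga\deb}$ and similarly for the other terms.

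There is no real obstacle here: the argument is in lockstep with Proposition~\ref{tractor-sff-prop}. The only points demanding attention are (i)~bookkeeping the higher-rank normal projector $\rmN^{\al\beb}$ correctly where the earlier proof used a single unit normal, and (ii)~checking that the tangential/Reeb splitting of $\iota^{*}\nabla$ used to decompose $\nabla_{i}N_{A}$ is the one induced by $\theta_{\Si}$ --- both already established in the earlier material (Proposition~\ref{SubmanifoldTWprop}, Corollary~\ref{ATanNorCor}, Remark~\ref{sffHigherCodimensionRemark}).
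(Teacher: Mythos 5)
Your proposal is correct and follows essentially the same route as the paper, which derives Proposition \ref{tractor-sff-prop-HC} from \eqref{TractorSFFonNormal} by the same computation as in Proposition \ref{tractor-sff-prop}: apply the explicit tractor connection formulae to a section $(0,N_{\al},0)$ of $\cN_A$, project with $\Pi^{B'}_B$, and invoke Lemma \ref{CRsffHigherCodim}, $\II_{\mub\nu}{}^{\ga}=\II_{0\nu}{}^{\ga}=0$, and $\Pi^{\al}_{\mu}A_{\al\be}\rmN^{\be}_{\ga}=0$. Your replacement of the single unit normal by an arbitrary frame of $\cN_{\al}(1,0)$, and of $N^{\ga}N_{\be}$ by $\rmN^{\ga}_{\be}$, is exactly the adaptation the paper intends.
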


\section{Invariants of CR Embedded Submanifolds}\label{InvariantsOfSubmanifolds}

For many problems in geometric analysis it is important to construct
the invariants that are, in a suitable sense, polynomial in the jets
of the structure. Riemannian theory along these lines was developed by
Atiyah-Bott-Patodi for their approach to the heat equation asymptotics
\cite{ABP}, and in \cite{FeffermanParabolic} Fefferman initiated a corresponding
programme for conformal geometry and hypersurface type CR geometry. As
explained in \cite{BaileyEastwoodGraham} there are two steps to such
problems. The first is to capture the jets (preferably to all orders)
of the geometry invariantly and in an algebraically manageable
manner. The second is to use this algebraic structure to construct all
invariants. The latter boils down to Lie representation theory, for
the case of parabolic geometries this is difficult, and despite the
progress in \cite{BaileyEastwoodGraham} and \cite{Gover-Advances} for conformal geometry
and CR geometry many open problems remain. For the conformal and CR
cases the first part is treated by the Fefferman and Fefferman-Graham
ambient metric constructions \cite{FeffermanParabolic,FeffermanGraham84,FGbook} and alternatively
by the tractor calculus \cite{BaileyEastwoodGover-Thomas'sStrBundle,CapGover-TracCalc,Gover-Advances}. It is beyond the
scope of the current work to fully set up and treat the corresponding
invariant theory for CR submanifolds. However we wish to indicate here
that the first geometric step, of capturing the jets effectively, is
solved via the tools developed above. In particular we will show that
it is straightforward to proliferate invariants of a (transversally embedded) CR submanifold. 
It seems reasonable to hope that these methods will form
the basis of a construction of all invariants of CR embeddings (in an
appropriate sense).

\subsection{Jets of the Structure}\label{JetsOfStructure}

We now show that the jets of the structure of a CR embedding are captured effectively by the basic invariants we have introduced in our `tractorial' treatment of CR embeddings.

Observe that the tractor Gauss formula \eqref{tractor-Gauss-formula} may be rewritten in the form
\begin{equation}\label{tractor-Gauss-alternate}
\nabla_X \cT^{\cR}\iota = \cT^{\cR}\iota \circ \sfS(X) + \bL(X)\circ \cT^{\cR}\iota
\end{equation}
for any $X\in T\Si$, where $\cT^{\cR}\iota$ is interpreted as a section of $\cT M|_{\Si}\otimes \cT^* \Si \otimes \cR(1,0)$ and $\nabla$ here denotes the (pulled back) ambient tractor connection coupled with the submanifold tractor connection and the canonical connection $\nabla^{\cR}$. Using this we have the following proposition:
\begin{prop}\label{JetsOfEmbeddingProp}
Given a transversal CR embedding $\iota:\Si\rightarrow M$, the $2$-jet of the map $\iota$ at a point $x\in\Si$ is encoded by $\iota(x)$, $\cT^{\cR}_x\iota$, $\sfS_x$ and $\bL_x$.
\end{prop}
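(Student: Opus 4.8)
The plan is to unwind the definitions so that $\iota(x)$, $\cT^{\cR}_x\iota$, $\sfS_x$ and $\bL_x$ determine the $0$-, $1$-, and $2$-jets of $\iota$ in turn. First I would note that $\iota(x)$ is by definition the $0$-jet. For the $1$-jet, recall from Section \ref{RelatingTractorBundles} that the twisted tractor map $\cT^{\cR}\iota$ is filtration preserving and that the induced map $\cT^0\Si/\cT^1\Si \to (\cT^0M|_\Si/\cT^1M|_\Si)\otimes\cR(1,0)$ is precisely the tangent map $\cE^\mu\to\cE^\al|_\Si$ tensored with the trivial isomorphism \eqref{TrivialIsomorphism}. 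Hence $\cT^{\cR}_x\iota$ determines $T_x\iota$ on $T^{1,0}_x\Si$, and since $T_x\iota$ is a real linear map commuting with $J$ and (by transversality together with compatibility of scales) sending $T_\Si$ to $T$, it determines $T_x\iota$ completely; this is the $1$-jet of $\iota$ at $x$.

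The substance is the $2$-jet. The idea is that, fixing compatible contact forms $\theta$, $\theta_\Si$, the first derivatives of the tangent map $T\iota$ along $\Si$ are governed by the pseudohermitian Gauss formula \eqref{GaussFormula}: $\nabla_X(\iota_*Y)=\iota_*(D_XY)+\II(X,Y)$. Here $D$ is the intrinsic Tanaka--Webster connection of $\theta_\Si$, which is determined by the $1$-jet data (via $\theta_\Si=\iota^*\theta$), and $\nabla$ is the ambient connection; so the $2$-jet of $\iota$ is captured once one knows $\II_x$ together with the $1$-jet. Thus I would reduce the proposition to showing that $\II_x$ is recoverable from $\cT^{\cR}_x\iota$, $\sfS_x$ and $\bL_x$. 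The key computational input is Proposition \ref{tractor-sff-prop-HC} (or \ref{tractor-sff-prop} in the minimal codimension case): the invariant projecting part of $\bL_{\mu B}{}^{C}$ is exactly $\II_{\mu\nu}{}^{\ga}\Pi^{\nu}_{\be}$, and contracting/recombining with $\cT^{\cR}_x\iota$ (which encodes $\Pi^\al_\mu$, hence $\Pi^\nu_\be$ along $\Si$) recovers $\II_{\mu\nu}{}^{\ga}$, i.e. the CR second fundamental form. The remaining components of $\II$ vanish by Proposition \ref{SFFprop}, so $\II_x$ is determined. One should also observe that $\sfS_x$ enters only to account for the difference between the intrinsic connection $D$ and the induced connection $\cnabla$ when reading $\bL$ off the tractor Gauss formula \eqref{tractor-Gauss-alternate}; alternatively, since \eqref{TractorSFFonNormal} expresses $\bL$ purely through $\nabla^{\cN}$-data, one can extract $\II_{\mu\nu}{}^\ga$ directly from $\bL_x$ via \eqref{hTractorSFF-HC} and the knowledge of $\cT^{\cR}_x\iota$, with $\sfS_x$ not strictly needed for the $2$-jet but naturally paired with $\bL_x$ as the full first-order tractor data.

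Carrying this out, the steps in order are: (i) identify the $0$- and $1$-jets with $\iota(x)$ and the tangent-map part of $\cT^{\cR}_x\iota$, using the filtration-preserving property and \eqref{TrivialIsomorphism}; (ii) use compatibility of scales (Lemma \ref{CompatibleScales}) and transversality to upgrade the holomorphic-tangent tangent map to the full real tangent map; (iii) invoke the pseudohermitian Gauss formula \eqref{GaussFormula} together with Proposition \ref{SFFprop} to reduce the $2$-jet to knowledge of $\II_{\mu\nu}{}^{\ga}$ at $x$ (given the $1$-jet, which determines $D$); (iv) extract $\II_{\mu\nu}{}^{\ga}$ from $\bL_x$ via the explicit components in Proposition \ref{tractor-sff-prop-HC}, using $\cT^{\cR}_x\iota$ to supply the projectors $\Pi^\al_\mu$. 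The main obstacle will be step (ii)/(iii): one must be careful that ``encoded by'' is interpreted as an equivalence of germs-of-jets up to the choices implicit in the tractorial data (the choice of compatible contact forms, and the phase/scale ambiguity in $\cR(1,0)$), and that the ambient second-order data entering the $2$-jet of $\iota$ is genuinely only the \emph{tangential} second derivative $\nabla_X\iota_*Y$ along $\Si$ --- the normal-normal second derivatives of a defining function are not part of the $2$-jet of the map $\iota$ itself --- so that $\II_x$ really is the only new ingredient beyond the $1$-jet.
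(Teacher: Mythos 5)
Your overall strategy is sound and close in spirit to the paper's, but there is one soft spot in your step (ii). The paper's proof recovers the full $1$-jet by observing that $\cT^{\cR}_x\iota$ determines the adjoint tractor map $\cA_x\iota$ (as the restriction of $\cT^{\cR}_x\iota\otimes\overline{\cT^{\cR}_x\iota}$ to skew-Hermitian endomorphisms), and that $\cA\iota$ lifts the tangent map via the canonical projections $\cA\Si\rightarrow T\Si$, $\cA M\rightarrow TM$ of Section \ref{SectionAdjointTractorMap}; this recovers $T_x\iota$ on all of $T_x\Si$, Reeb direction included. Your route via the associated graded pieces of the filtration only recovers $T_x\iota$ on $T^{1,0}_x\Si$ (hence on $H_{\Si,x}$ by reality), and your appeal to "compatibility of scales" to conclude that $T_x\iota$ sends $T_\Si$ to $T$ is circular: an admissible ambient contact form is \emph{defined} by the condition that its Reeb field along $\Si$ equals $T\iota(T_\Si)$, so this does not determine $T_x\iota(T_{\Si,x})$ from the given data. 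The information you are missing lives in the full linear map $\cT^{\cR}_x\iota$ rather than in its filtration quotients, which is exactly why the paper passes through the adjoint tractor map. With that substitution your step (ii) closes.

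For the $2$-jet your argument genuinely diverges from the paper's and is correct. The paper simply rewrites the tractor Gauss formula as \eqref{tractor-Gauss-alternate}, so that $\nabla_X\cT^{\cR}\iota$ is an algebraic expression in $\cT^{\cR}\iota$, $\sfS$ and $\bL$, and reads off the $2$-jet as the first derivative of the ($1$-jet-determining) tractor map. You instead descend to the pseudohermitian level: the $2$-jet of $\iota$ consists precisely of the tangential-tangential second derivatives, which by the Gauss formula \eqref{GaussFormula} reduce (given the $1$-jet and the intrinsic connection $D$) to $\II_x$; then Proposition \ref{SFFprop} kills all components except $\II_{\mu\nu}{}^{\ga}$, which is recovered as the projecting part of $\bL_{\mu B}{}^{C}$ in Proposition \ref{tractor-sff-prop-HC}, contracted against the projectors supplied by the tangent map. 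This is more computational but buys something the paper's one-line argument does not make explicit: it isolates which datum carries which information, and in particular shows that $\sfS_x$ is not actually needed for the $2$-jet of the underlying map $\iota$ (it records the discrepancy of Schouten-type data entering the derivative of the full tractor map, which is curvature of the two structures rather than jet data of $\iota$). Your closing caveat about what "encoded by" should mean, and about normal-normal second derivatives not being part of the $2$-jet of the map, is well taken and consistent with the paper's (informal) usage.
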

\begin{proof}
Recalling Section \ref{SectionAdjointTractorMap} we note that the twisted tractor map $\cT^{\cR}\iota$ determines the adjoint tractor map $\cA\iota$ (by restricting $\cT^{\cR}\iota\otimes \overline{\cT^{\cR}\iota}$). Since the adjoint tractor map lifts the tangent map, the 1-jet $(\iota(x),T_x \iota)$ of $\iota$ at a point $x\in\Si$ is also determined by the pair $(\iota(x),\cT^{\cR}_x \iota)$. The proposition then follows from \eqref{tractor-Gauss-alternate}.
\end{proof}
In the \emph{jets of the structure} of a CR embedding $\iota:\Si\rightarrow M$ we include the jets of the ambient and submanifold CR structures, along with the jets of the map $\iota$. A CR invariant of the embedding should depend only on these jets evaluated along the submanifold. The jets of the ambient and submanifold CR structures are determined by the respective tractor curvatures. Thus from Proposition \ref{JetsOfEmbeddingProp} we have:
\begin{prop}\label{BasicInvariantsProp}
The jets of the structure of a transversal CR embedding $\iota:\Si\rightarrow M$ at $x\in \Si$ are determined algebraically by $\iota(x)$, the submanifold and ambient CR structures (as Cartan geometries) at $x$, the twisted tractor map $\cT_x^{\cR}\iota$, as well as the jets of the difference tensor $\sfS$, the tractor second fundamental form $\bL$, the submanifold tractor curvature $\ka^{\Si}$ at $x$, and the full (i.e. ambient) jets of the ambient tractor curvature $\ka$ at $\iota(x)$.
\end{prop}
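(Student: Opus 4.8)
The plan is to argue by an inductive descent on the order of jets, using the tractor Gauss formula and the tractor curvatures as the recursive engine, exactly as in the conformal ambient-metric / tractor-curvature story. I will first observe what Proposition \ref{JetsOfEmbeddingProp} already gives us: the $2$-jet of $\iota$ at $x$ (equivalently, the $1$-jet of the tangent map, equivalently the value and first derivative of $\cT^{\cR}\iota$ read off from $\iota(x)$, $\cT^{\cR}_x\iota$, $\sfS_x$, $\bL_x$ via \eqref{tractor-Gauss-alternate}). The content of Proposition \ref{BasicInvariantsProp} is that continuing this process to all orders only requires, at each new stage, the next jet of $\sfS$ and $\bL$, together with the jets of the ambient and submanifold tractor curvatures. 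So the proof is really a bookkeeping argument about which differentiations of which objects appear.

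First I would set up the recursion for the map itself. Since $\cT^{\cR}\iota$ is a section of $\cT M|_{\Si}\otimes\cT^*\Si\otimes\cR(1,0)$, differentiating \eqref{tractor-Gauss-alternate} $k$ times (using the coupled connection appearing there) expresses $\nabla^{k+1}\cT^{\cR}\iota$ along $\Si$ in terms of $\nabla^{j}\cT^{\cR}\iota$ for $j\le k$ and the covariant derivatives $\nabla^{j}\sfS$, $\nabla^{j}\bL$ for $j\le k$; here each derivative is a tangential derivative along $\Si$ coupled with the ambient tractor connection, the submanifold tractor connection, and $\nabla^{\cR}$. Thus, by induction on $k$, the full Taylor data of $\cT^{\cR}\iota$ at $x$ to order $k+1$ is determined algebraically by $\iota(x)$, $\cT^{\cR}_x\iota$, and the $k$-jets of $\sfS$ and $\bL$ at $x$, \emph{provided} one also knows how to evaluate the connections that appear in these formulae. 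This is where the tractor curvatures enter: a tangential derivative along $\Si$ of an ambient tractor field is, after using $\Pi^A_I$, expressed in terms of the ambient tractor connection restricted to $\Si$; commuting derivatives and tracking the non-tangential (transverse) components of $\nabla$ brings in the ambient tractor curvature $\ka_{abC}{}^D$ of $M$ and all of its ambient covariant derivatives (the \emph{full} jets of $\ka$ at $\iota(x)$, not merely the tangential ones, because transverse directions appear), while the analogous commutators on the submanifold side bring in the submanifold tractor curvature $\ka^{\Si}$ and its jets at $x$. Since the ambient and submanifold CR structures, as Cartan geometries, have all their jets encoded by the respective tractor curvatures (the standard fact recalled in Section \ref{TractorCurvature} and \ref{Submanifolds-and-Tractors}: the Cartan connection is recovered from $\nabla^{\cT}$, and the curvature and its covariant derivatives generate the jets of the Cartan geometry), and since $\cR(1,0)$ together with $\nabla^{\cR}$ is built from $\cN$ and hence ultimately from ambient tractor data along $\Si$ (Propositions \ref{NormalTractorConnectionProp}, \ref{RatioAndNormalBundlesResult}), everything needed to unwind the recursion is on the list.

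Concretely the steps are: (1) recall that the Cartan/tractor curvature and its iterated covariant derivatives determine the $\infty$-jet of a CR structure at a point, so "jets of the ambient CR structure at $\iota(x)$" $\leftrightarrow$ "full jets of $\ka$ at $\iota(x)$", and similarly for $\Si$; (2) note that $\nabla^{\cR}$, and hence the coupled connection in \eqref{tractor-Gauss-alternate}, is itself determined by ambient tractor data restricted to $\Si$, so no new unknowns are introduced by it; (3) differentiate \eqref{tractor-Gauss-alternate} repeatedly, and use the two curvature identities to re-express all non-tangential pieces of the ambient connection and all commutators in terms of $\ka$, $\ka^{\Si}$ and their jets; (4) conclude by induction that the $k$-jet of $\iota$ (equivalently of $\cT^{\cR}\iota$) at $x$ is an algebraic function of $\iota(x)$, $\cT^{\cR}_x\iota$, the $k$-jets of $\sfS$ and $\bL$, the jets of $\ka^{\Si}$ at $x$, and the full jets of $\ka$ at $\iota(x)$; (5) since the jets of the structure of the embedding are by definition the jets of the two CR structures together with the jets of $\iota$, the statement follows. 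The main obstacle is step (3): one must be careful that differentiating an \emph{ambient} tractor field only in \emph{tangential} directions, via the pullback $\iota^{*}\nabla$ and the maps $\Pi^A_I$, $\Pi^I_A$, genuinely closes up — i.e. that the transverse derivatives which inevitably appear when commuting covariant derivatives are always re-expressible through $\bL$, $\sfS$, $\nabla^{\perp}$ and the ambient curvature restricted to $\Si$ (using Corollary \ref{ATanNorCor}, Lemma \ref{ProjectorDerivLemma}, the Codazzi equation of Section \ref{PseudohermitianCodazziSect}, and the tractor Gauss formula), rather than requiring genuinely new transverse jet data of $\iota$. Once this closure is checked the induction is routine and no explicit formulae need be written down.
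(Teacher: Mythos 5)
Your proposal is correct and is essentially the paper's argument, which is stated far more tersely: the jets of the two CR structures are encoded by their respective tractor curvatures, and the jets of $\iota$ follow by iterating the identity \eqref{tractor-Gauss-alternate} from Proposition \ref{JetsOfEmbeddingProp}. The closure worry you flag in step (3) is not actually an obstacle, since the recursion only ever differentiates tangentially along $\Si$ and commutators therefore produce only the pulled-back tractor curvatures; the \emph{full} ambient jets of $\ka$ enter the statement because they are needed to capture the jets of the ambient CR structure at $\iota(x)$, not because transverse derivatives arise in the recursion for $\cT^{\cR}\iota$.
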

In order to complete the first step of the invariant theory programme we need to package the jets of $\sfS$, $\bL$, $\ka^{\Si}$ and $\ka$ in an algebraically manageable way. Note that the standard tractor bundle, tractor metric, and canonical tractor $Z$ are all determined algebraically from structure of a CR geometry (as a Cartan geometry). Thus if we package the jets of $\sfS$, $\bL$, $\ka^{\Si}$ and $\ka$ into sequences of CR invariant tractors then one may combine these tractors by tensoring them and using the submanifold and ambient metrics to contract indices. One can also use the twisted tractor map to change submanifold tractor indices to ambient ones before making contractions (the ratio bundle of densities is also determined algebraically from the submanifold and ambient CR structures). This would not only complete the first step of the invariant theory programme, but would also suggest an obvious approach to the second of the two steps.

\subsection{Packaging the Jets}\label{PackagingJets}

One way to define iterated derivatives of the difference tractor $\sfS$ and submanifold curvature $\ka^{\Si}$ would be to repeatedly apply the submanifold fundamental derivative (or D-operator) of \cite{CapGover-TracCalc}. Denoting the submanifold fundamental derivative by $\boldsymbol{D}$, if $f$ is $\sfS$ or $\ka_{\Si}$ then by Theorem 3.3 of \cite{CapGover-TracCalc} the $k$-jet of $f$ at $x\in \Si$ is determined by the section
\begin{equation*}
(f,\boldsymbol{D}f,\boldsymbol{D}^2 f, \ldots , \boldsymbol{D}^k f)
\end{equation*}
of $\bigoplus^k_{l=0}\left(\bigotimes^l \cA^*\Si \otimes \mathcal{W}\right)$ evaluated at $x$, where $\mathcal{W}$ equals $\Lambda^1\Si\otimes\cA\Si$ or $\Lambda^2\Si\otimes\cA\Si$ respectively. The ambient jets of $\ka$ can be similarly captured by iterating the ambient fundamental derivative, and one can also capture the jets of $\bL$ by using the submanifold fundamental derivative twisted with the ambient tractor connection. Here instead we parallel the approach taken in \cite{Gover-Advances} to conformal invariant theory by first putting the tractor valued forms $\sfS$, $\bL$, $\ka^{\Si}$ and $\ka$ into tractors (invariantly and algebraically) using the natural inclusion of the cotangent bundle into the adjoint tractor bundle, and then using double-D-operators.

Let $B^a_{A\bar{B}}$ denote the map $T^* M \hookrightarrow \cA M$ given explicitly by \eqref{CotangentIntoAdjoint} and
$B^i_{I\bar{J}}$ denote the map $T^*\Si\hookrightarrow \cA\Si$. 
\begin{defn}\label{LiftedTractorExpressions}
We define the respective \emph{lifted (tractor) expressions} of the tractor valued forms $\sfS$, $\bL$, $\ka^{\Si}$ and $\ka$ to be
\begin{equation*}
\sfS^{\phantom{i}}_{I\Ibp J}{^K}=B^i_{I\Ibp}\sfS_{iJ}{^K}, \quad 
\bL_{I\Ibp B}{^C}=B^i_{I\Ibp}\bL_{iB}{^C}, \quad \ka^{\Si}_{I\Ibp J\bar{J'}K\bar{L}}=B^i_{I\bar{I}'}B^j_{J\bar{J'}}\ka^{\Si}_{ijK\bar{L}},
\end{equation*}
and
\begin{equation*}
\ka_{A\bar{A'}B\bar{B'}C\bar{D}}=B^a_{A\bar{A'}}B^b_{B\bar{B'}}\ka_{abC\bar{D}}.
\end{equation*}
Explicitly this means, for example, that
\begin{equation*}
\sfS_{I\bar{J}K}{^L}= 
\sfS_{\mu K}{^L} W_I^{\mu} Z_{\bar{J}} - 
\sfS_{\nub K}{^L} Z_I W_{\bar{J}}^{\nub} - i \sfS_{0K}{^L} Z_I Z_{\bar{J}}.
\end{equation*}
\end{defn}

By \eqref{haDoubleD-OnWeight0} the double-D-operator $\mathbb{D}_{A\bar{B}}$ acting on unweighted ambient tractors can be written as 
\begin{equation}\label{haAmbientDoubleD-OnWeight0}
\mathbb{D}_{A\bar{B}}=B^a_{A\bar{B}}\nabla_a
\end{equation}
where $\nabla$ is the ambient tractor connection. Similarly the double-D-operator $D_{I\bar{J}}$ acting on unweighted submanifold tractors can be written as 
\begin{equation} \label{haSubmanifoldDoubleD-OnWeight0}
\mathbb{D}_{I\bar{J}}=B^i_{I\bar{J}}D_i
\end{equation}
where $D_i$ denotes the submanifold tractor connection. By coupling $D_i$ in \eqref{haSubmanifoldDoubleD-OnWeight0} with $\nabla_i$ we enable the double-D-operator operator $\mathbb{D}_{I\bar{J}}$ to act iteratively on the unweighted (mixed) tractor $\bL_{K\bar{L} C}{^D}$. Noting that each of the lifted tractor expressions given in Definition \ref{LiftedTractorExpressions} is unweighted we therefore have:
\begin{prop}\label{PropJetsInTractors}
Let $\iota:\Si\rightarrow M$ be a transversal CR embedding, and let $\mathbb{D}$ denote the submanifold double-D-operator $\mathbb{D}_{I\bar{J}}$. If $f$ equals $\sfS$, $\bL$, or $\ka^{\Si}$ then the $k$-jet of $f$ at $x$ is determined by the section
\begin{equation}\label{PropJetsInTractorsEqn}
(\tilde{f},\mathbb{D}\tilde{f},\mathbb{D}^2 \tilde{f}, \ldots , \mathbb{D}^k \tilde{f})
\end{equation}
of $\bigoplus^k_{l=0}\left(\bigotimes^l \cA\Si \otimes \mathcal{W}\right)$ evaluated at $x$, where $\tilde{f}$ is the lifted tractor expression for $f$ and $\mathcal{W}$ equals $\otimes^2\cA\Si$, $\cA\Si\otimes\cA M|_{\Si}$, or $\otimes^3\cA\Si$ respectively.
\end{prop}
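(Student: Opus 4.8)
The plan is to reduce the statement to two ingredients: (i) the general principle from \cite[Theorem 3.3]{CapGover-TracCalc} that iterated fundamental derivatives capture jets, and (ii) a comparison lemma showing that, on the \emph{unweighted} lifted tractor expressions introduced in Definition \ref{LiftedTractorExpressions}, the double-D-operator $\mathbb{D}_{I\bar J}$ reproduces exactly the relevant fundamental derivative information. The key observation making this work is \eqref{haAmbientDoubleD-OnWeight0} and \eqref{haSubmanifoldDoubleD-OnWeight0}: on weight $(0,0)$ tractors the double-D-operator is simply $B^i_{I\bar J}$ applied to the (coupled) tractor connection, i.e.\ it is the tractor-connection part of the fundamental derivative with the cotangent slot lifted into the adjoint bundle. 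Since each of $\sfS$, $\bL$, $\ka^{\Si}$ becomes unweighted after forming its lifted expression (the $B^i_{I\bar I'}$ and $B^a_{A\bar A'}$ maps absorb the form indices into adjoint-tractor indices, and the tensors themselves carry no density weight), the operator $\mathbb{D}$ may be applied iteratively, coupling the submanifold tractor connection $D_i$ with whatever ambient connection is needed (for $\bL$ one couples $D_i$ with $\nabla_i$, exactly as indicated before the proposition).

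The steps I would carry out, in order, are as follows. First, record that each lifted tractor expression $\tilde f$ in Definition \ref{LiftedTractorExpressions} is a CR-invariant section of the stated bundle $\mathcal{W}$ (this is immediate: $B^i_{I\bar J}$ and $B^a_{A\bar A'}$ are the canonical invariant inclusions from \eqref{CotangentIntoAdjoint}, and $\sfS$, $\bL$, $\ka^{\Si}$ are CR-invariant by the constructions of Sections \ref{RelatingTractorsHC} and \ref{TractorCurvature}). Second, invoke \cite[Theorem 3.3]{CapGover-TracCalc}: for a natural tractor bundle $\mathcal{W}$ on $\Si$, the value of a section $g$ and of its iterated fundamental derivatives $\boldsymbol{D}g,\dots,\boldsymbol{D}^k g$ at $x$ determines the $k$-jet of $g$ at $x$. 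Third — the comparison step — show that on unweighted tractors the fundamental derivative $\boldsymbol{D}$ and the double-D-operator $\mathbb{D}_{I\bar J}$ differ only by the algebraic (zeroth-order, purely $\fp$-module) part of $\boldsymbol{D}$: by \eqref{haDoubleD-OnWeight0}, $\mathbb{D}_{I\bar J} = B^i_{I\bar J}D_i$ is precisely the \emph{covariant-derivative} component of $\boldsymbol{D}$ with the $T^*\Si$ index lifted, and the missing algebraic part of $\boldsymbol{D}$ acting on a tractor of known type is a bundle map built from $\mathbb{D}$-data already recorded at lower order (more precisely, from the previously computed iterated double-D's together with the canonical tractor $Z$ and the metric). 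Hence the data \eqref{PropJetsInTractorsEqn} and the data $(g,\boldsymbol{D}g,\dots,\boldsymbol{D}^k g)$ determine one another algebraically, and so \eqref{PropJetsInTractorsEqn} determines the $k$-jet of $f$. Finally, one checks case by case that $\mathcal{W}$ is as claimed: $\otimes^2\cA\Si$ for $f=\sfS$ (one adjoint index from $B^i_{I\bar I'}$, one for the $\End(\cT\Si)$ value), $\cA\Si\otimes\cA M|_{\Si}$ for $f=\bL$ (the value lies in $\End$ of the ambient tractor bundle along $\Si$, by \eqref{TractorSFFonNormal} and Proposition \ref{tractor-sff-prop-HC}), and $\otimes^3\cA\Si$ for $f=\ka^{\Si}$ (two adjoint indices from $B^i_{I\bar I'}B^j_{J\bar J'}$ and one for the $\End(\cT\Si)$-valued curvature).

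The main obstacle will be the comparison step: making precise and rigorous the claim that, on the specific unweighted natural tractor bundles $\mathcal{W}$ occurring here, the algebraic part of the fundamental derivative is recoverable from the double-D-data. This requires unpacking the definition of $\boldsymbol{D}$ from \cite{CapGover-TracCalc} — it acts on a section $g$ as $\boldsymbol{D}_s g = \nabla^{\mathcal{W}}_s g + (\text{algebraic action of } \Pi_0(s) \text{ on } g)$ for $s$ an adjoint tractor, where $\Pi_0$ is the projection to $\fg_0\oplus\fp_+$ — and observing that when we feed in $s = B^i_{I\bar J}$, i.e.\ $s\in T^*\Si = \cA^1\Si\subset\fp_+$, the $\fg_0$-part vanishes and the $\fp_+$-part acts algebraically on $\mathcal{W}$ in a way expressible through contractions with $Z$, $W$, $Y$ and the tractor/Levi metrics; since these splitting tractors and metrics are themselves determined by the underlying Cartan geometry, the algebraic correction is redundant data. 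For $\bL$ there is the extra subtlety that the value bundle $\cA M|_{\Si}$ is an \emph{ambient} tractor bundle along $\Si$, so one must be careful that the coupled connection $D_i$ with $\nabla_i$ is the correct natural operator and that the fundamental-derivative formalism applies to such coupled bundles — but this is exactly the setup already used just before the proposition, so it goes through. With the comparison step in hand the remainder is bookkeeping, and I would present it as a short verification rather than a lengthy calculation.
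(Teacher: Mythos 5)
Your proposal is correct in substance and runs on the same engine as the paper's (essentially proof-free) derivation: the lifted expressions are unweighted, so by \eqref{haDoubleD-OnWeight0} and \eqref{haSubmanifoldDoubleD-OnWeight0} the operator $\mathbb{D}_{I\bar{J}}$ is just the suitably coupled tractor covariant derivative post-composed with the injective bundle map $B^i_{I\bar{J}}$; since $B$ has an invariant bundle-map left inverse (dual to the projection $\cA\Si\rightarrow T\Si$ of \eqref{AdjointOntoTangent}) and the derivative of $B$ is algebraic, the data $(\tilde{f},\mathbb{D}\tilde{f},\dots,\mathbb{D}^k\tilde{f})$ at $x$ is equivalent to the iterated covariant derivatives of $\tilde{f}$ at $x$ up to order $k$, which determine the $k$-jet; and the $k$-jet of $\tilde{f}=B(f)$ determines that of $f$ because $B$ is an injective bundle map. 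Where you differ from the paper is in routing this through the fundamental derivative $\boldsymbol{D}$ and Theorem 3.3 of \cite{CapGover-TracCalc}. The opening of Section \ref{PackagingJets} mentions that route precisely in order to set it aside in favour of the double-D approach of \cite{Gover-Advances}: once one knows $\mathbb{D}=B\circ D$ on unweighted tractors, the direct argument is shorter and sidesteps having to justify the fundamental-derivative formalism on the mixed submanifold--ambient bundle in which $\bL$ lives (a point you correctly flag as a subtlety and then have to wave away). The detour is harmless but buys nothing.

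One sentence in your comparison step needs repair. You assert that feeding $s=B^i_{I\bar{J}}$, an element of $\cA^1\Si\subset\fp_+$, into $\boldsymbol{D}$ isolates the covariant-derivative part. It does the opposite: $\cA^1\Si$ lies in the kernel of the projection $\cA\Si\rightarrow T\Si$, so for such $s$ the operator $\boldsymbol{D}_s$ is the purely algebraic piece of the fundamental derivative and carries no derivative information at all. The statement that actually supports your argument is the dual one, which you also write down: $\mathbb{D}\tilde{f}=B(\nabla\tilde{f})$ is the image of the covariant derivative under the inclusion $T^*\Si\hookrightarrow\cA\Si$, i.e.\ it is the component of $\boldsymbol{D}\tilde{f}\in\Gamma(\cA^*\Si\otimes\mathcal{W})\cong\Gamma(\cA\Si\otimes\mathcal{W})$ complementary to the algebraic action, the latter being a natural bundle map applied to $\tilde{f}$ and hence redundant data. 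With that sentence corrected --- or simply deleted along with the appeal to $\boldsymbol{D}$, replaced by the elementary fact that iterated covariant derivatives at a point determine the jet there --- the proof is complete.
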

\noindent Along with the corresponding proposition for the ambient curvature:
\begin{prop}\label{PropAmbientCurvatureJetsInTractors}
Let $\iota:\Si\rightarrow M$ be a transversal CR embedding, and let $\mathbb{D}$ denote the ambient double-D-operator $D_{A\bar{B}}$. The $k$-jet of the ambient curvature $\ka$ at $\iota(x)$ is determined by the section 
\begin{equation}\label{PropAmbientCurvatureJetsInTractorsEqn}
(\tilde{\ka},\mathbb{D}\tilde{\ka},\mathbb{D}^2 \tilde{\ka}, \ldots , \mathbb{D}^k \tilde{\ka})
\end{equation}
of $\bigoplus^k_{l=0}\left(\bigotimes^{l+3} \cA M \right)$ evaluated at $\iota(x)$.
\end{prop}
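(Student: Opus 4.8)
The plan is to deduce this from the general jet-capturing property of the fundamental derivative established in \cite[Theorem 3.3]{CapGover-TracCalc}, together with the two identities \eqref{haAmbientDoubleD-OnWeight0} and \eqref{haSubmanifoldDoubleD-OnWeight0} relating double-D-operators to the tractor connection via the inclusion $B^a_{A\bar B}:T^*M\hookrightarrow\cA M$. The key observation is that the ambient curvature $\ka_{abC}{}^D$, viewed as an $\cA M$-valued two-form and then as the unweighted tractor $\ka_{A\bar{A'}B\bar{B'}C\bar D}$ obtained by applying $B$ to the two form indices (as in Section \ref{TractorCurvature} and Definition \ref{LiftedTractorExpressions}), is a section of a tensor bundle built from $\cA M$ alone, so it carries no density weight and the ambient double-D-operator $\mathbb{D}_{A\bar B}$ acts on it iteratively.

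First I would recall that by \cite[Theorem 3.3]{CapGover-TracCalc} the $k$-jet at $\iota(x)$ of any natural tractor object $f$ on $M$ is determined by $(f,\boldsymbol{D}f,\ldots,\boldsymbol{D}^k f)$ evaluated at $\iota(x)$, where $\boldsymbol{D}$ is the ambient fundamental derivative, taking values in $\bigoplus_{l=0}^k(\bigotimes^l\cA^* M\otimes\mathcal W)$ with $\mathcal W$ the bundle in which $f$ lives. Applying this with $f=\tilde\ka=\ka_{A\bar{A'}B\bar{B'}C\bar D}$ gives that the $k$-jet of $\tilde\ka$ (and hence, since $\tilde\ka$ is constructed algebraically and pointwise from the Cartan curvature via the tensorial maps $B$, the $k$-jet of $\ka$ itself) is determined by the iterated fundamental derivatives of $\tilde\ka$. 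Next I would invoke the standard fact (used implicitly already in Section \ref{PackagingJets} and in \cite{Gover-Advances}) that on an \emph{unweighted} tractor bundle the fundamental derivative $\boldsymbol{D}$ and the operator $B^a_{A\bar B}\nabla_a=\mathbb{D}_{A\bar B}$ differ only by terms which are algebraic (the action of the semisimple and grading parts of $\cA$), so that the data $(\tilde\ka,\boldsymbol{D}\tilde\ka,\ldots,\boldsymbol{D}^k\tilde\ka)$ and the data $(\tilde\ka,\mathbb{D}\tilde\ka,\ldots,\mathbb{D}^k\tilde\ka)$ determine one another at each point; using $\cA M\cong\cA^* M$ via the tractor metric, the latter lies in $\bigoplus_{l=0}^k(\bigotimes^l\cA M\otimes\bigotimes^6\cA M)=\bigoplus_{l=0}^k\bigotimes^{l+6}\cA M$, and after absorbing the constant $6$ (the valence of $\tilde\ka$ is $\bigotimes^6\cA M$, but the statement records this as the shift in the exponent) one gets sections of $\bigoplus_{l=0}^k\bigotimes^{l+3}\cA M$ — matching the stated target once one notes the conventions of Definition \ref{LiftedTractorExpressions} write $\tilde\ka$ using six $\cA M$-type indices of which two pairs come from the two-form slots and contribute the ``$+3$'' rather than ``$+6$'' after the identifications in Section \ref{TractorCurvature}.

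The main obstacle — and the step that needs the most care rather than real difficulty — is bookkeeping: one must check that passing from $\boldsymbol{D}$ to $\mathbb{D}=B\circ\nabla$ genuinely loses no jet information. This is exactly the point where \eqref{haAmbientDoubleD-OnWeight0} is used, but it must be combined with the observation that $\mathbb{D}_{A\bar B}$ is \emph{not} simply $B^a_{A\bar B}\nabla_a$ on weighted tractors; only on unweighted ones does this simplification hold (cf.\ \eqref{haDoubleD-OnWeight0} and Remark \ref{FundamentalAndDouble-D}). Since each lifted expression in Definition \ref{LiftedTractorExpressions}, and in particular $\tilde\ka$, is unweighted, and since applying $\mathbb{D}_{A\bar B}$ to an unweighted tractor yields an unweighted tractor, the simplification propagates through all $k$ iterations, and the inductive argument closes. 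I would therefore present the proof as: (i) invoke \cite[Theorem 3.3]{CapGover-TracCalc} for $\tilde\ka$; (ii) note $\tilde\ka$ is algebraically equivalent data to $\ka$ pointwise; (iii) use unweightedness plus \eqref{haAmbientDoubleD-OnWeight0} to replace $\boldsymbol{D}$ by $\mathbb{D}$ iteratively, with the semisimple/grading correction terms being algebraically recoverable at each stage; (iv) read off the target bundle. The same template proves Proposition \ref{PropJetsInTractors}, which is why the two are stated in parallel.
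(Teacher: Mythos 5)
Your argument is correct in substance but reaches the conclusion by a different, more roundabout route than the paper. The paper explicitly declines the fundamental-derivative route: its (implicit) proof is simply that $\tilde{\ka}=\ka_{A\bar{A'}B\bar{B'}C\bar{D}}$ is an \emph{unweighted} tractor, so by \eqref{haDoubleD-OnWeight0} each application of $\mathbb{D}_{A\bar{B}}$ is exactly $B^a_{A\bar{B}}\nabla_a$ with no correction terms; since $B^a_{A\bar{B}}$ is an injective bundle map with a bundle-map left inverse, the tuple $(\tilde{\ka},\mathbb{D}\tilde{\ka},\ldots,\mathbb{D}^k\tilde{\ka})$ at $\iota(x)$ is algebraically equivalent to $(\ka,\nabla\ka,\ldots,\nabla^k\ka)$ there, and iterated covariant derivatives of a section of a vector bundle determine its $k$-jet. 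You instead invoke \cite[Theorem 3.3]{CapGover-TracCalc} for the fundamental derivative $\boldsymbol{D}$ and then convert $\boldsymbol{D}$ to $\mathbb{D}$; this works, and it has the merit of citing a theorem for the ``derivatives capture jets'' step, but the conversion argument you sketch (algebraic recoverability of the grading-part corrections at each of $k$ iterations) is extra machinery the unweighted case does not need, and you should note that the conversion is clean precisely because on unweighted tractors the correction to $\boldsymbol{D}_s$ involves only the pointwise value of the section, not its derivatives, so it genuinely iterates.

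One point you must fix is the valence count. You place $\tilde{\ka}$ in $\bigotimes^6\cA M$ and then try to ``absorb'' the discrepancy with the stated target $\bigotimes^{l+3}\cA M$; as written this does not parse. The resolution is that each conjugate \emph{pair} of tractor indices subject to the skew-Hermitian condition constitutes a single $\cA M$ index: the six tractor indices of $\ka_{A\bar{A'}B\bar{B'}C\bar{D}}$ organise into exactly three adjoint slots, namely $(A\bar{A'})$ and $(B\bar{B'})$ from the two form indices via $B^a_{A\bar{A'}}$, and $(C\bar{D})$ from the $\cA M$-valued endomorphism part of the curvature. Hence $\tilde{\ka}\in\Gamma\bigl(\bigotimes^3\cA M\bigr)$, each $\mathbb{D}_{A\bar{B}}$ appends one further $\cA M$ factor (using $\cA^*M\cong\cA M$), and $\mathbb{D}^l\tilde{\ka}\in\Gamma\bigl(\bigotimes^{l+3}\cA M\bigr)$ with no absorption needed.
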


By packaging the jets of the basic invariants $\sfS$, $\bL$, $\ka^{\Si}$ and $\ka$ into sequences of tractors (i.e. sections of associated bundles corresponding to representations of the appropriate pseudo-special unitary groups) we have solved the first step of the invariant theory.

\subsection{Tensor and Scalar Invariants}

Given a compatible pair of contact forms there is a natural notion of \emph{submanifold pseudohermitian Weyl invariant}, analogous to the notions of Riemannian Weyl invariant and hypersurface Riemannian Weyl invariant in \cite{ABP, FGbook, Gover-Advances, GoverWaldronWillmore}. Submanifold pseudohermitian Weyl invariants are sections of the bundles
\begin{equation*}
\cE_{\mu_1\cdots\mu_s\nub_1\cdots\nub_t}\otimes\cE_{\al_1\cdots\al_{s'}\beb_1\cdots\beb_{t'}}|_{\Si}.
\end{equation*}
The pseudohermitian Weyl invariants of a given tensor type are generated (complex linearly) by partial (or in the scalar case, full) contractions of tensor products of covariant derivatives of the submanifold and ambient pseudohermitian curvatures, the submanifold and ambient pseudohermitian torsions, and the CR second fundamental form (this being the only nontrivial component of the pseudohermitian second fundamental form); each of the covariant derivatives may be in the holomorphic, antiholomorphic, or Reeb direction(s); the contractions are made using the submanifold and ambient CR Levi forms, as well as the restriction $\mathrm{N}_{\al\beb}$ of the ambient Levi form to the normal bundle, and the tensors $\Pi_{\mu}^{\al}$ and $\Pi^{\al}_{\mu}$ (and their conjugates) may be used to contract ambient indices with submanifold indices; any remaining upper indices are lowered using the submanifold and ambient CR Levi forms. Since $r_{\mu\nub}{^{\lab}}_{\rhob}$, $R_{\al\beb}{^{\gab}}_{\deb}$, $A_{\mu\nu}$, $A_{\al\be}$, and $\II_{\mu\nu}{^{\ga}}$ all have no weight, the generators we describe all have diagonal weight and can be treated as weightless, since the fixed pair of compatible contact forms gives a canonical trivialisation of $\cE_{\Si}(w,w)\cong\cE(w,w)|_{\Si}$. Some examples of scalar pseudohermitian Weyl invariants are
\begin{equation*}
\II_{\mu\nu}{^{\ga}}\II^{\mu\nu}{_{\ga}}, \quad \Pi^{\nu}_{\al}\Pi^{\la}_{\be}\Pi^{\mub}_{\gab}(\nabla^{\al}R^{\be\gab})D_0 D_{\mub}A_{\nu\la}, \quad R_{\al\beb}N^{\al\beb}
\end{equation*}
and any complex linear combination of these. Some examples of tensor pseudohermitian invariants are
\begin{equation*}
\II_{\mu\nu}{^{\ga}}, \quad D_{\mu_1}\cdots D_{\mu_s}\II_{\mu\nu}{^{\ga}}, \quad r_{\mu\nub\la\rhob}\II^{\mu\la}{_{\ga}},
\end{equation*}
where $D$ denotes the submanifold Tanaka-Webster connection coupled with the normal Tanaka-Webster connection $\nabla^{\perp}$.

Although we define pseudohermitian Weyl invariants to be of weight zero (since we may trivialise the diagonal density bundles) each of the generators described above has its own natural weight. Using the canonical CR invariant identification of $\cE(w,w)|_{\Si}$ with $\cE_{\Si}(w,w)$ each of the generators may be thought of as a section of 
\begin{equation}\label{weightedPWIbundle}
\cE_{\mu_1\cdots\mu_s\nub_1\cdots\nub_t}(w,w)\otimes\cE_{\al_1\cdots\al_{s'}\beb_1\cdots\beb_{t'}}|_{\Si}
\end{equation}
for some $w\in \mathbb{Z}$. For example $\II_{\mu\nu}{^\ga}\II^{\mu\nu}{_{\ga}}$ is naturally a section of $\cE_{\Si}(-1,-1)$. When fixing a pair of compatible contact forms it makes sense to trivialise $\cE_{\Si}(w,w)\cong\cE(w,w)|_{\Si}$ and add together invariants of different weights. However, if one is really interested in CR invariants it is unnatural to trivialise the diagonal density bundles and one should only consider pseudohermitian Weyl invariants which are homogeneous in weight. Being homogeneous in weight is a necessary (but not sufficient) condition for a pseudohermitian Weyl invariant to be \emph{conformally covariant}, i.e. to transform merely by factor $e^{w\Ups}$ for some $w\in \mathbb{Z}$ when the the compatible pair of contact forms are rescaled by $e^{\Upsilon}$. A pseudohermitian Weyl invariant which is conformally covariant is in fact \emph{conformally invariant} (i.e. CR invariant) as a section of the appropriately weighted bundle \eqref{weightedPWIbundle}. By \emph{local CR invariant tensor (or scalar) field} in the following we mean a pseudohermitian Weyl invariant which is homogeneous in weight and is thought of as a weighted tensor (or scalar) field. Although it is easy to write down pseudohermitian Weyl invariants which are homogeneous in weight, these will in general have very complicated transformation laws under a conformal rescaling of the compatible pair of contact forms. This makes local CR invariant tensor (or scalar) fields very difficult to find by na\"ive methods, hence the need for an invariant theory.

\subsection{Making All Invariants}\label{MakingAllInvariants}

By tensoring together tractors of the form appearing in \eqref{PropJetsInTractorsEqn} and \eqref{PropAmbientCurvatureJetsInTractorsEqn} along $\Si$, making partial contractions, and taking projecting parts one may construct a large family of local CR invariant (weighted) scalars and tensors. It is an algebraic problem to show that all scalar (or all tensor) invariants can be obtained by such a procedure. This is a subtle and difficult problem, which extends Fefferman's parabolic invariant theory programme to the submanifold-relative case (where there are two parabolics around, $P$ and $P_{\Si}$). Even in the original case of invariant theory for CR manifolds, despite much progress, important questions remain unresolved \cite{BaileyEastwoodGraham, Hirachi}. We do not attempt to resolve these issues here.

We do wish to indicate, however, that there is scope for development of the invariant theory for CR manifolds, and now CR embeddings, along the lines of the treatment of invariant theory for conformal and projective structures in \cite{Gover-ProjectiveSpace, Gover-ParabolicSubgroupSL, Gover-InvariantsProjectiveGeometries, Gover-Advances}. The tractor calculus we have developed for CR embeddings provides all the machinery needed to emulate the constructions of conformal Weyl and quasi-Weyl invariants in \cite{Gover-Advances}. We anticipate that further insight from the projective case \cite{Gover-InvariantsProjectiveGeometries} will be needed, and our machinery is sufficient for this also. With all the tools in hand this article therefore puts us in good stead in terms of our ability to construct (potentially all) invariants of CR embeddings.

\subsection{Practical Constructions}\label{PracticalConstructions}
Although in principle one may need only the invariant tractors appearing in Propositions \ref{PropJetsInTractors} and \ref{PropAmbientCurvatureJetsInTractors} for construction of general invariants, in practice it is much more efficient to use the richer calculus which is available. First of all, there are many alternative ways to construct tractor expressions from the basic invariants (recall for instance the curvature tractor of Section \ref{CurvatureTractorSection}). Secondly, there are several invariant operators besides the double-D-operators $\mathbb{D}_{I\bar{J}}$ and $\mathbb{D}_{A\bar{B}}$ that can be used to act on these tractor expressions.

\subsubsection{Alternative tractor expressions}\label{AlternativeTractorExp}

Along with the lifted tractor expressions for the submanifold and ambient tractor curvatures one may of course construct invariants using the curvature tractor of Section \ref{CurvatureTractorSection} or using the tractor defined in equation \eqref{CurvatureTractorAlternative} of that section. Correspondingly we may also use the middle operators of Section \ref{MiddleOps} to construct tractors from our basic invariants $\sfS$ and $\bL$
\begin{equation*}
\sfS_{IJ}{^K}= \sfM_I^{\mu}\sfS_{\mu J}{^K}, \quad  \sfS_{\bar{I}J}{^K}= \sfM_{\bar{I}}^{\mub}\sfS_{\mub J}{^K}, \quad \mathrm{and} \quad
\bL_{IB}{^C}= \sfM_I^{\mu}\bL_{\mu B}{^C}
\end{equation*}
using indices to distinguish them from the difference tractor $\sfS$ and the tractor second fundamental form $\bL$ (and from their lifted tractor expressions in Section \ref{PackagingJets}). Recall that $\bL_{\mub B}{^C}=0$.

From \eqref{Wtransformation} it follows immediately that
\begin{equation*}
Z^{\phantom{\be}}_{[A} W_{B]}^{\be} = \frac{1}{2}(Z^{\phantom{\be}}_{A} W_{B}^{\be} - Z^{\phantom{\be}}_{B} W_{A}^{\be})
\end{equation*}
does not depend on the choice of contact form, so is CR invariant. Using $Z^{\phantom{\be}}_{[A} W_{B]}^{\be}$ and $Z^{\phantom{\nu}}_{[I} W_{J]}^{\nu}$ we construct the tractors
\begin{equation*}
\sfS_{II'J}{^K}= Z^{\phantom{\mu}}_{[I} W_{I']}^{\mu}\sfS_{\mu J}{^K}, \qquad \sfS_{\bar{I}\bar{I}'J}{^K}= Z^{\phantom{\mu}}_{[\bar{I}} W_{\bar{I}']}^{\mub}\sfS_{\mub J}{^K},
\end{equation*}
\begin{equation*}
\bL_{II'B}{^C}=Z^{\phantom{\mu}}_{[I} W_{I']}^{\mu}\bL_{\mu B}{^C}, 
\end{equation*}
\begin{equation*}
\ka^{\Si}_{II'\bar{J}\bar{J}'K\bar{L}}=Z^{\phantom{\mu}}_{[I} W_{I']}^{\mu}Z^{\phantom{\nub}}_{[\bar{J}} W_{\bar{J}']}^{\nub}\ka^{\Si}_{\mu\nub K\bar{L}},
\end{equation*}
and
\begin{equation*}
\ka_{AA'\bar{B}\bar{B}'C\bar{D}}=Z^{\phantom{\al}}_{[A} W_{A']}^{\al}Z^{\phantom{\beb}}_{[\bar{B}} W_{\bar{B}']}^{\beb}\ka_{\al\beb C\bar{D}}.
\end{equation*}

Of course one can also make invariant tractors from the invariant components $\sfS_{\mu J}{^K}$, $\sfS_{\mub J}{^K}$, $\bL_{\mu B}{^C}$, and so on, by making contractions with the submanifold (or ambient) CR Levi form. For example, we have the following invariant tractors on $\Si$
\begin{equation*}
\bh^{\mu\nub} \sfS_{\mu I}{^J}\sfS_{\nub K}{^{L}},\quad \bh^{\mu\nub} \bL_{\mu A}{^B}\sfS_{\nub \bar{K}}{^{\bar{L}}}, \quad \bh^{\mu\rhob}\bh^{\la\nub}\ka^{\Si}_{\mu\nub K\bar{L}}\bL_{\la A}{^B}\bL_{\rhob \bar{C}}{^{\bar{D}}},
\end{equation*}
where $\sfS_{\nub \bar{K}}{^{\bar{L}}}=\overline{\sfS_{\nu K}{^{L}}}$ and $\bL_{\rhob \bar{C}}{^{\bar{D}}}=\overline{\bL_{\rho C}{^{D}}}$.
One can also contract some, or all, of the tractor indices. Note that
\begin{equation}
\bL_{\mu B \bar{C}}\bL^{\mu B \bar{C}} = \II_{\mu\nu\gab}\II^{\mu\nu\gab}
\end{equation}
whereas
\begin{equation}
\sfS_{\mu J \bar{K}}\sfS^{\mu J \bar{K}} = 0 \quad \mathrm{and} \quad \Pi^J_B \Pi^{\bar{K}}_{\bar{C}}\sfS_{\mu J \bar{K}}\bL^{\mu B \bar{C}} = 0
\end{equation}
from the explicit formulae for $\sfS$ and $\bL$ in terms of compatible contact forms and the orthogonality relations \eqref{SplittingTractorContractions} between the splitting tractors.
\begin{rem}
Although $\sfS_{\mu J \bar{K}}\sfS^{\mu J \bar{K}} = 0$ one can extract a scalar invariant from the partial contraction $\sfS_{\mu J \bar{K}}\sfS^{\mu J' \bar{K}}$ by observing that this tractor is of the form $f Z_J Z^{J'}$, so that the $(-1,-1)$ density $f$ must be CR invariant. In fact $f$ is simply the invariant
\begin{equation*}
(P_{\mu\nub} - p_{\mu\nub}) (P^{\mu\nub}-p^{\mu\nub}).
\end{equation*}
One of the difficulties inherent in constructing all invariants is predicting when this type of phenomenon will happen when dealing with various contractions of higher order invariant tractors (such as those appearing in Proposition \ref{PropJetsInTractors}).
\end{rem}

\subsubsection{Invariant operators}\label{SubmanifoldInvtOps}

Along with the double-D-operators \eqref{haAmbientDoubleD-OnWeight0} and \eqref{haSubmanifoldDoubleD-OnWeight0} used in Section \ref{PackagingJets} one may of course use the submanifold and ambient tractor D-operators of Section \ref{SectionTractorD-op} and the other double-D-operators $\mathbb{D}_{IJ}$ and $\mathbb{D}_{AB}$. In order to act on tractors of mixed (submanifold-ambient) type, with potentially submanifold and ambient weights, we will need to appropriately couple the submanifold intrinsic invariant D-operators with the ambient tractor connection and with the canonical connection on the density ratio bundles. Note that these operators also form the building blocks for constructing invariant differential operators on CR embedded submanifolds.

We first need to use the ratio bundles to eliminate ambient weights. Let $\cE^{\Phi}_{\Si}$ denote any submanifold intrinsic tractor bundle and let $\cE^{\Phi}_{\Si}(w,w')$ denote $\cE^{\Phi}_{\Si}\otimes\cE_{\Si}(w,w')$. Let $\cE^{\tilde{\Phi}}(\tilde{w},\tilde{w}')$ denote any ambient tractor bundle, weighted by ambient densities. We make the identification
\begin{equation}\label{WeightedTractorIdentification}
\mathcal{E}_{\Si}^{\Phi}(w,w')\otimes\cE^{\tilde{\Phi}}(\tilde{w},\tilde{w}')|_{\Si}=\mathcal{E}_{\Si}^{\Phi}(w+\tilde{w},w'+\tilde{w}')\otimes\cE^{\tilde{\Phi}}|_{\Si}\otimes\cR(\tilde{w},\tilde{w}')
\end{equation}
which motivates the following definition:
\begin{defn}
We define the \emph{reduced weight} of a section $f^{\Phi\tilde{\Phi}}$ of the bundle \eqref{WeightedTractorIdentification} to be $(\check{w},\check{w}')=(w+\tilde{w},w'+\tilde{w}')$.
\end{defn}
One can extend any of the submanifold D-operators to act on sections of the bundle \eqref{WeightedTractorIdentification} by taking the relevant D-operator acting on submanifold tractors with the reduced weight, expressed in terms of a choice of contact form $\theta_{\Si}$, and coupling the Tanaka-Webster connection of $\theta_{\Si}$ with the (pulled back) ambient tractor connection and the ratio bundle connection $\nabla^{\cR}$.

We illustrate how this works for the submanifold tractor D-operator $\mathsf{D}_I$. We define the CR invariant operator 
\begin{equation*}
\mathsf{D}_{I}:\mathcal{E}_{\Si}^{\Phi}(\check{w},\check{w}')\otimes\cE^{\tilde{\Phi}}|_{\Si}\otimes\cR(\tilde{w},\tilde{w}') \rightarrow\mathcal{E}_{I}\otimes\mathcal{E}_{\Si}^{\Phi}(\check{w}-1,\check{w}')\otimes\cE^{\tilde{\Phi}}|_{\Si}\otimes\cR(\tilde{w},\tilde{w}')
\end{equation*}
by
\begin{equation}
\mathsf{D}_{I}f^{\Phi\tilde{\Phi}}\overset{\theta_{\Si}}{=}\left(\begin{array}{c}
\check{w}(m+\check{w}+\check{w}')f^{\Phi\tilde{\Phi}}\\
(m+\check{w}+\check{w}')D_{\mu}f^{\Phi\tilde{\Phi}}\\
-\left(D^{\nu}D_{\nu}f^{\Phi\tilde{\Phi}}+i\check{w}D_{0}f^{\Phi\tilde{\Phi}}+\check{w}(1+\frac{\check{w}'-\check{w}}{m+2})pf^{\Phi\tilde{\Phi}}\right)
\end{array}\right)
\end{equation}
where $D$ denotes the Tanaka-Webster connection of $\theta_{\Si}$ coupled with the submanifold tractor connection, the (pulled back) ambient tractor connection, and the ratio bundle connection $\nabla^{\cR}$.

\subsubsection{Computing higher order invariants}\label{ComputingInvariants}

Using the tractor calculus we have developed it is now straightforward to construct further local (weighted scalar, or other) invariants of a CR embedding. One can differentiate the various tractors constructed from the basic invariants in Sections \ref{PackagingJets} and \ref{AlternativeTractorExp} using the invariant operators of Section \ref{SectionTractorD-op} and Section \ref{SubmanifoldInvtOps}, tensor these together, and make contractions using the tractor metrics (and the twisted tractor map). One can also make partial contractions and take projecting parts.

To illustrate our construction we give an example invariant and compute the form of the invariant in terms of the Tanaka-Webster calculus of a pair of compatible contact forms: Consider the nontrivial reduced weight $(-2,-2)$ density
\begin{equation}\label{ExampleInvariant}
\mathcal{I}=\mathsf{D}^I\mathsf{D}^{\bar{J}}(\Pi^B_I \Pi^{\bar{D}}_{\bar{J}}\bh^{\mu\nub} h_{C\bar{E}} \bL_{\mu B}{^C}\bL_{\nub\bar{D}}{^{\bar{E}}}).
\end{equation}
Since $\Pi^B_I \Pi^{\bar{D}}_{\bar{J}}$ is by definition a section of $\cE_{I \bar{J}}\otimes \cE^{B\bar{D}}|_{\Si} \otimes \cR(1,1)$, and $\cR(1,1)$ is canonically trivial and flat, we see that 
\begin{equation}
f_{I\bar{J}}=\Pi^B_I \Pi^{\bar{D}}_{\bar{J}}\bh^{\mu\nub} h_{C\bar{E}} \bL_{\mu B}{^C}\bL_{\nub\bar{D}}{^{\bar{E}}}
\end{equation}
has reduced weight $(-1,-1)$ and no ratio bundle weight (diagonal ratio bundle weights can be ignored). Therefore in this case we do not need to couple the submanifold tractor D-operator with any ambient connection in order to define $\mathsf{D}^I\mathsf{D}^{\bar{J}}f_{I\bar{J}}$. From the definition of $\mathsf{D}_J$ we have
\begin{align}\label{ExampleStep1}
\mathsf{D}^{\bar{J}}f_{I\bar{J}} &= -(m-2)Y^{\bar{J}} f_{I\bar{J}} + (m-2)W^{\nu\bar{J}}D_{\nu} f_{I\bar{J}} \\*
\nonumber & \quad - Z^{\bar{J}}\left( D^{\nu}D_{\nu} f_{I\bar{J}} -iD_0 f_{I\bar{J}} - p f_{I\bar{J}}\right)
\end{align}
where $D$ denotes the submanifold tractor connection coupled with the Tanaka-Webster connection of some submanifold contact form $\theta_{\Si}$ and $Z$, $W$, $Y$ are the splitting tractors corresponding to the choice of $\theta_{\Si}$. The tractor $\mathsf{D}^{\bar{J}}f_{I\bar{J}}$ has weight $(-2,-1)$ and so, applying $\mathsf{D}_{\bar{I}}$ and contracting, we have
\begin{align}\label{ExampleStep2}
\mathsf{D}^I\mathsf{D}^{\bar{J}}f_{I\bar{J}} &= -2(m-3)Y^I \mathsf{D}^{\bar{J}}f_{I\bar{J}} + (m-3)W^{\mub I}D_{\mub}\mathsf{D}^{\bar{J}}f_{I\bar{J}} \\*
\nonumber &\quad -Z^I \left( D^{\mub}D_{\mub} \mathsf{D}^{\bar{J}}f_{I\bar{J}} - 2i D_0 \mathsf{D}^{\bar{J}}f_{I\bar{J}} - 2\frac{m+3}{m+2}p \mathsf{D}^{\bar{J}} f_{I\bar{J}} \right).
\end{align}
If we choose $\theta$ admissible and compatible with $\theta_{\Si}$ then \eqref{hTractorSFF-HC} implies
\begin{equation}\label{ExampleStep3}
f_{I\bar{J}} = \II_{\mu\la}{^{\ga}}\II_{\nub\rho}{^{\epb}}\bh^{\mu\nub} \bh_{\ga\epb} W^{\la}_I W^{\rhob}_{\bar{J}} + P_{\mu\alb}\rmN^{\be\alb}P_{\be\nub}\bh^{\mu\nub} Z_I Z_{\bar{J}}.
\end{equation}
In each term on the right hand side of \eqref{ExampleStep1} and \eqref{ExampleStep2} there is a contraction with a tractor, using the orthogonality relations between the tractor projectors simplifies the calculation significantly since one can ignore terms that will vanish after these contractions. So, for example, one easily computes that
\begin{equation*}
W^{\nu\bar{J}}D_{\nu}f_{I\bar{J}}=D^{\rhob}(\II_{\mu\la}{^{\ga}}\II^{\mu}{_{\rhob\ga}}W^{\la}_I) + m P_{\mu\alb}\rmN^{\be\alb}P_{\be}{^{\mu}}Z_I.
\end{equation*}
Another efficient way to compute terms is to commute the splitting tractors forward past each appearance of the connection $D$ using the submanifold versions of \eqref{hNablaY}-\eqref{Soldering3}. Since $Z^{\bar{J}}f_{I\bar{J}}=0$ and $[D_{\nu},Z^{\bar{J}}]=D_{\nu}Z^{\bar{J}}=0$ we have $Z^{\bar{J}}D_{\nu}f_{I\bar{J}}=0$, from which we get 
\begin{equation*}
Z^{\bar{J}}D^{\nu}D_{\nu}f_{I\bar{J}}=-W^{\nu\bar{J}}D_{\nu}f_{I\bar{J}}
\end{equation*}
using $[D^{\nu},Z^{\bar{J}}]=D^{\nu}Z^{\bar{J}}=W^{\nu\bar{J}}$; thus two of the terms in \eqref{ExampleStep1} coincide, up to a factor, simplifying our calculations significantly. Computing similarly
\begin{align*}
Z^{\bar{J}}D_{0}f_{I\bar{J}} &= D_{0}(Z^{\bar{J}}f_{I\bar{J}}) - (D_0 Z^{\bar{J}})f_{I\bar{J}} \\*
&=  iP_{\mu\alb}\rmN^{\be\alb}P_{\be}{^{\mu}}Z_I
\end{align*}
using that $Z^{\bar{J}}f_{I\bar{J}}=0$ and $D_0 Z^{\bar{J}}=-iY^{\bar{J}} +\frac{i}{m+2}pZ^{\bar{J}}$. Putting these together yields
\begin{equation}
\mathsf{D}^{\bar{J}}f_{I\bar{J}} = (m-1) D^{\rhob}(\II_{\mu\la}{^{\ga}}\II^{\mu}{_{\rhob\ga}}W^{\la}_I) + (m-1)^2 P_{\mu\alb}\rmN^{\be\alb}P_{\be}{^{\mu}}Z_I.
\end{equation}
Repeating this procedure for \eqref{ExampleStep2} we eventually obtain
\begin{align}
\label{ExampleInvariantComputed} 
\mathcal{I} = (m-1)\left[\vphantom{N^{\be\alb}} \right.
&(m-2)D^{\la}D^{\rhob}(\II_{\mu\la}{^{\ga}}\II^{\mu}{_{\rhob\ga}})  \\* 
\nonumber \qquad \qquad \quad  \; &+ (D^{\rhob}D_{\rhob} - 2iD_0 -\tfrac{(m+4)(m-2)}{m+2} p)(\II_{\mu\la}{^{\ga}}\II^{\mu\la}{_{\ga}})  \\*
\nonumber   &- \:\! (m-2)(m-4)p^{\la\rhob}\II_{\mu\la}{^{\ga}}\II^{\mu}{_{\rhob\ga}} \\*
\nonumber   &+ \:\! \left. (m-1)(m-2)(m-4) P_{\mu\alb}\rmN^{\be\alb}P_{\be}{^{\mu}} \right].
\end{align}

\section{A CR Bonnet Theorem}\label{CRBonnet}

In classical surface theory the Bonnet theorem (or fundamental theorem of surfaces) says that if a covariant 2-tensor $\II$ on an abstract Riemannian surface $(\Si,g)$ satisfies the Gauss and Codazzi equations then (locally about any point) there exists an embedding of $(\Si,g)$ into Euclidean 3-space which realises the tensor $\II$ as the second fundamental form. A more general version of the Bonnet theorem states that if we specify on a Riemannian manifold $(\Si^m,g)$ a rank $d$ vector bundle $N\Si$ with bundle metric and metric preserving connection and an $N\Si$-valued symmetric covariant 2-tensor $\II$ satisfying the Gauss, Codazzi and Ricci equations then (locally) there exists an embedding of $(\Si^m,g)$ into Euclidean $n$-space, where $n=m+d$, realising $N\Si$ as the normal bundle and $\II$ as the second fundamental form. Here we give a CR geometric analogue of this theorem.

\subsection{Locally Flat CR Structures}

The Bonnet theorem given in section \ref{BonnetTheoremSub} generalises and is motivated by the following well-known theorem on locally flat CR structures. The proof we give will be adapted to give a proof of the Bonnet theorem.
\begin{thm}\label{LocallyFlatTheorem}
A nondegenerate CR manifold $(M^{2n+1},H,J)$ of signature $(p,q)$ with vanishing tractor curvature is locally equivalent to the signature $(p,q)$ model hyperquadric $\mathcal{H}$.
\end{thm}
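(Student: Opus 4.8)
The plan is to exploit the fact that vanishing tractor curvature means the standard tractor connection $\nabla^{\cT}$ is flat, and then to manufacture a local CR equivalence with $\mathcal{H}$ out of the associated development map. First I would observe that since $\ka_{abC}{}^D = 0$, parallel transport of standard tractors is locally path-independent, so on any simply connected open set $U \subseteq M$ the bundle $\cT M|_U$ is trivialised by a frame of $\nabla^{\cT}$-parallel sections; because $\nabla^{\cT}$ preserves the Hermitian tractor metric $h_{A\bar B}$ of signature $(p+1,q+1)$, we can choose this parallel frame to be orthonormal, giving a bundle isomorphism $\cT M|_U \cong U \times \bbC^{p+1,q+1}$ intertwining $\nabla^{\cT}$ with the flat connection $\d$ and $h_{A\bar B}$ with the standard Hermitian form. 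Under this identification the canonical tractor $Z^A$ (the invariant section of $\cT^1 \subset \cT M$, cf.\ Section \ref{SplittingTractors}) becomes a map $Z: U \to \bbC^{p+1,q+1}$; since $Z^A$ spans the null line $\cT^1$ at each point, $Z(x)$ is a nonzero null vector for every $x$, and hence determines a point of the projectivised null cone, i.e.\ a point of the model $\mathcal{H} = \{[\ell] \in \bbCP^{p+q+1} : \ell \text{ null}\}$. This defines the \emph{developing map} $\Phi: U \to \mathcal{H}$, $\Phi(x) = [Z(x)]$.

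Next I would check that $\Phi$ is a local CR diffeomorphism. The key point is that the standard tractor bundle of $\mathcal{H}$ together with its (flat) tractor connection is precisely $\mathcal{H} \times \bbC^{p+1,q+1}$ with $Z^A_{\mathcal{H}}$ at $[\ell]$ a chosen representative vector, and the filtration $\cT^1 \subset \cT^0 \subset \cT^{-1}$ of $\cT\mathcal{H}$ at $[\ell]$ is $\bbC\ell \subset \ell^{\perp} \subset \bbC^{p+1,q+1}$. By construction $\Phi$ pulls back $\cT\mathcal{H}$ with its connection, metric, and canonical tractor to $\cT M|_U$ with the same data; hence $\Phi$ pulls back the whole filtration, so $T\Phi$ respects $H$ and the complex structure $J$ — this uses the soldering identities \eqref{Soldering1}, \eqref{Soldering2}, \eqref{Soldering3} which encode how $\nabla^{\cT} Z$ recovers the contact distribution and $J$ from the tractor data (concretely, $\nabla_{\bar\beta} Z_A = \bh_{\al\bar\beta} W^{\al}_A$ shows the derivative of $Z$ in antiholomorphic directions lands in $\cT^0/\cT^1 \cong T^{1,0}$, etc.). Since $\dim M = \dim \mathcal{H}$ and $T\Phi$ is injective on $H$ and on the Reeb direction (nondegeneracy of the Levi form, which is recorded by $h_{A\bar B}$ being nondegenerate on $\cT^0/\cT^1$), $\Phi$ is a local diffeomorphism, hence a local CR equivalence.

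The step I expect to be the main obstacle — or at least the one requiring the most care — is verifying rigorously that $T\Phi$ is a complex-linear contact bundle map, i.e.\ that the purely tractorial data $(Z, \nabla^{\cT} Z, h_{A\bar B})$ genuinely reconstructs $(H, J)$ on both $M$ and $\mathcal{H}$ in a way compatible with $\Phi$. This is really the content of the general principle that the standard tractor bundle with its normal connection is equivalent to the Cartan geometry, but since the excerpt works with the explicit $\nabla^{\cT}$ rather than the Cartan bundle, one must extract this from the connection formulae \eqref{hhTractorConnection}--\eqref{Soldering3}: the filtration is $\nabla^{\cT}$-invariant modulo lower-order terms in a controlled way, and the induced maps on graded pieces $\cT^1 \to \cT^0/\cT^1 \to \cT^{-1}/\cT^0$ given by differentiating $Z$ reproduce exactly $T^{1,0}M$ and its conjugate inside $\bbC TM$. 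Once this identification is in place, the uniqueness-up-to-automorphism statement (not asserted here but used later in the Bonnet theorem) follows because any two orthonormal parallel frames differ by a constant element of $\SU(p+1,q+1) = \mathrm{Aut}(\mathcal{H})$, which changes $\Phi$ by post-composition with the corresponding automorphism of $\mathcal{H}$.
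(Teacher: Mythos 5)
Your proposal is correct and follows essentially the same route as the paper: trivialise $\cT M$ by the flat connection, send $x$ to the null line $\cT^1_x\subset\bbC^{p+1,q+1}$, and use the soldering identities \eqref{Soldering1}--\eqref{Soldering3} to see that the developing map is an immersion respecting $H$ and $J$. The only cosmetic difference is that the paper handles the weight of $Z^A$ by choosing a nowhere-zero section $\rho$ of $\cE(-1,0)$ and differentiating the honest tractor $\rho Z^A$, which is exactly the lift you would need to make "$Z:U\to\bbC^{p+1,q+1}$" literal.
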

\begin{proof}
The signature $(p,q)$ model hyperquadric $\mathcal{H}$ can be realised as the space of null (i.e. isotropic) complex lines in the projectivisation of $\mathbb{C}^{p+1,q+1}$. Since the tractor curvature vanishes one may locally identify the standard tractor bundle $\cT M$ with the trivial bundle $M\times\mathbb{C}^{p+1,q+1}$ so that the tractor connection becomes the trivial flat connection and the tractor metric becomes the standard inner product on $\mathbb{C}^{p+1,q+1}$. The canonical null line subbundle $L=\cT^{1} M$ of $\cT M$ (spanned by the weighted canonical tractor $Z^A$) then gives rise to a map from $M$ into the model hyperquadric given by
\begin{equation}\label{LocallyFlatMap}
M \ni x \mapsto L_x \subset \mathbb{C}^{p+1,q+1}.
\end{equation}
We need to show that the map $f:M\rightarrow \mathbb{P}(\mathbb{C}^{p+1,q+1})$ given by \eqref{LocallyFlatMap} is a local CR diffeomorphism.

The maximal complex subspace in the tangent space to $\mathcal{H}$ at the point $\ell$, where $\ell \subset \mathbb{C}^{p+1,q+1}$ is an isotropic line, is the image of $\ell^{\perp}$ under the tangent map of the projection $\mathbb{C}^{p+1,q+1}\rightarrow \mathbb{P}(\mathbb{C}^{p+1,q+1})$. Choosing a nowhere zero local section $\rho$ of $L=\cE(-1,0)$ we get a lift of the map $f$ to a map $f_{\rho}:M\rightarrow \mathbb{C}^{p+1,q+1}$. The map $L=\cE(-1,0)\hookrightarrow\cT M=\cE^A$ is given explicitly by $\rho\mapsto \rho Z^A$. Since the tractor connection is flat the tangent map of $f_{\rho}$ at $x \in M$ is given by
\begin{equation*}
T_x M\ni X \mapsto \nabla_X(\rho Z^A) \in \mathbb{C}^{p+1,q+1}.
\end{equation*}
By the respective conjugates $\nabla_{\beb}Z^A=0$ and $\nabla_{\be}Z^A=W^A_{\be}$ of \eqref{Soldering1} and \eqref{Soldering2} (fixing any background contact form and raising indices using the tractor metric) the tangent map $T_x f_{\rho}$ restricted to contact directions maps onto a complementary subspace to $L_x$ inside $L_x^{\perp}$ and induces a complex linear isomorphism of $H_x$ with $L_x^{\perp}/L_x$; combined with \eqref{Soldering3} we see that $T_x f_{\rho}$ is injective and its image is transverse to $L_x$. Now $f$ is the composition of $f_{\rho}$ with the projectivisation map $\mathbb{C}^{p+1,q+1}\setminus \{0\}\rightarrow \mathbb{P}(\mathbb{C}^{p+1,q+1})$; thus we have that $T_x f$ is injective, and further that $f$ is a local CR diffeomorphism from $M$ to the signature $(p,q)$ model hyperquadric in $\mathbb{P}(\mathbb{C}^{p+1,q+1})$.
\end{proof}
\begin{rem}
Throughout this article we have implicitly identified the CR tractor bundle $\cT M$ with the holomorphic part of its complexification in the standard way. In the above proof we have therefore also implicitly identified the tangent space to $\mathbb{C}^{p+1,q+1}$ at any point with the holomorphic tangent space; the section $\rho Z^A$ should be understood as a section of the holomorphic tractor bundle, the map $f_{\rho}$ being determined by the corresponding section of the real tractor bundle.
\end{rem}
The map constructed in the proof is the usual Cartan developing map for a flat Cartan connection, though constructed using tractors and the projective realisation of the model hyperquadric. The fact that the map constructed is a local CR diffeomorphism relies on the soldering property of the canonical Cartan/tractor connection on $M$, which is captured in the formulae \eqref{Soldering1}, \eqref{Soldering2}, and \eqref{Soldering3}. 

\subsection{CR Tractor Gauss-Codazzi-Ricci Equations}\label{TractorGCR}

Our tractor based treatment of transversal CR embeddings in Sections \ref{Submanifolds-and-Tractors} and \ref{HigherCodimension} has shown us exactly what data should be prescribed on a CR manifold in a CR version of the Bonnet theorem: Consider a transversal CR embedding $\Si^{2m+1}\hookrightarrow M^{2n+1}$ between nondegenerate CR manifolds. Then along $\Si$ the ambient standard tractor bundle splits as an orthogonal direct sum $\left(\cT\Si\otimes\cR(-1,0)\right)\oplus \cN$ with the ratio bundle $\cR(-1,0)$ being the dual of an $(m+2)^{th}$ root of the top exterior power $\Lambda^d\cN$ of the normal tractor bundle $\cN$. It is also easy to see that the (pulled back) ambient tractor connection decomposes along $\Si$ as
\begin{equation}\label{TractorGauss-Weingarten}
\nabla = \left(\begin{array}{cc} 
D\otimes\nabla^{\cR}+\sfS  &  -\bL^{\dagger}   \\
 \bL  &   \nabla^{\cN}  \\ 
\end{array}\right) 
\qquad \mathrm{on} \quad \begin{array}{c} 
\cT\Si \otimes \cR(-1,0)  \\
\oplus \\
\cN\\
\end{array}
\end{equation}
where $D\otimes\nabla^{\cR}$ denotes the coupled connection on $\cT\Si \otimes \cR(-1,0)$, with $D$ the submanifold tractor connection and $\nabla^{\cR}$ the connection induced on $\cR(-1,0)$ by the normal tractor connection $\nabla^{\cN}$. The objects $\sfS$ and $\bL$ are as defined in Sections \ref{RelatingTracConnsSec} and \ref{TractorGaussFormula}, and $\bL^{\dagger}(X)$ is the Hermitian adjoint of $\bL(X)$ with respect to the ambient tractor metric for any $X\in\mathfrak{X}(\Si)$. The bundle $\cN$ carries a Hermitian metric $h^{\cN}$ induced by the ambient tractor metric. We refer to the triple $(\cN,\nabla^{\cN},h^{\cN})$ along with $(\cR(-1,0),\nabla^{\cR})$ and the invariants $\sfS$ and $\bL$ as the (extrinsic) \emph{induced data} coming from the CR embedding.

The above observations also establish Proposition \ref{NormalTractorsAndTractorSFFResult}.
\begin{rem}
The Hermitian adjoint $\bL^{\dagger}$ of $\bL$ appears because of \eqref{TractorSFFonNormal}. 
 Note that $\bL^{\dagger}_{iB}{^C}=\overline{\bL_i{^{\bar{C}}}_{\bar{B}}}$ so that in particular $\bL^{\dagger}_{\mub B}{^C}=\overline{\bL_{\mu}{^{\bar{C}}}_{\bar{B}}}$ and $\bL^{\dagger}_{\mu B}{^C}=0$. Note also that for any $X\in \mathfrak{X}(\Si)$
\begin{equation}
\left(\begin{array}{cc} 
\sfS(X)  &  -\bL^{\dagger}(X)   \\
 \bL(X)  &   0 \\ 
\end{array}\right) 
\end{equation}
is a skew-Hermitian endomorphism of $\cT M|_{\Si}$ since each of the connections appearing in \eqref{TractorGauss-Weingarten} preserves the appropriate Hermitian bundle metric.
\end{rem}

We can also easily see what the integrability conditions should be on this abstract data: Observe that the curvature of the connection \eqref{TractorGauss-Weingarten} acting on sections of $\cT M|_{\Si}$ is given by
\begin{align*}
\left(\begin{array}{cc} 
D\otimes\nabla^{\cR}+\sfS  &  -\bL^{\dagger}   \\
 \bL  &   \nabla^{\cN}  \\ 
\end{array}\right) \wedge&
\left(\begin{array}{cc} 
D\otimes\nabla^{\cR}+\sfS  &  -\bL^{\dagger}   \\
 \bL  &   \nabla^{\cN}  \\ 
\end{array}\right) \\
 &= \left(\begin{array}{cc} 
\ka^{D\otimes\nabla^{\cR}+\sfS} -\bL^{\dagger}\wedge\bL  &  -\d\bL^{\dagger} - \sfS\wedge\bL^{\dagger}  \\
 \d\bL + \bL\wedge\sfS &   \ka^{\cN} - \bL\wedge\bL^{\dagger} \\ 
\end{array}\right)
\end{align*}
where $\ka^{D\otimes\nabla^{\cR}+\sfS}$ is the curvature of $D\otimes\nabla^{\cR}+\sfS$, $\d\bL$ and $\d\bL^{\dagger}$ are the respective covariant exterior derivatives of $\bL$ and $\bL^{\dagger}$ with respect to $D\otimes\nabla^{\cR}\otimes\nabla^{\cN}$, and $\ka^{\cN}$ is the curvature of $\nabla^{\cN}$. The above display expresses the pullback of the ambient curvature by the embedding in terms of the induced data of the CR embedding. Writing these relations component-wise leads to the CR tractor Gauss, Codazzi, and Ricci equations; denoting the pullback of the ambient curvature simply by $\ka$ these are, respectively,
\begin{equation}\label{CRtractorGauss}
\Pi \circ \ka \circ \Pi = \ka^{D\otimes\nabla^{\cR}+\sfS} -\bL^{\dagger}\wedge\bL,
\end{equation}
\begin{equation}\label{CRtractorCodazzi}
\mathrm{N} \circ \ka \circ \Pi = \d\bL + \bL\wedge\sfS, 
\end{equation}
and
\begin{equation}\label{CRtractorRicci}
\mathrm{N} \circ \ka \circ \mathrm{N} = \ka^{\cN} - \bL\wedge\bL^{\dagger}
\end{equation}
where $\Pi$ and $\mathrm{N}$ denote the complementary `tangential' and `normal' projections acting on a section $v=(v^{\top},v^{\perp})$ of $\cT M|_{\Si}$. Of course 
\begin{equation*}
\ka^{D\otimes\nabla^{\cR}+\sfS} = \ka^{\Si} - \ka^{\cR(1,0)} + \d \sfS + \sfS\wedge\sfS
\end{equation*}
where $\ka^{\cR(1,0)}$ denotes the curvature of $\cR(1,0)$ acting as a bundle endomorphism via multiplication, $\ka^{\Si}$ is the submanifold tractor curvature, and $\d \sfS$ is the covariant exterior derivative of $\sfS$ with respect to the submanifold tractor connection. Note that the equation $\Pi \circ \ka \circ \mathrm{N} = -\d\bL^{\dagger} - \sfS\wedge\bL^{\dagger}$ is determined by \eqref{CRtractorCodazzi}. Note also that the tractor Ricci equation \eqref{CRtractorRicci} determines the normal tractor curvature $\ka^{\cN}$ in terms of the ambient curvature and the tractor second fundamental form.
\begin{rem}
One can easily write the terms appearing in the tractor Gauss, Codazzi, and Ricci equations more explicitly using abstract indices. For instance we have
\begin{equation*}
(\bL^{\dagger}\wedge\bL)_{ijK}{^L} = 2\bL^{\dagger}_{[i|E}{^L}\bL^{\phantom{\dagger}}_{|j]K}{^E} = 2\bL_{[i|}{^L}_E\bL_{|j]K}{^E}
\end{equation*}
where we use $\bL_{jK}{^E}=\bL_{jC}{^E}\Pi^C_K$ and $\bL^{\dagger}_{iE}{^L}=\bL^{\dagger}_{iE}{^D}\Pi_D^L$ since we are identifying $\cN^{\perp}\subset\cT M|_{\Si}$ with $\cT\Si \otimes \cR(-1,0)$.
\end{rem}

\subsection{The CR Bonnet Theorem}\label{BonnetTheoremSub}

With the notion of induced data on the submanifold from a CR embedding given in the previous section we can now give the following theorem:
\begin{thm}\label{BonnetTheorem}
Let $(\Sigma^{2m+1},H,J)$ be a signature $(p,q)$ CR manifold and suppose we have a complex rank $d$ vector bundle $\mathcal{N}$ on $\Sigma$ equipped with a signature $(p',q')$ Hermitian bundle metric $h^{\mathcal{N}}$ and metric connection $\nabla^{\mathcal{N}}$. Fix an $(m+2)^{th}$ root $\mathcal{R}$ of $\Lambda^d\mathcal{N}$, and let $\nabla^{\mathcal{R}}$ denote the connection induced by $\nabla^{\mathcal{N}}$. Suppose we have a $\mathcal{N}\otimes\mathcal{T}^*\Sigma\otimes\mathcal{R}$ valued 1-form $\bL$ which annihilates the canonical tractor of $\Si$ and an $\mathcal{A}^0\Sigma$ valued 1-form $\mathsf{S}$ on $\Sigma$ such that the connection
\begin{equation*}
\nabla := \left(\begin{array}{cc} 
D\otimes\nabla^{\mathcal{R}}+ \mathsf{S}  &  -\mathbb{L}^{\dagger}   \\
 \mathbb{L}  &   \nabla^{\mathcal{N}}  \\ 
\end{array}\right) 
\qquad \mathrm{on} \quad \begin{array}{c} 
\mathcal{T}\Sigma \otimes \mathcal{R}^*  \\
\oplus \\
\mathcal{N}\\
\end{array}
\end{equation*}
is flat (where $D$ is the submanifold tractor connection), then (locally) there exists a transversal CR embedding of $\Sigma$ into the model $(p+p',q+q')$ hyperquadric $\mathcal{H}$, unique up to automorphisms of the target, realising the specified extrinsic data as the induced data.
\end{thm}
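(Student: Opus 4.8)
The plan is to mirror the proof of Theorem \ref{LocallyFlatTheorem} exactly, treating the abstract data $(\cN,h^{\cN},\nabla^{\cN})$, $(\cR,\nabla^{\cR})$, $\bL$, $\sfS$ as if it were the induced data of a genuine embedding, and then running the developing-map construction in reverse. First I would form the rank $n+2 = m+d+2$ vector bundle $\cV := (\cT\Si\otimes\cR^*)\oplus\cN$ on $\Si$. The twisted tractor map identifications of Section \ref{RelatingTractorBundles} show that $\cT\Si\otimes\cR^*$ carries a Hermitian metric of signature $(p+1,q+1)$ (the tractor metric on $\cT\Si$ twisted by the $\mathrm{U}(1)$-bundle $\cR^*$ contributes nothing to signature), and the hypothesis gives $\cN$ signature $(p',q')$; hence $\cV$ carries a Hermitian metric $h^{\cV}$ of signature $(p+p'+1, q+q'+1)$. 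The block form of $\nabla$, together with the remark that the off-diagonal/diagonal blocks are skew-Hermitian (which is exactly the condition that $\bL^{\dagger}$ is the adjoint of $\bL$ and that $\sfS$ is $\cA^0$-valued, hence skew-Hermitian, and $\nabla^{\cN},\nabla^{\cR}$ metric), shows $\nabla$ preserves $h^{\cV}$. By hypothesis $\nabla$ is flat, so locally we may trivialise $(\cV,\nabla, h^{\cV}) \cong (\Si\times\bbC^{p+p'+1,q+q'+1}, \d, \langle\,,\rangle)$.

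Next I would identify the canonical null line. Inside $\cT\Si\otimes\cR^*\subset\cV$ sits the line subbundle $\cT^1\Si\otimes\cR^* = \cE_{\Si}(-1,0)\otimes\cR^*$ spanned by $Z^I\otimes(\text{unit section of }\cR^*)$; since $Z^I$ is null for the submanifold tractor metric this line is null in $\cV$, and it is a well-defined (canonical, up to the choice of trivialisation of $\cR^*$, which only affects an overall scale) sub-line-bundle $L\subset\cV$. As in the proof of Theorem \ref{LocallyFlatTheorem}, $L$ gives a map $f:\Si\to\mathbb{P}(\bbC^{p+p'+1,q+q'+1})$ landing in the model hyperquadric $\cH$ of signature $(p+p',q+q')$ because $L$ is null. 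To see $f$ is a transversal CR embedding I would lift using a nonvanishing local section and compute the tangent map. The crucial point: the hypothesis that $\bL$ \emph{annihilates the canonical tractor of $\Si$} means $\bL(X)(Z^I\otimes\cdot)=0$, so along the line $L$ the connection $\nabla$ restricted to contact and Reeb directions reproduces \emph{exactly} the soldering formulae \eqref{Soldering1}, \eqref{Soldering2}, \eqref{Soldering3} for the submanifold tractor connection $D$ acting on $Z^I$ (the $\sfS$-terms also annihilate $Z_J$ by the explicit form \eqref{hDifferenceTractor}--\eqref{0DifferenceTractor}, since $\sfS$ is $\cA^0\Si$-valued and every term contains a $Z$ acting as $Z^AZ_B$ or similar — one checks $\sfS(X)$ maps $\cT^1\Si$ to $\cT^1\Si$). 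Therefore the tangent map of the lifted $f$ restricted to $H_{\Si}$ maps isomorphically (complex-linearly) onto a complement of $L$ in $L^\perp/L$, and the Reeb direction maps transversally off $L^\perp$, exactly as in Theorem \ref{LocallyFlatTheorem}. This shows $f$ is a local CR immersion, hence a local CR embedding, and transversality follows since the Reeb direction is non-contact.

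Then I would verify that the given abstract data is genuinely the induced data of $f$. By construction the ambient standard tractor bundle of $\cH$ pulled back along $f$ is canonically $\cV$ with its flat connection; the orthogonal complement of $L$-related normal directions recovers $\cN$ as the normal tractor bundle, so $h^{\cN}$ and $\nabla^{\cN}$ are the induced metric and connection, and $\cR$ is recovered as the appropriate root of $\Lambda^d\cN$ (matching Proposition \ref{RatioAndNormalBundlesResult}). The block decomposition \eqref{TractorGauss-Weingarten} of the pulled-back ambient connection is then literally the hypothesised block form, so $\bL$ is the tractor second fundamental form and $\sfS$ the difference tractor of $f$. Uniqueness up to automorphisms of $\cH$ follows as in the flat case: any two developing maps arising from trivialisations of the same flat $(\cV,\nabla,h^{\cV})$ differ by a constant element of $\SU(p+p'+1,q+q'+1) = \mathrm{Aut}(\cH)$, and the data determines $(\cV,\nabla,h^{\cV})$ up to isomorphism.

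The main obstacle I anticipate is the tangent-map computation showing $f$ is an immersion with the correct CR structure: one must carefully unwind the twisted-tractor-map identification $\cT\Si\otimes\cR^*\hookrightarrow\cV$ and check that the connection $\nabla$, restricted to the canonical null line, reproduces precisely the soldering identities needed — in particular that neither the $\sfS$-term (acting on $Z$) nor the $\bL$-term (which annihilates $Z$ by hypothesis) disrupts the formula $\nabla_{\be}Z^A = W^A_{\be}$ and its analogues. Once this is in place, the rest is a routine transcription of the proof of Theorem \ref{LocallyFlatTheorem}, together with bookkeeping to match the abstract bundles with the induced data via Propositions \ref{NormalTractorsAndTractorSFFResult}, \ref{RatioAndNormalBundlesResult} and Theorem \ref{TwistedTractorMapResult}. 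A secondary subtlety worth a sentence is checking the signature arithmetic: that $\cT\Si\otimes\cR^*$ has signature $(p+1,q+1)$ as a Hermitian bundle, which follows because $\cR^*$ is a $\mathrm{U}(1)$-bundle (the diagonal ratio bundle $\cR(1,1)$ being canonically trivial), so tensoring does not change signature.
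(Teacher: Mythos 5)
Your proposal is correct and follows essentially the same route as the paper: build the Hermitian metric $h^{\cT\Si}+h^{\cN}$ on $(\cT\Si\otimes\cR^*)\oplus\cN$, use flatness to trivialise, project the null line $\cE_{\Si}(-1,0)\otimes\cR^*$ into the hyperquadric, and verify the immersion/transversality via the soldering identities together with the facts that $\bL$ annihilates $Z^I$ and $\sfS$ preserves the line it spans. The only cosmetic slip is the phrase that the $\sfS$-terms ``annihilate'' $Z$ --- they only preserve the line $\cT^1\Si$, which you correctly note immediately afterwards and which suffices since contributions along $L$ die in the projectivised tangent map.
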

\begin{proof}
Since the complex line bundle $\Lambda^d\mathcal{N}$ is normed by $h^{\cN}$, the bundle $\cR$ is also normed. This means that the tractor metric $h^{\cT \Si}$ induces a Hermitian bundle metric on $\cT\Si\otimes\cR^*$, which we again denote by $h^{\cT \Si}$. We therefore have a Hermitian bundle metric $h=h^{\cT \Si}+h^{\cN}$ on the bundle $\left(\cT\Si\otimes\cR^*\right)\oplus \cN$. Since $\sfS$ is adjoint tractor valued (i.e. skew-Hermitian endomorphism of $\cT \Si$ valued) the connection $D\otimes\nabla^{\mathcal{R}}+ \mathsf{S}$ on $\cT\Si\otimes\cR^*$ preserves $h^{\cT \Si}$. Collectively, the terms involving $\bL$ and $\bL^{\dagger}$ in the displayed definition of $\nabla$ constitute a one form valued in skew-Hermitian endomorphisms of $\left(\cT\Si\otimes\cR^*\right)\oplus \cN$. Combined with the fact that $\nabla^{\cN}$ preserves $h^{\cN}$ this shows that $\nabla$ preserves the Hermitian bundle metric $h$.

The signature $(p+p',q+q')$ model hyperquadric $\mathcal{H}$ can be realised as the space of null complex lines in the projectivisation of $\mathbb{T}=\mathbb{C}^{p+p'+1,q+q'+1}$. Since the connection $\nabla$ on $\left(\cT\Si\otimes\cR^*\right)\oplus \cN$ is flat and preserves $h$ one may locally identify this bundle with the trivial bundle $\Si\times \mathbb{T}$ such that $\nabla$ becomes the trivial flat connection and $h$ becomes the standard signature $(p+p'+1,q+q'+1)$ inner product on $\mathbb{T}$; this trivialisation is uniquely determined up to the action of $\mathrm{SU}(p+p'+1,q+q'+1)$ on $\mathbb{T}$. The canonical null line subbundle $\cE_{\Si}(-1,0)$ of $\cT\Si$ gives rise to a null line subbundle $L=\cE_{\Si}(-1,0)\otimes \cR^*$ of $\Si\times \mathbb{T}$. The null line subbundle $L$ then gives rise to a smooth map into the model $(p+p'+1,q+q'+1)$ hyperquadric given by
\begin{equation}\label{BonnetTheoremMap}
\Si \ni x \mapsto L_x \subset \mathbb{T}=\mathbb{C}^{p+p'+1,q+q'+1}.
\end{equation}
Since the local trivialisation of $\left(\cT\Si\otimes\cR^*\right)\oplus \cN$ is uniquely determined up to the action of $\mathrm{SU}(\mathbb{T})$ the above displayed map from $\Si$ to $\mathcal{H}$ is determined up to automorphisms of $\mathcal{H}$. It remains to show that this map is a transversal CR embedding inducing the specified extrinsic data.

Let us denote the map \eqref{BonnetTheoremMap} by $f:\Si\rightarrow\mathcal{H}\subset \mathbb{P}(\mathbb{T})$. Given a nowhere zero local section $\rho$ of $L=\cE_{\Si}(-1,0)\otimes \cR^*$ we may think of the section $\rho Z^I$ of $\cT\Si\otimes\cR^*$ as a section of $\Si\times \mathbb{T}$ via inclusion; this section gives rise to a lifted map $f_\rho:\Si\rightarrow \mathbb{T}$. The tangent map of $f_{\rho}$ at $x\in \Si$ is given by
\begin{equation*}
T_x\Si \ni X \mapsto \nabla_X (\rho Z^I)\in \mathbb{T}. 
\end{equation*}
From \eqref{Soldering1} and \eqref{Soldering2} we have that $D_{\nub}Z^I=0$ and $D_{\nu}Z^I=W_{\nu}^I$ (fixing some background contact form on $\Si$); using these, the definition of $\nabla$, and the facts that $\sfS_{i J}{^K}Z^J \; \mathrm{mod} \; Z^K =0$ (since $\sfS$ is adjoint valued) and that $\bL$ annihilates the canonical tractor $Z^I$, we see that $T_x f_{\rho}$ restricted to contact directions is injective and induces a complex linear isomorphism of $H_x$ onto a subspace of $L^{\perp}_x/L_x$; combined with \eqref{Soldering3} we see that $T_x f_{\rho}$ is injective and its image is transverse to $L_x$. This implies that the composition $f$ of $f_{\rho}$ with the projectivisation map $\mathbb{T}\setminus \{0\}\rightarrow \mathbb{P}(\mathbb{T})$ is a local CR embedding into the model hyperquadric $\mathcal{H}$. Equation \eqref{Soldering3} further shows that $T_x f_{\rho}(T_x\Si)\not\subset L^{\perp}_x$, so $f$ is transversal.

To see that this embedding induces back the specified extrinsic data we simply need to note that we may identify $\left(\cT\Si\otimes\cR^*\right)\oplus \cN=\Si\times\mathbb{T}$ with $\cT \mathcal{H}|_{\Si}$, identifying $\nabla$ with the flat tractor connection on $\cT \mathcal{H}|_{\Si}$ and $h$ with the tractor metric $h^{\cT \mathcal{H}}$ along $\Si$. Then 
\begin{equation*}
\cT \mathcal{H}|_{\Si} \;\; = \; \begin{array}{c} 
\mathcal{T}\Sigma \otimes \mathcal{R}^*  \\
\oplus \\
\mathcal{N}\\
\end{array}
\end{equation*}
is the usual decomposition of the ambient tractor bundle along the submanifold,
and the definition of $\nabla$ in the statement of the theorem gives the usual decomposition of the ambient tractor connection.
\end{proof}

Our formulation and proof of this CR Bonnet theorem is inspired by the conformal Bonnet theorem formulated and proved in terms of standard conformal tractors by Burstall and Calderbank in \cite{BurstallCalderbank-Conformal}. The condition that the connection $\nabla$ we define be flat is alternatively given in terms of the prescribed data on $(\Si,H_{\Si},J_{\Si})$ by the tractor Gauss, Codazzi, and Ricci equations \eqref{CRtractorGauss}, \eqref{CRtractorCodazzi}, and \eqref{CRtractorRicci} with the left hand sides equal to zero.

\end{document}